\documentclass[11pt,reqno]{amsart}
\usepackage{esint}
\usepackage{bbm}
\usepackage{amssymb}
\usepackage{mathrsfs}
\usepackage{amsfonts}
\usepackage{amsfonts,amssymb,amsmath,amsthm}
\usepackage{url}
\usepackage{enumerate}
\usepackage{graphicx,xcolor}
\urlstyle{sf}

\definecolor{refkey}{gray}{0.5}
\definecolor{labelkey}{gray}{0.1}

\newtheorem{theorem}{Theorem}[section]
\newtheorem{lemma}[theorem]{Lemma}
\newtheorem{proposition}[theorem]{Proposition}
\newtheorem{corollary}[theorem]{Corollary}
\newtheorem{claim}[theorem]{Claim}

\theoremstyle{definition}
\newtheorem{definition}[theorem]{Definition}
\newtheorem{remark}[theorem]{Remark}
\numberwithin{equation}{section}

\newtheorem{example}[theorem]{Example}

\newtheorem{problem}[theorem]{Problem}
\newtheorem{assumption}[theorem]{Assumption}
\newtheorem{notation}[theorem]{Notation}

%\newtheorem{theorem}{Theorem}[section]
%\newtheorem{lemma}[theorem]{Lemma}
%\newtheorem{proposition}[theorem]{Proposition}
%\newtheorem{corollary}[theorem]{Corollary}
%\theoremstyle{definition}
%\newtheorem{definition}[theorem]{Definition}
%\newtheorem{remark}[theorem]{Remark}
%\numberwithin{equation}{section}
%\newtheorem{question}{Question}
%\newtheorem{example}{Example}
%\newtheorem{conclue}{Conclution}
%
%\newtheorem{Thrm}{Theorem}

\usepackage[left=2.0cm, right=2.0cm, top=2.0cm, bottom=2.0cm]{geometry}

\usepackage{graphicx}%
\usepackage{color}
\usepackage{cite}
\usepackage{hyperref}
\hypersetup{
	colorlinks = true,%
	citecolor = blue,%[rgb]{0.65,0.0,0.0},
	filecolor=red,%
	linkcolor = [rgb]{0.65,0.0,0.0},%
	anchorcolor = red,
	pagecolor = red,
	urlcolor= [rgb]{0.65,0.0,0.0}
	linktocpage=true,
	pdfpagelabels=true,
	bookmarksnumbered=true,
}

\DeclareMathOperator{\FAC}{\mathsf{FAC}}
\DeclareMathOperator{\GMM}{\mathsf{GMM}}

\DeclareMathOperator{\id}{Id}
\DeclareMathOperator{\diam}{Diam}

\DeclareMathOperator{\loc}{loc}

\DeclareMathOperator{\di}{div}

\DeclareMathOperator{\lo}{loc}

\DeclareMathOperator{\dd}{{d}}
\DeclareMathOperator{\ee}{{e}}

\DeclareMathOperator{\sgn}{sgn}

\DeclareMathOperator{\ac}{ac}

\DeclareMathOperator{\argmin}{argmin}

\DeclareMathOperator{\gtau}{\mathfrak{T}}
\DeclareMathOperator{\lra}{\longrightarrow}

\DeclareMathOperator{\w}{\mathbf{w}}
\DeclareMathOperator{\vv}{\mathbf{v}}
\DeclareMathOperator{\LL}{\mathfrak{L}}

\DeclareMathOperator{\m}{{\mathfrak{m}}}

\makeatletter
\@namedef{subjclassname@2020}{%
  \textup{2020} Mathematics Subject Classification}
\makeatother

\urlstyle{sf}

\author{Shin-ichi Ohta}
\address{
Department of Mathematics\\
Osaka University\\
560-0043 Osaka\\
Japan\\
and RIKEN Center for Advanced Intelligence Project (AIP)\\
1-4-1 Nihonbashi \\ Tokyo 103-0027\\
Japan}
\email{s.ohta@math.sci.osaka-u.ac.jp}

\author{Wei Zhao}
\address{
School of Mathematics\\
East China University of Science and Technology\\
200237 Shanghai, China}
\email{szhao\underline{ }wei@yahoo.com}

\keywords{asymmetric metric space, gradient flow, convex function, Finsler manifold, heat flow, Wasserstein space}
\subjclass[2020]{Primary 49J27; Secondary 49J52, 58J60}
%% NB There should be only one primary classification,
%%and zero or more secondary classifications.
\begin{document}

\title[Gradient flows in asymmetric metric spaces and applications]{Gradient flows in asymmetric metric spaces and applications}

\begin{abstract}
This paper is devoted to the investigation of gradient flows in asymmetric metric spaces
(for example, irreversible Finsler manifolds and Minkowski normed spaces)
by means of discrete approximation.
We study basic properties of curves and upper gradients in asymmetric metric spaces,
and establish the existence of a curve of maximal slope,
which is regarded as a gradient curve in the non-smooth setting.
{Introducing} a natural convexity assumption on the potential function,
{which is called the $(p,\lambda)$-convexity,}
we also obtain some regularizing effects on the asymptotic behavior of curves of maximal slope.
Applications include several existence results for gradient flows in Finsler manifolds,
doubly nonlinear differential evolution equations on {infinite-dimensional Funk spaces},
and heat flow on compact Finsler manifolds.
\end{abstract}

\maketitle

%\tableofcontents

\section{Introduction}%%%%%%%%%%%%%
%%%%%%%%%%%%%%%%%%

The aim of this article is to {develop the theory} of gradient flows in \emph{asymmetric} metric spaces
(i.e., the symmetry $d(x,y)=d(y,x)$ is not assumed; see Definition~\ref{generalsapcedef}).
Typical and important examples are gradient flows of geodesically convex functions
on irreversible Finsler manifolds (or Minkowski normed spaces).
The theory of gradient flows has been successfully developed in ``Riemannian-like'' spaces
such as CAT$(0)$-spaces and RCD-spaces
(see, e.g., \cite{AGS,Bacak,GN,Jost,Mayer,Ograd,OP1,OP2,Sav,Sturm});
nonetheless, the lack of the Riemannian-like structure causes a significant difference
and we know much less about gradient flows in ``Finsler-like'' spaces
(see \cite[Remark~3.2]{OP2}, \cite{OSnc} and Subsection~\ref{ssc:rem} for more details).
In this article, based on the recent work \cite{KZ} on the geometry of asymmetric metric spaces,
we investigate gradient flows in asymmetric metric spaces
by generalizing the minimizing movement scheme as in \cite{AGS}.
{
Compared with the preceding studies \cite{CRZ,RMS} on asymmetric metric spaces,
we will be able to remove some conditions on the space $(X,d)$ or the potential function $\phi$
by a more careful analysis
(see Remarks~\ref{diffbetRMS}, \ref{diffbettheree} for details).
Moreover, the notion of $(p,\lambda)$-convexity (discussed in Section~\ref{plambdaconvex})
seems new and of independent interest even in the symmetric case.}

Asymmetric metrics often occur in nature and can be represented as Finsler metrics;
a prominent example is the Matsumoto metric
describing the law of walking on a mountain slope under the action of gravity
(see \cite{Matsumoto}).
Randers metrics appearing as solutions to the Zermelo navigation problem
(concerning a Riemannian manifold with ``wind'' blown on it)
provide another important class of irreversible metrics (see \cite{BRS}).
A particular example of the latter metric is given as a ``non-symmetrization''
of the Klein metric on the $n$-dimensional Euclidean unit ball
$\mathbb{B}^n =\{ x \in \mathbb{R}^n \,|\, \|x\|<1\}$ ($n \ge 2$),
called the \textit{Funk metric} (see, e.g., \cite[Example~1.3.5]{Sh1}),
defined as $F:\mathbb{B}^n \times \mathbb{R}^n \lra [0,\infty)$ by
\begin{equation}\label{Funckmeatirc}
 F(x,v)
 =\frac{\sqrt{\|v\|^2-(\|x\|^2\|v\|^2-\langle x,v \rangle^2)} +\langle x,v\rangle}{1-\|x\|^2},
 \quad x \in \mathbb{B}^n,\ v \in T_x\mathbb{B}^n=\mathbb{R}^n,
\end{equation}
where $\|\cdot\|$ and $\langle\cdot, \cdot\rangle$ denote
the Euclidean norm and inner product, respectively.
The associated distance function $d_F$ is written as (see \cite[Example~1.1.2]{Sh1})
\begin{equation}\label{distFunk}
 d_{F}(x_1,x_2) =\log\Bigg(
 \frac{\sqrt{\|x_1-x_2\|^2-(\|x_1\|^2\|x_2\|^2-\langle x_1,x_2\rangle^2)}-\langle x_1,x_2-x_1\rangle}
 {\sqrt{\|x_1-x_2\|^2-(\|x_1\|^2\|x_2\|^2-\langle x_1,x_2\rangle^2)}-\langle x_2,x_2-x_1\rangle}
 \Bigg), \quad x_1,x_2 \in \mathbb{B}^n.
\end{equation}
It is readily seen that $d_F(x_1,x_2)\neq d_F(x_2,x_1)$ and,
for $\mathbf{0}=(0,\ldots,0)$,
\[
 \lim_{\|x\| \to 1} d_F(\mathbf{0},x)=\infty, \qquad
 \lim_{\|x\| \to 1} d_F(x,\mathbf{0})=\log 2.
\]
The Funk metric $(\mathbb{B}^n,d_F)$
will be one of the model asymmetric structures we have in mind.
We remark that the symmetrization $d(x_1,x_2):=\{d_F(x_1,x_2)+d_F(x_2,x_1)\}/2$
coincides with the Klein metric.

For functions on an asymmetric metric space $(X,d)$,
we shall study the associated \emph{curves of maximal slope} in $X$ (see Subsection~\ref{ssc:maxslope});
this conception generalizes gradient curves in the smooth setting.
In order to deal with the asymmetry, the \emph{reversibility} of $(X,d)$, defined by
\[
 \lambda_d(X):=\sup_{x\neq y} \frac{d(x,y)}{d(y,x)},
\]
will be instrumental.
Clearly $\lambda_d(X) \geq 1$,
and $\lambda_d(X)=1$ holds if and only if $d$ is symmetric.
The reversibility may be $\infty$ for noncompact asymmetric metric spaces.
Actually, for the model Funk metric \eqref{Funckmeatirc},
a direct calculation yields $\lambda_{d_F}(\mathbb{B}^n)=\infty$ and
$\lambda_{d_F}(B^+_\mathbf{0}(r)) \leq 2{\ee}^r-1$,
where $B^+_\mathbf{0}(r)$ is the forward open ball of radius $r$ centered at $\mathbf{0}$, i.e.,
$B^+_\mathbf{0}(r)=\{x\in \mathbb{B}^n|\, d_F(\mathbf{0},x)<r\}$.
The latter estimate suggests to consider a collection of pointed spaces
$(X,\star,d)$ whose reversibility satisfies $\lambda_d(B^+_\star(r))\leq \Theta(r)$
for some non-decreasing function $\Theta:(0,\infty) \lra [1,\infty)$.
Such spaces are called \emph{forward metric spaces}
(see Subsection~\ref{ssc:forward}) and intensively studied in \cite{KZ}.
For instance, the Gromov--Hausdorff topology and the theory of curvature-dimension condition
developed by Lott, Sturm and Villani can be generalized to such spaces.
Every forward complete Finsler manifold is a forward metric space (see \cite{KZ} for details).

In the present article, by generalizing the theory of \cite{AGS} to forward metric spaces,
we are able to obtain some existence and regularity results of curves of maximal slope.
Among others, we establish the existence of curves of maximal slope
satisfying the energy identity (Theorem~\ref{existsassupab}),
and show some estimates on the behavior of the potential function and its upper gradient
along curves of maximal slope (Theorem~\ref{theogmmd}).
{
In the latter result, we make use of the \emph{$(p,\lambda)$-convexity},
defined in Definition~\ref{df:pl-conv} by
\[
 \phi\big( \gamma(t) \big)
 \leq (1-t)\phi\big( \gamma(0) \big) +t\phi\big( \gamma(1) \big)
 -\frac{\lambda}{p} t(1-t^{p-1}) d^p\big( \gamma(0),\gamma(1) \big)
\]
(which slightly differs from the $(\lambda,p)$-convexity studied in \cite{RSS};
see Remark~\ref{rm:p-conv} for a further account), plays a role.}
As an application, we have the following in the Finsler setting
(see Corollaries~\ref{bascirgeulfinsler}, \ref{regularoffINSLERCASE} for the precise statements).

\begin{theorem}\label{Finslegradientflow1}
Let $(M,F)$ be a forward complete Finsler manifold and $\phi \in C^1(M)$.
Then, for any $x_0 \in M$,
there exists a $C^1$-curve $\xi:[0,T) \lra M$ solving the gradient flow equation
\[
 \xi'(t) =\nabla(-\phi)\big( \xi(t) \big), \qquad \xi(0)=x_0,
\]
where $\lim_{t \to T}d_F(x_0,\xi(t))=\infty$ if $T<\infty$.
If $\phi$ is $\lambda$-geodesically convex for some $\lambda>0$,
then $T=\infty$, $\xi(t)$ converges to a unique minimizer $\bar{x}$ of $\phi$
and $F(\nabla(-\phi)(\xi(t)))$ decreases to $0$ as $t \to \infty$.
\end{theorem}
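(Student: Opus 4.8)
The plan is to deduce Theorem~\ref{Finslegradientflow1} from the general machinery built in the paper — namely the existence of a curve of maximal slope satisfying the energy identity (Theorem~\ref{existsassupab}) together with the regularizing estimates under $(p,\lambda)$-convexity (Theorem~\ref{theogmmd}) — and then to upgrade the resulting non-smooth curve to a genuine $C^1$ solution of the gradient flow ODE using the smooth Finsler structure. First I would observe that a forward complete Finsler manifold $(M,F)$ with its forward distance $d_F$ is a forward metric space in the sense of Subsection~\ref{ssc:forward}, so the abstract theory applies; one takes $p=2$ and the descending slope $|\nabla^-\phi|$ as an upper gradient for $\phi \in C^1(M)$. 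Applying Theorem~\ref{existsassupab} with initial point $x_0$ produces a locally absolutely continuous curve $\xi$ of maximal slope on a maximal interval $[0,T)$ satisfying the energy identity; the blow-up alternative $\lim_{t\to T} d_F(x_0,\xi(t)) = \infty$ when $T<\infty$ follows from forward completeness, since otherwise $\xi$ would remain in a forward-bounded (hence forward-precompact) region and the scheme could be continued past $T$.

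The second step is the identification of the abstract curve of maximal slope with a classical gradient curve. For $\phi \in C^1(M)$ one has, at every point $x$, that the descending slope equals $F(\nabla(-\phi)(x))$ (equivalently the dual norm $F^*(-d\phi_x)$), and the metric derivative of any absolutely continuous curve dominates $F(\xi'(t))$; plugging these into the energy-dissipation identity forces $F(\xi'(t)) = F(\nabla(-\phi)(\xi(t)))$ and $\xi'(t)$ parallel to $\nabla(-\phi)(\xi(t))$ for a.e.\ $t$, whence $\xi'(t) = \nabla(-\phi)(\xi(t))$ a.e. Because $\nabla(-\phi)$ is a continuous vector field (here I would use $C^1$ regularity of $\phi$ and smoothness of the Legendre transform away from the zero section, handling the stationary case $d\phi=0$ separately), a bootstrap argument shows $\xi$ is actually $C^1$ and solves the ODE in the classical sense with $\xi(0)=x_0$; local uniqueness is immediate from continuity of the field.

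For the convex case, suppose $\phi$ is $\lambda$-geodesically convex with $\lambda>0$. I would first note that $\lambda$-geodesic convexity implies the $(2,\lambda)$-convexity of Definition~\ref{df:pl-conv} (the inequality $t(1-t)\ge t(1-t)$ being trivial at $p=2$, so the two notions coincide when $p=2$), so Theorem~\ref{theogmmd} applies and yields: $T=\infty$ (no finite-time blow-up, since the estimates confine $\xi$ to a forward-bounded set), exponential decay of $\phi(\xi(t)) - \inf\phi$ and of the slope $F(\nabla(-\phi)(\xi(t)))$ at rate governed by $\lambda$, and a Cauchy estimate in $d_F$ forcing $\xi(t)$ to converge; the limit $\bar x$ must be a critical point with $F(\nabla(-\phi)(\bar x))=0$, hence a minimizer, and $\lambda$-convexity with $\lambda>0$ gives uniqueness of the minimizer. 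Finally, monotone decrease of $t\mapsto F(\nabla(-\phi)(\xi(t)))$ to $0$ follows from the energy identity (the slope is non-increasing along a curve of maximal slope for a $(p,\lambda)$-convex potential) combined with the already-established convergence to the minimizer.

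The main obstacle is the second step: passing from the abstract metric-space curve of maximal slope — which a priori is only locally absolutely continuous with the energy identity holding in an integrated sense — to a pointwise $C^1$ solution of the Finsler ODE. This requires carefully relating the metric speed, the upper gradient, and the Finsler norm of the actual tangent vector, and in the irreversible setting one must be attentive to the asymmetry of $F$ (the relevant quantities are the \emph{forward} metric speed and $F(\nabla(-\phi))$, not their reverses) and to the non-smoothness of $F$ and its Legendre transform along the zero section, i.e.\ near critical points of $\phi$.
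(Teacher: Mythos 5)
Your overall route is the paper's own: existence of a curve of maximal slope via the minimizing movement machinery, identification with the classical gradient curve and $C^1$ regularity exactly as in Example~\ref{Nonfinslerexamp} and Example~\ref{finsbackgrad} (using \eqref{deriavetisfinsermorem}, $|\partial\phi|=F(\nabla(-\phi))$ and the equality case of Young's inequality), and, for $\lambda>0$, the unique minimizer from Lemma~\ref{cocervityconvexfun}, the convergence from Theorem~\ref{direestimde}, and the monotone decay of the slope from Theorem~\ref{theogmmd}. Those parts are fine.

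The genuine gap is in your first step. Theorem~\ref{existsassupab} (and Corollary~\ref{generalforwardexistssolu}) requires Assumption~\ref{nonemtpassumpto}, hence $\tau_*(\phi)>0$, i.e.\ the coercivity of Assumption~\ref{continudef}\eqref{ass-b}; a general $\phi\in C^1(M)$ on a noncompact forward complete $M$ can be unbounded below so fast (e.g.\ decaying faster than $-d_F^2(x_0,\cdot)$) that $\Phi_\tau\equiv-\infty$ for every $\tau>0$, and then the resolvent scheme cannot even be started. Moreover, when the abstract theorem does apply it produces a curve on all of $[0,\infty)$; it never produces a ``maximal interval $[0,T)$'', so your blow-up alternative cannot be extracted from it by a continuation argument alone --- if coercivity held globally there would be nothing to continue, and if it fails you have no curve to continue. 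The missing idea is the paper's truncation device in the proof of Corollary~\ref{bascirgeulfinsler}: replace $\phi$ by $\phi_r:=\max\{\phi,\inf_{B^+_{x_0}(r)}\phi\}$, which is bounded below and has $\nabla(-\phi_r)=\nabla(-\phi)$ on $B^+_{x_0}(r)$, run the scheme there, and iterate over $r,2r,4r,\dots$ restarting from the exit point of each ball; it is precisely this iteration that yields the interval $[0,T)$ together with $\lim_{t\to T}d_F(x_0,\xi(t))=\infty$ when $T<\infty$. (In the $\lambda$-convex case with $\lambda>0$ this issue disappears, since Lemma~\ref{cocervityconvexfun}\eqref{m*-2} gives $\inf_X\phi>-\infty$ and the flow is global, as you claim.) With this truncation step inserted, your argument matches the paper's proof.
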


Another application is concerned with ``infinite dimensional Finsler spaces"
such as the unit ball in a Hilbert space endowed with the Funk metric \eqref{distFunk}
and a reflexive Banach space $(\mathscr{B},\|\cdot\|)$ equipped with
a Randers-type metric $d(x,y)=\|y-x\|+\omega(y-x)$
for some $\omega\in \mathscr{B}^*$ with $\|\omega\|_*<1$.
For such spaces, we prove that curves of maximal slope satisfy
a doubly nonlinear differential evolution equation or inclusion,
{which generalizes some results in \cite{RMS}}
(see Subsection~\ref{funkspacesgrad} for details).

Last but not least, we also investigate gradient flows in the Wasserstein space
over a compact Finsler manifold.
We establish the equivalence between weak solutions to the heat equation
and trajectories of the gradient flow for the relative entropy (in the same spirit as \cite{Erb,OS}),
as well as the following existence result of weak solutions to the heat equation
as curves of maximal slope, which gives a direct construction rather than the extension
by the $L^2$-contraction as in \cite{OS} (see Subsection~\ref{heatflowfins} for a further account).

\begin{theorem}
Let $(M,F)$ be a compact Finsler manifold endowed with a smooth positive measure $\m$.
For any function $u \in L^2(M)$ bounded above,
there exists a weak solution $(u_t)_{t \ge 0}$
to the heat equation $\partial_t u_t =\Delta_{\m} u_t$ with $u_0=u$.
\end{theorem}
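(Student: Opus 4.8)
The plan is to produce $(u_t)_{t\ge0}$ as a rescaled and shifted curve of maximal slope for the relative entropy in a Wasserstein space over $(M,F)$, combining the abstract existence result Theorem~\ref{existsassupab} with the equivalence between weak solutions of the heat equation and trajectories of the entropy gradient flow established in Subsection~\ref{heatflowfins}. First I would reduce the initial datum to a probability density. Fix $C\in\mathbb{R}$ with $u\le C$ $\m$-a.e.\ and put $w:=C-u\ge0$; since $M$ is compact, $w\in L^2(M)\subset L^1(M)$. If $w\equiv0$ then $u\equiv C$ and $u_t\equiv C$ already solves the equation, so assume $m:=\int_M w\,d\m>0$ and set $\rho_0:=w/m$, a probability density. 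Because $L^2\subset L\log L$ on the finite measure space $(M,\m)$, the measure $\mu_0:=\rho_0\,\m$ has finite relative entropy $\mathrm{Ent}_{\m}(\mu_0)<\infty$.

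Next I would apply Theorem~\ref{existsassupab} on the $L^2$-Wasserstein space $(\mathcal{P}(M),W)$ built over the compact Finsler manifold $(M,F)$ (or over its reverse $\overleftarrow{F}$, $\overleftarrow{F}(x,v):=F(x,-v)$, according to the sign conventions fixed in Subsection~\ref{heatflowfins}), with potential $\phi:=\mathrm{Ent}_{\m}$ and initial point $\mu_0$. The hypotheses would be checked as follows: since the base is a \emph{compact} Finsler manifold, $(\mathcal{P}(M),W)$ is a compact forward metric space of bounded reversibility (cf.~\cite{KZ}); $\mathrm{Ent}_{\m}$ is proper, bounded below by $-\log\m(M)$ via Jensen's inequality, and forward lower semicontinuous, since forward-Wasserstein convergence implies narrow convergence, along which the entropy is lower semicontinuous; and compactness of $M$ yields a lower bound $K\in\mathbb{R}$ on the weighted Ricci curvature $\mathrm{Ric}_\infty$ of the base, whence, by the Lott--Sturm--Villani--Ohta theory adapted to the asymmetric setting (cf.~\cite{KZ}), $\mathrm{Ent}_{\m}$ is $(2,K)$-convex along Wasserstein geodesics in the sense of Definition~\ref{df:pl-conv}. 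This $(2,K)$-convexity, together with the analysis underlying Subsection~\ref{heatflowfins}, ensures that the descending slope $|\partial^-\mathrm{Ent}_{\m}|$ is lower semicontinuous and a strong upper gradient, which is what Theorem~\ref{existsassupab} requires; since $\mathcal{P}(M)$ has finite diameter, the resulting curve of maximal slope $(\mu_t)_{t\ge0}$ exists for all $t\ge0$, satisfies the energy identity, and starts at $\mu_0$.

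Finally I would translate back to the heat equation. By the equivalence of Subsection~\ref{heatflowfins}, $\mu_t=\rho_t\,\m$ with $(\rho_t)_{t\ge0}$ a weak solution of the heat equation for the reverse metric, $\partial_t\rho_t=\Delta_{\m}^{\overleftarrow{F}}\rho_t$. Since the map $\rho\mapsto\Delta_{\m}^{\overleftarrow{F}}\rho$ is positively $1$-homogeneous (the Finsler gradient $\nabla^{\overleftarrow{F}}$ is, and $\mathrm{div}_{\m}$ is linear), $w_t:=m\,\rho_t$ is again a weak solution, now with $w_0=w$. Setting $u_t:=C-w_t$ (so $u_t\le C$ and $u_0=u$) and using the pointwise identity $\nabla^F(-g)=-\nabla^{\overleftarrow{F}}(g)$ — immediate from the relation $\overleftarrow{F}^*(x,\alpha)=F^*(x,-\alpha)$ between the dual norms and the corresponding Legendre transforms — one obtains $\Delta_{\m}u_t=\Delta_{\m}(-w_t)=-\Delta_{\m}^{\overleftarrow{F}}w_t=\partial_t u_t$; carrying this sign through the (linear) weak formulation shows that $(u_t)_{t\ge0}$ is the desired weak solution of $\partial_t u_t=\Delta_{\m}u_t$ with $u_0=u$.

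I expect the principal obstacle to be the verification, on the \emph{asymmetric} Wasserstein space, that $|\partial^-\mathrm{Ent}_{\m}|$ is a strong upper gradient: this amounts to identifying the metric speed and the descending slope of the entropy with (the square root of) the Fisher information and controlling $\mathrm{Ent}_{\m}$ along absolutely continuous curves — the Finsler counterpart of the Ambrosio--Gigli--Savar\'e/Erbar computation — and it must be made compatible with the reversibility bounds and the $(p,\lambda)$-convexity machinery of Section~\ref{plambdaconvex}; keeping the reverse Finsler structure consistent throughout the reduction is a related bookkeeping issue. The remaining points (compactness and lower semicontinuity of the entropy, the rescaling, and the constant shift) are routine given the theory developed in the earlier sections.
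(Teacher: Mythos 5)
Your proposal is correct and follows essentially the same route as the paper: normalize $u$ to a probability density proportional to $\sup u-u$ (which has finite entropy since $L^2\subset L\log L$), obtain a curve of maximal slope / gradient-flow trajectory for $H_{\m}$ from the abstract existence theory using the $K$-geodesic convexity, lower semicontinuity and compactness of $(\mathcal{P}(M),d_W)$, invoke the equivalence with weak solutions of the heat equation for the reverse density from Subsection~\ref{heatflowfins}, and undo the shift and rescaling via positive $1$-homogeneity of $\Delta_{\m}$ --- exactly the argument of Corollary~\ref{existsofheatlow}\eqref{norsolheatfl} built on Theorems~\ref{existenceoftranflow} and \ref{equvthegra}. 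The step you single out as the principal obstacle (that the slope of the entropy is a lower semicontinuous strong upper gradient and that curves of maximal slope are gradient-flow trajectories) is precisely what Corollary~\ref{upperconsidrgrage} (via the $(2,K)$-convexity and Example~\ref{pgeodesicconve2}) together with Propositions~\ref{equivalecegratrajec}, \ref{strongHWthe} supplies, so it is already covered by the machinery you cite rather than an open gap.
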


We stress that there remain important open problems for gradient flows
in Finsler-like spaces, even in (symmetric) normed spaces; see Subsection~\ref{ssc:rem}.
We hope that our work motivates further investigations.

This article is organized as follows.
In Section~\ref{sect1}, we introduce some necessary concepts to analyze
curves of maximal slope in forward metric spaces.
In Section~\ref{topologyexiscurvew}, we take advantage of the Moreau--Yosida approximation
to prove the existence of curves of maximal slope under mild topological assumptions.
Section~\ref{plambdaconvex} is devoted to the investigation of curves of maximal slope
{for $(p,\lambda)$-convex functions}, including some further existence and regularity results
and the study of heat flow.

\smallskip

\noindent{\textbf{Acknowledgements.}}
{The authors thank the anonymous referees for valuable comments.}
The first author was supported by JSPS Grant-in-Aid for Scientific Research (KAKENHI) 19H01786,
and the second author was supported by Natural Science Foundation of Shanghai
(Nos.\ 21ZR1418300, 19ZR1411700).

\section{Curves and upper gradients in asymmetric metric spaces}\label{sect1}%%%%%%
%%%%%%%%%%%%%%%%%%%%%%%

\subsection{Forward metric spaces}\label{ssc:forward}%%%%%%%%
%%%%%%%%%%%%%%%%%%%%

First we discuss the basic properties of asymmetric metric spaces.
We refer to \cite{CRZ,KZ,ACFM,ACFM2,RMS,SZ} for related studies (partly with different names).

\begin{definition}[Asymmetric metric spaces]\label{generalsapcedef}
Let $X$ be a nonempty set and $d:X\times X \lra [0,\infty)$ be a nonnegative function on $X \times X$.
We call $(X,d)$ an \emph{asymmetric metric space} if $d$ satisfies
\begin{enumerate}[(1)]
\item $d(x,y)\geq 0$ for all $x,y \in X$ with equality if and only if $x=y$;
\item $d(x,z)\leq d(x,y)+d(y,z)$ for all $x,y,z \in X$.
\end{enumerate}
The function $d$ is called a \emph{distance function} or a \emph{metric} on $X$.
\end{definition}

Since the function $d$ could be asymmetric (i.e., $d(x,y) \neq d(y,x)$),
there are two kinds of balls.
For a point $x \in X$ and $r>0$, the \emph{forward} and \emph{backward balls}
of radius $r$ centered at $x$ are defined as
\[
 B^+_x(r):=\{y\in X \,|\, d(x,y)<r\}, \qquad
 B^-_x(r):=\{y\in X \,|\, d(y,x)<r\}.
\]
Let $\mathcal{T}_+$ (resp.\ $\mathcal{T}_-$) denote the topology
induced from forward balls (resp.\ backward balls).
In order to study the relation between $\mathcal{T}_+$ and $\mathcal{T}_-$,
the following notion on the reversibility of $d$ was introduced in \cite{KZ}.

\begin{definition}[Pointed forward $\Theta$-metric spaces]\label{thetametricspace}
Let $\Theta:(0,\infty) \lra [1,\infty)$ be a (not necessarily continuous) non-decreasing function.
A triple $(X,\star,d)$ is called a \emph{pointed forward $\Theta$-metric space}
if $(X,d)$ is an asymmetric metric space and $\star$ is a point in $X$ such that
$\lambda_d (B^+_\star(r)) \leq \Theta(r)$ for all $r>0$, where
\[
 \lambda_d\big( B^+_\star(r) \big)
 :=\inf\big\{ \lambda\geq1 \,\big|\,
 d(x,y)\leq \lambda d(y,x) \text{ for any } x,y \in B^+_\star(r) \big\}.
\]
If we can take a constant function $\Theta\equiv\theta$ (i.e., $\lambda_d(X) \le \theta$),
then we call $(X,d)$ a \emph{$\theta$-metric space}.
\end{definition}

Note that we have the bounded reversibility $\lambda_d<\infty$ only on forward balls,
thereby the reversibility of backward balls may be infinite (like the Funk metric).
For pointed forward $\Theta$-metric spaces,
the backward topology is weaker than the forward topology as follows (see \cite[Theorem~2.6]{KZ}).

\begin{theorem}[Properties of $\mathcal{T}_+$]\label{topologychara}
Let $(X,\star, d)$ be a pointed forward $\Theta$-metric space.
Then,
\begin{enumerate}[{\rm (i)}]
\item $\mathcal{T}_- \subset \mathcal{T}_+$ holds and,
%% $\mathcal{T}_-$ is weaker than $\mathcal{T}_+$
in particular, $d$ is continuous in $\mathcal{T}_+ \times \mathcal{T}_+$
and $(X,\mathcal{T}_+)$ is a Hausdorff space$;$

\item $\mathcal{T}_+$ coincides with the topology $\widehat{\mathcal{T}}$
induced from the symmetrized distance function
\[
 \widehat{d}(x,y) := \frac12 \big\{ d(x,y)+d(y,x) \big\}.
\]
%\smallskip
%\item[(iii)] If $\mathcal{X}$ is sequentially compact with respect to $\mathcal{T}_+$, then $\mathcal{T}_+=\mathcal{T}_-$.
\end{enumerate}
\end{theorem}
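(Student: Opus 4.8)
The plan is to push all of the asymmetry into a single \emph{local comparison} statement and then let point-set topology do the rest. Concretely, I would first prove: for every $x_0\in X$ there is a radius $s>d(\star,x_0)$ and a threshold $\rho_0>0$ such that
\[
 B^+_{x_0}(\rho)\subseteq B^-_{x_0}\big(\Theta(s)\rho\big)\qquad\text{for all }0<\rho\le\rho_0 .
\]
Indeed, if $y\in B^+_{x_0}(\rho)$ with $\rho$ small, then $d(\star,y)\le d(\star,x_0)+d(x_0,y)<s$, so both $x_0$ and $y$ lie in $B^+_\star(s)$ and hence $d(y,x_0)\le\lambda_d(B^+_\star(s))\,d(x_0,y)\le\Theta(s)\rho$. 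This is the only place the defining inequality $\lambda_d(B^+_\star(r))\le\Theta(r)$ is invoked, and it is exactly here that care is needed: the reversibility bound is available \emph{only} on forward balls centered at $\star$, so one must use the triangle inequality to certify that $x_0$ and $y$ sit inside one such ball before applying it. I expect this bookkeeping — keeping the relevant points inside a common $B^+_\star(s)$ — to be the main (if modest) obstacle; everything afterwards is formal.

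Next I would establish (ii). One inclusion is immediate and uses no reversibility: since $d(x,y)\le 2\widehat d(x,y)$, every forward ball $B^+_x(r)$ contains the $\widehat d$-ball of radius $r/2$ about $x$, whence $\mathcal{T}_+\subseteq\widehat{\mathcal{T}}$. For the reverse inclusion I would use the local comparison: given $x$ and $r>0$, pick $\rho$ small; then for $y\in B^+_x(\rho)$ we have $\widehat d(x,y)=\tfrac12\{d(x,y)+d(y,x)\}\le\tfrac12\big(1+\Theta(s)\big)\rho$, which is $<r$ once $\rho$ is chosen small enough. Thus the forward ball $B^+_x(\rho)$ sits inside the $\widehat d$-ball of radius $r$ about $x$, so every $\widehat d$-open set is $\mathcal{T}_+$-open, i.e.\ $\widehat{\mathcal{T}}\subseteq\mathcal{T}_+$.

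Finally, (i) follows from (ii) with little extra work. From $d(y,x)\le 2\widehat d(x,y)$ the same argument as in the easy half of (ii) shows that each backward ball $B^-_x(r)$ contains a $\widehat d$-ball about $x$, so $\mathcal{T}_-\subseteq\widehat{\mathcal{T}}=\mathcal{T}_+$. Since $\widehat d$ is a genuine (symmetric) metric, $(X,\mathcal{T}_+)=(X,\widehat{\mathcal{T}})$ is Hausdorff. For continuity of $d$ on $\mathcal{T}_+\times\mathcal{T}_+$, I would combine the triangle inequality $d(x,y)\le d(x,x_0)+d(x_0,y_0)+d(y_0,y)$ with its mirror and the bound $d(a,b)\le 2\widehat d(a,b)$ to get
\[
 \big|d(x,y)-d(x_0,y_0)\big|\le 2\widehat d(x,x_0)+2\widehat d(y,y_0),
\]
so $d$ is Lipschitz, hence continuous, with respect to $\widehat d\times\widehat d=\mathcal{T}_+\times\mathcal{T}_+$. (Alternatively one can prove $\mathcal{T}_-\subseteq\mathcal{T}_+$ directly from the local comparison, bypassing $\widehat d$, but routing through (ii) seems cleanest.)
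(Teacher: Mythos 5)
Your proposal is correct. Note that the paper itself does not prove this statement: it is quoted from Krist\'aly--Zhao \cite[Theorem~2.6]{KZ}, so there is no in-paper argument to compare against; judged on its own terms, your proof is complete and the route is the natural one. The whole content of the theorem is isolated in your local comparison $B^+_{x_0}(\rho)\subseteq B^-_{x_0}(\Theta(s)\rho)$, and you apply the reversibility bound correctly: you first certify via the triangle inequality that both $x_0$ and $y$ lie in a common ball $B^+_\star(s)$ (this forces the implicit constraint $\rho_0\le s-d(\star,x_0)$, which you should state when writing it up), and only then invoke $\lambda_d(B^+_\star(s))\le\Theta(s)$, which is indeed available only on forward balls centered at $\star$. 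The remaining steps are sound: $d\le 2\widehat{d}$ gives $\mathcal{T}_+\subseteq\widehat{\mathcal{T}}$ and $\mathcal{T}_-\subseteq\widehat{\mathcal{T}}$ with no reversibility input, the local comparison gives $\widehat{\mathcal{T}}\subseteq\mathcal{T}_+$, Hausdorffness follows since $\widehat{d}$ is a genuine metric, and the estimate $|d(x,y)-d(x_0,y_0)|\le 2\widehat{d}(x,x_0)+2\widehat{d}(y,y_0)$ (triangle inequality in both directions) yields continuity of $d$ on $\mathcal{T}_+\times\mathcal{T}_+$. The only points you gloss are routine: forward (and backward) balls form bases of their topologies by the triangle inequality, so the ball-inclusion statements centered at a point do upgrade to inclusions of topologies after re-centering; make that one-line remark explicit and the proof is airtight.
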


We remark that the topology $\tau$ considered in \cite{ACFM,ACFM2}
coincides with $\widehat{\mathcal{T}}$.
Precisely, $\tau$ is induced from both forward and backward balls
and associated with $\widetilde{d}(x,y):=\max\{ d(x,y),d(y,x) \}$,
then $\widehat{d} \le \widetilde{d} \le 2\widehat{d}$ yields $\tau=\widehat{\mathcal{T}}$.
Some more remarks on pointed forward $\Theta$-metric spaces are in order.

\begin{remark}\label{forwardpointspaceandbackwardones}
\begin{enumerate}[(a)]
\item \label{pfms-a}
A sequence $(x_i)_{i \geq 1}$ in $X$ converges to $x$
with respect to $\mathcal{T}_+$ if and only if
\[
 \lim_{i \to \infty}d(x,x_i)=0,
\]
which implies the convergence $\lim_{i \to \infty}d(x_i,x)=0$ in $\mathcal{T}_-$
(thanks to $\lambda_d (B^+_\star(r)) \leq \Theta(r) <\infty$).
However, the converse does not necessarily hold true (when $d(\star,x_i) \to \infty$).

\item \label{pfms-b}
If $(X,\star,d)$ is a pointed forward $\Theta$-metric space, then, for every $x\in X$,
the triple $(X,x,d)$ is a pointed forward $\overline{\Theta}$-metric space
for $\overline{\Theta}(r):=\Theta(d(\star,x)+r)$.
Moreover, if $\diam(X):=\sup_{x,y\in X}d(x,y)<\infty$,
then $(X,d)$ is a $\theta$-metric space with $\theta:=\Theta(\diam(X))$.

\item \label{pfms-c}
One can similarly introduce a \emph{pointed backward $\Theta$-metric space}
 $(X,\star,d)$ by $\lambda_d (B^-_\star(r)) \leq \Theta(r)$ for $r>0$.
Note that a pointed backward $\Theta$-metric space may not be
a pointed forward $\overline{\Theta}$-metric space for any $\overline{\Theta}$;
recall the Funk metric in the introduction.
Since $(X,\star,d)$ is a pointed backward $\Theta$-metric space
if and only if $(X,\star,\overleftarrow{d})$ is a pointed forward $\Theta$-metric space
for the \emph{reverse metric} $\overleftarrow{d}(x,y):=d(y,x)$,
we will focus only on pointed forward $\Theta$-metric spaces.
\end{enumerate}
\end{remark}

\begin{notation}[Forward metric spaces]\label{notationX}
In the sequel, every pointed forward $\Theta$-metric space $(X,\star,d)$
is endowed with the forward topology $\mathcal{T}_+$.
Suppressing $\star$ and $\Theta$ for the sake of simplicity,
we will write $(X,d)$ and call it a \emph{forward metric space}.
\end{notation}

Forward metric spaces possess many fine properties.
For example, one can define a generalized Gromov--Hausdorff topology
to study the convergence of forward metric spaces,
and Gromov's precompactness theorem remains valid.
Besides, optimal transport theory can be developed and
the weak curvature-dimension condition in the sense of Lott--Sturm--Villani \cite{LV,Sturm-1,Sturm-2}
is stable also in this setting.
Furthermore, various geometric and functional inequalities
(such as Brunn--Minkowski, Bishop--Gromov, log-Sobolev and Lichnerowicz inequalities)
can be established.
We refer to \cite{KZ} for details and further results.

We next recall some concepts related to the completeness (cf.\ \cite{ACFM, ACFM2,KZ,SZ}).

\begin{definition}[Completeness]\label{df:complete}
Let $(X,d)$ be an asymmetric metric space.
\begin{enumerate}[(1)]
\item
A sequence $(x_i)_{i \geq 1}$ in $X$ is called
a \emph{forward} (resp.\ \emph{backward}) \emph{Cauchy sequence} if, for each $\varepsilon>0$,
there is $N \geq 1$ such that $d(x_i,x_j)<\varepsilon$ (resp.\ $d(x_j,x_i)<\varepsilon$) holds
for all $j \ge i>N$.

\item
$(X, d)$ is said to be \emph{forward} (resp.\ \emph{backward}) \emph{complete}
if every forward (resp.\ backward) Cauchy sequence in $X$ is convergent
with respect to $\mathcal{T}_+$.

\item We say that $(X,d)$ is \emph{forward} (resp.\ \emph{backward}) \emph{boundedly compact}
if every closed set in any forward (resp.\ backward) bounded ball is compact.
\end{enumerate}
\end{definition}

If $\Theta$ is bounded (i.e., $\lambda_d(X)<\infty$),
then the forward and backward properties are mutually equivalent.
However, they are not equivalent when $\Theta$ is unbounded;
e.g., the Funk metric is forward complete but backward incomplete.

\subsection{Forward absolutely continuous curves}%%%%%%%%
%%%%%%%%%%%%%%%%%%%%

Let $(X,\star,d)$ be a forward complete pointed forward $\Theta$-metric space in this subsection.

\begin{definition}[Forward absolutely continuous curves]\label{forwardabcontinc}
A curve $\gamma:I \lra X$ on an interval $I \subset \mathbb{R}$ is said to be
\emph{$p$-forward absolutely continuous} for $p\in [1,\infty]$
(denoted by $\gamma\in \FAC^p(I;X)$) if there is a nonnegative function $f\in L^p(I)$ such that
\begin{equation}\label{moredfabsc}
 d \big( \gamma(s),\gamma(t) \big) \leq
 \int^t_s f(r) \,{\dd}r \quad \text{for all}\ s,t \in I \ \text{with}\ s\leq t.
\end{equation}
We will denote $\FAC^1(I;X)$ by $\FAC(I;X)$
and call its element a \emph{forward absolutely continuous} curve.
\end{definition}

A standard argument combined with the forward completeness yields the following.

\begin{lemma}\label{uniformlyconverge}
Any curve $\gamma\in \FAC^p([a,b);X)$ is forward uniformly continuous,
i.e., for any $\varepsilon>0$, there exists $\delta>0$ such that
$d(\gamma(s),\gamma(t))<\varepsilon$ holds for any $s,t \in [a,b)$ with $0 \leq t-s<\delta$.
In particular, if $b<\infty$, then the left limit $\gamma(b_-):=\lim_{t \to b^-} \gamma(t)$ exists.
Moreover, if $\gamma\in \FAC([a,b);X)$, then $\gamma(b_-)$ exists even when $b=\infty$.
\end{lemma}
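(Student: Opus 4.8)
The plan is to prove Lemma~\ref{uniformlyconverge} in three stages: first the forward uniform continuity, then the existence of the left limit $\gamma(b_-)$ when $b<\infty$, and finally the refinement to $b=\infty$ under the stronger hypothesis $\gamma \in \FAC([a,b);X)$.

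For the \textbf{forward uniform continuity}, fix $\gamma \in \FAC^p([a,b);X)$ with witness $f \in L^p([a,b))$ satisfying \eqref{moredfabsc}. For $s \le t$ we have $d(\gamma(s),\gamma(t)) \le \int_s^t f(r)\,dr$, and by H\"older's inequality this is bounded by $\|f\|_{L^p} \, |t-s|^{1-1/p}$ when $p>1$ (with the obvious modification $\le \int_s^t f$ when $p=1$, using absolute continuity of the Lebesgue integral). In either case the right-hand side tends to $0$ as $t-s \to 0^+$, uniformly in $s$; this gives the required $\delta$. Note that only the forward inequality $d(\gamma(s),\gamma(t))$ is controlled — the asymmetry means we do not get a bound on $d(\gamma(t),\gamma(s))$, which is why the conclusion is stated in the ``forward'' form.

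For the \textbf{existence of $\gamma(b_-)$ when $b<\infty$}, I would show that for any sequence $t_i \uparrow b$ the sequence $(\gamma(t_i))_i$ is a forward Cauchy sequence: for $t_i \le t_j$ (with $i \le j$ after passing to a monotone subsequence, or directly since $t_i$ is increasing) we have $d(\gamma(t_i),\gamma(t_j)) \le \int_{t_i}^{t_j} f(r)\,dr \le \int_{t_i}^{b} f(r)\,dr \to 0$ as $i \to \infty$, because $f \in L^p([a,b)) \subset L^1([a,b))$ (as $b-a<\infty$) so the tail integral vanishes. By forward completeness, $\gamma(t_i)$ converges in $\mathcal{T}_+$ to some limit; a standard argument (interleaving two sequences) shows the limit is independent of the chosen sequence, and then $\lim_{t\to b^-}\gamma(t)$ exists in $\mathcal{T}_+$. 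This uses Remark~\ref{forwardpointspaceandbackwardones}\eqref{pfms-a} to pass between sequential and topological formulations, which is legitimate since $\mathcal{T}_+$ is metrizable via the symmetrized distance by Theorem~\ref{topologychara}(ii).

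For the case \textbf{$b=\infty$ with $\gamma \in \FAC([a,b);X)$}, the only point that changes is that $L^p$-integrability over an infinite interval no longer gives a vanishing tail for $p>1$; but the hypothesis $\gamma \in \FAC = \FAC^1$ means $f \in L^1([a,\infty))$, so again $\int_{t_i}^{\infty} f(r)\,dr \to 0$ and the same Cauchy argument applies verbatim. The main (mild) obstacle is purely bookkeeping around the asymmetry: one must be careful that ``forward Cauchy'' is exactly the condition controlled by \eqref{moredfabsc} (which bounds $d(\gamma(s),\gamma(t))$ for $s \le t$, matching Definition~\ref{df:complete}(1)), and that the limit obtained lies in the correct topology $\mathcal{T}_+$; none of this requires new ideas beyond the forward completeness assumption and the absolute continuity of the Lebesgue integral.
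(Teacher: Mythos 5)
Your proof is correct and is exactly the ``standard argument combined with forward completeness'' that the paper invokes without writing out: H\"older (or absolute continuity of the integral for $p=1$) gives the forward uniform continuity, vanishing tail integrals give forward Cauchyness along increasing sequences, and forward completeness plus the Hausdorff/metrizable nature of $\mathcal{T}_+$ (Theorem~\ref{topologychara}) yields the left limit, with the $\FAC^1$ hypothesis supplying the integrable tail when $b=\infty$. No gaps; the bookkeeping about which of $d(\gamma(s),\gamma(t))$ versus $d(\gamma(t),\gamma(s))$ is controlled is handled correctly.
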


We remark that, for $\gamma\in \FAC^p((a,b);X)$, the right limit
$\gamma(a_+):=\lim_{t \to a^+} \gamma(t)$ may not exist.
This is the reason why we call $\gamma$ a \emph{forward} absolutely continuous curve.
For example, in the Funk space \eqref{Funckmeatirc},
consider the unit speed minimal geodesic $\gamma:(-\log 2,0] \lra \mathbb{B}^n$
such that $\gamma(0)=\mathbf{0}$ and $\gamma(t)$ converges to $(-1,0,\ldots,0)$
in $\mathbb{R}^n$ as $t \to -\log 2$.
Clearly $\gamma \in \FAC((-\log2,0];\mathbb{B}^n)$ with $f \equiv 1$ in (\ref{moredfabsc});
however, $\gamma$ is not defined at $t=-\log 2$.
In fact, $\lim_{t \to -\log 2} d_F(\mathbf{0},\gamma(t))=\infty$.

Thanks to Lemma~\ref{uniformlyconverge}, we always have
$\FAC^p([a,b);X)=\FAC^p([a,b];X)$ for $b<\infty$
and $\FAC([a,\infty);X)=\FAC([a,\infty];X)$.
Hence, we will mainly consider $\FAC^p([a,b];X)$ with $a \in \mathbb{R}$.
{Owing to \cite[Proposition~2.2]{RMS}, there always exists a minimal function $f$
satisfying \eqref{moredfabsc} as follows (we also refer to
\cite[Theorem~1.1.2]{AGS}, \cite[Theorem~3.5]{CRZ} and \cite[Lemma~7.1]{OS}).}

\begin{theorem}[Forward metric derivative]\label{vilocitythem}
Suppose that either $p=1$ with $b\leq \infty$ or $p\in (1,\infty]$ with $b<\infty$.
Then, for any curve $\gamma\in \FAC^p([a,b];X)$, the limit
\[
 |\gamma_+'|(t) :=\lim_{s \to t} \frac{d(\gamma(\min\{s,t\}),\gamma(\max\{s,t\}))}{|t-s|}
\]
exists for $\mathscr{L}^1$-a.e.\ $t \in (a,b)$ and we have
\[
 d\big( \gamma(s),\gamma(t) \big) \leq
 \int^t_s |\gamma_+'|(r) \,{\dd}r \quad \text{ for any } a\leq s\leq t\leq b.
\]
Moreover, $|\gamma_+'|$ belongs to $L^p([a,b])$
and satisfies $|\gamma_+'|(t)\leq f(t)$ for $\mathscr{L}^1$-a.e.\ $t \in (a,b)$
for any $f$ satisfying \eqref{moredfabsc}.
\end{theorem}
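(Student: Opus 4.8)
The plan is to reconstruct the forward metric derivative by the classical device used in the symmetric case (\cite[Theorem~1.1.2]{AGS}), with the extra care needed because $d$ is only asymmetric. Fix $\gamma\in\FAC^p([a,b];X)$ and a nonnegative $f$ satisfying \eqref{moredfabsc}; under the stated hypotheses $f\in L^1([a,b])$ (for $b=\infty$ this is exactly why $p=1$ is imposed). By Lemma~\ref{uniformlyconverge} the curve $\gamma$ is continuous for $\mathcal{T}_+$, and from \eqref{moredfabsc} the quantity $d(\star,\gamma(t))\le d(\star,\gamma(a))+\int_a^t f$ is bounded, so the image $K:=\gamma([a,b])$ is contained in some forward ball $B^+_\star(R)$. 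The two facts I would extract from this are: (a) the reversibility is finite along the curve, $\Lambda:=\lambda_d(B^+_\star(R))\le\Theta(R)<\infty$; and (b) since $\mathcal{T}_+=\widehat{\mathcal{T}}$ (Theorem~\ref{topologychara}(ii)) and $\gamma$ is continuous, $K$ is a separable subset of the genuine metric space $(X,\widehat d)$. Put $F(t):=\int_a^t f(r)\,dr$.

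Next I would build the candidate derivative. Choosing $(x_n)_{n\ge1}$ dense in $(K,\widehat d)$, set $g_n(t):=d(x_n,\gamma(t))$. For $s\le t$ two applications of the triangle inequality give $g_n(t)-g_n(s)\le d(\gamma(s),\gamma(t))\le F(t)-F(s)$ and $g_n(s)-g_n(t)\le d(\gamma(t),\gamma(s))\le\Lambda\,d(\gamma(s),\gamma(t))\le\Lambda\bigl(F(t)-F(s)\bigr)$, whence each $g_n$ is absolutely continuous with $-\Lambda f\le g_n'\le f$ a.e.; in particular $[g_n']_+\le f$ a.e. I would then put $m:=\sup_n[g_n']_+$, so that $0\le m\le f$ a.e.\ and $m\in L^p([a,b])$. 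Using the density of $(x_n)$ and the continuity of $d$ on $\mathcal{T}_+\times\mathcal{T}_+$ (Theorem~\ref{topologychara}(i)) — approximating $\gamma(s)$ by $x_{n_k}\to\gamma(s)$ in $\mathcal{T}_+$, so that $g_{n_k}(s)\to0$ and $g_{n_k}(t)\to d(\gamma(s),\gamma(t))$ — one gets $d(\gamma(s),\gamma(t))=\sup_n\bigl(g_n(t)-g_n(s)\bigr)$ for $s\le t$. Consequently, since each $g_n$ is absolutely continuous,
\[
 d\bigl(\gamma(s),\gamma(t)\bigr)=\sup_n\int_s^t g_n'(r)\,dr\le\sup_n\int_s^t [g_n'(r)]_+\,dr\le\int_s^t m(r)\,dr,\qquad s\le t,
\]
which already proves the integral inequality with $m$ in place of $|\gamma_+'|$.

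It remains to identify $m$ with the pointwise limit. Let $t\in(a,b)$ be a Lebesgue point of $m$ and of every $g_n'$ (the complement is $\mathscr{L}^1$-null). The displayed estimate applied on $[s,t]$ and on $[t,s]$, together with $h^{-1}\!\int_t^{t\pm h} m\to m(t)$, gives $\limsup_{s\to t}\frac{d(\gamma(\min\{s,t\}),\gamma(\max\{s,t\}))}{|t-s|}\le m(t)$. For the reverse inequality I would use, for $h>0$, the triangle inequalities $d(x_n,\gamma(t))\le d(x_n,\gamma(t-h))+d(\gamma(t-h),\gamma(t))$ and $d(x_n,\gamma(t+h))\le d(x_n,\gamma(t))+d(\gamma(t),\gamma(t+h))$, which yield $h^{-1}d(\gamma(t-h),\gamma(t))\ge h^{-1}\!\int_{t-h}^t g_n'(r)\,dr$ and $h^{-1}d(\gamma(t),\gamma(t+h))\ge h^{-1}\!\int_t^{t+h}g_n'(r)\,dr$; letting $h\to0^+$ shows $\liminf_{s\to t}\frac{d(\gamma(\min\{s,t\}),\gamma(\max\{s,t\}))}{|t-s|}\ge g_n'(t)$ for every $n$, and since the left-hand side is nonnegative it is $\ge\sup_n[g_n'(t)]_+=m(t)$. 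Hence the limit $|\gamma_+'|(t)$ exists and equals $m(t)$ for $\mathscr{L}^1$-a.e.\ $t$, which gives $|\gamma_+'|\in L^p([a,b])$, $|\gamma_+'|\le f$ a.e., and the asserted integral inequality.

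The hardest part is handling the asymmetry so that the final bound has constant $1$, not $\Lambda$. In the symmetric case one works with $|g_n'|$ directly; here the estimate on $g_n(s)-g_n(t)$ for $s\le t$ only delivers $g_n'\ge-\Lambda f$, so passing to $|g_n'|$ would lose a factor $\Lambda$. The point is that only the \emph{forward} increments of the $g_n$ enter the identity $d(\gamma(s),\gamma(t))=\sup_n(g_n(t)-g_n(s))$, so replacing $g_n'$ by $[g_n']_+$ is harmless and restores the sharp bound $m\le f$. The other place where the forward-metric structure is genuinely used is in confining $K$ to a forward ball, which is what makes $\Lambda<\infty$ available and rests on $f\in L^1$ together with the continuity of $\gamma$ from Lemma~\ref{uniformlyconverge}.
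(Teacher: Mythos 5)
Your proof is correct. Note that the paper does not actually prove Theorem~\ref{vilocitythem}: it simply invokes \cite[Proposition~2.2]{RMS} (with pointers to \cite[Theorem~1.1.2]{AGS}, \cite[Theorem~3.5]{CRZ}, \cite[Lemma~7.1]{OS}), and what you have written is precisely the standard Ambrosio--Gigli--Savar\'e argument that those references carry out, adapted to the asymmetric setting exactly where it needs to be: confining $\gamma([a,b])$ to a forward ball so that the reversibility $\Lambda\le\Theta(R)$ is finite (making the $g_n$ absolutely continuous and the approximation $d(x_{n_k},\gamma(s))\to 0$ legitimate), and working with the positive parts $[g_n']_+$ rather than $|g_n'|$ so that the bound $|\gamma'_+|\le f$ comes out with constant $1$. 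The concluding minimality claim $|\gamma'_+|\le f$ a.e.\ for an arbitrary admissible $f$ is also fine as stated, since your $f$ was arbitrary from the outset and the limit defining $|\gamma'_+|$ does not depend on it.
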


We call $|\gamma'_+|$ the \emph{forward metric derivative} of $\gamma$,
and $\mathscr{L}^1$ denotes the $1$-dimensional Lebesgue measure.
{
One can also define the \emph{$p$-backward absolute continuity} by requiring \eqref{moredfabsc}
for $s,t \in I$ with $t \le s$.
Then, due to the asymmetry, the resulting \emph{backward metric derivative} $|\gamma'_-|$
may not coincide with $|\gamma'_+|$.
}

\begin{definition}[Length]
For $\gamma\in \FAC([a,b];X)$, its \emph{length} $L(\gamma)$ is defined by
\[
 L(\gamma):=\int^b_a |\gamma'_+|(t) \,{\dd}t.
\]
\end{definition}

Thanks to Theorem~\ref{vilocitythem} and Lemma~\ref{uniformlyconverge},
it is not difficult to see that
\[
 L(\gamma) =\sup\left\{ \sum_{i=1}^N d\big( \gamma(t_{i-1}),\gamma(t_i) \big)
 \,\middle|\, N \in \mathbb{N},\ a=t_0<\cdots <t_N=b \right\}.
\]

\begin{definition}[Lipschitz curves]
A curve $\gamma: [a,b] \lra X$ is said to be \emph{$C$-Lipschitz} for $C>0$
if it satisfies $d(\gamma(s),\gamma(t)) \leq C(t-s)$ for all $a \leq  s\leq t\leq b$.
\end{definition}

Obviously, a $C$-Lipschitz curve belongs to $\FAC([a,b];X)$ (provided $b<\infty$)
and $|\gamma'_+|(t)\leq C$ for $\mathscr{L}^1$-a.e.\ $t\in (a,b)$.
The next lemma (proved in the same manner as \cite[Lemma 1.1.4]{AGS})
tells that every forward absolutely continuous curve can be viewed as a Lipschitz curve.

\begin{lemma}[Reparametrization]\label{Lisrepare}
Given $\gamma\in \FAC([a,b];X)$ with length $L:=L(\gamma)$, set
\[
 \mathfrak{s}(t) :=\int^t_a |\gamma'_+|(r) \,{\dd}r, \qquad
 \mathfrak{t}(s):=\min\{t\in [a,b] \,|\, \mathfrak{s}(t)=s\}.
\]
Then $\mathfrak{s}:[a,b] \lra [0,L]$ is a non-decreasing absolutely continuous function
and $\mathfrak{t}:[0,L] \lra [a,b]$ is a left-continuous increasing function such that
$\mathfrak{s}(a_+)=0$, $\mathfrak{s}(b_-)=L$ and $\mathfrak{s} \circ \mathfrak{t}(s)=s$.
Moreover, the curve $\hat{\gamma}:=\gamma\circ \mathfrak{t}:[0,L] \lra X$ is
$1$-Lipschitz and satisfies
$\gamma=\hat{\gamma}\circ\mathfrak{s}$ and $|\hat{\gamma}'_+| =1$
$\mathscr{L}^1$-a.e.\ in $[0,L]$.
\end{lemma}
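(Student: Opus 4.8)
The plan is to follow the classical arc-length reparametrization argument, as in \cite[Lemma~1.1.4]{AGS}, checking only that the asymmetry of $d$ does not interfere because every quantity in play is oriented ``forward'' in the parameter. Since $\gamma\in\FAC([a,b];X)$, Theorem~\ref{vilocitythem} gives $|\gamma'_+|\in L^1([a,b])$; in particular $L=L(\gamma)=\int_a^b|\gamma'_+|<\infty$, so $[0,L]$ is a genuine compact interval, and $\mathfrak{s}(t)=\int_a^t|\gamma'_+|(r)\,{\dd}r$ is non-decreasing and absolutely continuous with $\mathfrak{s}(a_+)=0$ and $\mathfrak{s}(b_-)=L$. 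For each $s\in[0,L]$ the level set $\{t\in[a,b]\mid\mathfrak{s}(t)=s\}$ is nonempty by the intermediate value theorem and closed, hence attains its minimum $\mathfrak{t}(s)\in[a,b]$, and by construction $\mathfrak{s}(\mathfrak{t}(s))=s$. Monotonicity of $\mathfrak{s}$ makes $\mathfrak{t}$ strictly increasing: if $s_1<s_2$ but $\mathfrak{t}(s_1)\geq\mathfrak{t}(s_2)$, then $s_1=\mathfrak{s}(\mathfrak{t}(s_1))\geq\mathfrak{s}(\mathfrak{t}(s_2))=s_2$, a contradiction. For left-continuity, if $s_n\uparrow s$ then $\mathfrak{t}(s_n)$ increases to some $t^\ast\leq\mathfrak{t}(s)$, and continuity of $\mathfrak{s}$ yields $\mathfrak{s}(t^\ast)=\lim_n\mathfrak{s}(\mathfrak{t}(s_n))=\lim_n s_n=s$, so $t^\ast$ belongs to the level set and $t^\ast\geq\mathfrak{t}(s)$ by minimality, whence $t^\ast=\mathfrak{t}(s)$.

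The key observation is that $\gamma$ is constant wherever $\mathfrak{s}$ is: if $\mathfrak{s}(t_1)=\mathfrak{s}(t_2)$ with $t_1\leq t_2$, then $\int_{t_1}^{t_2}|\gamma'_+|=0$, so $d(\gamma(t_1),\gamma(t_2))\leq\int_{t_1}^{t_2}|\gamma'_+|(r)\,{\dd}r=0$ by Theorem~\ref{vilocitythem}, i.e.\ $\gamma(t_1)=\gamma(t_2)$. Since $\mathfrak{t}(\mathfrak{s}(t))$ and $t$ lie in the same level set of $\mathfrak{s}$, this gives $\hat{\gamma}(\mathfrak{s}(t))=\gamma(\mathfrak{t}(\mathfrak{s}(t)))=\gamma(t)$, that is $\gamma=\hat{\gamma}\circ\mathfrak{s}$. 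Moreover, for $0\leq s_1\leq s_2\leq L$, putting $t_i=\mathfrak{t}(s_i)$ (so $t_1\leq t_2$) and applying Theorem~\ref{vilocitythem} once more,
\[
 d\big(\hat{\gamma}(s_1),\hat{\gamma}(s_2)\big)=d\big(\gamma(t_1),\gamma(t_2)\big)
 \leq\int_{t_1}^{t_2}|\gamma'_+|(r)\,{\dd}r=\mathfrak{s}(t_2)-\mathfrak{s}(t_1)=s_2-s_1,
\]
so $\hat{\gamma}$ is $1$-Lipschitz; in particular $\hat{\gamma}\in\FAC([0,L];X)$ and $|\hat{\gamma}'_+|\leq1$ $\mathscr{L}^1$-a.e.\ on $[0,L]$.

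It remains to upgrade this to $|\hat{\gamma}'_+|=1$ a.e., which is the one step requiring real care; I would argue through the length. Using $\gamma=\hat{\gamma}\circ\mathfrak{s}$, any partition $a=t_0<\cdots<t_N=b$ of $[a,b]$ is sent by $\mathfrak{s}$ to a weakly increasing sequence $0=\mathfrak{s}(t_0)\leq\cdots\leq\mathfrak{s}(t_N)=L$ with $\sum_i d(\gamma(t_{i-1}),\gamma(t_i))=\sum_i d(\hat{\gamma}(\mathfrak{s}(t_{i-1})),\hat{\gamma}(\mathfrak{s}(t_i)))$; discarding the repeated values, whose contribution vanishes, leaves a genuine partition of $[0,L]$ with the same polygonal sum, so by the supremum formula for length this sum is at most $L(\hat{\gamma})$. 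Taking the supremum over all partitions of $[a,b]$ gives $L=L(\gamma)\leq L(\hat{\gamma})=\int_0^L|\hat{\gamma}'_+|(s)\,{\dd}s$, which together with $|\hat{\gamma}'_+|\leq1$ a.e.\ forces $|\hat{\gamma}'_+|=1$ for $\mathscr{L}^1$-a.e.\ $s\in[0,L]$. The subtlety is purely bookkeeping --- that collapsing repeated $\mathfrak{s}$-values does not alter the polygonal sum --- and it holds precisely because $\gamma$ is constant on the level sets of $\mathfrak{s}$.
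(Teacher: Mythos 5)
Your proof is correct and follows essentially the same route the paper takes, namely the classical arc-length reparametrization of \cite[Lemma~1.1.4]{AGS} (the paper gives no details and simply cites that lemma), with the asymmetry harmless because all estimates are oriented forward in the parameter. The only cosmetic difference is in the last step, where you deduce $|\hat{\gamma}'_+|=1$ a.e.\ from the polygonal-sum characterization of length rather than from the chain-rule/change-of-variables identity $\int_a^b|\hat{\gamma}'_+|(\mathfrak{s}(t))\,\mathfrak{s}'(t)\,{\dd}t=\int_0^L|\hat{\gamma}'_+|(s)\,{\dd}s$ used in \cite{AGS}; both give the same inequality $L\leq L(\hat{\gamma})$, so this is a sound variant.
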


\subsection{Asymmetric metrics on Finsler manifolds}\label{ssc:Finsler}%%%%%%%%%%
%%%%%%%%%%%%%%%%%%%%%%%%%

In this subsection, we discuss the case of Finsler manifolds as a typical example of
asymmetric metric spaces.

\subsubsection{Finsler manifolds}%%%%%%%%%%%
%%%%%%%%%%%%%%%%

We first recall the basics of Finsler geometry;
see \cite{BCS,Obook,Sh1} for further reading.
Let $M$ be an $n$-dimensional connected $C^{\infty}$-manifold without boundary,
and $TM=\bigcup_{x \in M}T_{x}M$ be its tangent bundle.
We call $(M,F)$ a \emph{Finsler manifold} if a nonnegative function $F:TM\lra [0,\infty)$
satisfies
\begin{enumerate}[(1)]
\item $F\in C^{\infty}(TM\setminus \{0\})$;
\item $F(cv)=cF(v)$ for all $v\in TM$ and $c \geq 0$;
\item For any $v\in T_xM \setminus \{0\}$, the $n \times n$ symmetric matrix
\[
 g_{ij}(v):=\frac{1}{2} \frac{\partial^2[F^2]}{\partial v^i \partial v^j}(v)
\]
is positive-definite, where $v=\sum_{i=1}^n v^i (\partial/\partial x^i)|_x$
in a chart $(x^i)_{i=1}^n$ around $x$.
\end{enumerate}
We remark that $g_{ij}(v)$ cannot be defined at $v=0$ unless $F$ is Riemannian.
For $v \in T_xM \setminus \{0\}$, we define a Riemannian metric $g_v$ of $T_xM$
by $g_v(w,\bar{w}) :=\sum_{i,j=1}^n g_{ij}(v) w^i \bar{w}^j$.
Euler's homogeneous function theorem yields that
$F^2(v)=g_v(v,v)$ for any $v \in T_xM \setminus \{0\}$.
The \emph{reversibility} of $F$ on $U\subset M$ is defined as
\[
 \lambda_F(U):=\sup_{v \in TU \setminus \{0\}} \frac{F(-v)}{F(v)}.
\]
If $\lambda_F(M)=1$ (i.e., $F(-v)=F(v)$ for all $v \in TM$),
then we say that $F$ is \emph{reversible}.
Note that $\lambda_F(U)<\infty$ for any compact set $U \subset M$
thanks to the smoothness of $F$.

We define the \emph{dual Finsler metric} $F^*$ of $F$ by
\begin{equation*}
 F^*(\zeta):=\sup_{v \in T_xM \setminus \{0\}} \frac{\zeta(v)}{F(v)}, \quad
 \zeta\in T_x^*M.
\end{equation*}
By definition, we have
\begin{equation}\label{tangninnerprot}
 \zeta(v) \leq F(v)F^*(\zeta).
\end{equation}
Then the \emph{Legendre transformation} $\mathfrak{L}:T_xM \lra T_x^*M$
is defined by $\mathfrak{L}(v):=\zeta$, where $\zeta$ is the unique element satisfying
$F(v)=F^*(\zeta)$ and $\zeta(v)=F^2(v)$.
Note that $\mathfrak{L}:TM \setminus \{0\} \lra T^*M\setminus \{0\}$ is a diffeomorphism.
For $f \in C^1(M)$,
the \emph{gradient vector field} of $f$ is defined by $\nabla f := \mathfrak{L}^{-1}({\dd}f)$.
We have ${\dd}f(v) =g_{\nabla f}(\nabla f,v)$ provided ${\dd}f \neq 0$.
We remark that the grandient $\nabla$ is nonlinear; indeed,
$\nabla(f+h)\neq\nabla f+\nabla h$ and $\nabla(-f) \neq -\nabla f$ in general
(the latter is due to the irreversibility of $F$).
Note also that
\begin{equation}\label{finsergradientdef}
 \limsup_{y \to x} \frac{f(y)-f(x)}{d_F(x,y)}
 =F^* \big( {\dd}f(x) \big) =F\big( \nabla f(x) \big),
\end{equation}
where $d_F$ is the Finsler distance function defined below.

\subsubsection{Length structure and absolutely continuous curves}%%%%%%%%
%%%%%%%%%%%%%%%%%%%%%

Let $\mathcal{A}_\infty([0,1];M)$ denote the class of piecewise smooth curves in $M$
defined on $[0,1]$.
Given $\gamma\in \mathcal{A}_\infty([0,1];M)$, we define its \emph{length} by
\[
 L_F(\gamma):=\int^1_0 F\big( \gamma'(t) \big) \,{\dd}t.
\]
Then the associated \emph{distance function} $d_F:M\times M \lra [0,\infty)$ is defined as
\[
 d_F(x,y):=\inf\{L_F(\gamma) \,|\,
 \gamma\in \mathcal{A}_\infty([0,1];M),\, \gamma(0)=x,\, \gamma(1)=y\}.
\]
Note that $d_F:M\times M \lra [0,\infty)$ is a continuous function
and $(M,d_F)$ is an asymmetric metric space in the sense of Definition~\ref{generalsapcedef}.
Indeed, $d_F(x,y)$ may not coincide with $d_F(y,x)$ unless $F$ is reversible.
Observe also that $\mathcal{T}_+= \mathcal{T}_-$ is exactly the original topology of $M$.

A Finsler manifold is said to be \emph{forward} (resp.\ \emph{backward}) \emph{complete}
if $(M,d_F)$ is forward (resp.\ backward) complete in the sense of Definition~\ref{df:complete}
(see \cite[Theorem~6.6.1]{BCS} for a Finsler analogue of the Hopf--Rinow theorem).
As for an estimate of the reversibility, one has the following (see \cite[Theorem 2.23]{KZ}).

\begin{theorem}\label{reversibifinslerdd}
Let $(M,F)$ be a forward complete Finsler manifold.
Then, for any $\star\in M$, the triple $(M,\star,d_F)$ is a pointed forward $\Theta$-metric space for
\[
 \Theta(r):=\lambda_F\Big( B^+_\star\big( 2r+\lambda_F \big( B^+_\star(r) \big) r \big) \Big).
\]
\end{theorem}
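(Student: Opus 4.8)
The plan is to exploit the elementary \emph{reversal trick}: if $\sigma:[0,1]\lra M$ is a piecewise smooth curve whose image lies in an open set $U$ with $\lambda_F(U)<\infty$, then the reversed curve $\bar\sigma(t):=\sigma(1-t)$ satisfies $L_F(\bar\sigma)\le\lambda_F(U)\,L_F(\sigma)$, because $\bar\sigma'(t)=-\sigma'(1-t)$ and $F(-w)\le\lambda_F(U)F(w)$ for every $w\in TU$. Hence, whenever two points $a,b\in M$ are joined by curves from $a$ to $b$ of length arbitrarily close to $d_F(a,b)$ whose images stay in such a set $U$, one gets $d_F(b,a)\le\lambda_F(U)\,d_F(a,b)$. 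Thus, to estimate $\lambda_{d_F}(B^+_\star(r))$ it suffices to produce a radius $R=R(r)$ so that, for all $x,y\in B^+_\star(r)$, every nearly minimal curve from $y$ to $x$ has image in $B^+_\star(R)$; this yields $\lambda_{d_F}(B^+_\star(r))\le\lambda_F(B^+_\star(R))$, and it then remains to check that $R=2r+\lambda_F(B^+_\star(r))r$ does the job.

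The estimate of $R$ is a short bootstrap on two scales. First, for $y\in B^+_\star(r)$ we have $d_F(\star,y)<r$, so there is a curve from $\star$ to $y$ of length $<r$; each of its sub-arcs emanating from $\star$ then also has length $<r$, so the whole curve stays in $B^+_\star(r)$. Applying the reversal trick with $U=B^+_\star(r)$ gives the a priori bound $d_F(y,\star)\le\lambda_F(B^+_\star(r))\,d_F(\star,y)<\lambda_F(B^+_\star(r))\,r$. Next, for $x,y\in B^+_\star(r)$ the triangle inequality gives $d_F(y,x)\le d_F(y,\star)+d_F(\star,x)<\lambda_F(B^+_\star(r))r+r$. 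Finally, if $\sigma$ is a curve from $y$ to $x$ with $L_F(\sigma)$ close to $d_F(y,x)$, then for every point $p$ on $\sigma$ one has $d_F(\star,p)\le d_F(\star,y)+L_F(\sigma)<r+(\lambda_F(B^+_\star(r))r+r)=R$ once $\sigma$ is chosen close enough to minimal (there is slack because the preceding inequalities are strict). So the image lies in $B^+_\star(R)$, and the reversal trick with $U=B^+_\star(R)$ gives $d_F(x,y)\le\lambda_F(B^+_\star(R))\,d_F(y,x)$; since $x,y$ were arbitrary in $B^+_\star(r)$ this is exactly $\lambda_{d_F}(B^+_\star(r))\le\Theta(r)$.

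It remains to note that $\Theta$ is a genuine non-decreasing $[1,\infty)$-valued function. Monotonicity is immediate, since $r\mapsto B^+_\star(r)$ and $r\mapsto\lambda_F(B^+_\star(r))$ are non-decreasing, hence so is $r\mapsto 2r+\lambda_F(B^+_\star(r))r$ and therefore $\Theta$. Finiteness is precisely where forward completeness is used: by the Finsler Hopf--Rinow theorem (\cite[Theorem~6.6.1]{BCS}) forward completeness implies forward bounded compactness, so $\overline{B^+_\star(R)}$ is compact, and $\lambda_F$ of a compact set is finite by smoothness of $F$; thus $\lambda_F(B^+_\star(R))\le\lambda_F(\overline{B^+_\star(R)})<\infty$. (Forward completeness also provides genuinely minimal geodesics between any two points, which lets one dispense with the $\varepsilon$-approximation if preferred.) I expect the only real subtlety is the two-scale bootstrap: recognizing that the ``backward'' quantity $d_F(y,\star)$ needed in the triangle inequality costs only $\lambda_F$ on the \emph{small} ball $B^+_\star(r)$, because the relevant curve from $\star$ automatically stays inside it, while the larger radius $R$ is forced only by the diameter of nearly minimal curves joining points of $B^+_\star(r)$.
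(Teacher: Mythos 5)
Your proof is correct, and since the paper itself does not prove this statement but refers to \cite[Theorem~2.23]{KZ}, there is no internal proof to diverge from: your two-scale bootstrap (reversing a short curve inside $B^+_\star(r)$ to get $d_F(y,\star)\le\lambda_F(B^+_\star(r))\,d_F(\star,y)$, then reversing nearly minimal curves, which by the triangle inequality stay inside $B^+_\star\bigl(2r+\lambda_F(B^+_\star(r))r\bigr)$) is exactly the argument that yields this specific $\Theta$, with forward completeness entering only through Hopf--Rinow compactness of closed forward balls to guarantee $\Theta(r)<\infty$. The only cosmetic remark is that the same compactness argument is also needed to know $\lambda_F(B^+_\star(r))<\infty$ at the small scale, which you use implicitly when defining $R$.
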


Since $M$ is smooth and the reversibility is finite on every compact set,
the study of absolutely continuous curves can be largely reduced
to the case of Riemannian manifolds or Euclidean spaces.
We only briefly explain for thoroughness.
According to \cite{AB},
we say that a curve $\gamma:[0,1] \lra M$ is \emph{absolutely continuous}
if, for any chart $\varphi:U \lra \mathbb{R}^n$ of $M$, the composition
$\varphi \circ \gamma: \gamma^{-1}(U) \lra \varphi(U) \subset \mathbb{R}^n$
is locally absolutely continuous, i.e., absolutely continuous on any closed subinterval of
$\gamma^{-1}(U)$.
Let $\mathcal{A}_{\ac}([0,1];M)$ denote the class of absolutely continuous curves defined on $[0,1]$.
For any $\gamma\in \mathcal{A}_{\ac}([0,1];M)$,
the derivative $\gamma'(t)$ exists for $\mathscr{L}^1$-a.e.\ $t\in [0,1]$ and we can define
\[
 L_{F}(\gamma):=\int^1_0 F \big( \gamma'(t) \big) \,{\dd}t.
\]
Note that $L_{F}(\gamma)<\infty$ and $F(\gamma')\in L^1([0,1])$.
Moreover, we have
\begin{equation}\label{eBusemanMayer}
 \lim_{\delta \to 0^+} \frac{d(\gamma(t),\gamma(t+\delta))}{\delta}
 =F\big( \gamma'(t) \big) \quad \text{for $\mathscr{L}^1$-a.e.}\ t \in (0,1).
\end{equation}
One can see $\mathcal{A}_{\ac}([0,1];M)=\FAC([0,1];M)$ by an argument similar to that of \cite[Proposition~3.18]{AB}.
Then, for $\gamma\in \FAC([0,1];M)$, we find from \eqref{eBusemanMayer} that
\begin{equation}\label{deriavetisfinsermorem}
 |\gamma'_+|(t)=F \big( \gamma'(t) \big) \quad \text{for $\mathscr{L}^1$-a.e.}\ t\in (0,1).
\end{equation}

\subsection{Upper gradients}\label{ssc:upper}%%%%%%%%%%%%%%%%%
%%%%%%%%%%%%%%%

In this subsection, we introduce upper gradients for functions on asymmetric metric spaces.
First, let us consider the case of Finsler manifolds.

\begin{example}\label{finslergradecase}
Let $(M,d_F)$ be a forward metric space induced by a forward complete Finsler manifold $(M,F)$.
Given $\phi\in C^1(M)$, for any $\gamma\in \FAC([a,b];M)$,
$\phi\circ\gamma$ is absolutely continuous and \eqref{tangninnerprot} yields
\begin{equation}\label{Fcauchyinequality}
 (\phi \circ \gamma)'(t) ={\dd}\phi\big( \gamma'(t) \big)
 \leq F^* \big({\dd}\phi \big( \gamma(t) \big) \big) F\big( \gamma'(t) \big)
 = F\big( \nabla\phi \big( \gamma(t) \big) \big) F\big( \gamma'(t) \big)
\end{equation}
for $\mathscr{L}^1$-a.e.\ $t\in (a,b)$.
Hence, a nonnegative function $\mathfrak{g}:M \lra [0,\infty)$
satisfies $F(\nabla\phi)\leq \mathfrak{g}$ if and only if
\[
 \phi\big( \gamma(t_2) \big) -\phi\big( \gamma(t_1) \big)
 \leq \int^{t_2}_{t_1}\mathfrak{g}\big( \gamma(t) \big) F\big( \gamma'(t) \big) \,{\dd}t
\]
for all $\gamma\in \FAC([a,b];M)$ and $a \leq t_1 \leq t_2 \leq b$.
\end{example}

Now, let $(X,d)$ be a forward complete forward metric space.
In what follows, let $\phi:X \lra (-\infty,\infty]$ denote a \emph{proper} function, i.e.,
its \emph{proper effective domain} $\mathfrak{D}(\phi):=\{x\in X \,|\, \phi(x)<\infty\}$
is nonempty.

\begin{definition}[Strong upper gradients]\label{strongdefg}%[AGS,Def 1.2.1]
A function $\mathfrak{g}:X \lra [0,\infty]$ is called a \emph{strong upper gradient} for $\phi$
if, for every curve $\gamma\in \FAC([a,b];X)$, $\mathfrak{g}\circ \gamma$ is Borel and satisfies
\begin{equation}\label{stronggrad}
 \phi \big( \gamma(t_2) \big) -\phi \big( \gamma(t_1) \big)
 \leq \int^{t_2}_{t_1} \mathfrak{g} \big( \gamma(t) \big) |\gamma'_+|(t) \,{\dd}t
 \quad \text{for all}\ a \leq  t_1\leq t_2 \leq b.
\end{equation}
\end{definition}

{
Notice that, as is natural in view of Example~\ref{finslergradecase}
(see also Example~\ref{finsbackgrad}),
we did not take the absolute value in the left-hand side of \eqref{stronggrad}.
Therefore, our definition of upper gradients is weaker than \cite[Definition~3.6]{CRZ}.
}

If $\mathfrak{g} \circ \gamma\, |\gamma'_+| \in L^1(a,b)$,
then $\phi\circ \gamma$ is absolutely continuous (see the remark below) and
\[
 (\phi\circ \gamma)'(t) \leq \mathfrak{g} \big( \gamma(t) \big) |\gamma'_+|(t) \quad
 \text{for }\mathscr{L}^1\text{-a.e.}\ t\in (a,b).
\]

\begin{remark}\label{trickforwardupp}
For $\gamma\in \FAC([a,b];X)$,
we deduce from \eqref{stronggrad} for $\gamma$ and its reverse curve that
\[
 \big| \phi \big( \gamma(t_1) \big) -\phi \big( \gamma(t_2) \big) \big|
 \leq \Theta\big( d\big( \star,\gamma(0) \big) +L(\gamma) \big)
 \int^{t_2}_{t_1} \mathfrak{g} \big( \gamma(t) \big) |\gamma'_+|(t) \,{\dd}t
\]
for all $a \leq t_1\leq t_2 \leq b$.
Hence, if $\mathfrak{g}\circ \gamma\, |\gamma'_+|\in L^1(a,b)$,
then $\phi\circ\gamma$ is absolutely continuous and
\[
 |(\phi\circ \gamma)'(t)| \leq \Theta\big( d \big( \star,\gamma(0) \big) +L(\gamma) \big)
 \mathfrak{g} \big( \gamma(t) \big) |\gamma'_+|(t) \quad \text{for } \mathscr{L}^1\text{-a.e.}\ t\in (a,b).
\]
\end{remark}

\begin{definition}[Weak upper gradients]\label{weakforwaupp22}%[AGS,Def 1.2.2]
A function $\mathfrak{g}:X \lra [0,\infty]$ is called a \emph{weak upper gradient} for $\phi$
if, for every curve $\gamma\in \FAC([a,b];X)$ satisfying
\begin{enumerate}[(1)]
\item $\mathfrak{g}\circ \gamma\,|\gamma'_+|\in L^1(a,b)$;
\item $\phi\circ\gamma$ is $\mathscr{L}^1$-a.e.\ equal to a function $\varphi$
with finite pointwise variation in $(a,b)$ (see \cite[(1.1.2)]{AGS}),
\end{enumerate}
we have
\begin{equation}\label{weakuppversion}
 \varphi'(t)\leq \mathfrak{g} \big( \gamma(t) \big) |\gamma_+'|(t) \quad
 \text{for } \mathscr{L}^1\text{-a.e.}\ t\in (a,b).
\end{equation}
%In this case, if $\phi\circ \gamma$ is absolutely continuous in $(a,b)$, then (\ref{stronggrad}) holds (by choosing $\varphi:=\phi\circ \gamma$).
\end{definition}

Note that a strong upper gradient is a weak upper gradient.
A sufficient condition for a weak upper gradient to be a strong upper gradient is as follows.

\begin{proposition}\label{weakbeomcestronggra}
Let $\mathfrak{g}$ be a weak upper gradient for $\phi$.
If $\phi\circ \gamma$ is absolutely continuous for every $\gamma\in \FAC([a,b];X)$
with $\mathfrak{g}\circ \gamma\,|\gamma'_+|\in L^1(a,b)$,
then $\mathfrak{g}$ is a strong upper gradient for $\phi$.
\end{proposition}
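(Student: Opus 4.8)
The plan is to adapt the reduction used in \cite{AGS}: one derives the integral inequality \eqref{stronggrad} from the pointwise differential inequality \eqref{weakuppversion} by exploiting the absolute continuity hypothesis. So I would fix $\gamma \in \FAC([a,b];X)$ and $a \le t_1 \le t_2 \le b$, and first dispose of the trivial case: if $\int_{t_1}^{t_2} \mathfrak{g}(\gamma(t))\,|\gamma'_+|(t)\,{\dd}t = \infty$, then \eqref{stronggrad} is immediate, so from now on this integral is finite.

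Next I would pass to the restriction $\eta := \gamma|_{[t_1,t_2]}$. Since the restriction of a forward absolutely continuous curve to a subinterval is again forward absolutely continuous, and the forward metric derivative is a local quantity (Theorem~\ref{vilocitythem}), one has $|\eta'_+|(t) = |\gamma'_+|(t)$ for $\mathscr{L}^1$-a.e.\ $t \in (t_1,t_2)$, whence $\mathfrak{g}\circ\eta\,|\eta'_+| \in L^1(t_1,t_2)$. The hypothesis, applied on the interval $[t_1,t_2]$, then gives that $\phi\circ\eta$ is absolutely continuous; in particular it is of finite pointwise variation, so I may apply the weak upper gradient property of $\mathfrak{g}$ to $\eta$ with the choice $\varphi := \phi\circ\eta$ itself. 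This produces
\[
 (\phi\circ\eta)'(t) \le \mathfrak{g}\big( \eta(t) \big)\,|\eta'_+|(t) = \mathfrak{g}\big( \gamma(t) \big)\,|\gamma'_+|(t)
 \qquad \text{for }\mathscr{L}^1\text{-a.e.}\ t \in (t_1,t_2),
\]
and then, integrating over $[t_1,t_2]$ via the fundamental theorem of calculus for the absolutely continuous function $\phi\circ\eta$,
\[
 \phi\big( \gamma(t_2) \big) - \phi\big( \gamma(t_1) \big)
 = \int_{t_1}^{t_2} (\phi\circ\eta)'(t)\,{\dd}t
 \le \int_{t_1}^{t_2} \mathfrak{g}\big( \gamma(t) \big)\,|\gamma'_+|(t)\,{\dd}t,
\]
which is \eqref{stronggrad}. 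Together with the Borel measurability of $\mathfrak{g}\circ\gamma$ (automatic once $\mathfrak{g}$ is a Borel function on $X$), this would show that $\mathfrak{g}$ is a strong upper gradient for $\phi$.

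I do not expect a serious obstacle here: the argument is largely bookkeeping, and the only step that genuinely uses the hypothesis is the passage from the $\mathscr{L}^1$-a.e.\ bound on $(\phi\circ\eta)'$ to the integral bound, which would fail if $\phi\circ\eta$ carried a singular monotone part --- exactly the pathology ruled out by assumption. The points needing a little care are that restrictions of $\FAC$-curves remain in $\FAC$ with the inherited forward metric derivative (so that both the hypothesis and the weak upper gradient property can be invoked on every subinterval $[t_1,t_2]$), and the routine measurability bookkeeping for the integrand in \eqref{stronggrad}.
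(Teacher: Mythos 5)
Your argument is correct and is essentially the paper's own proof, just written out in more detail: the paper likewise splits into the trivial case $\mathfrak{g}\circ\gamma\,|\gamma'_+|\notin L^1(t_1,t_2)$ and the case where the hypothesis gives absolute continuity of $\phi\circ\gamma$ on $[t_1,t_2]$, so that the weak upper gradient inequality with $\varphi=\phi\circ\gamma$ can be integrated. Your explicit remarks on restricting to the subinterval and the locality of $|\gamma'_+|$ are exactly the bookkeeping the paper leaves implicit.
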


\begin{proof}
Let $\gamma\in \FAC([a,b];X)$ and $a \leq t_1 \le t_2 \leq b$.
On the one hand, if $\mathfrak{g}\circ \gamma\,|\gamma'_+|\in L^1(t_1,t_2)$,
then $\varphi=\phi \circ \gamma$ satisfies \eqref{weakuppversion} and hence \eqref{stronggrad} holds.
On the other hand,
\eqref{stronggrad} is trivial if $\mathfrak{g}\circ \gamma\,|\gamma'_+| \not\in L^1(t_1,t_2)$.
\end{proof}

\begin{definition}[Slopes]\label{wekafordef} %[AGS,Def 1.2.4]
The \emph{local} and \emph{global $($descending$)$ slopes} of $\phi$ at $x \in \mathfrak{D}(\phi)$
are defined by
\[
 |\partial\phi|(x) :=\limsup_{y \to x} \frac{[\phi(x)-\phi(y)]_+}{d(x,y)}, \qquad
 \mathfrak{l}_{\phi}(x) := \sup_{y\neq x} \frac{[\phi(x)-\phi(y)]_+}{d(x,y)},
\]
respectively, where $[a]_+:=\max\{a,0\}$.
For $x\in X \setminus \mathfrak{D}(\phi)$,
we set $|\partial\phi|(x)=\mathfrak{l}_{\phi}(x):=\infty$.
\end{definition}

The local slope $|\partial\phi|$ represents how fast the function $\phi$ can decrease.
We remark that $|\partial (-\phi)| \neq |\partial \phi|$ even in symmetric metric spaces.

\begin{example}\label{finsbackgrad}
Let $(M,d_F)$ and $\phi$ be as in Example \ref{finslergradecase}.
It follows from  \eqref{finsergradientdef} that $F(\nabla(-\phi)(x)) =|\partial\phi|(x)$.
We remark that $F(\nabla(-\phi))$ may not coincide with either $F(-\nabla\phi)$ or $F(\nabla\phi)$.
Note also that $|\partial\phi|=F(\nabla(-\phi))$ is a strong upper gradient for $-\phi$
by \eqref{deriavetisfinsermorem} and \eqref{Fcauchyinequality}.
\end{example}

In general, we have the following (cf.\ \cite[Theorem~1.2.5]{AGS}).

\begin{theorem}[Slopes are upper gradients]\label{slopeuppgr}%[AGS,Thm1.2.5]
Let $(X,d)$ be a forward complete forward metric space
and $\phi:X \lra (-\infty,\infty]$ be a proper function.
\begin{enumerate}[{\rm (i)}]
\item \label{slope-1}
$|\partial\phi|$ is a weak upper gradient for $-\phi;$

\item \label{slope-2}
If $\phi$ is lower semicontinuous $($with respect to $\mathcal{T}_+)$,
then $\mathfrak{l}_{\phi}$ is lower semicontinuous and a strong upper gradient for $-\phi$.
\end{enumerate}
\end{theorem}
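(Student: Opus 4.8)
The plan is to prove the two parts along the lines of \cite[Theorem~1.2.5]{AGS}, keeping track of two asymmetric features: along a curve in $\FAC$ only the forward (left) limit is available (Lemma~\ref{uniformlyconverge}), so one-sided limits must always be taken as $s\to t^+$; and the ``reverse'' comparison for $\phi$ along $\gamma$ costs the reversibility factor $\Theta$, exactly as in Remark~\ref{trickforwardupp}. For Part~\ref{slope-1}, I would fix $\gamma\in\FAC([a,b];X)$ satisfying conditions (1) and (2) of Definition~\ref{weakforwaupp22} for $-\phi$ with $\mathfrak{g}=|\partial\phi|$, and let $\varphi$ be the representative of $(-\phi)\circ\gamma$ of finite pointwise variation. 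Then $\varphi$ is differentiable at $\mathscr{L}^1$-a.e.\ $t$ and, by Theorem~\ref{vilocitythem}, $|\gamma'_+|(t)$ exists at $\mathscr{L}^1$-a.e.\ $t$; I would verify \eqref{weakuppversion} at any $t$ where both hold. There $d(\gamma(t),\gamma(s))\le\int_t^s|\gamma'_+|\to0$ as $s\to t^+$, so $\gamma(s)\to\gamma(t)$ in $\mathcal{T}_+$; if $\varphi'(t)\le0$ there is nothing to prove, and otherwise $\varphi(s)-\varphi(t)=\phi(\gamma(t))-\phi(\gamma(s))>0$ for $s$ slightly larger than $t$, whence
\[
 \frac{\varphi(s)-\varphi(t)}{s-t}=\frac{[\phi(\gamma(t))-\phi(\gamma(s))]_+}{d(\gamma(t),\gamma(s))}\cdot\frac{d(\gamma(t),\gamma(s))}{s-t}.
\]
Letting $s\to t^+$, the first factor has upper limit at most $|\partial\phi|(\gamma(t))$ and the second tends to $|\gamma'_+|(t)$; the cases $|\partial\phi|(\gamma(t))=\infty$ and $\gamma(t)\notin\mathfrak{D}(\phi)$ are trivial.

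For the lower semicontinuity in Part~\ref{slope-2}, note that for fixed $y$ the map $x\mapsto[\phi(x)-\phi(y)]_+/d(x,y)$ is lower semicontinuous on $X\setminus\{y\}$, since $\phi$ is lower semicontinuous, $x\mapsto d(x,y)$ is continuous and strictly positive there by Theorem~\ref{topologychara}(i), and $[\,\cdot\,]_+$ is continuous and nondecreasing. Hence, for $x_i\to x$ in $\mathcal{T}_+$ and any fixed $y\ne x$ (so that $x_i\ne y$ for $i$ large), $\liminf_i[\phi(x_i)-\phi(y)]_+/d(x_i,y)\ge[\phi(x)-\phi(y)]_+/d(x,y)$, and taking the supremum over $y\ne x$ gives $\liminf_i\mathfrak{l}_\phi(x_i)\ge\mathfrak{l}_\phi(x)$ (the case $x\notin\mathfrak{D}(\phi)$ is clear, as then $\phi(x_i)\to\infty$). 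In particular $\mathfrak{l}_\phi$ is Borel, so $\mathfrak{l}_\phi\circ\gamma$ is Borel along every curve.

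For the strong upper gradient property, since $\mathfrak{l}_\phi\ge|\partial\phi|$ pointwise, Part~\ref{slope-1} already shows $\mathfrak{l}_\phi$ is a weak upper gradient for $-\phi$; by Proposition~\ref{weakbeomcestronggra} it then suffices to prove that $\phi\circ\gamma$ is absolutely continuous whenever $\gamma\in\FAC([a,b];X)$ and $\mathfrak{l}_\phi\circ\gamma\,|\gamma'_+|\in L^1(a,b)$. Using Lemma~\ref{Lisrepare} I would first reduce to $\gamma$ being $1$-Lipschitz on $[0,L]$, so that $|\gamma'_+|\equiv1$ and $m:=\mathfrak{l}_\phi\circ\gamma\in L^1([0,L])$ (a change of variables transfers absolute continuity back to the original parametrization). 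With $R:=d(\star,\gamma(0))+L$, the definition of $\mathfrak{l}_\phi$ together with the reversibility bound yields, for all $0\le s\le t\le L$,
\[
 \phi(\gamma(s))-\phi(\gamma(t))\le m(s)(t-s),\qquad \phi(\gamma(t))-\phi(\gamma(s))\le\Theta(R)\,m(t)(t-s);
\]
the decisive point is that these hold for \emph{all} $t\ge s$, not merely for $t$ near $s$, because $\mathfrak{l}_\phi$ is a \emph{global} slope. Combining these with the lower semicontinuity of $\phi\circ\gamma$ and with the elementary consequence $\liminf_{\sigma\to\tau}|\sigma-\tau|\,m(\sigma)=0$ of $m\in L^1$, one shows that $\phi\circ\gamma$ has no jumps, hence is continuous, that it is of bounded variation, and that its singular part vanishes: a jump, or a self-similar Cantor-type profile, of $\phi\circ\gamma$ would force $m$ to be of order $|t-\tau|^{-1}$ near some $\tau$, or to be non-integrable over the corresponding family of plateaus, contradicting $m\in L^1$. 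Thus $\phi\circ\gamma$ is absolutely continuous and Proposition~\ref{weakbeomcestronggra} finishes the proof. I expect this last step to be the main obstacle: a pointwise a.e.\ bound on the derivative of $\phi\circ\gamma$ is not by itself sufficient (a Cantor-type profile is compatible with such a bound but, notably, not with an integrable \emph{global} slope), so one must genuinely exploit that $\phi(\gamma(s))-\phi(\gamma(t))\le m(s)(t-s)$ holds for every $t\ge s$, jointly with $m\in L^1$ and the lower semicontinuity of $\phi$; the symmetric prototype is \cite[Theorem~1.2.5]{AGS}, the only extra care being the factor $\Theta(R)$ in the reverse estimate and the forward direction of all limits.
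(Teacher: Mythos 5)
Your treatment of part (i) and of the lower semicontinuity of $\mathfrak{l}_\phi$, as well as the overall reduction in part (ii) (using $\mathfrak{l}_\phi\ge|\partial\phi|$ and Proposition~\ref{weakbeomcestronggra}, reparametrizing by Lemma~\ref{Lisrepare} to a $1$-Lipschitz curve, and deriving the two-sided estimate with the reversibility factor $\Theta$ exactly as in \eqref{stracontall}--\eqref{phiforumul}), coincide with the paper's proof; the only cosmetic remark for (i) is that the identity $\varphi(s)-\varphi(t)=\phi(\gamma(t))-\phi(\gamma(s))$ holds only for $s$ in the full-measure set where $\varphi=-\phi\circ\gamma$, so the limit $s\to t^+$ must be taken through that set, as the paper does.

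The genuine gap is the last step: passing from ``$|\phi(\hat\gamma(s))-\phi(\hat\gamma(t))|\le C\,(m(s)+m(t))\,|s-t|$ for all $s,t$, with $m\in L^1$'' to absolute continuity of $\phi\circ\hat\gamma$. Your argument here is only a heuristic (excluding jumps and Cantor-type profiles by claiming $m$ would be forced to be non-integrable), and as stated it does not work: testing the estimate at chosen points only controls $m$ on those points, which for a Cantor-type profile is a null (even countable) set and says nothing about integrability of $m$; likewise your bounded-variation claim would require bounding Riemann-type sums $\sum(m(t_{i-1})+m(t_i))(t_i-t_{i-1})$ uniformly over partitions, which is false for a general $L^1$ function (and $m=\mathfrak{l}_\phi\circ\hat\gamma$ may even be $+\infty$ on a null set). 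The exclusion of the singular part genuinely requires an integrated use of the estimate. The paper's route is: apply \cite[Lemma~1.2.6]{AGS} to conclude $\varphi=-\phi\circ\hat\gamma\in W^{1,1}(0,L)$ with $|\varphi'|\le 2\lambda g$ and a continuous (hence absolutely continuous) representative; then identify $\varphi$ with that representative at \emph{every} point by combining the upper semicontinuity of $\varphi$ (this is precisely where the lower semicontinuity of $\phi$ enters) with the one-sided bound \eqref{stracontall}, which yields
\[
 \liminf_{\varepsilon\to 0^+}\frac{1}{2\varepsilon}\int_{-\varepsilon}^{\varepsilon}\varphi(s+r)\,{\dd}r\ \ge\ \varphi(s)
\]
for all $s\in(0,L)$, because $\frac{1}{2\varepsilon}\int_{-\varepsilon}^{\varepsilon}|r|\,g(s+r)\,{\dd}r\le\frac12\int_{-\varepsilon}^{\varepsilon}g(s+r)\,{\dd}r\to 0$. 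These two facts sandwich $\varphi$ between its continuous representative from above and below, so $\varphi$ is continuous, hence absolutely continuous, and transferring back through $\mathfrak{s}$ finishes the claim. Without this (or an equivalent mollification argument), your proof of the strong upper gradient property is incomplete.
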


\begin{proof}
\eqref{slope-1}
Let $\gamma \in \FAC([a,b];X)$ and $\varphi$ satisfy the assumptions (1), (2)
in Definition \ref{weakforwaupp22} for $-\phi$.
Set
\[
 A:= \big\{ t\in (a,b) \,\big|\, {-}\phi\big( \gamma(t) \big)=\varphi(t),\,
 \text{$\varphi$ is differentiable at }t,\, |\gamma'_+|(t) \text{ exists} \big\}.
\]
Note that $(a,b) \setminus A$ is $\mathscr{L}^1$-negligible (by Theorem~\ref{vilocitythem}).
To see \eqref{weakuppversion}, it suffices to consider $t\in A$ with $\varphi'(t)>0$.
Since $\varphi'(t)>0$,
we may assume that $\gamma(s) \neq \gamma(t)$ for $s\, (\neq t)$ close to $t$.
Then we have
\begin{align*}
 \varphi'(t)
 &= \lim_{s \to t^+,\, s\in A}\frac{\phi(\gamma(t))-\phi(\gamma(s))}{s-t}
 \leq \limsup_{s \to t^+,\, s\in A} \frac{\phi(\gamma(t))-\phi(\gamma(s))}{d(\gamma(t),\gamma(s))}
 \limsup_{s \to t^+,\, s\in A} \frac{d(\gamma(t),\gamma(s))}{s-t} \\
 &\leq |\partial\phi| \big( \gamma(t) \big) |\gamma'_+|(t).
\end{align*}
Therefore, $|\partial\phi|$ is a weak upper gradient for $-\phi$.

\eqref{slope-2}
We first prove the lower semicontinuity.
It follows from the assumption that $x \longmapsto [\phi(x)-\phi(y)]_+$ is lower semicontinuous.
Hence, for any sequence $x_i \to x$ and $y \neq x$,
we have $x_i \neq y$ for large $i$ and
\[
 \liminf_{i \to \infty} \mathfrak{l}_\phi(x_i) \geq
 \liminf_{i \to \infty} \frac{[\phi(x_i)-\phi(y)]_+}{d(x_i,y)}
 \geq \frac{[\phi(x)-\phi(y)]_+}{d(x,y)}.
\]
Then, taking the supremum in $y \neq x$ furnishes the lower semicontinuity of $\mathfrak{l}_{\phi}$.

Next, we prove that $\mathfrak{l}_\phi$ is a strong upper gradient for $-\phi$.
Since $|\partial\phi|\leq \mathfrak{l}_\phi$ by definition,
$\mathfrak{l}_\phi$ is a weak upper gradient for $-\phi$.
Therefore, in view of Proposition~\ref{weakbeomcestronggra},
it is sufficient to show the following.

\begin{claim}\label{claim}
For any $\gamma\in \FAC([a,b];X)$
with $\mathfrak{l}_\phi \circ \gamma\,|\gamma'_+ |\in L^1(a,b)$,
$-\phi\circ\gamma$ is absolutely continuous.
\end{claim}

To this end, for $\mathfrak{t}:[0,L] \lra [a,b]$ given by Lemma~\ref{Lisrepare}
with $L=L(\gamma)$, we set
\[
 \hat{\gamma} :=\gamma \circ \mathfrak{t}, \quad
 \varphi :=-\phi \circ \hat{\gamma}, \quad
 g :=\mathfrak{l}_\phi \circ \hat{\gamma}.
\]
Since $L<\infty$ and $\hat{\gamma}([0,L])=\gamma([a,b])$,
the triangle inequality yields
\[
 \lambda :=\sup_{s\in [0,L]} \Theta \big( d \big( \star,\hat{\gamma}(s) \big) \big)
 \leq \Theta \big( d \big( \star,\gamma(a) \big) +L \big) <\infty.
\]
Recalling that $\hat{\gamma}$ is a $1$-Lipschitz curve, we have
$d( \hat{\gamma}(s_1),\hat{\gamma}(s_2)) \leq \lambda |s_1-s_2|$ for all $s_1,s_2\in [0,L]$
(regardless of the order of $s_1,s_2$), thereby
\begin{equation}\label{stracontall}
 [\varphi(s_2)-\varphi(s_1)]_+\leq \lambda g(s_1) |s_1-s_2|
 \quad \text{for all}\ s_1,s_2\in [0,L].
\end{equation}
Thus, we have
\begin{equation}\label{phiforumul}
 |\varphi(s_1)-\varphi(s_2)|
 =[\varphi(s_1)-\varphi(s_2)]_+ +[\varphi(s_2)-\varphi(s_1)]_+
 \leq \lambda \big( g(s_1)+g(s_2) \big) |s_1-s_2|.
\end{equation}
Moreover, by the hypothesis $\mathfrak{l}_\phi \circ \gamma \, |\gamma'_+|\in L^1(a,b)$
and Lemma~\ref{Lisrepare}, we find
\[
 \int^L_0 g(s) \,{\dd}s
 =\int_0^L \mathfrak{l}_\phi \big( \gamma \big( \mathfrak{t}(s) \big) \big) \,{\dd}s
 =\int^b_a \mathfrak{l}_\phi \big( \gamma(t) \big) |\gamma'_+|(t) \,{\dd}t
 <\infty.
\]
Therefore, we obtain $g\in L^1(0,L)$, which together with \eqref{phiforumul}
and \cite[Lemma~1.2.6]{AGS} yields that $\varphi$ belongs to $W^{1,1}(0,L)$
with $|\varphi'| \leq 2\lambda g$ and has a continuous representative.

To see that $\varphi$ itself is continuous, on the one hand,
note that $\varphi$ is upper semicontinuous by the lower semicontinuity of $\phi$.
On the other hand, we infer from \eqref{stracontall} and $g\in L^1(0,L)$ that
\[
 \liminf_{\varepsilon \to 0^+} \frac{1}{2\varepsilon} \int^{\varepsilon}_{-\varepsilon} \varphi(s+r) \,{\dd}r
 \geq \varphi(s)
 \quad \text{for all}\ s \in (0,L).
\]
This implies that $\varphi$ is continuous and, since it lives in $W^{1,1}(0,L)$,
absolutely continuous.
Then we observe from $-\phi(\gamma(t))=\varphi(\mathfrak{s}(t))$
that $-\phi\circ \gamma$ is absolutely continuous.
This completes the proof of Claim~\ref{claim}.
\end{proof}

\subsection{Curves of maximal slope}\label{ssc:maxslope}%%%%%%%%%%%
%%%%%%%%%%%%%%%

Let $(X,d)$ be a forward complete forward metric space.
For $p\in [1,\infty]$, denote by $\FAC^p_{\loc}((a,b);X)$
the class of \emph{locally $p$-forward absolutely continuous curves} $\xi$ defined on $(a,b)$,
i.e., $\xi|_{[s,t]}\in \FAC^p([s,t];X)$ for every $a<s<t<b$.
%We call $\xi \in \FAC^1_{\loc}((a,b);X)$ simply a \emph{locally forward absolutely continuous curve}.

\begin{definition}[Curves of maximal slope]\label{maxslp}%[AGS,Def1.3.2]
Let $\phi:X \lra (-\infty,\infty]$ be a proper function,
$\mathfrak{g}$ be a weak upper gradient for $-\phi$, and $p\in (1,\infty)$.
We call $\xi \in \FAC^1_{\loc}((a,b);X)$
a \emph{$p$-curve of maximal slope} for $\phi$ with respect to $\mathfrak{g}$
if $\phi\circ \xi$ is $\mathscr{L}^1$-a.e.\ equal to a non-increasing function $\varphi$ satisfying
\begin{equation}\label{curvemaxforslop}
 \varphi'(t) \leq -\frac{1}{p}|\xi'_+|^p(t)-\frac{1}{q}\mathfrak{g}^q \big( \xi(t) \big)
 \quad \text{for $\mathscr{L}^1$-a.e.\ $t\in (a,b)$},
\end{equation}
where $1/p+1/q=1$.
In the case of $p=2$, we simply call $\xi$ a \emph{curve of maximal slope}.
\end{definition}

In fact, equality holds in \eqref{curvemaxforslop} for $\mathscr{L}^1$-a.e.\ $t\in (a,b)$ as follows.

\begin{proposition}[Energy identity]\label{curaboslpssss}%[AGS,Rem1.3.3]
If $\xi:(a,b) \lra X$ is a $p$-curve of maximal slope for $\phi$ with respect to $\mathfrak{g}$,
then we have $\xi \in \FAC_{\lo}^p((a,b);X)$ and $\mathfrak{g} \circ \xi \in L^q_{\lo}(a,b)$ with
\begin{equation}\label{maxlinequforthreego}
 |\xi'_+|^p(t) =\mathfrak{g}^q \big( \xi(t) \big) =-\varphi'(t) \quad \text{for $\mathscr{L}^1$-a.e.}\ t\in (a,b).
\end{equation}
Moreover, if $\mathfrak{g}$ is a strong upper gradient for $-\phi$,
then $\varphi =\phi\circ \xi$ is locally absolutely continuous in $(a,b)$ and satisfies
the \emph{energy identity}
\begin{equation}\label{energyident}
 \frac1p \int^t_s |\xi'_+|^p(r) \,{\dd}r +\frac1q \int^t_s \mathfrak{g}^q \big( \xi(r) \big) \,{\dd}r
 =\phi\big( \xi(s) \big) -\phi\big( \xi(t) \big)
 \quad \text{for all}\,\ a<s<t<b.
\end{equation}
\end{proposition}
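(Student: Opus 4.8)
The plan is to adapt the scheme of \cite[\S1.2]{AGS} to the asymmetric setting, the only genuinely new point being the passage from a one-sided to a two-sided control of $\phi\circ\xi$, for which Remark~\ref{trickforwardupp} is used. Throughout, $\varphi$ may be assumed finite-valued. \emph{First, integrability:} combining \eqref{curvemaxforslop} with Young's inequality $ab\le a^p/p+b^q/q$ gives, for $\mathscr{L}^1$-a.e.\ $t\in(a,b)$,
\[
 -\varphi'(t)\ \ge\ \tfrac1p|\xi'_+|^p(t)+\tfrac1q\mathfrak{g}^q\big(\xi(t)\big)\ \ge\ \mathfrak{g}\big(\xi(t)\big)\,|\xi'_+|(t)\ \ge\ 0 .
\]
Fix $a<s<t<b$. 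Since $\varphi$ is non-increasing, $\int_s^t(-\varphi'(r))\,{\dd}r\le\varphi(s)-\varphi(t)<\infty$, and integrating the display yields $\tfrac1p\int_s^t|\xi'_+|^p+\tfrac1q\int_s^t\mathfrak{g}^q\circ\xi\le\varphi(s)-\varphi(t)<\infty$. Hence $|\xi'_+|\in L^p_{\loc}(a,b)$ and $\mathfrak{g}\circ\xi\in L^q_{\loc}(a,b)$; since $\xi\in\FAC^1_{\loc}$, Theorem~\ref{vilocitythem} gives $d(\xi(u),\xi(v))\le\int_u^v|\xi'_+|$ on compact subintervals, so $|\xi'_+|$ serves as the function in \eqref{moredfabsc} and $\xi\in\FAC^p_{\loc}((a,b);X)$. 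By Hölder, $\mathfrak{g}\circ\xi\,|\xi'_+|\in L^1_{\loc}(a,b)$ as well.

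\emph{Next, the reverse inequality.} Fix $[s,t]\subset(a,b)$. The curve $\xi|_{[s,t]}$ fulfils condition~(1) of Definition~\ref{weakforwaupp22} for $-\phi$ by the previous step, and condition~(2) with the representative $-\varphi$, which is non-decreasing, hence of finite pointwise variation. Since $\mathfrak{g}$ is a weak upper gradient for $-\phi$, \eqref{weakuppversion} gives $-\varphi'(r)\le\mathfrak{g}(\xi(r))|\xi'_+|(r)$ for $\mathscr{L}^1$-a.e.\ $r$. Together with the chain displayed above this forces, for a.e.\ $r$,
\[
 \mathfrak{g}\big(\xi(r)\big)\,|\xi'_+|(r)\ =\ \tfrac1p|\xi'_+|^p(r)+\tfrac1q\mathfrak{g}^q\big(\xi(r)\big)\ =\ -\varphi'(r) .
\]
The equality case of Young's inequality applied to the first identity gives $|\xi'_+|^p(r)=\mathfrak{g}^q(\xi(r))$, whence $-\varphi'(r)=|\xi'_+|^p(r)=\mathfrak{g}^q(\xi(r))$; this is \eqref{maxlinequforthreego}.

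\emph{Finally, the energy identity.} Suppose $\mathfrak{g}$ is a strong upper gradient for $-\phi$ and fix $[s,t]\subset(a,b)$. Then $L(\xi|_{[s,t]})<\infty$ and $\mathfrak{g}\circ\xi\,|\xi'_+|\in L^1(s,t)$, so Remark~\ref{trickforwardupp} applied to $-\phi$ shows that $(-\phi)\circ\xi$, and therefore $\phi\circ\xi$, is absolutely continuous on $[s,t]$; being continuous and $\mathscr{L}^1$-a.e.\ equal to $\varphi$, it coincides with $\varphi$ there. Thus $\varphi=\phi\circ\xi$ is locally absolutely continuous on $(a,b)$, and integrating $-\varphi'=\tfrac1p|\xi'_+|^p+\tfrac1q\mathfrak{g}^q\circ\xi$ over $[s,t]$ yields \eqref{energyident}. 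The argument is essentially routine; the point needing the most care is securing the integrability in the first step \emph{before} the weak-upper-gradient inequality can be invoked, together with the fact that in the asymmetric case Definition~\ref{strongdefg} only gives a one-sided estimate, so one must pass through Remark~\ref{trickforwardupp}---where the reversibility factor $\Theta(d(\star,\xi(s))+L(\xi|_{[s,t]}))$ enters---to obtain genuine absolute continuity of $\phi\circ\xi$ in the last step.
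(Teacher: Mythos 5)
Your proposal is correct and follows essentially the same route as the paper: integrability of $|\xi'_+|^p$ and $\mathfrak{g}^q\circ\xi$ from the monotonicity of $\varphi$ and \eqref{curvemaxforslop}, the reverse inequality via the weak upper gradient property and the equality case of Young's inequality, and then Remark~\ref{trickforwardupp} to identify $\varphi=\phi\circ\xi$ as locally absolutely continuous before concluding \eqref{energyident}. The only cosmetic difference is that you integrate the pointwise identity $-\varphi'=\tfrac1p|\xi'_+|^p+\tfrac1q\mathfrak{g}^q\circ\xi$ directly, whereas the paper closes with a chain of inequalities sandwiched between $\phi(\xi(s))-\phi(\xi(t))$; both are valid.
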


\begin{proof}
Since $\varphi$ is non-increasing, $\varphi'(t)$ is locally integrable.
This together with \eqref{curvemaxforslop} implies $\mathfrak{g} \circ \xi\in L^q_{\lo}(a,b)$
and $|\xi'_+|\in L^p_{\lo}(a,b)$, and hence $\xi \in \FAC_{\lo}^p((a,b);X)$ and
$\mathfrak{g}\circ \xi\, |\xi'_+|\in L^1_{\loc}(a,b)$.
For $\mathscr{L}^1$-a.e.\ $t \in (a,b)$,
since $\mathfrak{g}$ is a weak upper gradient for $-\phi$,
we have $-\varphi'(t)\leq \mathfrak{g}(\xi(t))|\xi'_+|(t)$.
Combining this with the Young inequality yields the reverse inequality to
\eqref{curvemaxforslop}, thereby we obtain \eqref{maxlinequforthreego}.

If $\mathfrak{g}$ is additionally a strong upper gradient,
then $\phi \circ \xi$ is locally absolutely continuous in $(a,b)$ (recall Remark~\ref{trickforwardupp})
and we have $\varphi =\phi \circ \xi$.
Then it follows from the Young inequality and \eqref{curvemaxforslop} that
\begin{align*}
 \phi\big( \xi(s) \big) -\phi\big( \xi(t) \big)
 &\leq \int^t_s \mathfrak{g}\big( \xi(r) \big) |\xi'_+|(r) \,{\dd}r
 \leq \frac1p \int^t_s |\xi'_+|^p(r) \,{\dd}r
 +\frac1q \int^t_s \mathfrak{g}^q \big( \xi (r) \big) \,{\dd}r \\
 &\leq -\int^t_s \varphi'(r) \,{\dd}r
 =\phi\big( \xi(s) \big) -\phi\big( \xi(t) \big).
\end{align*}
This furnishes the energy identity \eqref{energyident}.
\end{proof}

\begin{example}\label{Nonfinslerexamp}
Let $(M,d_F)$ and $\phi$ be as in Example \ref{finslergradecase}.
According to \eqref{deriavetisfinsermorem} and Example \ref{finsbackgrad},
if $\xi:(a,b) \lra M$ is a $p$-curve of maximal slope for $\phi$
with respect to $F(\nabla(-\phi))=F^*(-{\dd}\phi)$, then we have
\begin{equation}\label{finslercasemaximalsop}
 (\phi\circ \xi)'(t) = -F^p\big( \xi'(t) \big) = -F^* \big( {-}{\dd}\phi \big(\xi(t) \big) \big)^q.
\end{equation}
This implies $(\phi\circ \xi)'(t)=-F(\xi'(t)) F^*(-{\dd}\phi(\xi(t)))$,
and hence $\xi'(t)=\alpha(t) \nabla(-\phi)(\xi(t))$ holds for some $\alpha(t) \ge 0$.
Actually, we deduce from \eqref{finslercasemaximalsop} that
\begin{equation}\label{finserpflow}
 \xi'(t)=\begin{cases}
 F^{\frac{2-p}{p-1}} \big( \nabla(-\phi)\big( \xi(t) \big) \big) \cdot \nabla(-\phi)\big( \xi(t) \big) &
 \text{ if } \nabla(-\phi)\big( \xi(t) \big) \neq0,\\
 0 & \text{ if } \nabla(-\phi) \big( \xi(t) \big)=0.
\end{cases}
\end{equation}
In particular, $\xi$ is $C^1$ since $\phi$ is $C^1$,
thereby \eqref{finslercasemaximalsop} holds for all $t \in (a,b)$.
We may rewrite \eqref{finserpflow} as $\mathfrak{j}_p(\xi'(t))=\nabla(-\phi)(\xi(t))$
by introducing an operator $\mathfrak{j}_p:TM \lra TM$ defined by
$\mathfrak{j}_p(v):=F^{p-2}(v)v$ if $v\neq0$ and $\mathfrak{j}_p(0):=0$.
In the case of $p=2$, we obtain the usual gradient flow equation $\xi'(t)=\nabla(-\phi)(\xi(t))$.
We stress that $\xi'(t)=-\nabla\phi(\xi(t))$ holds only when $F$ is reversible.
\end{example}

{
We conclude the section with a comparison to the setting of \cite{RMS}.

\begin{remark}\label{diffbetRMS}
In \cite{RMS}, they considered a convex function $\psi:[0,\infty) \lra [0,\infty]$
satisfying some natural conditions (see \cite[(2.30)]{RMS}) and investigated curves $\xi$ fulfilling
\begin{equation}\label{generalgradientflow}
\varphi'(t) \leq -\psi \big( |\xi'_+|(t) \big) -\psi^* \big( \mathfrak{g}(\xi(t)) \big)
\end{equation}
instead of \eqref{curvemaxforslop},
where $\psi^*$ is the Legendre--Fenchel--Moreau transform of $\psi$.
Choosing $\psi(x)=x^p/p$ recovers \eqref{curvemaxforslop}.
Nonetheless, to establish the corresponding existence theory,
they assumed that $\phi$ is bounded from below (see \cite[(2.19b)]{RMS}),
which is unnecessary in the present paper.
\end{remark}
}

\section{Generalized minimizing movements and curves of maximal slope}\label{topologyexiscurvew}%%
%%%%%%%%%%%%%%%%%%%%%%

\subsection{Problem and strategy}%%%%%%%%%%
%%%%%%%%%%%%%%%%%

Throughout this section, let $(X,d)$ be a forward complete forward metric space,
$\phi:X \lra (-\infty,\infty]$ be a proper function, and $p \in (1,\infty)$.
The main objective of this section is to study the following problem.

\begin{problem}\label{firsslopque}
Given an initial datum $x_0\in \mathfrak{D}(\phi)$,
does there exist a $p$-curve $\xi:(0,\infty) \lra X$ of maximal slope for $\phi$
such that $\lim_{t \to 0}\xi(t)=x_0$?
\end{problem}

We shall solve this problem via a discrete approximation.
We begin with some definitions and notations.

\begin{definition}[Resolvent operator]\label{mosydefam}
We define the \emph{$p$-resolvent operator} by, for $\tau>0$ and $x\in X$,
\[
 J_\tau[x]:=\argmin \Phi(\tau,x;\cdot),
\]
where
\[
 \Phi(\tau,x;y)
 :=\phi(y) +\frac{d^p(x,y)}{p\tau^{p-1}}, \quad y\in X.
\]
That is to say, $y \in J_\tau[x]$ if and only if $\Phi(\tau,x;y) \leq \Phi(\tau,x;z)$ for all $z\in X$.
\end{definition}

Let $P_{\gtau}
 :=\{0=t^0_{\gtau}<t^1_{\gtau}<\cdots<t^k_{\gtau}<\cdots \}$
be a partition of the time interval $[0,\infty)$ corresponding to a sequence of
positive time steps $\gtau =(\tau_k)_{k \ge 1}$ in the sense that
\[
 \tau_k =t^k_{\gtau}-t^{k-1}_{\gtau},\qquad
 \lim_{k \to \infty}t^k_{\gtau} =\sum_{k=1}^\infty \tau_k =\infty.
\]
Set $\|{\gtau}\|:=\sup_{k \geq 1} \tau_k$.
We will consider the following recursive scheme:
\begin{equation}\label{discreteequation}
 \text{Given } \Xi^0_{\gtau} \in X, \text{ whenever } \Xi^1_{\gtau},\ldots, \Xi^{k-1}_{\gtau}
 \text{ are known, take } \Xi^k_{\gtau}\in J_{\tau_k} \big[ \Xi^{k-1}_{\gtau} \big].
\end{equation}
This is a well-known scheme to construct (descending) gradient curves of $\phi$.
The following example in the case of Minkowski spaces may be helpful to understand
the choice of $J_{\tau}[x]$ as above.

\begin{example}\label{finslergradent}
Let $(\mathbb{R}^n,F)$ be a \emph{Minkowski space}, i.e.,
each of its tangent spaces is canonically isometric to $(\mathbb{R}^n,F)$,
and $\phi \in C^1(\mathbb{R}^n)$.
For any $C^1$-curve $\gamma:(-\varepsilon,\varepsilon) \lra \mathbb{R}^n$
with $\gamma(0)=\Xi^{k}_{\gtau}$ and $\gamma'(0)=v$, we have
\[
 \frac{\dd}{{\dd}t}\bigg|_{t=0} d\big( \Xi^{k-1}_{\gtau},\gamma(t) \big)
 =\frac{\dd}{{\dd}t}\bigg|_{t=0} F\big( \gamma(t)-\Xi^{k-1}_{\gtau} \big)
 =\frac{g_{\Xi^k_{\gtau}-\Xi^{k-1}_{\gtau}}({\Xi^k_{\gtau}-\Xi^{k-1}_{\gtau}},v)}{F(\Xi^k_{\gtau}-\Xi^{k-1}_{\gtau})},
\]
provided $\Xi^k_{\gtau} \neq \Xi^{k-1}_{\gtau}$.
Combining this with $\frac{\dd}{{\dd}t}|_{t=0}\Phi(\tau_k,\Xi^{k-1}_{\gtau};\gamma(t))=0$
by the choice \eqref{discreteequation} of $\Xi^{k}_{\gtau}$, we find
\[
 -{\dd}\phi(v) =g_{\Xi^k_{\gtau}-\Xi^{k-1}_{\gtau}}(w,v) =[\mathfrak{L}(w)](v),
 \quad \text{where }\,
 w=\frac{F^{p-2}(\Xi^k_{\gtau}-\Xi^{k-1}_{\gtau})}{\tau^{p-1}_k} (\Xi^k_{\gtau}-\Xi^{k-1}_{\gtau}).
\]
Since $v$ was arbitrary, we arrive at the equation
$w=\mathfrak{L}^{-1}(-{\dd}\phi(\Xi^k_{\gtau})) =\nabla(-\phi)(\Xi^k_{\gtau})$.
By the choice of $w$, this is equivalent to
\[
 \frac{\Xi^k_{\gtau}-\Xi^{k-1}_{\gtau}}{\tau_k}
 =F^{\frac{2-p}{p-1}}\big( \nabla(-\phi)(\Xi^k_{\gtau}) \big) \cdot \nabla(-\phi)(\Xi^k_{\gtau}),
\]
which can be regarded as a discrete version of \eqref{finserpflow}.
\end{example}

\begin{definition}[Discrete solutions]\label{discreesolu}
Given ${\gtau}$, $\Xi^0_{\gtau} \in X$
and a sequence $(\Xi^k_{\gtau})_{k \ge 1}$ solving \eqref{discreteequation},
we define a piecewise constant curve $\overline{\Xi}_{\gtau}:[0,\infty) \lra X$ by
\[
 \overline{\Xi}_{\gtau}(0) :=\Xi^0_{\gtau}, \qquad
 \overline{\Xi}_{\gtau}(t) :=\Xi^k_{\gtau} \quad \text{for }t \in (t^{k-1}_{\gtau},t^k_{\gtau}], \ k\geq 1.
\]
We call $\overline{\Xi}_{\gtau}$ a \emph{discrete solution} corresponding to the partition $P_{\gtau}$.
\end{definition}

Under appropriate conditions on $(X,d)$ and $\phi$,
we shall solve Problem~\ref{firsslopque} in the following steps:
\begin{itemize}
\item
Show that the minimization algorithm \eqref{discreteequation} starting from $x_0$ is solvable;

\item
Find a sequence $(P_{\gtau_m})_m$ of admissible partitions with $\|{\gtau_m}\| \to 0$
such that the discrete solutions $(\overline{\Xi}{}_{{\gtau}_m}^k)_{k \ge 1}$ converge to
a solution to Problem~\ref{firsslopque} with respect to a suitable topology $\sigma$ on $X$.
\end{itemize}

\subsection{Topological assumptions}\label{ssc:top}%%%%%%%%%%%%
%%%%%%%%%%%

In the sequel, we always assume that $\sigma$ is a Hausdorff topology on $X$,
possibly different from $\mathcal{T}_{\pm}$, compatible with $d$ in the following sense:
\begin{enumerate}[(1)]
\item
$\sigma$ is weaker than the forward topology $\mathcal{T}_+$ induced from $d$
(i.e.,  $x_i \to x$ in $\mathcal{T}_+$ implies $x_i \to x$ in $\sigma$);
\item
$d$ is $\sigma$-sequentially lower semicontinuous
(i.e., if $x_i \to x$ and $y_i \to y$ in $\sigma$, then $\liminf_{i \to \infty} d(x_i,y_i)\geq d(x,y)$).
\end{enumerate}
We will denote by $x_i \,\overset{\sigma}{\lra}\, x$
the convergence with respect to the topology $\sigma$.

\begin{remark}[Topology comparison]\label{samelimitsimgad}
\begin{enumerate}[(a)]
\item \label{top-a}
Recall that $d$ is $\mathcal{T}_+$-continuous by Theorem~\ref{topologychara},
thereby $\sigma=\mathcal{T}_+$ always satisfies (1) and (2) above.

\item \label{top-b}
If $x_i \,\overset{\mathcal{T}_-}{\lra}\, x$,
then we deduce from the triangle inequality the following upper semicontinuity:
\[
 \limsup_{i \to \infty}d(x_i,y) \leq \lim_{i \to \infty} \big\{ d(x_i,x)+d(x,y) \big\}
 =d(x,y).
\]

\item \label{top-c}
By the $\sigma$-sequential lower semicontinuity of $d$, the limit under $\sigma$ is unique.
Indeed, if $x$ and $x'$ are $\sigma$-limit points of $(x_i)_{i \ge 1}$,
then $0=\liminf_{i \to \infty}d(x_i,x_i)\geq d(x,x')$ necessarily holds,
thereby $x=x'$.
\end{enumerate}
\end{remark}

We give an example where $\sigma$ is different from $\mathcal{T}_{\pm}$
(see \cite[Remark~2.3.9]{AGS} for another example).

\begin{example}[Randers-like spaces]\label{ex:Randers}
Let $(X,\langle\cdot,\cdot\rangle)$ be a Hilbert space
and $\|x\|:=\sqrt{\langle x,x \rangle}$.
Choose $a \in X$ with $\|a\|<1$ and define a function $d:X\times X \lra [0,\infty)$ by
\[
 d(x,y) :=\|y-x\|+\langle a, y-x \rangle.
\]
Then $(X,d)$ is a $[(1+\|a\|)/(1-\|a\|)]$-metric space (recall Definition~\ref{thetametricspace}), and
$\mathcal{T}_+=\mathcal{T}_-$ coincides with the (strong) topology of $(X,\langle\cdot,\cdot\rangle)$.
Now, let $\sigma$ be the weak topology of $X$.
Since $\|\cdot\|$ is $\sigma$-sequentially lower semicontinuous, so is $d$.
\end{example}

\begin{lemma}\label{sigcompimpdcom}
Every $\sigma$-sequentially compact set $K \subset X$ is forward complete.
\end{lemma}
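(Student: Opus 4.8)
The plan is to show that a forward Cauchy sequence in a $\sigma$-sequentially compact set $K$ must converge in $\mathcal{T}_+$ to a point of $K$. So let $(x_i)_{i\ge 1}$ be a forward Cauchy sequence contained in $K$. First I would extract a subsequence $(x_{i_j})_j$ that $\sigma$-converges to some $x\in K$, using the $\sigma$-sequential compactness of $K$. The goal is then to upgrade this to $\lim_{i\to\infty} d(x,x_i)=0$, which is exactly $\mathcal{T}_+$-convergence by Remark~\ref{forwardpointspaceandbackwardones}\eqref{pfms-a}.

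The key step is a standard Cauchy-plus-subsequential-limit argument, but done carefully with the asymmetry and the $\sigma$-lower semicontinuity of $d$. Fix $\varepsilon>0$ and choose $N$ so that $d(x_k,x_\ell)<\varepsilon$ for all $\ell\ge k>N$ (forward Cauchy condition). Fix any $k>N$. For each $j$ large enough that $i_j\ge k$, we have $d(x_k,x_{i_j})<\varepsilon$. Now let $j\to\infty$: since $x_{i_j}\overset{\sigma}{\to}x$ and $x_k\overset{\sigma}{\to}x_k$ (constant sequence), the $\sigma$-sequential lower semicontinuity of $d$ gives
\[
 d(x_k,x)\le \liminf_{j\to\infty} d(x_k,x_{i_j})\le \varepsilon.
\]
Since this holds for every $k>N$, we conclude $d(x_k,x)\le\varepsilon$ for all $k>N$, i.e. $\lim_{k\to\infty} d(x_k,x)=0$. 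Hence $x_k\to x$ in $\mathcal{T}_+$ with $x\in K$, so $K$ is forward complete.

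I do not anticipate a serious obstacle here; the only point requiring care is to lower-semicontinuity in the right slot of $d$ — we need $d(x_k,\cdot)$ to behave well under $\sigma$-limits in the second argument, which is precisely what assumption (2) on $\sigma$ provides (applied with the constant sequence $x_k$ in the first slot). One should also note that $K$ being $\sigma$-sequentially compact gives both the existence of the $\sigma$-convergent subsequence and that its limit lies in $K$; no separate closedness hypothesis is needed. The argument does not use forward completeness of $X$ itself, only the topological compatibility of $\sigma$ with $d$, so it applies verbatim in the generality stated.
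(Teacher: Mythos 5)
There is a genuine gap at the last step, and it is exactly the asymmetry that this framework is careful about. You correctly state the goal as $\lim_{i\to\infty} d(x,x_i)=0$, which is $\mathcal{T}_+$-convergence by Remark~\ref{forwardpointspaceandbackwardones}\eqref{pfms-a}; but what your lower-semicontinuity argument actually yields is $d(x_k,x)\le\varepsilon$ for all $k>N$, i.e.\ $\lim_{k\to\infty}d(x_k,x)=0$, which is $\mathcal{T}_-$-convergence. The inequality $d(x_k,x)\le\liminf_{j}d(x_k,x_{i_j})$ necessarily places the $\sigma$-limit $x$ in the \emph{second} slot, and you cannot rerun the argument with $x$ in the first slot, because for fixed $\ell$ the forward Cauchy condition controls $d(x_\ell,x_{i_j})$ only for $i_j\ge\ell$, not $d(x_{i_j},x_\ell)$. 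Since backward convergence does not imply forward convergence in general (this is precisely the caveat in Remark~\ref{forwardpointspaceandbackwardones}\eqref{pfms-a}), the final sentence ``hence $x_k\to x$ in $\mathcal{T}_+$'' does not follow as written.

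The gap is repairable, and then your route is genuinely different from the paper's. A forward Cauchy sequence is forward bounded (apply the Cauchy condition with $\varepsilon=1$ and the triangle inequality from $x_{N+1}$), and $d(\star,x)\le d(\star,x_k)+d(x_k,x)$ shows that the $\sigma$-limit $x$ lies in the same forward ball $B^+_\star(r)$ as the sequence; Definition~\ref{thetametricspace} then gives $d(x,x_k)\le\Theta(r)\,d(x_k,x)\to 0$, which is the desired $\mathcal{T}_+$-convergence. With this extra step your argument indeed avoids using the forward completeness of $X$, relying only on the pointed forward $\Theta$-structure. The paper's proof is shorter and uses the standing hypothesis that $X$ is forward complete: the whole forward Cauchy sequence has a $\mathcal{T}_+$-limit $x\in X$, hence also a $\sigma$-limit $x$, while $\sigma$-sequential compactness gives a subsequence with $\sigma$-limit $x'\in K$; uniqueness of $\sigma$-limits (Remark~\ref{samelimitsimgad}\eqref{top-c}) forces $x=x'\in K$.
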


\begin{proof}
Given a forward Cauchy sequence $(x_i)_{i \ge 1}$ in $K$, on the one hand,
the $\sigma$-sequential compactness of $K$ yields
a subsequence $(x_{i_j})_{j \ge 1}$ of $(x_i)_{i \ge 1}$
converging to a point $x' \in K$ in $\sigma$.
On the other hand,
the forward completeness of $X$ furnishes a point $x \in X$
such that $x_i$ converges to $x$ in $\mathcal{T}_+$.
Since $\sigma$ is weaker than $\mathcal{T}_+$, $x_i$ converges to $x$ in $\sigma$ as well.
Hence, $x_{i_j}$ converges to both $x$ and $x'$ in $\sigma$,
and we find from Remark~\ref{samelimitsimgad}\eqref{top-c} that $x=x'\in K$.
Thus, $K$ is forward complete.
\end{proof}

A set $A \subset X$ is said to be \emph{forward bounded}
if $A \subset B^+_\star(r)$ for some $r>0$.
We remark that, thanks to $\lambda_d(B^+_{\star}(r)) \le \Theta(r)$
in Definition~\ref{thetametricspace},
$A$ is forward bounded if and only if $\sup_{x,y \in A} d(x,y)<\infty$.

Now we introduce our main assumptions on $(X,d)$ and $\phi$.

\begin{assumption}\label{continudef}
\begin{enumerate}[(a)]
\item\textbf{Lower semicontinuity.}\label{ass-a}
$\phi$ is $\sigma$-sequentially lower semicontinuous on forward bounded sets, i.e.,
if $\sup_{i,j}d(x_i,x_j)<\infty$ and $x_i \,\overset{\sigma}{\lra}\, x$,
then we have $\liminf_{i \to \infty} \phi(x_i)\geq \phi(x)$.
(In particular, $\phi$ is $\mathcal{T}_+$-lower semicontinuous.)

\item\textbf{Coercivity.}\label{ass-b}
There exist $\tau_*>0$ and $x_*\in X$ such that
\[
\Phi_{\tau_*}(x_*) :=\inf_{y \in X}\Phi(\tau_*,x_*;y)
 =\inf_{y \in X} \Bigg\{ \phi(y) +\frac{d^p(x_*,y)}{p\tau^{p-1}_*} \Bigg\} >-\infty.
\]

\item\textbf{Compactness.}\label{ass-c}
Every forward bounded set contained in a sublevel set of $\phi$
is relatively $\sigma$-sequentially compact, i.e.,
if a sequence $(x_i)_{i \ge 1}$ in $X$ satisfies
$\sup_i \phi(x_i) <\infty$ and $\sup_{i,j}d(x_i,x_j)<\infty$,
then it admits a $\sigma$-convergent subsequence.
\end{enumerate}
\end{assumption}

\begin{remark}[$\sigma=\mathcal{T}_+$ case]\label{strongassupm}
When $\sigma=\mathcal{T}_+$,
\eqref{ass-a} and \eqref{ass-c} above can be rewritten as follows, respectively:
\begin{enumerate}
\item [(a')]
$\phi$ is $\mathcal{T}_+$-lower semicontinuous;

\item[(c')]\label{basicassc'}
Every forward bounded set in a sublevel set of $\phi$ is relatively compact in $X$.
\end{enumerate}
\end{remark}

The next proposition presents one of the simplest situations
where Assumption~\ref{continudef} holds (cf.\ \cite[Remark~2.1.1]{AGS}).
We remark that $\sigma$ may be different from $\mathcal{T}_+$.

\begin{proposition}\label{simpassump}
Suppose that every sublevel set of $\phi$ is compact in $\mathcal{T}_+$.
Then Assumption~$\ref{continudef}$ holds.
\end{proposition}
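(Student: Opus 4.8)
Suppose that every sublevel set of $\phi$ is compact in $\mathcal{T}_+$. Then Assumption~\ref{continudef} holds.

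The plan is to verify the three items \eqref{ass-a}, \eqref{ass-b}, \eqref{ass-c} of Assumption~\ref{continudef} directly, taking $\sigma = \mathcal{T}_+$ (which is legitimate by Remark~\ref{samelimitsimgad}\eqref{top-a}), and then invoke Remark~\ref{strongassupm} to reduce \eqref{ass-a} and \eqref{ass-c} to the simpler forms (a') and (c').

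First I would establish \eqref{ass-a}, i.e.\ that $\phi$ is $\mathcal{T}_+$-lower semicontinuous. Fix $x \in X$ and a sequence $x_i \to x$ in $\mathcal{T}_+$. If $\liminf_i \phi(x_i) = \infty$ there is nothing to prove, so pass to a subsequence realizing the liminf as a finite limit $c$, and fix any $c' > c$; then $x_i$ lies in the sublevel set $\{\phi \le c'\}$ for all large $i$. Since this sublevel set is $\mathcal{T}_+$-compact, hence $\mathcal{T}_+$-closed (as $(X,\mathcal{T}_+)$ is Hausdorff by Theorem~\ref{topologychara}), the limit $x$ also lies in $\{\phi \le c'\}$, so $\phi(x) \le c'$. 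Letting $c' \downarrow c$ gives $\phi(x) \le c = \liminf_i \phi(x_i)$, as required. This argument also shows each sublevel set is closed, which will be reused below.

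Next, \eqref{ass-c}: by Remark~\ref{strongassupm} it suffices to show that every forward bounded subset of a sublevel set of $\phi$ is relatively compact in $\mathcal{T}_+$. But such a set is contained in $\{\phi \le c\}$ for some $c \in \mathbb{R}$, which is $\mathcal{T}_+$-compact by hypothesis; a subset of a compact Hausdorff space is relatively compact (its closure is a closed subset of a compact set, hence compact). So \eqref{ass-c} holds. Finally, for the coercivity \eqref{ass-b}, I would pick any $x_* \in \mathfrak{D}(\phi)$ (nonempty since $\phi$ is proper) and argue that for a suitably small $\tau_* > 0$ the infimum $\Phi_{\tau_*}(x_*) = \inf_{y} \{\phi(y) + d^p(x_*,y)/(p\tau_*^{p-1})\}$ is finite. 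The point is that the minimizing sequence cannot escape to where $\phi$ is very negative without paying an unbounded $d^p$-penalty: one shows the sublevel sets of $y \mapsto \Phi(\tau_*, x_*; y)$ are forward bounded, hence (being contained in sublevel sets of $\phi$ up to the additive penalty term, and using compactness of $\phi$-sublevels) precompact, so the lower semicontinuous function $\Phi(\tau_*,x_*;\cdot)$ attains a finite minimum; alternatively, if $\phi$ is already bounded below on $X$ — which follows formally here since $\{\phi \le c\}$ compact and nonempty for small $c$ would fail, forcing $\inf \phi > -\infty$ when $\emptyset$ is not compact-excluded — then \eqref{ass-b} is immediate with any $\tau_*$.

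The main obstacle is \eqref{ass-b}: one must make sure the compactness of sublevel sets genuinely prevents $\Phi(\tau_*,x_*;\cdot)$ from being unbounded below. The cleanest route is to observe that $\phi$ itself is bounded below: if $\phi$ were not bounded below, there would be $x_i$ with $\phi(x_i) \to -\infty$, but then $\{x_i\} \subset \{\phi \le \phi(x_1)\}$, which is $\mathcal{T}_+$-compact, so a subnet converges to some $x$, and $\mathcal{T}_+$-lower semicontinuity of $\phi$ (just proved) gives $\phi(x) \le \liminf \phi(x_i) = -\infty$, a contradiction. Hence $\inf_X \phi > -\infty$, and then $\Phi_{\tau_*}(x_*) \ge \inf_X \phi > -\infty$ for every $\tau_* > 0$ and every $x_*$, which establishes \eqref{ass-b} (even with any choice of $x_* \in X$). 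This completes the verification of Assumption~\ref{continudef}.
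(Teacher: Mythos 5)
Your argument is fine as far as it goes, but it proves the proposition only in the special case $\sigma=\mathcal{T}_+$, whereas the statement (as the paper stresses immediately before it: ``$\sigma$ may be different from $\mathcal{T}_+$'') is about the topology $\sigma$ fixed in Subsection~\ref{ssc:top}, which is only assumed to be weaker than $\mathcal{T}_+$ with $d$ being $\sigma$-sequentially lower semicontinuous. You are not free to ``take $\sigma=\mathcal{T}_+$'': Remark~\ref{samelimitsimgad}\eqref{top-a} only says that $\mathcal{T}_+$ is one admissible choice, not that the general case reduces to it. The part that genuinely breaks is Assumption~\ref{continudef}\eqref{ass-a}: $\sigma$-sequential lower semicontinuity is \emph{stronger} than $\mathcal{T}_+$-lower semicontinuity when $\sigma$ is strictly weaker, because there are sequences converging in $\sigma$ that do not converge in $\mathcal{T}_+$, and for those your closedness-of-sublevel-sets argument says nothing. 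The paper's proof handles exactly this point: if $x_i\,\overset{\sigma}{\lra}\,x$ and $\phi(x_i)\to\alpha<\infty$, then for $\varepsilon>0$ the set $A_\varepsilon=\{\phi\le\alpha+\varepsilon\}$ is $\mathcal{T}_+$-compact (hence sequentially compact, since $\mathcal{T}_+$ is induced by $\widehat{d}$), so a subsequence converges in $\mathcal{T}_+$ to some $x'\in A_\varepsilon$; this subsequence then also $\sigma$-converges to $x'$, and uniqueness of $\sigma$-limits (Remark~\ref{samelimitsimgad}\eqref{top-c}) forces $x=x'$, giving $\phi(x)\le\alpha+\varepsilon$. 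That compactness-plus-uniqueness step is the essential content you are missing.

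The other two items are less of an issue. Your treatment of \eqref{ass-b} ultimately coincides with the paper's: compactness of sublevel sets together with lower semicontinuity gives $\inf_X\phi>-\infty$, whence $\Phi_\tau(x)\ge\inf_X\phi>-\infty$ for every $\tau$ and $x$ (the earlier ``penalty'' digression is unnecessary and its parenthetical justification is garbled, but the final argument is correct, and \eqref{ass-b} does not involve $\sigma$ at all). For \eqref{ass-c}, your $\mathcal{T}_+$-relative-compactness argument does extend to general $\sigma$, since a $\mathcal{T}_+$-convergent subsequence is automatically $\sigma$-convergent; but as written you only assert the $\sigma=\mathcal{T}_+$ version. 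To repair the proof, keep \eqref{ass-b} and \eqref{ass-c} (adding the one-line remark that $\sigma$ is weaker than $\mathcal{T}_+$), and replace your proof of \eqref{ass-a} by the compactness/uniqueness-of-limits argument above.
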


\begin{proof}
\eqref{ass-a}
Assume that $x_i \,\overset{\sigma}{\lra}\, x$
and the limit $\alpha:=\lim_{i \to \infty}\phi(x_i)$ exists.
To see $\phi(x) \le \alpha$, suppose $\alpha<\infty$ without loss of generality.
For any $\varepsilon>0$,
$A_{\varepsilon}:=\{ y \in X \,|\, \phi(y) \le \alpha+\varepsilon \}$
is compact by hypothesis, and hence a subsequence of $(x_i)_{i \ge 1}$
converges to some $x' \in A_{\varepsilon}$ in $\mathcal{T}_+$.
Then $x=x' \in A_{\varepsilon}$ by Remark~\ref{samelimitsimgad}\eqref{top-c},
and the arbitrariness of $\varepsilon$ yields $\phi(x) \leq \alpha$ as desired.

\eqref{ass-b} is seen by noticing $\inf_X \phi >-\infty$, which follows from \eqref{top-a}.
\eqref{ass-c} is clear by hypothesis.
\end{proof}

{
\begin{remark}\label{diffbettheree}
In \cite{RMS}, Rossi, Mielke and Savar\'e investigated the \emph{doubly nonlinear evolution equation} (DNE),
which is more general than the gradient flow equation.
Their topological requirements are close to ours, however,
recall from Remark~\ref{diffbetRMS} that they assumed $\inf_X \phi >-\infty$,
which is stronger than the coercivity above and can simplify some arguments below.
We remark that Chenchiah--Rieger--Zimmer's \cite{CRZ}
is also concerned with the existence of ($2$-)curves of maximal slope
in the asymmetric setting.
On the one hand, they assumed the lower semicontinuity of $d$ only in the second argument.
On the other hand, they assumed that the backward convergence implies the forward convergence
(see \cite[Assumption~4.3]{CRZ}; compare it with \cite[Remark 2.9]{RMS} and
Remark~\ref{forwardpointspaceandbackwardones}\eqref{pfms-a})
and used a stronger notion of upper gradient (recall Definition~\ref{strongdefg}).
\end{remark}
}

\subsection{Moreau--Yosida approximation}\label{ssc:MY}%%%%%%%%%%%%
%%%%%%%%%%%%%%

In this subsection, we will present an existence result of solutions to \eqref{discreteequation}.
For this purpose, we recall the  definition of Moreau--Yosida approximation.

\begin{definition}[Moreau--Yosida approximation]\label{myadef}
For $\tau>0$ and $x \in X$, the \emph{Moreau--Yosida approximation} $\Phi_\tau$
is defined as
\[
 \Phi_\tau(x) :=\inf_{y \in X} \Phi(\tau,x;y)
 =\inf_{y \in X}\ \Bigg\{ \phi(y) +\frac{d^p(x,y)}{p\tau^{p-1}} \Bigg\}.
\]
We also set
\[
 \tau_*(\phi) :=\sup\{ \tau>0 \,|\, \Phi_\tau(x)>-\infty \text{ for some }x \in X \}.
\]
\end{definition}

Note that Assumption~\ref{continudef}\eqref{ass-b} is equivalent to $\tau_*(\phi)>0$.
Moreover, we have the following (cf.\ \cite[Lemma~2.2.1]{AGS}).

\begin{lemma}\label{basicestphi} %[AGS, Lemma 2.2.1]
Suppose Assumption~$\ref{continudef}$\eqref{ass-b}.
For $0<\tau<\tau_*\leq \tau_*(\phi)$, set
\[
 \epsilon =\epsilon(p,\tau_*,\tau)
 :=\frac{\tau_*^{p-1}-\tau^{p-1}}{2\tau^{p-1}}>0, \qquad
 C(p,\tau_*,\tau) :=\frac{\mathfrak{C}(p,\epsilon)}{p\tau_*^{p-1}}>0,
\]
where $\mathfrak{C}(p,\epsilon)$ is the constant introduced in Lemma~{\rm \ref{basicpestimate}}.
Then we have
\begin{align}
 \Phi_\tau(x)
 &\geq \Phi_{\tau_*}(x_*)-C(p,\tau_*,\tau)\,d^p(x_*,x),
 \label{existI}\\
 d^p(x,y)
 &\leq \frac{2p\tau^{p-1}\tau_*^{p-1}}{\tau^{p-1}_*-\tau^{p-1}}
 \big\{ \Phi(\tau,x;y) -\Phi_{\tau_*}(x_*) +C(p,\tau_*,\tau) d^p(x_*,x) \big\}
 \label{estimad}
\end{align}
for all $x,y \in X$.
In particular, sublevel sets of $\Phi(\tau,x;\cdot)$ are forward bounded.
\end{lemma}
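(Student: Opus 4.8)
The plan is to follow the standard strategy from \cite[Lemma~2.2.1]{AGS}, adapted to the asymmetric setting. The key input is an elementary inequality on $\mathbb{R}$ (presumably the forthcoming Lemma~\ref{basicpestimate}) of the form $a^p \le (1+\epsilon) b^p + \mathfrak{C}(p,\epsilon) c^p$ whenever $a \le b+c$; in our situation I would apply it with $a = d(x_*,y)$, $b = d(x,y)$, and $c = d(x_*,x)$, which is legitimate since $d$ satisfies the triangle inequality $d(x_*,y) \le d(x_*,x) + d(x,y)$. The subtlety caused by asymmetry is precisely that one must use the triangle inequality in this \emph{forward} order; one cannot freely swap arguments of $d$. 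Fortunately the combination that appears is the one that works directly.

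First I would fix $x,y \in X$ and $0 < \tau < \tau_* \le \tau_*(\phi)$, and use $\Phi_{\tau_*}(x_*) \le \Phi(\tau_*, x_*; y) = \phi(y) + d^p(x_*,y)/(p\tau_*^{p-1})$ to get $\phi(y) \ge \Phi_{\tau_*}(x_*) - d^p(x_*,y)/(p\tau_*^{p-1})$. Next, applying the elementary inequality with $\epsilon = \epsilon(p,\tau_*,\tau)$ as defined,
\[
 d^p(x_*,y) \le (1+\epsilon)\, d^p(x,y) + \mathfrak{C}(p,\epsilon)\, d^p(x_*,x).
\]
Substituting this into the bound for $\phi(y)$ and then into $\Phi(\tau,x;y) = \phi(y) + d^p(x,y)/(p\tau^{p-1})$, the coefficient of $d^p(x,y)$ becomes
\[
 \frac{1}{p\tau^{p-1}} - \frac{1+\epsilon}{p\tau_*^{p-1}}
 = \frac{1}{p\tau^{p-1}}\Big( 1 - \frac{(1+\epsilon)\tau^{p-1}}{\tau_*^{p-1}}\Big),
\]
and the precise choice $\epsilon = (\tau_*^{p-1}-\tau^{p-1})/(2\tau^{p-1})$ makes $(1+\epsilon)\tau^{p-1} = (\tau_*^{p-1}+\tau^{p-1})/2$, so this coefficient equals $(\tau_*^{p-1}-\tau^{p-1})/(2p\tau^{p-1}\tau_*^{p-1}) > 0$. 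This yields
\[
 \Phi(\tau,x;y) \ge \Phi_{\tau_*}(x_*) - C(p,\tau_*,\tau)\, d^p(x_*,x) + \frac{\tau_*^{p-1}-\tau^{p-1}}{2p\tau^{p-1}\tau_*^{p-1}}\, d^p(x,y),
\]
with $C(p,\tau_*,\tau) = \mathfrak{C}(p,\epsilon)/(p\tau_*^{p-1})$. Dropping the nonnegative last term and taking the infimum over $y$ gives \eqref{existI}; rearranging for $d^p(x,y)$ gives \eqref{estimad}. Finally, \eqref{estimad} shows that on any sublevel set $\{y : \Phi(\tau,x;y) \le c\}$ one has $d^p(x,y)$ bounded by a constant depending only on $c, p, \tau, \tau_*, x_*, x$; since by Definition~\ref{thetametricspace} a set is forward bounded iff it has finite $d$-diameter (as remarked before Assumption~\ref{continudef}), and here we control $d(x,\cdot)$ with $x$ fixed, such sublevel sets are forward bounded.

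I expect the only real obstacle to be bookkeeping: confirming that the elementary inequality is applied in the argument order compatible with the triangle inequality (it is, as noted above), and checking that ``sublevel set forward bounded'' genuinely follows from a bound on $d(x,y)$ with $x$ fixed rather than a symmetric diameter bound — but this is handled by the observation, already recorded in the text, that $\lambda_d(B^+_\star(r)) \le \Theta(r)$ makes forward boundedness equivalent to finite $d$-diameter. No compactness or lower semicontinuity of $\phi$ is needed here; only Assumption~\ref{continudef}\eqref{ass-b}, i.e. $\tau_*(\phi) > 0$, enters, through the finiteness of $\Phi_{\tau_*}(x_*)$.
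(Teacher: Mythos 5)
Your proposal is correct and follows essentially the same route as the paper's proof: both apply Lemma~\ref{basicpestimate} with $a=d(x,y)$, $b=d(x_*,x)$ together with the forward triangle inequality $d(x_*,y)\le d(x_*,x)+d(x,y)$, use the choice of $\epsilon$ so that $(1+\epsilon)/(p\tau_*^{p-1})$ splits off from $1/(p\tau^{p-1})$ leaving the positive coefficient $(\tau_*^{p-1}-\tau^{p-1})/(2p\tau^{p-1}\tau_*^{p-1})$ on $d^p(x,y)$, and then deduce \eqref{existI} by dropping that term and \eqref{estimad} by rearranging. The handling of forward boundedness of sublevel sets also matches the paper's (brief) conclusion.
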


\begin{proof}
We deduce from Lemma~\ref{basicpestimate} (with $a=d(x,y)$ and $b=d(x_*,x)$)
and the triangle inequality that
\[
 \frac{\tau^{p-1}_*+\tau^{p-1}}{2p\tau^{p-1}\tau_*^{p-1}} d^p(x,y) +C(p,\tau_*,\tau)\,d^p(x_*,x)
 \geq \frac{d^p(x_*,y)}{p\tau_*^{p-1}}
\]
for any $x,y \in X$.
By the definition of $\Phi_{\tau_*}$, this implies
\[
 \phi(y) +\frac{\tau^{p-1}_*+\tau^{p-1}}{2p\tau^{p-1}\tau_*^{p-1}} d^p(x,y)
 +C(p,\tau_*,\tau) d^p(x_*,x)
 \geq \Phi_{\tau_*}(x_*).
\]
Then the first claim \eqref{existI} follows since, for any $y \in X$,
\begin{align*}
 \Phi(\tau,x;y)
 &= \phi(y) +\frac{\tau^{p-1}_*+\tau^{p-1}}{2p\tau^{p-1}\tau_*^{p-1}} d^p(x,y)
  +\frac{\tau_*^{p-1}-\tau^{p-1}}{2p\tau^{p-1}\tau_*^{p-1}}d^p(x,y) \\
 &\geq \Phi_{\tau_*}(x_*)-C(p,\tau_*,\tau) d^p(x_*,x)
  +\frac{\tau_*^{p-1}-\tau^{p-1}}{2p\tau^{p-1}\tau_*^{p-1}}d^p(x,y) \\
 &\geq \Phi_{\tau_*}(x_*)-C(p,\tau_*,\tau) d^p(x_*,x).
\end{align*}
Observe also that the first inequality corresponds to the second claim \eqref{estimad}.
The forward boundedness of sublevel sets of $\Phi(\tau,x;\cdot)$
readily follows from \eqref{estimad}.
\end{proof}

Now we prove the existence of a solution to \eqref{discreteequation},
giving a discrete solution as in Definition~\ref{discreesolu}.

\begin{theorem}[Existence of discrete solutions]\label{existenceofdisc} %[AGS, Corollary 2.2.2]
Suppose Assumption~$\ref{continudef}$\eqref{ass-a}--\eqref{ass-c}.
Then, for every $\tau \in (0,\tau_*(\phi))$ and $x \in X$, we have $J_{\tau}[x] \neq \emptyset$.
In particular, for any $\Xi^0_{\gtau} \in X$ and partition $P_{\gtau}$ with $\|{\gtau}\|<\tau_*(\phi)$,
there exists a discrete solution $\overline{\Xi}_{\gtau}$ corresponding to $P_{\gtau}$.
\end{theorem}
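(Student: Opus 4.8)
The plan is to establish $J_\tau[x]\neq\emptyset$ via the direct method of the calculus of variations, using the topology $\sigma$ as the auxiliary topology in which compactness is available. Fix $\tau\in(0,\tau_*(\phi))$ and $x\in X$. First I would take a minimizing sequence $(y_i)_{i\ge 1}$ for $\Phi(\tau,x;\cdot)$, so that $\Phi(\tau,x;y_i)\to\Phi_\tau(x)$. Since $\tau<\tau_*(\phi)$, we may pick $\tau_*$ with $\tau<\tau_*\le\tau_*(\phi)$ and apply Lemma~\ref{basicestphi}: estimate \eqref{existI} gives $\Phi_\tau(x)>-\infty$, so $\Phi_\tau(x)$ is a genuine real number and the sequence $\Phi(\tau,x;y_i)$ is bounded; then \eqref{estimad} (with the roles as stated there) shows that $d^p(x,y_i)$ is bounded, i.e.\ $(y_i)$ is contained in a forward bounded set. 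Moreover $\phi(y_i)=\Phi(\tau,x;y_i)-d^p(x,y_i)/(p\tau^{p-1})$ is bounded above, so $(y_i)$ also lies in a sublevel set of $\phi$.

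With these two bounds in hand, Assumption~\ref{continudef}\eqref{ass-c} applies: $(y_i)$ admits a subsequence, still denoted $(y_i)$, with $y_i\overset{\sigma}{\lra}\bar y$ for some $\bar y\in X$. Now I would pass to the limit in $\Phi(\tau,x;\cdot)$ using lower semicontinuity of both terms along this subsequence. For the potential term, Assumption~\ref{continudef}\eqref{ass-a} gives $\liminf_i\phi(y_i)\ge\phi(\bar y)$, since $(y_i)$ is forward bounded. For the distance term, the $\sigma$-sequential lower semicontinuity of $d$ (the standing hypothesis on $\sigma$ in Subsection~\ref{ssc:top}, applied with the constant sequence $x$ in the first slot and $y_i\to\bar y$ in the second) gives $\liminf_i d(x,y_i)\ge d(x,\bar y)$, whence $\liminf_i d^p(x,y_i)/(p\tau^{p-1})\ge d^p(x,\bar y)/(p\tau^{p-1})$. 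Adding, we get
\[
 \Phi(\tau,x;\bar y)\le\liminf_{i\to\infty}\Phi(\tau,x;y_i)=\Phi_\tau(x),
\]
so $\bar y$ is a minimizer, i.e.\ $\bar y\in J_\tau[x]$ and $J_\tau[x]\neq\emptyset$.

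For the ``in particular'' statement I would simply iterate: given $\Xi^0_{\gtau}\in X$ and a partition $P_{\gtau}$ with $\|{\gtau}\|<\tau_*(\phi)$, each step size $\tau_k\le\|{\gtau}\|<\tau_*(\phi)$, so once $\Xi^{k-1}_{\gtau}$ is constructed, applying the first part with $\tau=\tau_k$ and $x=\Xi^{k-1}_{\gtau}$ yields a nonempty $J_{\tau_k}[\Xi^{k-1}_{\gtau}]$ from which we pick $\Xi^k_{\gtau}$; by induction the whole sequence $(\Xi^k_{\gtau})_{k\ge1}$ solving \eqref{discreteequation} exists, giving the discrete solution $\overline{\Xi}_{\gtau}$ of Definition~\ref{discreesolu}.

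The only genuinely delicate point is the interplay of the three topologies: compactness is only in $\sigma$, but lower semicontinuity of $\phi$ is only assumed on forward bounded sets and $d$ is only $\sigma$-sequentially lower semicontinuous, so it is essential that Lemma~\ref{basicestphi} first confines the minimizing sequence to a forward bounded sublevel set before any $\sigma$-compactness or semicontinuity is invoked. The asymmetry requires a small amount of care in that one must keep $x$ in the first argument of $d$ throughout (rather than symmetrizing), but this is exactly how the lower semicontinuity hypothesis on $\sigma$ and estimate \eqref{estimad} are phrased, so no extra work is needed; everything else is routine.
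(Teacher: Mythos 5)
Your proof is correct and follows essentially the same route as the paper: both confine a minimizing family to a forward bounded sublevel set via Lemma~\ref{basicestphi}, extract a $\sigma$-convergent subsequence from Assumption~\ref{continudef}\eqref{ass-c}, and pass to the limit using Assumption~\ref{continudef}\eqref{ass-a} together with the $\sigma$-sequential lower semicontinuity of $d$ (the paper phrases this as $\sigma$-sequential compactness of the sublevel set $A=\{\Phi(\tau,x;\cdot)\le c\}$, but the argument is the same direct method). The iteration for the ``in particular'' part is likewise as intended.
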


\begin{proof}
Given $c>\Phi_{\tau}(x)$, consider the sublevel set
$A:=\{y \in X \,|\, \Phi(\tau,x;y) \leq c \}$.
Recall from Lemma~\ref{basicestphi} that $A$ is forward bounded,
and hence $d$ is bounded on $A \times A$.
Moreover, for any $y \in A$, we have
\[
 \Phi_{\tau}(x) -\frac{d^p(x,y)}{p\tau^{p-1}}
 \le \phi(y) \le \Phi(\tau,x;y) \le c.
\]
Thus, $\phi$ is also bounded on $A$.

Next, we show that $A$ is $\sigma$-sequentially compact.
For any sequence $(y_i)_{i \ge 1}$ in $A$,
since $\sup_{i,j} d(y_i,y_j)$ and $\sup_i \phi(y_i)$ are bounded,
Assumption~\ref{continudef}\eqref{ass-c} yields a subsequence $(y_{i_k})_{k \ge 1}$
which is $\sigma$-convergent to some $y_{\infty} \in X$.
Then, since both $\phi$ and $d$ are $\sigma$-sequentially lower semicontinuous,
we find $\Phi(\tau,x;y_{\infty}) \le c$, thereby $y_{\infty} \in A$.
Hence, $A$ is $\sigma$-sequentially compact.

By the $\sigma$-sequential compactness of $A$ and
the $\sigma$-sequential lower semicontinuity of $\phi$ and $d$,
we can take $y_* \in A$ with $\Phi(\tau,x;y_*) =\inf_{y \in A}\Phi(\tau,x;y)=\Phi_{\tau}(x)$.
This completes the proof.
\end{proof}

{
\begin{remark}\label{rm:psi-MY}
As in \cite[(3.2)]{RMS},
one can also consider a resolvent operator associated with a convex function $\psi$:
\[
 J_\tau[x]:=\underset{y\in X}{\argmin} \left\{ \phi(y)+\tau\psi\left( \frac{d(x,y)}{\tau} \right) \right\}
\]
corresponding to the equation \eqref{generalgradientflow} in Remark~\ref{diffbetRMS}.
See \cite[Lemma 3.2]{RMS} for the existence of discrete solutions in this context.
\end{remark}
}

In the rest of this subsection,
we study some further properties of the Moreau--Yosida approximation.
For $x \in X$ and $\tau>0$ with $J_\tau[x] \neq \emptyset$, we set
\[
 d_\tau^+(x) :=\sup_{y \in J_\tau[x]}d(x,y), \qquad
 d_\tau^-(x):=\inf_{y \in J_\tau[x]}d(x,y).
\]
We introduce the following assumption for convenience;
note that it is stronger than Assumption~\ref{continudef}\eqref{ass-b}.

\begin{assumption}\label{nonemtpassumpto}
For any $x \in X$ and $\tau \in (0,\tau_*(\phi))$, $J_\tau[x] \neq \emptyset$ holds.
\end{assumption}

\begin{remark}\label{existsju}
By Theorem~\ref{existenceofdisc},
if Assumption~\ref{continudef}\eqref{ass-a}--\eqref{ass-c} hold,
then Assumption~\ref{nonemtpassumpto} holds as well.
See also Remark~\ref{strongassupm}, Proposition~\ref{simpassump} and Remark~\ref{assmupabcimpb}
for some situations where Assumption~\ref{continudef} holds.
\end{remark}

We first discuss some continuity and monotonicity properties
(cf.\ \cite[Lemma~3.1.2]{AGS}).

\begin{lemma}\label{monocontiofphi} %[AGS, Lemma 3.1.2]
Suppose Assumption~$\ref{nonemtpassumpto}$ in \eqref{Phi-2}--\eqref{Phi-5} below.
\begin{enumerate}[{\rm (i)}]
\item \label{Phi-1}
The function $(\tau,x) \longmapsto \Phi_\tau(x)$ is continuous in $(0,\tau_*(\phi)) \times X$.

\item \label{Phi-2}
For any $x \in X$, $0<\tau_0<\tau_1$ and $y_i \in J_{\tau_i}[x]$ $(i=0,1)$, we have
\begin{equation}\label{moninpro}
 \phi(x) \geq \Phi_{\tau_0}(x) \geq \Phi_{\tau_1}(x), \quad
 d(x,y_0) \leq d(x,y_1), \quad
 \phi(x)\geq \phi(y_0) \geq \phi(y_1), \quad
 d^+_{\tau_0}(x) \leq d^-_{\tau_1}(x).
\end{equation}

\item \label{Phi-3}
If $x \in \overline{\mathfrak{D}(\phi)}$, then $\lim_{\tau \to 0}d^+_\tau(x)=0$.

\item \label{Phi-4}
For any $x \in X$,
there exists an at most countable set $\mathscr{N}_x \subset (0,\tau_*(\phi))$ such that
$d^-_\tau(x) =d^+_\tau(x)$ for all $\tau \in (0,\tau_*(\phi)) \setminus \mathscr{N}_x$.

\item \label{Phi-5}
If $\phi$ is $\mathcal{T}_+$-lower semicontinuous, then we have,
for all $x \in \overline{\mathfrak{D}(\phi)}$,
\[
 \lim_{\tau \to 0} \Phi_\tau(x) =\lim_{\tau \to 0} \inf_{y \in J_\tau[x]} \phi(y)
 =\phi(x).
\]
Moreover, $\lim_{\tau \to 0} \Phi_\tau(x)=\phi(x)$ holds for all $x \in X$.
\end{enumerate}
\end{lemma}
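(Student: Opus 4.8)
The statement to be proved is Lemma~\ref{monocontiofphi}, comprising five assertions about the Moreau--Yosida approximation $\Phi_\tau$ and the resolvent $J_\tau[\cdot]$. The plan is to treat the parts essentially in the listed order, since each later part leans on the monotonicity recorded in part~(ii). Throughout I would work on forward bounded sets so that $\lambda_d(B^+_\star(r))\le\Theta(r)<\infty$ gives two-sided control of $d$, and I would repeatedly invoke Lemma~\ref{basicestphi} to know that sublevel sets of $\Phi(\tau,x;\cdot)$ are forward bounded, hence the infimum defining $\Phi_\tau(x)$ is attained (under Assumption~\ref{nonemtpassumpto}) at points that stay in a controlled ball.

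\textbf{Parts (i) and (ii).} For the monotonicity~(ii), the key algebraic identity is that $\tau\mapsto\tau^{p-1}$ is increasing, so for fixed $x,y$ the map $\tau\mapsto\Phi(\tau,x;y)=\phi(y)+d^p(x,y)/(p\tau^{p-1})$ is non-increasing; taking infima over $y$ gives $\Phi_{\tau_0}(x)\ge\Phi_{\tau_1}(x)$, and the bound $\phi(x)\ge\Phi_{\tau_0}(x)$ comes from testing with $y=x$. The chain of inequalities for $d(x,y_i)$, $\phi(y_i)$ and $d^+_{\tau_0}(x)\le d^-_{\tau_1}(x)$ is the standard minimality comparison argument: write the two optimality inequalities $\Phi(\tau_0,x;y_0)\le\Phi(\tau_0,x;y_1)$ and $\Phi(\tau_1,x;y_1)\le\Phi(\tau_1,x;y_0)$, add them, and the $\phi$-terms cancel, leaving an inequality between $d^p(x,y_0)$ and $d^p(x,y_1)$ with positive coefficients that forces $d(x,y_0)\le d(x,y_1)$; feeding this back into one optimality inequality gives $\phi(y_0)\ge\phi(y_1)$; and since this holds for \emph{every} pair of minimizers, one gets the $d^+_{\tau_0}\le d^-_{\tau_1}$ comparison by taking sup/inf. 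For continuity~(i): lower semicontinuity of $(\tau,x)\mapsto\Phi_\tau(x)$ follows from Lemma~\ref{basicestphi}\eqref{existI} (a uniform lower bound near any point, plus $\sigma$-lsc of $\phi$ and $d$ applied to a minimizing configuration), while upper semicontinuity follows by testing $\Phi_\tau(x)$ with a near-minimizer of $\Phi_{\tau'}(x')$ and using the triangle inequality together with $\lambda_d\le\Theta$ to absorb the change from $x'$ to $x$; the monotonicity in $\tau$ from (ii) handles the $\tau$-direction cleanly. I expect the bookkeeping of constants here (uniformity of the forward-boundedness radius as $(\tau,x)$ vary in a compact set) to be the most tedious point, but it is routine given Lemma~\ref{basicestphi}.

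\textbf{Parts (iii), (iv), (v).} For~(iii), fix $x\in\overline{\mathfrak D(\phi)}$ and $y_\tau\in J_\tau[x]$; minimality against a test point $z$ with $\phi(z)<\infty$ close to $x$ gives $d^p(x,y_\tau)/(p\tau^{p-1})\le\phi(z)-\phi(y_\tau)+d^p(x,z)/(p\tau^{p-1})$, and since $\phi(y_\tau)$ is bounded below on the relevant forward ball (again Lemma~\ref{basicestphi}) one gets $d^p(x,y_\tau)\le d^p(x,z)+O(\tau^{p-1})$; letting $\tau\to0$ and then $z\to x$ yields $\limsup_\tau d(x,y_\tau)=0$. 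This is exactly \cite[Lemma~3.1.2]{AGS} adapted, with the only new ingredient the use of $\Theta$ to pass between $d(x,\cdot)$ and $d(\cdot,x)$. Part~(iv) is a monotonicity-plus-countability argument: by~(ii), $\tau\mapsto d^+_\tau(x)$ and $\tau\mapsto d^-_\tau(x)$ are non-decreasing and interlaced ($d^+_{\tau_0}\le d^-_{\tau_1}$ for $\tau_0<\tau_1$), so they share the same left and right limits and can differ only on the at most countable set $\mathscr N_x$ of common jump points. Part~(v) combines~(iii) with lower semicontinuity: $\phi(y_\tau)\le\Phi_\tau(x)\le\phi(x)$ gives $\limsup_\tau\Phi_\tau(x)\le\phi(x)$ and $\limsup_\tau\phi(y_\tau)\le\phi(x)$, while $y_\tau\to x$ in $\mathcal T_+$ (hence in $\sigma$) and $\mathcal T_+$-lower semicontinuity give $\liminf_\tau\phi(y_\tau)\ge\phi(x)$; squeezing yields all three limits equal $\phi(x)$, and for general $x$ (with $\phi(x)=+\infty$ possible) monotonicity in $\tau$ from~(ii) forces $\Phi_\tau(x)\uparrow\phi(x)$ directly.

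\textbf{Main obstacle.} The genuinely delicate point is part~(i), specifically upper semicontinuity of $(\tau,x)\mapsto\Phi_\tau(x)$ in the $x$-variable: one must move a near-optimal competitor from the configuration at $x'$ to one at $x$, and in the asymmetric setting the triangle inequality only controls $d(x,y)$ by $d(x,x')+d(x',y)$, so one has to be careful which of $d(x',x)$ or $d(x,x')$ appears and invoke $\lambda_d(B^+_\star(r))\le\Theta(r)$ to interchange them on the forward bounded set where everything lives. Once that is handled, the rest is a faithful transcription of \cite[Lemma~3.1.2]{AGS} with $\Theta$-factors inserted wherever reversibility is needed.
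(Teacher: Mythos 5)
Most of your plan matches the paper's proof: your handling of (ii) (add the two optimality inequalities, cancel the $\phi$-terms, feed the distance comparison back in), of (iii) (test against $z\in\mathfrak{D}(\phi)$ near $x$ and use a $\tau$-uniform lower bound on $\phi(y_\tau)$), of (iv) (monotone interlacing $d^-_\tau\le d^+_\tau\le d^-_{\tau'}$, countably many jumps), and of the first claim of (v) (squeeze using $d^+_\tau(x)\to 0$ and $\mathcal{T}_+$-lower semicontinuity) is essentially what the paper does. But two points are genuinely off. First, in (i) you have the two directions swapped, and the route you offer for what is really the lower semicontinuity of $(\tau,x)\mapsto\Phi_\tau(x)$ --- ``\eqref{existI} plus $\sigma$-lsc of $\phi$ and $d$ applied to a minimizing configuration'' --- uses hypotheses that are not available here: part (i) is stated with no lower semicontinuity or compactness assumption on $\phi$ (Assumption~\ref{continudef}\eqref{ass-a},\eqref{ass-c} are not assumed, and one cannot extract a convergent ``minimizing configuration'' without them). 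The correct argument, which the paper gives and which you in fact describe but attribute to the wrong direction, is to take near-minimizers $y_i$ of $\Phi_{\tau_i}(x_i)$ and use them as competitors for $\Phi_\tau(x)$ via $d(x,y_i)\le d(x,x_i)+d(x_i,y_i)$ and Lemma~\ref{basicpestimate}, with \eqref{estimad} supplying the bound $\sup_i d(x_i,y_i)<\infty$; note that since $x_i\to x$ in $\mathcal{T}_+$ one has $d(x,x_i)\to 0$ directly, so no $\Theta$-factor is even needed there. The easy direction ($\limsup_i\Phi_{\tau_i}(x_i)\le\Phi_\tau(x)$, by testing with a fixed $y$ and continuity of $d$) never appears in your plan, though it is a one-liner.

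Second, and more substantively, the final clause of (v) is not proved by your argument. For $x\notin\overline{\mathfrak{D}(\phi)}$ you assert that ``monotonicity in $\tau$ from (ii) forces $\Phi_\tau(x)\uparrow\phi(x)$ directly,'' but monotonicity only yields that $\Phi_\tau(x)$ increases to some limit $\le\phi(x)=\infty$; it does not force the limit to be $+\infty$. One must show that the penalty term blows up: since all $y_\tau\in J_\tau[x]$ lie in $\mathfrak{D}(\phi)$ and $x$ is not in its $\mathcal{T}_+$-closure, $\liminf_{\tau\to 0}d(x,y_\tau)>0$, and in addition $\phi(y_\tau)$ must be bounded below uniformly in small $\tau$ (the paper gets this from \eqref{loeresmtiss}, \eqref{clmmaconst} and the monotonicity $d(x,y_\tau)\le d(x,y_{\tau_*})$ of \eqref{moninpro}); without this last bound, $\phi(y_\tau)\to-\infty$ could in principle offset the blow-up of $d^p(x,y_\tau)/(p\tau^{p-1})$. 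So this step needs the coercivity estimate \eqref{existI}, not just monotonicity, and as written your plan has a gap there.
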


\begin{proof}
\eqref{Phi-1}
Take $(\tau,x)\in (0,\tau_*(\phi))\times X$ and a sequence
$((\tau_i,x_i))_{i \ge 1}$ in $(0,\tau_*(\phi)) \times X$ converging to $(\tau,x)$.
On the one hand, for any $y \in X$, we have
\[
 \limsup_{i \to \infty}\Phi_{\tau_i}(x_i)
 \leq \limsup_{i \to \infty} \Phi(\tau_i,x_i;y) =\Phi(\tau,x;y).
\]
Taking the infimum in $y \in X$ yields the upper semicontinuity
$\limsup_{i \to \infty} \Phi_{\tau_i}(x_i) \leq \Phi_\tau(x)$.
On the other hand, to see the lower semicontinuity,
let $(y_i)_{i \ge 1} \subset \mathfrak{D}(\phi)$ be a sequence such that
\[
 \lim_{i \to \infty} \big\{ \Phi(\tau_i,x_i;y_i)-\Phi_{\tau_i}(x_i) \big\} =0.
\]
Since $\sup_{i \ge 1} \Phi(\tau_i,x_i;y_i)<\infty$,
we find from \eqref{estimad} that $D:=\sup_{i \ge 1} d(x_i,y_i) <\infty$.
Thus, the triangle inequality implies $\sup_{i \ge 1} d(x,y_i)<\infty$.
It follows from Lemma~\ref{basicpestimate} that
\[ d^p(x,y_i) \le \{ d(x,x_i) +d(x_i,y_i) \}^p
 \le (1+\epsilon) d^p(x_i,y_i) +\mathfrak{C}(p,\epsilon) d^p(x,x_i) \]
for any $\epsilon>0$.
Hence, we have
\[
 \liminf_{i \to \infty}\Phi_{\tau_i}(x_i)
 = \liminf_{i \to \infty} \Phi(\tau_i,x_i;y_i)
 \geq \liminf_{i \to \infty} \Bigg\{ \phi(y_i) +\frac{d^p(x,y_i)}{p\tau^{p-1}_i} \Bigg\}
 -\frac{\epsilon D^p}{p\tau^{p-1}}
 \geq \Phi_\tau(x) -\frac{\epsilon D^p}{p\tau^{p-1}}.
\]
Letting $\epsilon \to 0$ furnishes the lower semicontinuity
$\Phi_{\tau}(x) \le \liminf_{i \to \infty} \Phi_{\tau_i}(x_i)$,
which completes the proof.

\eqref{Phi-2}
The first claim is clear by the definition of $\Phi_{\tau}(x)$,
and the second claim follows from
\begin{align*}
 \phi(y_0) +\frac{d^p(x,y_0)}{p\tau^{p-1}_0}
 &=\Phi_{\tau_0}(x) \leq \Phi(\tau_0,x;y_1)
 =\Phi_{\tau_1}(x) +\Bigg( \frac{1}{p\tau^{p-1}_0}-\frac{1}{p\tau^{p-1}_1} \Bigg) d^p(x,y_1) \\
 &\leq \phi(y_0) +\frac{d^p(x,y_0)}{p\tau^{p-1}_1}
 +\Bigg( \frac{1}{p\tau^{p-1}_0}-\frac{1}{p\tau^{p-1}_1} \Bigg) d^p(x,y_1).
\end{align*}
Note also that the fourth claim is an immediate consequence of the second claim.
Finally, in the third claim, the first inequality is obvious and the second one
is a consequence of the second claim as
\[
 \phi(y_1) +\frac{d^p(x,y_1)}{p\tau^{p-1}_1}
 \leq \phi(y_0) +\frac{d^p(x,y_0)}{p\tau^{p-1}_1}
 \leq \phi(y_0) +\frac{d^p(x,y_1)}{p\tau^{p-1}_1}.
\]

\eqref{Phi-3}
For $x \in \overline{\mathfrak{D}(\phi)}$, $y_{\tau} \in J_{\tau}[x]$ and {any $y \in \mathfrak{D}(\phi)$},
we deduce from \eqref{existI} that
\begin{equation}\label{loeresmtiss}
 {\infty>\Phi(\tau,x;y) \geq \Phi(\tau,x;y_{\tau})  \geq} \phi(y_\tau) \geq \Phi_\tau(y_\tau)
 \geq \Phi_{\tau_*}(x_*)-C(p,\tau_*,\tau) d^p(x_*,y_\tau).
\end{equation}
Then, since $\tau<\tau_*$, $y_* \in J_{\tau_*}[x]$ satisfies $d(x,y_{\tau}) \le d(x,y_*)$
by the second claim in \eqref{Phi-2} and we find
\[
 {\Phi(\tau,x;y)} \geq \phi(y_\tau) \geq \Phi_{\tau_*}(x_*) -C(p,\tau_*,\tau)
 \big(d(x_*,x)+d(x,y_*) \big)^p.
\]
Combining this with
\begin{equation}\label{clmmaconst}
 \lim_{\tau \to 0} C(p,\tau_*,\tau)=\frac{1}{p\tau^{p-1}_*}<\infty
\end{equation}
from Lemmas~\ref{basicestphi} and \ref{basicpestimate}, we find
\begin{equation}\label{inflimitd}
 \lim_{\tau \to 0} \tau^{p-1}\inf_{y_\tau \in J_\tau[x]} \phi(y_\tau)=0.
\end{equation}
Now, for any $y \in \mathfrak{D}(\phi)$, we have
\[
 d^+_\tau(x)^p
 =\sup_{y_\tau \in J_\tau[x]} p\tau^{p-1} \big( \Phi(\tau,x;y_\tau)-\phi(y_\tau) \big)
 \leq p\tau^{p-1} \phi(y) +d^p(x,y)
 -p\tau^{p-1}\inf_{y_\tau \in J_\tau[x]} \phi(y_\tau).
\]
Then, \eqref{inflimitd} yields $\limsup_{\tau \to 0}d^+_\tau(x)^p \leq d^p(x,y)$
for any $y \in \mathfrak{D}(\phi)$, and $\lim_{\tau \to 0} d^+_\tau(x)=0$
since $x \in \overline{\mathfrak{D}(\phi)}$.

\eqref{Phi-4}
Given $x \in X$, since the function $\tau \longmapsto d_{\tau}^-(x)$ is non-decreasing,
it is continuous except for at most countably many points.
Then, at any continuous point $\tau \in (0,\tau_*(\phi))$,
the last claim in \eqref{Phi-2} implies
\[ d_{\tau}^+(x) \le \lim_{t \to \tau^+} d_t^-(x) =d_{\tau}^-(x) \le d_{\tau}^+(x). \]
Hence, $d_{\tau}^+(x)=d_{\tau}^-(x)$.

\eqref{Phi-5}
Given $x \in X$, we deduce from \eqref{Phi-4} that
$\phi$ is constant on $J_\tau[x]$ for $\tau \in (0,\tau_*(\phi)) \setminus \mathscr{N}_x$.
Moreover, thanks to the monotonicity as in \eqref{Phi-2},
it suffices to show the convergence within $(0,\tau_*(\phi)) \setminus \mathscr{N}_x$.
Thus, we choose $y_\tau \in J_\tau[x]$ for each $\tau \in (0,\tau_*(\phi)) \setminus \mathscr{N}_x$
and consider the convergence of $\phi(y_\tau)$.

First, if $x \in \overline{\mathfrak{D}(\phi)}$,
then $\lim_{\tau \to 0}d^+_\tau(x)=0$ by \eqref{Phi-3}
and hence $y_\tau \to x$ as $\tau \to 0$.
Thus, on the one hand, the $\mathcal{T}_+$-lower semicontinuity of $\phi$ yields
\[
 \liminf_{\tau \to 0}\Phi_\tau(x)
 =\liminf_{\tau \to 0} \bigg\{ \phi(y_\tau) +\frac{d^p(x,y_\tau)}{p\tau^{p-1}} \bigg\}
 \geq \liminf_{\tau \to 0} \phi(y_\tau) \geq \phi(x).
\]
On the other hand, we have
$\phi(x) \geq \limsup_{\tau \to 0} \Phi_\tau(x) \geq \limsup_{\tau \to 0} \phi(y_\tau)$.
Combining these shows the first assertion.

Next, let $x \not\in \overline{\mathfrak{D}(\phi)}$.
On the one hand, clearly $\liminf_{\tau \to 0}d(x,y_\tau)>0$ holds
since $y_{\tau} \in \mathfrak{D}(\phi)$.
On the other hand,
by \eqref{loeresmtiss}, \eqref{clmmaconst}, the triangle inequality and
$d(x,y_\tau)\leq d(x,y_{\tau_*})$ from \eqref{Phi-2}, we find
\[
 \liminf_{\tau \to 0} \phi(y_\tau)
 \geq \Phi_{\tau_*}(x_*)-\frac{1}{p\tau^{p-1}_*} \big( d(x_*,x) +d(x,y_{\tau_*}) \big)^p>-\infty.
\]
This is enough to obtain the claim, indeed,
\[
 \liminf_{\tau \to 0} \Phi_\tau(x)
 \geq \liminf_{\tau \to 0}\phi(y_\tau) +\liminf_{\tau \to 0} \frac{d^p(x,y_\tau)}{p\tau^{p-1}}
 =\infty =\phi(x).
\]
\end{proof}

Next we investigate the derivative of the Moreau--Yosida approximation
(cf.\ \cite[Theorem~3.1.4]{AGS}, {\cite[Lemma~4.5]{RMS}}).

\begin{theorem}[Derivative of $\Phi_{\tau}(x)$]\label{th:Phi'} %[AGS, Theorem 3.1.4]
Suppose Assumption~$\ref{nonemtpassumpto}$.
Then, for any $x \in X$,  the function $\tau \longmapsto \Phi_\tau(x)$
is locally Lipschitz in $(0,\tau_*(\phi))$ and
\begin{equation}\label{dervaofphi}
 \frac{\dd}{{\dd}\tau}[\Phi_\tau(x)]
 =-\frac{p-1}{p} \bigg( \frac{d^+_\tau(x)}{\tau} \bigg)^p
 =-\frac{p-1}{p} \bigg( \frac{d^-_\tau(x)}{\tau} \bigg)^p
\end{equation}
holds for $\mathscr{L}^1$-a.e.\ $\tau \in (0,\tau_*(\phi))$.
Moreover, if in addition $\phi$ is $\mathcal{T}_+$-lower semicontinuous,
then we have
\begin{equation}\label{interdervaofphi}
 \frac{d^p(x,y_\tau)}{p\tau^{p-1}}
 +\int^{\tau}_0 \frac{p-1}{p} \bigg( \frac{d^\pm_r(x)}{r} \bigg)^p \,{\dd}r
 =\phi(x)-\phi(y_\tau)
\end{equation}
for all $\tau\in (0,\tau_*(\phi))$ and $y_\tau \in J_\tau[x]$.
\end{theorem}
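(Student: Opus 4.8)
The strategy is the one of \cite[Theorem~3.1.4]{AGS}: extract a two-sided incremental estimate for $\tau\mapsto\Phi_\tau(x)$ directly from the minimality defining $J_\tau$, and then feed it into the monotonicity already recorded in Lemma~\ref{monocontiofphi}. Fix $x\in X$. First I would take $0<\tau_0<\tau_1<\tau_*(\phi)$ and any $y_i\in J_{\tau_i}[x]$ (nonempty by Assumption~\ref{nonemtpassumpto}), and compare $\Phi_{\tau_1}(x)\le\Phi(\tau_1,x;y_0)$ with $\Phi_{\tau_0}(x)=\Phi(\tau_0,x;y_0)$, and symmetrically $\Phi_{\tau_0}(x)\le\Phi(\tau_0,x;y_1)$ with $\Phi_{\tau_1}(x)=\Phi(\tau_1,x;y_1)$. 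Since two values $\Phi(\tau_0,x;y)$ and $\Phi(\tau_1,x;y)$ at a common point $y$ differ only through the factor $\tau^{-(p-1)}$ multiplying $d^p(x,y)/p$, this yields
\[
 \frac{d^p(x,y_0)}{p}\Big(\frac1{\tau_0^{p-1}}-\frac1{\tau_1^{p-1}}\Big)
 \le \Phi_{\tau_0}(x)-\Phi_{\tau_1}(x)
 \le \frac{d^p(x,y_1)}{p}\Big(\frac1{\tau_0^{p-1}}-\frac1{\tau_1^{p-1}}\Big).
\]

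For local Lipschitz continuity I would fix a compact $[a,b]\subset(0,\tau_*(\phi))$, pick $b'\in(b,\tau_*(\phi))$ and $\tilde y\in J_{b'}[x]$, and use the monotonicity $d(x,y_{\tau_0})\le d(x,y_{\tau_1})\le d(x,\tilde y)<\infty$ from Lemma~\ref{monocontiofphi}\eqref{Phi-2} (valid for $\tau_0,\tau_1\in[a,b]$) together with the elementary bound $\tau_0^{-(p-1)}-\tau_1^{-(p-1)}\le(p-1)a^{-p}(\tau_1-\tau_0)$; the estimate above then bounds the difference quotients of $\Phi_\cdot(x)$ on $[a,b]$, so $\tau\mapsto\Phi_\tau(x)$ is locally Lipschitz, hence locally absolutely continuous and differentiable $\mathscr L^1$-a.e. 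To identify the derivative I would restrict to $\tau\in(0,\tau_*(\phi))\setminus\mathscr N_x$ ($\mathscr N_x$ as in Lemma~\ref{monocontiofphi}\eqref{Phi-4}, so that $d^-_\tau(x)=d^+_\tau(x)=:d_\tau(x)$) which is moreover a continuity point of the non-decreasing map $\tau'\mapsto d^-_{\tau'}(x)$ and at which $\Phi_\cdot(x)$ is differentiable --- a co-countable, hence full-measure, set. Letting $\tau_0\uparrow\tau$ and $\tau_1\downarrow\tau$, the chain $d^-_{\tau_0}(x)\le d(x,y_{\tau_0})\le d^+_{\tau_0}(x)\le d^-_{\tau_1}(x)\le d(x,y_{\tau_1})\le d^+_{\tau_1}(x)$ (the middle inequality being the last claim of \eqref{moninpro}) forces both $d(x,y_{\tau_0})$ and $d(x,y_{\tau_1})$ to converge to $d_\tau(x)$, while $(\tau_1-\tau_0)^{-1}(\tau_0^{-(p-1)}-\tau_1^{-(p-1)})\to(p-1)\tau^{-p}$; dividing the displayed inequality by $\tau_1-\tau_0$ and passing to the limit yields \eqref{dervaofphi}.

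For the integral identity I would invoke the extra hypothesis that $\phi$ is $\mathcal T_+$-lower semicontinuous, which via Lemma~\ref{monocontiofphi}\eqref{Phi-5} gives $\lim_{s\to0^+}\Phi_s(x)=\phi(x)$. By local absolute continuity, for $0<s<\tau<\tau_*(\phi)$,
\[
 \Phi_s(x)-\Phi_\tau(x)=\int_s^\tau \frac{p-1}{p}\Big(\frac{d^\pm_r(x)}{r}\Big)^p\,{\dd}r ,
\]
and letting $s\to0^+$ while applying monotone convergence to the nonnegative integrand yields $\phi(x)-\Phi_\tau(x)=\int_0^\tau\frac{p-1}{p}(d^\pm_r(x)/r)^p\,{\dd}r$ (both sides being possibly $+\infty$, namely when $x\notin\mathfrak D(\phi)$). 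Substituting $\Phi_\tau(x)=\phi(y_\tau)+d^p(x,y_\tau)/(p\tau^{p-1})$ for an arbitrary $y_\tau\in J_\tau[x]$ then rearranges this into \eqref{interdervaofphi}.

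The delicate point is the limit passage identifying the derivative: one must ensure that, for $\mathscr L^1$-a.e.\ $\tau$, the lower and upper endpoints $d(x,y_{\tau_0})$ and $d(x,y_{\tau_1})$ in the two-sided estimate genuinely pinch to a common value $d_\tau(x)$, which is precisely where the interplay between the monotonicity of $\tau\mapsto d^\pm_\tau(x)$ and the a.e.\ coincidence $d^-_\tau(x)=d^+_\tau(x)$ from Lemma~\ref{monocontiofphi} is essential; the incremental estimate, the Lipschitz bound and the final integration are routine.
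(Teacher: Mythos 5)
Your proposal is correct and follows essentially the same route as the paper: the two-sided incremental estimate obtained by testing $\Phi(\tau_i,x;\cdot)$ at the minimizers for the other parameter, local Lipschitzness from the monotonicity of $\tau\mapsto d^\pm_\tau(x)$, identification of the derivative a.e.\ using $d^-_\tau(x)=d^+_\tau(x)$ outside $\mathscr{N}_x$, and integration combined with Lemma~\ref{monocontiofphi}\eqref{Phi-5} to get \eqref{interdervaofphi}. The only cosmetic difference is that you pass to the limit with straddling parameters $\tau_0\uparrow\tau$, $\tau_1\downarrow\tau$ and a pinching argument at continuity points of $d^-_\cdot(x)$, whereas the paper anchors one endpoint at $\tau$ and sandwiches $-\frac{\dd}{\dd\tau}\Phi_\tau(x)$ between $\frac{p-1}{p}(d^+_\tau(x)/\tau)^p$ and $\frac{p-1}{p}(d^-_\tau(x)/\tau)^p$; both are valid.
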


\begin{proof}
For $0<\tau_0<\tau_1<\tau_*(\phi)$ and any $y_{\tau_1}\in J_{\tau_1}[x]$,
we have
\[
 \Phi_{\tau_0}(x)-\Phi_{\tau_1}(x)
 \leq \Phi(\tau_0,x;y_{\tau_1}) -\Phi(\tau_1,x;y_{\tau_1})
 =\frac{\tau^{p-1}_1-\tau^{p-1}_0}{p\tau_0^{p-1}\tau_1^{p-1}}d^p(x,y_{\tau_1}).
\]
Note that
\[
 \tau_1^{p-1} -\tau_0^{p-1} \le (p-1)\tau_i^{p-2}(\tau_1 -\tau_0) \quad
 \text{with}\,\
 i=\begin{cases} 0 &\text{for}\ 1<p \le 2, \\ 1 &\text{for}\ 2 \le p<\infty \end{cases}
\]
by the concavity ($1<p \le 2$) or the convexity ($2 \le p<\infty$) of $t^{p-1}$ in $t>0$.
This implies
\[
 \Phi_{\tau_0}(x)-\Phi_{\tau_1}(x)
 \leq \frac{\tau^{p-1}_1-\tau^{p-1}_0}{p\tau_0^{p-1}\tau_1^{p-1}} \big( d^-_{\tau_1}(x) \big)^p
 \leq \frac{(p-1)(\tau_1-\tau_0)}{p\tau_{1-i}^{p-1} \tau_i} \big( d^-_{\tau_1}(x) \big)^p.
\]
We similarly observe
\[
 \Phi_{\tau_0}(x)-\Phi_{\tau_1}(x)
 \geq \frac{\tau^{p-1}_1-\tau^{p-1}_0}{p\tau^{p-1}_0\tau^{p-1}_1} \big( d^+_{\tau_0}(x) \big)^p
 \geq \frac{(p-1)(\tau_1-\tau_0)}{p\tau_{1-i} \tau^{p-1}_i} \big( d^+_{\tau_0}(x) \big)^p.
\]
Combining them furnishes
\[
 \frac{p-1}{p}\frac{(d^+_{\tau_0}(x))^p}{\tau_{1-i} \tau^{p-1}_i}
 \leq \frac{\Phi_{\tau_0}(x)-\Phi_{\tau_1}(x)}{\tau_1-\tau_0}
 \leq \frac{p-1}p\frac{(d^-_{\tau_1}(x))^p}{\tau^{p-1}_{1-i} \tau_i},
\]
which shows that $\tau \longmapsto \Phi_\tau(x)$ is locally Lipschitz in $(0,\tau_*(\phi))$.

Take $\tau \in (0,\tau_*(\phi)) \setminus \mathscr{N}_x$
at where $\Phi_{\tau}(x)$ is differentiable,
with $\mathscr{N}_x$ from Lemma~\ref{monocontiofphi}\eqref{Phi-4}.
Then, by applying the above estimates to $\tau_1 \to \tau^+$ ($\tau_0=\tau$)
and $\tau_0 \to \tau^-$ ($\tau_1=\tau$),
we have
\[
 \frac{p-1}{p}\frac{(d^+_{\tau}(x))^p}{\tau^p}
 \leq -\frac{\dd}{{\dd}\tau}[\Phi_\tau(x)]
 \leq \frac{p-1}{p}\frac{(d^-_{\tau}(x))^p}{\tau^p}.
\]
Therefore, \eqref{dervaofphi} follows.
Integrating \eqref{dervaofphi} on $[\tau_0,\tau] \subset (0,\tau_*(\phi))$ yields
\[
 \Phi_\tau(x) +\int^\tau_{\tau_0} \frac{p-1}{p} \bigg( \frac{d^\pm_r(x)}{r} \bigg)^p \,{\dd}r
 =\Phi_{\tau_0}(x).
\]
Letting $\tau_0 \to 0$, we deduce \eqref{interdervaofphi} from
Lemma~\ref{monocontiofphi}\eqref{Phi-5} and $\Phi_\tau(x)=\Phi(\tau,x;y_\tau)$.
\end{proof}

Now we estimate the local slope $|\partial\phi|$
defined in Definition \ref{wekafordef} (cf.\ \cite[Lemmas~3.1.3, 3.1.5]{AGS}).
Let $\mathfrak{D}(|\partial\phi|) :=\{x\in \mathfrak{D}(\phi) \,|\, |\partial\phi|(x)<\infty\}$
be the proper effective domain of $|\partial\phi|$.
Recall that $q=p/(p-1)$.

\begin{lemma}\label{slopfirstexsts} %[AGS, Lemma 3.1.3]
Suppose Assumption~$\ref{nonemtpassumpto}$.
Then we have, for any $x \in X$, $\tau \in (0,\tau_*(\phi))$ and $y_\tau\in J_{\tau}[x]$,
\[
 |\partial\phi|^q(y_\tau) \leq \frac{d^p(x,y_\tau)}{\tau^p}.
\]
In particular, $y_\tau \in \mathfrak{D}(|\partial\phi|)$ holds and
$\mathfrak{D}(|\partial\phi|)$ is dense in $\overline{\mathfrak{D}(\phi)}$
with respect to $\mathcal{T}_+$.
\end{lemma}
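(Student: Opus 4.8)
The plan is to exploit the minimizing property of $y_\tau$ directly, together with the elementary convexity of $t\mapsto t^p$ on $[0,\infty)$. First, since $y_\tau\in J_\tau[x]$ minimizes $\Phi(\tau,x;\cdot)$, for every $z\in X$ one has $\Phi(\tau,x;y_\tau)\le\Phi(\tau,x;z)$, that is,
\[
 \phi(y_\tau)-\phi(z)\le\frac{d^p(x,z)-d^p(x,y_\tau)}{p\tau^{p-1}}.
\]
Taking $z$ to be any point of $\mathfrak{D}(\phi)$ (which is nonempty as $\phi$ is proper) shows $\phi(y_\tau)<\infty$, so $y_\tau\in\mathfrak{D}(\phi)$.

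Next I would apply the triangle inequality in the order appropriate to the asymmetric metric, $d(x,z)\le d(x,y_\tau)+d(y_\tau,z)$, and abbreviate $a:=d(x,y_\tau)$ and $h:=d(y_\tau,z)$. By convexity of $t\mapsto t^p$ we have $(a+h)^p-a^p\le p(a+h)^{p-1}h$, hence
\[
 \phi(y_\tau)-\phi(z)\le\frac{(a+h)^{p-1}}{\tau^{p-1}}\,d(y_\tau,z).
\]
Dividing by $d(y_\tau,z)$, taking positive parts, and letting $z\to y_\tau$ in $\mathcal{T}_+$ (so that $h\to0$), one obtains $|\partial\phi|(y_\tau)\le\big(d(x,y_\tau)/\tau\big)^{p-1}$; raising both sides to the power $q=p/(p-1)$ gives exactly $|\partial\phi|^q(y_\tau)\le d^p(x,y_\tau)/\tau^p$. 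In particular $|\partial\phi|(y_\tau)<\infty$, i.e.\ $y_\tau\in\mathfrak{D}(|\partial\phi|)$.

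For the density assertion, fix $x\in\overline{\mathfrak{D}(\phi)}$. By Assumption~\ref{nonemtpassumpto} we may pick $y_\tau\in J_\tau[x]$ for each $\tau\in(0,\tau_*(\phi))$, and Lemma~\ref{monocontiofphi}\eqref{Phi-3} gives $d(x,y_\tau)\le d^+_\tau(x)\to0$ as $\tau\to0$; by Remark~\ref{forwardpointspaceandbackwardones}\eqref{pfms-a} this means $y_\tau\to x$ in $\mathcal{T}_+$. Since each $y_\tau$ lies in $\mathfrak{D}(|\partial\phi|)$ by the first part, $\mathfrak{D}(|\partial\phi|)$ is dense in $\overline{\mathfrak{D}(\phi)}$ with respect to $\mathcal{T}_+$.

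I do not expect a genuine obstacle here; the only points requiring care are the order of the two terms in the triangle inequality — crucial because $d$ is asymmetric and the slope at $y_\tau$ is measured through $d(y_\tau,z)$ rather than $d(z,y_\tau)$ — the interpretation of the $\limsup$ defining $|\partial\phi|$ as a limit along $\mathcal{T}_+$, and the harmless degenerate case $d(x,y_\tau)=0$, which the convexity bound (giving $h^{p-1}/\tau^{p-1}\to0$) already covers.
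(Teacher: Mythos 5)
Your proof is correct and follows essentially the same route as the paper: both compare $\Phi(\tau,x;y_\tau)\le\Phi(\tau,x;z)$ for $z$ near $y_\tau$, control $d^p(x,z)-d^p(x,y_\tau)$ by the triangle inequality together with an elementary estimate on $t\mapsto t^p$ (you use the convexity bound $(a+h)^p-a^p\le p(a+h)^{p-1}h$, the paper the mean value theorem along a sequence realizing the $\limsup$), and then pass to the limit in the definition of $|\partial\phi|(y_\tau)$. The density argument via Lemma~\ref{monocontiofphi}\eqref{Phi-3} is identical to the paper's.
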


\begin{proof}
Assume $|\partial\phi|(y_\tau)>0$ without loss of generality
and take a sequence $(y_i)_{i \ge 1}$ converging to $y_{\tau}$ with
\[
 \lim_{i \to \infty} \frac{\phi(y_{\tau})-\phi(y_i)}{d(y_{\tau},y_i)} =|\partial\phi|(y_\tau).
\]
Observe from the choice of $y_\tau$ that
\[
 \phi(y_\tau)-\phi(y_i)
 \leq \frac{d^p(x,y_i)}{p\tau^{p-1}} -\frac{d^p(x,y_\tau)}{p\tau^{p-1}}.
\]
Dividing this inequality by $d(y_\tau,y_i)$ and using the mean value theorem,
we obtain the first claim as
\[
 |\partial\phi|(y_\tau)
 \leq \frac{d^{p-1}(x,y_{\tau})}{\tau^{p-1}} \limsup_{i \to \infty} \frac{d(x,y_i)-d(x,y_{\tau})}{d(y_{\tau},y_i)}
 \le \frac{d^{p-1}(x,y_\tau)}{\tau^{p-1}}.
\]
Thus $y_\tau \in \mathfrak{D}(|\partial\phi|)$,
and the density of $\mathfrak{D}(|\partial\phi|)$ in $\overline{\mathfrak{D}(\phi)}$
follows from Lemma~\ref{monocontiofphi}\eqref{Phi-3}.
\end{proof}

\begin{lemma}\label{dulecolp} %[AGS, Lemma 3.1.5]
Suppose Assumption~$\ref{nonemtpassumpto}$.
Then we have
\begin{equation}\label{contrllphandtheota}
 \limsup_{\tau \to 0} \frac{\phi(x)-\Phi_\tau(x)}{\tau}
 =\frac{|\partial\phi|^q(x)}{q}
 \quad \text{for all}\,\ x \in \mathfrak{D}(\phi).
\end{equation}
Moreover, there exists a sequence $\tau_i \to 0$ such that, for $y_{\tau} \in J_{\tau}[x]$,
\begin{equation}\label{delphi}
 |\partial\phi|^q(x)
 =\lim_{i \to \infty} \frac{\phi(x)-\phi(y_{\tau_i})}{\tau_i}
 =\lim_{i \to \infty} \frac{d^p(x,y_{\tau_i})}{\tau_i^p}
 \geq \liminf_{\tau \to 0}|\partial\phi|^q(y_\tau).
\end{equation}
\end{lemma}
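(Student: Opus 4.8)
The plan is to work throughout with a fixed selection $y_\tau\in J_\tau[x]$ (which exists for $\tau\in(0,\tau_*(\phi))$ by Assumption~\ref{nonemtpassumpto}) and to exploit the elementary identity
\[
 \phi(x)-\Phi_\tau(x)=\big[\phi(x)-\phi(y_\tau)\big]-\frac{d^p(x,y_\tau)}{p\tau^{p-1}},
\]
together with two facts: $y_\tau\to x$ in $\mathcal{T}_+$ as $\tau\to0$ by Lemma~\ref{monocontiofphi}\eqref{Phi-3} (note $x\in\mathfrak{D}(\phi)\subset\overline{\mathfrak{D}(\phi)}$), and $\phi(x)\ge\phi(y_\tau)$ by Lemma~\ref{monocontiofphi}\eqref{Phi-2}, so that $\phi(x)-\Phi_\tau(x)\ge0$. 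Write $s:=|\partial\phi|(x)\in[0,\infty]$.

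For \eqref{contrllphandtheota} I would prove the two inequalities separately. The inequality ``$\le$'' is vacuous when $s=\infty$; otherwise, given $\varepsilon>0$, the convergence $y_\tau\to x$ and the definition of the local slope give $\phi(x)-\phi(y_\tau)\le(s+\varepsilon)\,d(x,y_\tau)$ for all small $\tau$, and plugging this into the identity and maximizing the resulting expression $a\,t-t^p/(p\tau^{p-1})$ over $t=d(x,y_\tau)\ge0$ (Young's inequality) yields $\phi(x)-\Phi_\tau(x)\le\frac{\tau}{q}(s+\varepsilon)^q$; divide by $\tau$ and let $\varepsilon\downarrow0$. The inequality ``$\ge$'' is vacuous when $s=0$ (the left-hand side being $\ge0$); otherwise, for $y$ with $\phi(y)<\phi(x)$, putting $a:=[\phi(x)-\phi(y)]/d(x,y)$ and comparing $\Phi_\tau(x)$ with the competitor $y$ at the tuned step $\tau:=d(x,y)\,a^{-1/(p-1)}$ gives, after a short computation, $\tfrac1\tau\big(\phi(x)-\Phi_\tau(x)\big)\ge a^q/q$; feeding in a sequence $y_i\to x$ with $[\phi(x)-\phi(y_i)]/d(x,y_i)\to s$, whose associated steps satisfy $\tau_i\to0$, completes \eqref{contrllphandtheota}.

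For \eqref{delphi} I would choose a sequence $\tau_i\to0$ realizing the $\limsup$ in \eqref{contrllphandtheota}, i.e.\ with $\tfrac1{\tau_i}\big(\phi(x)-\Phi_{\tau_i}(x)\big)\to s^q/q$, and set $A_\tau:=\tfrac1\tau[\phi(x)-\phi(y_\tau)]$ and $B_\tau:=d^p(x,y_\tau)/\tau^p$, so that $A_\tau-\tfrac1p B_\tau=\tfrac1\tau[\phi(x)-\Phi_\tau(x)]$ and $A_\tau=c_\tau B_\tau^{1/p}$ with $c_\tau:=[\phi(x)-\phi(y_\tau)]/d(x,y_\tau)$ satisfying $\limsup_{\tau\to0}c_\tau\le s$. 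If $s\in(0,\infty)$, then combining $c_{\tau_i}B_{\tau_i}^{1/p}-\tfrac1p B_{\tau_i}\to s^q/q=\sup_{u\ge0}\big(su-u^p/p\big)$ with the Young bound $c_{\tau_i}B_{\tau_i}^{1/p}-\tfrac1p B_{\tau_i}\le\tfrac1q c_{\tau_i}^q$ first forces $c_{\tau_i}\to s$ and then, since the supremum is attained only at $u=s^{1/(p-1)}$, $B_{\tau_i}^{1/p}\to s^{1/(p-1)}$; hence $B_{\tau_i}\to s^q$ and $A_{\tau_i}=c_{\tau_i}B_{\tau_i}^{1/p}\to s^q$, which are the first two equalities in \eqref{delphi}, while the last inequality comes from Lemma~\ref{slopfirstexsts} ($|\partial\phi|^q(y_\tau)\le d^p(x,y_\tau)/\tau^p$ for all $\tau$), giving $\liminf_{\tau\to0}|\partial\phi|^q(y_\tau)\le\liminf_{\tau\to0}B_\tau\le\lim_i B_{\tau_i}=s^q$. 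The case $s=0$ is immediate since $c_\tau\to0$ and $0\le c_\tau B_\tau^{1/p}-\tfrac1p B_\tau$ force $B_\tau\to0$ and $A_\tau\to0$; for $s=\infty$, where the above only yields $A_{\tau_i}\to\infty$, I would instead pass to a nearby sequence via the derivative formula \eqref{dervaofphi} and the absolute continuity of $\tau\mapsto\Phi_\tau(x)$ from Theorem~\ref{th:Phi'}, extracting from $\tfrac1{\tau_i}\big(\phi(x)-\Phi_{\tau_i}(x)\big)\ge\tfrac1{\tau_i}\int_{\tau_i/2}^{\tau_i}\tfrac{p-1}{p}\big(d^+_r(x)/r\big)^p\,{\dd}r\to\infty$ a sequence $r_i\in[\tau_i/2,\tau_i]$ with $d^+_{r_i}(x)=d^-_{r_i}(x)$ and $\big(d^\pm_{r_i}(x)/r_i\big)^p\to\infty$, along which both $B_{r_i}$ and $A_{r_i}$ diverge while the remaining inequality is automatic.

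I expect the main obstacle to be the step, in the case $s\in(0,\infty)$, where one upgrades the convergence of the single quantity $A_{\tau_i}-\tfrac1p B_{\tau_i}$ to the separate convergence of $A_{\tau_i}$ and $B_{\tau_i}$ by analyzing the equality case of Young's inequality, together with the companion point that the borderline value $s=\infty$ must be handled through the derivative identity for $\Phi_\tau(x)$ rather than through a single resolvent competitor.
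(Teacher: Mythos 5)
For the case $|\partial\phi|(x)<\infty$ your argument is correct and is essentially the paper's proof. The upper bound in \eqref{contrllphandtheota} is obtained exactly as in the paper (via $y_\tau\to x$ from Lemma~\ref{monocontiofphi}\eqref{Phi-3} and Young's inequality); your lower bound, playing a near-optimal competitor $y$ against the tuned step $\tau=d(x,y)a^{-1/(p-1)}$, is the same computation that the paper packages as a supremum over $0<\tau<d(x,y)/a$; and for \eqref{delphi} your analysis of the equality case in Young's inequality (first forcing $c_{\tau_i}\to|\partial\phi|(x)$ and boundedness of $B_{\tau_i}$, then convergence of $B_{\tau_i}$ and $A_{\tau_i}$) is a slightly more detailed version of the paper's ``comparing with the Young inequality'' step, with the final inequality from Lemma~\ref{slopfirstexsts} as in the paper.

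The genuine problem is your treatment of $|\partial\phi|(x)=\infty$. The inequality $\tau_i^{-1}\big(\phi(x)-\Phi_{\tau_i}(x)\big)\ge\tau_i^{-1}\int_{\tau_i/2}^{\tau_i}\frac{p-1}{p}\big(d^+_r(x)/r\big)^p\,{\dd}r$ is fine under Assumption~\ref{nonemtpassumpto} (integrate \eqref{dervaofphi} over $[\tau_i/2,\tau_i]$ and use $\phi(x)\ge\Phi_{\tau_i/2}(x)$), but the inference that the right-hand side tends to infinity is a non sequitur: divergence of the larger side says nothing about the smaller one. Worse, under the stated hypotheses (only Assumption~\ref{nonemtpassumpto}; no lower semicontinuity of $\phi$) the middle equality in \eqref{delphi} can genuinely fail when the slope is infinite, so no repair is possible in that generality. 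For instance, on $X=[-1,1]$ with the Euclidean distance, $p=2$, $x=0$, set $\phi(0)=0$ and $\phi(y)=-1-2^{-k-1}$ for $2^{-k}\le|y|<2^{-k+1}$; then $J_\tau[x']\neq\emptyset$ for every $x'$ and $\tau$, $|\partial\phi|(0)=\infty$, yet every $y_\tau\in J_\tau[0]$ satisfies $d(0,y_\tau)<\tau$, so $d^p(0,y_\tau)/\tau^p$ stays bounded along every sequence (here $\phi(0)-\Phi_\tau(0)\ge 1$, which is how \eqref{contrllphandtheota} is saturated without $d(0,y_\tau)/\tau$ blowing up). Note that the paper's own proof also tacitly assumes $|\partial\phi|(x)<\infty$ (its identity $\frac{|\partial\phi|^q(x)}{q}=\limsup_{\tau\to0}\{|\partial\phi|(x)\frac{d(x,y_\tau)}{\tau}-\frac{d^p(x,y_\tau)}{p\tau^p}\}$ is meaningless for infinite slope), so the lemma should be read in the finite-slope case; if one additionally assumes $\mathcal{T}_+$-lower semicontinuity, your idea can be repaired: \eqref{interdervaofphi} gives $\phi(x)-\Phi_{\tau_i}(x)=\int_0^{\tau_i}\frac{p-1}{p}\big(d^\pm_r(x)/r\big)^p\,{\dd}r$, so the average over all of $(0,\tau_i)$ (not over $[\tau_i/2,\tau_i]$, where the mass need not sit) diverges, and one may pick $r_i\le\tau_i$ outside the countable set $\mathscr{N}_x$ of Lemma~\ref{monocontiofphi}\eqref{Phi-4} with $\big(d^\pm_{r_i}(x)/r_i\big)^p\to\infty$, whence $B_{r_i}\to\infty$ and $A_{r_i}\ge\frac1p B_{r_i}\to\infty$.
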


\begin{proof}
We first show \eqref{contrllphandtheota}.
By the Young inequality, for any $s>0$, we have
\begin{equation}\label{normestimate}
 \frac{s^q}{q}
 =\sup_{t>0} \bigg( st-\frac{t^p}{p} \bigg)
 =\sup_{t>a} \bigg( st-\frac{t^p}{p} \bigg)
\end{equation}
for all $a \in (0,s^{1/(p-1)})$.
This implies
\begin{align*}
 \limsup_{\tau \to 0} \frac{\phi(x)-\Phi_\tau(x)}{\tau}
 &=\limsup_{\tau \to 0} \bigg\{ \frac{\phi(x)-\phi(y_\tau)}{\tau} -\frac{d^p(x,y_\tau)}{p\tau^p} \bigg\} \\
 &\leq \limsup_{\tau \to 0} \bigg\{ \frac{[\phi(x)-\phi(y_\tau)]_+}{d(x,y_\tau)}\frac{d(x,y_\tau)}{\tau}
 -\frac{d^p(x,y_\tau)}{p\tau^p} \bigg\} \\
 &\leq \limsup_{\tau \to 0} \bigg\{ |\partial\phi|(x) \frac{d(x,y_\tau)}{\tau}
 -\frac{d^p(x,y_\tau)}{p\tau^p} \bigg\}
 \leq \frac{|\partial\phi|^q(x)}{q},
\end{align*}
where we used Lemma~\ref{monocontiofphi}\eqref{Phi-3} in the second inequality.
If $|\partial\phi|(x)=0$,
then we are done with the help of the first claim in \eqref{moninpro}.
To show the reverse inequality under $|\partial\phi|(x)>0$,
we again use \eqref{normestimate} to see
\begin{align*}
 \frac{|\partial\phi|^q(x)}{q}
 &= \limsup_{y \to x} \frac1q \bigg( \frac{[\phi(x)-\phi(y)]_+}{{d(x,y)}} \bigg)^q \\
 &= \limsup_{y \to x} \sup_{0<\tau<d(x,y)/a}
 \bigg\{ \frac{[\phi(x)-\phi(y)]_+}{d(x,y)} \frac{d(x,y)}{\tau} -\frac{d^p(x,y)}{p\tau^{p}} \bigg\}
\end{align*}
for any $a \in (0,|\partial\phi|^{1/(p-1)}(x))$.
Hence, for small $\varepsilon>0$,
\[
 \frac{|\partial\phi|^q(x)}{q}
 \leq \sup_{y \neq x} \sup_{0<\tau<\varepsilon}
 \bigg\{ \frac{\phi(x)-\phi(y)}{\tau} -\frac{d^p(x,y)}{p\tau^p} \bigg\}
 = \sup_{0<\tau<\varepsilon} \frac{\phi(x)-\Phi_\tau(x)}{\tau}.
\]
Letting $\varepsilon \to 0$ shows
\[
 \frac{|\partial\phi|^q(x)}{q} \leq \limsup_{\tau \to 0} \frac{\phi(x)-\Phi_\tau(x)}{\tau}
\]
and completes the proof of \eqref{contrllphandtheota}.

We next prove \eqref{delphi}.
First of all,
the last inequality in \eqref{delphi} immediately follows from Lemma~\ref{slopfirstexsts}.
Now, on the one hand, \eqref{contrllphandtheota} furnishes a sequence $\tau_i \to 0$ such that
\begin{equation}\label{eq:delphi+}
 \lim_{i \to \infty} \frac{\phi(x)-\Phi_{\tau_i}(x)}{\tau_i} =\frac{|\partial\phi|^q(x)}{q}.
\end{equation}
On the other hand, we find from the above argument that
\[
 \frac{|\partial\phi|^q(x)}{q}
 =\limsup_{\tau \to 0} \bigg\{ |\partial\phi|(x)\frac{d(x,y_\tau)}{\tau} -\frac{d^p(x,y_\tau)}{p\tau^p} \bigg\}.
\]
Comparing this with the Young inequality \eqref{normestimate},
taking a subsequence if necessary, we obtain
\[
 |\partial\phi|^q(x) =\lim_{i \to \infty} \bigg( \frac{d(x,y_{\tau_i})}{\tau_i} \bigg)^p.
\]
Finally, substituting this into \eqref{eq:delphi+} yields
\[
 |\partial\phi|^q(x)
 =q \lim_{i \to \infty} \bigg\{ \frac{\phi(x)-\phi(y_{\tau_i})}{\tau_i}
 -\frac{d^p(x,y_{\tau_i})}{p\tau_i^p} \bigg\}
 =q \lim_{i \to \infty} \frac{\phi(x)-\phi(y_{\tau_i})}{\tau_i} -(q-1)|\partial\phi|^q(x).
\]
This completes the proof of \eqref{delphi}.
\end{proof}

\subsection{A priori estimates for discrete solutions and a compactness result}%%%%%
%%%%%%%%%%%%%%%%%%

We introduce De Giorgi's variational interpolation associated with
a solution to the recursive scheme \eqref{discreteequation}
(cf.\ \cite[Definition~3.2.1]{AGS}).

\begin{definition}[Variational interpolation]\label{df:var} %[AGS, Definition 3.2.1]
Given a solution $(\Xi^k_{\gtau})_{k \ge 0}$ to \eqref{discreteequation},
we denote by $\widetilde{\Xi}_{\gtau}:[0,\infty) \lra X$ its arbitrary interpolation satisfying
\[
 \widetilde{\Xi}_{\gtau}(t^k_{\gtau}) =\Xi^k_{\gtau},\qquad
 \widetilde{\Xi}_{\gtau}(t^{k-1}_{\gtau}+\delta)\in J_\delta \big[\Xi^{k-1}_{\gtau} \big] \quad
 \text{for}\ \delta \in (0,t^k_{\gtau} -t^{k-1}_{\gtau}).
\]
\end{definition}

Define a function $G_{\gtau}$ on $(0,\infty)$ associated with the discrete solution
$(\Xi^k_{\gtau})_{k \ge 0}$ as, for $\delta \in (0,t^k_{\gtau} -t^{k-1}_{\gtau}]$,
\[
 G_{\gtau}(t^{k-1}_{\gtau}+\delta)
 :=\frac{d^+_\delta(\Xi^{k-1}_{\gtau})}{\delta}
 \geq \frac{d(\Xi^{k-1}_{\gtau},\widetilde{\Xi}_{\gtau}(t^{k-1}_{\gtau}+\delta))}{\delta}.
\]
Observe that $G_{\gtau}$ is a Borel function by \eqref{moninpro}.
Moreover, we find from Lemma~\ref{slopfirstexsts} that
\begin{equation}\label{niforgG}
 |\partial\phi|^q \big( \widetilde{\Xi}_{\gtau}(t) \big)
 \leq G^p_{\gtau}(t)
 \quad \text{for}\ t>0.
\end{equation}
We also define a piecewise constant function $|\Xi'_{{\gtau}}|$ on $(0,\infty)$ by
\begin{equation}\label{detrivtauU'}
 |\Xi'_{\gtau}|(t)
 :=\frac{d(\Xi^{k-1}_{\gtau},\Xi^k_{\gtau})}{t^k_{\gtau}-t^{k-1}_{\gtau}}
 =\frac{d(\Xi^{k-1}_{\gtau},\Xi^k_{\gtau})}{\tau_k} \quad
 \text{for}\ t \in (t^{k-1}_{\gtau},t^k_{\gtau}].
\end{equation}

Then we have the following a priori estimates
(cf.\ \cite[Lemma~3.2.2, Remark~3.2.5]{AGS}, {\cite[Proposition~4.7]{RMS}}).
Recall Definition~\ref{discreesolu} for the piecewise constant interpolation
$\overline{\Xi}_{\gtau}(t)= \Xi^k_{\gtau}$ for $t \in (t^{k-1}_{\gtau},t^k_{\gtau}]$,
and Notation~\ref{notationX} that $(X,\star,d)$ is always a pointed forward $\Theta$-metric space.

\begin{lemma}[A priori estimates]\label{prooriesima} %[AGS, Lemma 3.2.2]
Suppose that $\phi$ is $\mathcal{T}_+$-lower semicontinuous
and Assumption~$\ref{nonemtpassumpto}$ holds,
and let $(\Xi^k_{\gtau})_{k \ge 0}$ be a solution to \eqref{discreteequation}.
\begin{enumerate}[{\rm (i)}]
\item \label{apri-1}
If $\|{\gtau}\|\in (0,\tau_*(\phi))$, then, for each couple of integers $0\leq k<l$, we have
\[
 \frac1p \int^{t^l_{\gtau}}_{t^k_{\gtau}} |\Xi'_{\gtau}|^p(t) \,{\dd}t
 +\frac1q \int^{t^l_{\gtau}}_{t^k_{\gtau}} G^p_{\gtau}(t) \,{\dd}t
 +\phi(\Xi^l_{\gtau}) =\phi(\Xi^k_{\gtau}).
\]

\item \label{apri-2}
For any $x_* \in X$ and $S,T>0$, there exist positive constants
$C_1=C_1(p,x_*,\tau_*(\phi),S,T)$ and $C_2=C_2(\Theta,p,x_*,\tau_*(\phi),S,T)$ such that, if
\[
 \phi(\Xi^0_{\gtau}) \leq S,\qquad
 d^p(x_*,\Xi^0_{\gtau}) \leq S,\qquad
 t^{N-1}_{\gtau} \leq T,\qquad
 \|{\gtau}\| \leq \frac{\tau_*(\phi)}{2^{p/(p-1)}p^2},
\]
then we have
\begin{align}
 &d^p(x_*,\Xi^k_{\gtau}) \leq C_1,\ \quad
 \sum_{i=1}^k \frac{d^p(\Xi^{i-1}_{\gtau},\Xi^i_{\gtau})}{p\tau^{p-1}_i}
 \leq \phi(\Xi^0_{\gtau}) -\phi(\Xi^k_{\gtau}) \leq C_1
 \quad \text{for all}\,\ 1 \leq k \leq N,
 \label{eq:apriori1} \\
 &\max\left\{ d^p \big( \overline{\Xi}_{\gtau}(t),\widetilde{\Xi}_{\gtau}(t) \big),
 d^p \big( \widetilde{\Xi}_{\gtau}(t),\overline{\Xi}_{\gtau}(t) \big) \right\}
 \leq C_2 \|{\gtau}\|^{p-1}
 \quad \text{for all}\,\ t \in [0,t_{\gtau}^N].
 \label{eq:apriori2}
\end{align}
\end{enumerate}
\end{lemma}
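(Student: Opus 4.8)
The plan is to obtain (i) as the telescoped sum of the single-step identity \eqref{interdervaofphi} of Theorem~\ref{th:Phi'}, and then to bootstrap (ii) from (i) by a discrete Gronwall-type argument exploiting the smallness of $\|{\gtau}\|$.

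\emph{Part (i).} Fix $k\ge 1$. Since $\phi$ is $\mathcal{T}_+$-lower semicontinuous, Assumption~\ref{nonemtpassumpto} holds and $\tau_k\le\|{\gtau}\|<\tau_*(\phi)$, one may apply \eqref{interdervaofphi} with $x=\Xi^{k-1}_{\gtau}$, $\tau=\tau_k$ and $y_\tau=\Xi^k_{\gtau}\in J_{\tau_k}[\Xi^{k-1}_{\gtau}]$. Using the change of variables $r=t-t^{k-1}_{\gtau}$, the equality $d^+_r=d^-_r$ for $\mathscr{L}^1$-a.e.\ $r$ (Lemma~\ref{monocontiofphi}\eqref{Phi-4}) and the definitions of $|\Xi'_{\gtau}|$ and $G_{\gtau}$, that identity becomes $\frac1p\int_{t^{k-1}_{\gtau}}^{t^k_{\gtau}}|\Xi'_{\gtau}|^p\,{\dd}t+\frac1q\int_{t^{k-1}_{\gtau}}^{t^k_{\gtau}}G^p_{\gtau}\,{\dd}t+\phi(\Xi^k_{\gtau})=\phi(\Xi^{k-1}_{\gtau})$. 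Summing this over the steps from the $(k+1)$-st to the $l$-th telescopes the potential and gives (i).

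\emph{Part (ii): the coupled estimates.} Put $E_k:=\phi(\Xi^0_{\gtau})-\phi(\Xi^k_{\gtau})$, which is non-negative and non-decreasing by \eqref{moninpro}; from (i) with lower index $0$, discarding the $G_{\gtau}$-term, $\sum_{i=1}^k d^p(\Xi^{i-1}_{\gtau},\Xi^i_{\gtau})/(p\tau_i^{p-1})\le E_k$. The discrete Hölder inequality gives $\big(\sum_{i=1}^k d(\Xi^{i-1}_{\gtau},\Xi^i_{\gtau})\big)^p\le (t^k_{\gtau})^{p-1}pE_k$, so the triangle inequality, $d^p(x_*,\Xi^0_{\gtau})\le S$ and $t^k_{\gtau}\le t^N_{\gtau}\le T+\|{\gtau}\|$ yield $d^p(x_*,\Xi^k_{\gtau})\le 2^{p-1}(S+(T+\|{\gtau}\|)^{p-1}pE_k)$. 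In the other direction, with $x_{**},\tau_{**}$ from the coercivity (Assumption~\ref{continudef}\eqref{ass-b}) and a fixed small $\hat\tau\in(0,\tau_{**})$ one has, by \eqref{existI} (the constant $C(p,\tau_{**},\hat\tau)$ of Lemma~\ref{basicestphi}) and the triangle inequality, $\phi(\Xi^k_{\gtau})\ge\Phi_{\hat\tau}(\Xi^k_{\gtau})\ge\Phi_{\tau_{**}}(x_{**})-C(p,\tau_{**},\hat\tau)d^p(x_{**},\Xi^k_{\gtau})$, hence $E_k\le A+C_*d^p(x_*,\Xi^k_{\gtau})$ with $A=A(p,\tau_*(\phi),S,x_*)$ finite and $C_*=C_*(p,\tau_*(\phi))$.

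\emph{Part (ii): closing the loop (the main obstacle) and \eqref{eq:apriori2}.} The two inequalities just derived are mutually circular, so one cannot conclude at once; this is the crux. The remedy is to subdivide $[0,t^N_{\gtau}]\subset[0,T+\|{\gtau}\|]$ into a number (of order $T/\delta_0$) of consecutive blocks of steps, each of total length comparable to a fixed $\delta_0=\delta_0(p,\tau_*(\phi),x_*)$ chosen with $2^{p-1}p\delta_0^{p-1}C_*\le\tfrac12$; on each block the corresponding restricted Hölder estimate renders the coefficient of $E_k$ absorbable, and since $\phi$ is non-increasing along the scheme the bound $\phi\le S$ survives at the start of every block while the bound on $d^p(x_*,\cdot)$ at most doubles from one block to the next. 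After finitely many blocks one gets $d^p(x_*,\Xi^k_{\gtau})\le C_1$, and then, via $\sum d^p/(p\tau^{p-1})\le E_k\le A+C_*C_1$, all of \eqref{eq:apriori1}, with $C_1$ depending on $T$ through the number of blocks. For \eqref{eq:apriori2}: if $t\in(t^{k-1}_{\gtau},t^k_{\gtau}]$, both $\overline{\Xi}_{\gtau}(t)=\Xi^k_{\gtau}$ and $\widetilde{\Xi}_{\gtau}(t)\in J_{t-t^{k-1}_{\gtau}}[\Xi^{k-1}_{\gtau}]$ lie within forward distance $d^+_{\tau_k}(\Xi^{k-1}_{\gtau})$ of $\Xi^{k-1}_{\gtau}$ (use $d^+_\delta\le d^+_{\tau_k}$ from \eqref{moninpro} and the definition of $J_{\tau_k}$); since $(d^+_{\tau_k}(\Xi^{k-1}_{\gtau}))^p=p\tau_k^{p-1}\big(\Phi_{\tau_k}(\Xi^{k-1}_{\gtau})-\inf_{J_{\tau_k}}\phi\big)$, bounding $\Phi_{\tau_k}(\Xi^{k-1}_{\gtau})\le\phi(\Xi^{k-1}_{\gtau})\le S$ and $\inf_{J_{\tau_k}}\phi$ from below by \eqref{existI} together with \eqref{eq:apriori1} leaves a term $\propto\tau_k^{p-1}(d^+_{\tau_k}(\Xi^{k-1}_{\gtau}))^p$ whose coefficient is $\le\tfrac12$ exactly because $\|{\gtau}\|\le\tau_*(\phi)/(2^{p/(p-1)}p^2)$; absorbing it yields $(d^+_{\tau_k}(\Xi^{k-1}_{\gtau}))^p\le C\|{\gtau}\|^{p-1}$. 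Finally $\Xi^{k-1}_{\gtau}$ lies in a forward ball whose radius is controlled by \eqref{eq:apriori1}, so the reversibility there is bounded by a value of $\Theta$ (Notation~\ref{notationX}), and the triangle inequality in both orders gives \eqref{eq:apriori2} with $C_2$ depending additionally on $\Theta$.
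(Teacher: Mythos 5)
Your part (i) is exactly the paper's argument: apply \eqref{interdervaofphi} on each step and telescope, so nothing to add there. For part (ii) you take a genuinely different route to the crucial bound $d^p(x_*,\Xi^k_{\gtau})\le C_1$. The paper combines the pointwise inequality $\tfrac1p d^p(x_*,\Xi^i_{\gtau})-\tfrac1p d^p(x_*,\Xi^{i-1}_{\gtau})\le d(\Xi^{i-1}_{\gtau},\Xi^i_{\gtau})\,d^{p-1}(x_*,\Xi^i_{\gtau})$ with the Young inequality (choosing $\varepsilon=2^{-1/(p-1)}\tau_*(\phi)/p$) and then invokes the discrete Gronwall lemma \cite[Lemma~3.2.4]{AGS}; the hypothesis $\|{\gtau}\|\le\tau_*(\phi)/(2^{p/(p-1)}p^2)$ enters precisely there, to make $m\le (p-1)/p$. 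You instead derive the circular pair ``H\"older bound on the accumulated displacement'' versus ``coercivity lower bound'' and close it by a block decomposition of $[0,t^N_{\gtau}]$ with per-block absorption, i.e.\ a hand-rolled Gronwall iteration; this is workable and yields constants of the same exponential-in-$T$ quality, but two points need tightening: the per-block growth is not a literal doubling but a factor of order $2^p$ plus an additive constant (harmless, since the number of blocks is of order $T/\delta_0$), and you must verify that the absorption coefficient stays strictly below $1$ also when a single step of length up to $\|{\gtau}\|$ exhausts a block --- this is where the smallness of $\|{\gtau}\|$ is really used, and it does suffice if you take $\tau_{**}$ close to $\tau_*(\phi)$ and $\hat\tau$ small, so that $\mathfrak{C}(p,\epsilon)$ in Lemma~\ref{basicestphi} is close to $1$; moreover you should take $x_{**}=x_*$ (legitimate by \eqref{existI}) so that the constants stay within the advertised dependence on $(p,x_*,\tau_*(\phi),S,T)$. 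For \eqref{eq:apriori2}, your detour through $d^+_{\tau_k}(\Xi^{k-1}_{\gtau})$ --- the supremum over \emph{all} minimizers --- genuinely forces the extra absorption step you describe; the paper avoids it altogether via the monotonicity $d^+_\delta\le d^-_{\tau_k}\le d(\Xi^{k-1}_{\gtau},\Xi^k_{\gtau})$ from \eqref{moninpro}, after which the second claim of \eqref{eq:apriori1} gives $d^p(\Xi^{k-1}_{\gtau},\Xi^k_{\gtau})\le pC_1\|{\gtau}\|^{p-1}$ directly; your treatment of the reversibility through $\Theta$ then matches the paper's.
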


\begin{proof}
\eqref{apri-1}
It follows from \eqref{interdervaofphi} that, for each $k+1 \le i \le l$,
\[
 \phi(\Xi^{i-1}_{\gtau}) -\phi(\Xi^i_{\gtau})
 =\frac{d^p(\Xi^{i-1}_{\gtau},\Xi^i_{\gtau})}{p\tau^{p-1}_i}
 +\frac1q \int_{t_{\gtau}^{i-1}}^{t_{\gtau}^i} G_{\gtau}^p(t) \,{\dd}t
 =\frac1p \int_{t_{\gtau}^{i-1}}^{t_{\gtau}^i} |\Xi'_{\gtau}|^p(t) \,{\dd}t
 +\frac1q \int_{t_{\gtau}^{i-1}}^{t_{\gtau}^i} G_{\gtau}^p(t) \,{\dd}t.
\]
Summing up in $k+1 \le i \le l$ shows the claim.

\eqref{apri-2}
Observe from \eqref{apri-1} that
\begin{equation}\label{threedintg}
 \sum_{i=1}^k \frac{d^p(\Xi^{i-1}_{\gtau},\Xi^i_{\gtau})}{p\tau_i^{p-1}}
 =\frac1p \int^{t^k_{\gtau}}_0 |\Xi'_{\gtau}|^p(t) \,{\dd}t
 \leq \phi(\Xi^0_{\gtau}) -\phi(\Xi^k_{\gtau}).
\end{equation}
Note also that
\[
 \frac{d^p(x_*,\Xi^i_{\gtau})}{p} -\frac{d^p(x_*,\Xi^{i-1}_{\gtau})}{p}
 \leq d(\Xi^{i-1}_{\gtau},\Xi^i_{\gtau})\,d^{p-1}(x_*,\Xi^i_{\gtau}),
\]
which follows from the convexity of the function $t^p$ in $t>0$
and the triangle inequality if $d(x_*,\Xi^i_{\gtau})>d(x_*,\Xi^{i-1}_{\gtau})$
(and it is clear if $d(x_*,\Xi^i_{\gtau}) \le d(x_*,\Xi^{i-1}_{\gtau})$).
We deduce from these inequalities and the Young inequality that,
for any $\varepsilon>0$ and $\tau_*:=\tau_*(\phi)/p<\tau_*(\phi)$,
\begin{align*}
\frac{d^p(x_*,\Xi^k_{\gtau})}{p} -\frac{d^p(x_*,\Xi^0_{\gtau})}{p}
 &\leq \sum_{i=1}^k d(\Xi^{i-1}_{\gtau},\Xi^i_{\gtau})  d^{p-1}(x_*,\Xi^i_{\gtau}) \\
 &\leq \sum_{i=1}^k \Bigg\{ \varepsilon^{p-1} \frac{d^p(\Xi^{i-1}_{\gtau},\Xi^i_{\gtau})}{p\tau^{p-1}_i}
 +\frac{p-1}{p\varepsilon} \tau_i d^p(x_*,\Xi^i_{\gtau}) \Bigg\} \\
 &\leq \varepsilon^{p-1} \big( \phi(\Xi^0_{\gtau}) -\phi(\Xi^k_{\gtau}) \big)
 +\frac{p-1}{p\varepsilon} \sum_{i=1}^k \tau_i d^p(x_*,\Xi^i_{\gtau}) \\
 &\leq \varepsilon^{p-1} \big( \phi(\Xi^0_{\gtau}) -\Phi_{\tau_*}(x_*) \big)
 +\frac{\varepsilon^{p-1}}{p\tau^{p-1}_*} d^p(x_*,\Xi^k_{\gtau})
 +\frac{p-1}{p\varepsilon} \sum_{i=1}^k \tau_i d^p(x_*,\Xi^i_{\gtau}).
\end{align*}
Substituting $\varepsilon=2^{-1/(p-1)}\tau_*$ and recalling our hypotheses, we obtain
\begin{align*}
 d^p(x_*,\Xi^k_{\gtau})
 &\leq 2 d^p(x_*,\Xi^0_{\gtau}) +p\tau^{p-1}_* \big( \phi(\Xi^0_{\gtau}) -\Phi_{\tau_*}(x_*) \big)
 +\frac{2^{p/(p-1)}(p-1)}{\tau_*} \sum_{i=1}^k \tau_i d^p(x_*,\Xi^i_{\gtau}) \\
 &\leq 2S+p\tau^{p-1}_* S-p\tau^{p-1}_*\Phi_{\tau_*}(x_*)
 +\frac{2^{p/(p-1)}(p-1)}{\tau_*} \sum_{i=1}^k \tau_i d^p(x_*,\Xi^i_{\gtau}).
\end{align*}
Then it follows from \cite[Lemma~3.2.4]{AGS} with
\[
 a_i =d^p(x_*,\Xi^i_{\gtau}), \quad
 A =2S+p\tau^{p-1}_* S-p\tau^{p-1}_* \Phi_{\tau_*}(x_*), \quad
 \alpha =\frac{2^{p/(p-1)}(p-1)}{\tau_*}, \quad
 m =\alpha\|{\gtau}\| \leq \frac{p-1}{p}
\]
that
\[
 a_k \leq B {\ee}^{\beta t^{k-1}_{\gtau}} \leq B {\ee}^{\beta T},
\]
where $B=A/(1-m)$ and $\beta=\alpha/(1-m)$.
We remark that $A \geq S>0$ since
\[
 \Phi_{\tau_*}(x_*)
 \leq \phi(\Xi_{\gtau}^0) +\frac{d^p(x_*,\Xi_{\gtau}^0)}{p\tau_*^{p-1}}
 \leq S+\frac{S}{p\tau_*^{p-1}}. \]
Note also that $\Phi_{\tau_*}(x_*)$ depends only on $p$, $\tau_*(\phi)$ and $x_*$ (and $\phi$).
Thus, we obtain the first claim in \eqref{eq:apriori1}.
Moreover, we find from \eqref{threedintg} that
\[
 \sum_{i=1}^k \frac{d^p(\Xi^{i-1}_{\gtau},\Xi^i_{\gtau})}{p\tau^{p-1}_i}
 \leq \phi(\Xi^0_{\gtau}) -\phi(\Xi^k_{\gtau})
 \leq \phi(\Xi^0_{\gtau}) -\Phi_{\tau_*}(x_*) +\frac{d^p(x_*,\Xi^k_{\gtau})}{p\tau^{p-1}_*},
\]
which together with the first claim implies the second claim in \eqref{eq:apriori1}.

As for \eqref{eq:apriori2},
since $\widetilde{\Xi}_{\gtau}(t) \in J_{t-t^{k-1}_{\gtau}}[\Xi^{k-1}_{\gtau}]$
for $t\in (t^{k-1}_{\gtau}, t^k_{\gtau}]$ with $1 \le k \le N$,
we see from the second claim in \eqref{moninpro},
the former claim in \eqref{eq:apriori1} and Definition~\ref{thetametricspace} that
\begin{align*}
 d\big( \overline{\Xi}_{\gtau}(t),\widetilde{\Xi}_{\gtau}(t) \big)
 &\leq d\big( \Xi^k_{\gtau},\Xi^{k-1}_{\gtau} \big) +d\big( \Xi^{k-1}_{\gtau},\widetilde{\Xi}_{\gtau}(t) \big)
 \leq d\big( \Xi^k_{\gtau},\Xi^{k-1}_{\gtau} \big) +d\big( \Xi^{k-1}_{\gtau},\Xi^k_{\gtau} \big) \\
 &\leq \big( \Theta \big( d(\star,x_*)+C_1^{1/p} \big) +1 \big)
 d\big( \Xi^{k-1}_{\gtau},\Xi^k_{\gtau} \big).
\end{align*}
Combining this with the latter claim in \eqref{eq:apriori1} furnishes
\[
 d^p \big(\overline{\Xi}_{\gtau}(t),\widetilde{\Xi}_{\gtau}(t) \big)
 \leq \big( \Theta \big( d(\star,x_*)+C_1^{1/p} \big)+1 \big)^p p\|{\gtau}\|^{p-1} C_1.
\]
Furthermore, one can bound $d^p(\widetilde{\Xi}_{\gtau}(t),\overline{\Xi}_{\gtau}(t))$
by Definition~\ref{thetametricspace}, thanks to
\begin{align*}
 d\big( \star,\overline{\Xi}_{\gtau}(t) \big)
 &\leq d(\star,x_*) +d\big( x_*,\Xi_{\gtau}^k \big)
 \leq d(\star,x_*) +C_1^{1/p}, \\
 d\big( \star,\widetilde{\Xi}_{\gtau}(t) \big)
 &\leq  d\big( \star,\overline{\Xi}_{\gtau}(t) \big)
 +d\big( \overline{\Xi}_{\gtau}(t),\widetilde{\Xi}_{\gtau}(t) \big) \\
 &\leq d(\star,x_*) +C_1^{1/p}
 +\big( \Theta \big( d(\star,x_*)+C_1^{1/p} \big)+1 \big) (p\|{\gtau}\|^{p-1} C_1)^{1/p}
\end{align*}
for $t\in (t^{k-1}_{\gtau}, t^k_{\gtau}]$ with $1 \le k \le N$.
This completes the proof.
\end{proof}

We shall need the following version of Ascoli--Arzel\`{a} theorem.
Owing to Lemma~\ref{sigcompimpdcom},
it is shown in the same way as \cite[Proposition 3.3.1]{AGS} and hence we omit the proof.

\begin{proposition}\label{proconveglmma} %[AGS, Prop 3.3.1]
For $T>0$ and a $\sigma$-sequentially compact set $K \subset X$,
let $\xi_i:[0,T] \lra K$, $i \geq 1$, be a sequence of $($possibly discontinuous$)$ curves
such that
\[
 \limsup_{i \to \infty} d\big( \xi_i(s),\xi_i(t) \big) \leq \omega(s,t) \quad
 \text{for all}\ s,t \in [0,T]
\]
for a symmetric function $\omega:[0,T]\times [0,T] \lra [0,\infty)$ satisfying
$\lim_{(s,t) \to (r,r)} \omega(s,t)=0$ for all $r \in [0,T] \setminus \mathscr{C}$,
where $\mathscr{C} \subset [0,T]$ is an at most countable set.
Then there exist a subsequence $(\xi_{i_j})_{j \ge 1}$
and a limit curve $\xi:[0,T] \lra X$ such that
$\xi_{i_j}(t) \,\overset{\sigma}{\lra}\, \xi(t)$ for all $t \in [0,T]$
and $\xi$ is $\mathcal{T}_+$-continuous in $[0,T] \setminus \mathscr{C}$.
\end{proposition}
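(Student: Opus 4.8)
The plan is to run the classical Arzel\`a--Ascoli diagonal argument of \cite[Proposition~3.3.1]{AGS}, substituting Lemma~\ref{sigcompimpdcom} for the familiar ``compact $\Rightarrow$ complete'' and keeping track of the asymmetry of $d$. First I would fix a countable set $D\subset[0,T]$ that is dense in $[0,T]$ and contains $\mathscr{C}$ together with the endpoints (e.g.\ $D=(\mathbb{Q}\cap[0,T])\cup\mathscr{C}$). Enumerating $D$ and invoking the $\sigma$-sequential compactness of $K$ repeatedly, a diagonal extraction yields a subsequence $(\xi_{i_j})_{j\ge1}$ such that $\xi_{i_j}(t)$ is $\sigma$-convergent for every $t\in D$; write $\xi(t)\in K$ for its limit (it lies in $K$ since $K$ is $\sigma$-sequentially compact and $\sigma$ is Hausdorff). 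Because $d$ is $\sigma$-sequentially lower semicontinuous (standing assumption of Subsection~\ref{ssc:top}), for $s,t\in D$ we obtain
\[
 d\big(\xi(s),\xi(t)\big)\le\liminf_{j\to\infty}d\big(\xi_{i_j}(s),\xi_{i_j}(t)\big)
 \le\limsup_{i\to\infty}d\big(\xi_i(s),\xi_i(t)\big)\le\omega(s,t).
\]

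Next I would extend $\xi$ to all of $[0,T]$. Fix $r\in[0,T]\setminus\mathscr{C}$ and a sequence $(t_n)_n$ in $D$ with $t_n\to r$. By the displayed estimate and the hypothesis $\omega(s,t)\to0$ as $(s,t)\to(r,r)$, the sequence $(\xi(t_n))_n$ is forward Cauchy in $K$; since $K$ is forward complete by Lemma~\ref{sigcompimpdcom}, it converges in $\mathcal{T}_+$ to a point of $K$, which I call $\xi(r)$. An interleaving argument shows $\xi(r)$ is independent of the chosen sequence and agrees with the earlier definition when $r\in D$. Approximating arbitrary $r,s$ by points of $D$ and using the triangle inequality --- being careful with the order of the arguments, and recalling from Remark~\ref{forwardpointspaceandbackwardones}\eqref{pfms-a} that $\mathcal{T}_+$-convergence controls $d$ in both directions inside forward balls --- one upgrades the estimate to $d(\xi(r),\xi(s))\le\omega(r,s)$ for all $r,s\in[0,T]$; letting $s\to r$ for $r\notin\mathscr{C}$ then gives the asserted $\mathcal{T}_+$-continuity of $\xi$ on $[0,T]\setminus\mathscr{C}$.

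It remains to prove $\xi_{i_j}(t)\overset{\sigma}{\lra}\xi(t)$ for every $t\in[0,T]$; only $t\in[0,T]\setminus D$ (hence $t\notin\mathscr{C}$) needs attention. Given such $t$, let $z$ be the $\sigma$-limit of an arbitrary $\sigma$-convergent subsequence of $(\xi_{i_j}(t))_j$, which exists by $\sigma$-sequential compactness of $K$. For $s\in D$, applying the $\sigma$-lower semicontinuity of $d$ along this subsequence gives $d(\xi(s),z)\le\omega(s,t)$, whence $d(\xi(t),z)\le d(\xi(t),\xi(s))+\omega(s,t)$. Letting $s\to t$ along $D$ sends both terms to $0$ (the first because $\xi(t)$ is the $\mathcal{T}_+$-limit of $\xi(s)$ along $D$, the second because $t\notin\mathscr{C}$), so $z=\xi(t)$ by Remark~\ref{samelimitsimgad}\eqref{top-c}. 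Since every $\sigma$-convergent subsequence of $(\xi_{i_j}(t))_j$ has the same limit $\xi(t)$ and $K$ is $\sigma$-sequentially compact, the full sequence $\sigma$-converges to $\xi(t)$.

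I do not anticipate a genuine obstacle here; the argument is essentially routine once Lemma~\ref{sigcompimpdcom} is available. The points that require the most care are the bookkeeping of the asymmetry of $d$ in all triangle-inequality estimates (in particular in the notion of forward Cauchy sequence and in the extension step) and the final upgrade from pointwise $\sigma$-convergence on the countable set $D$ to pointwise $\sigma$-convergence on all of $[0,T]$.
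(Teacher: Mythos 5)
Your proof is correct and follows essentially the same route the paper intends: the authors omit the argument precisely because it is the proof of \cite[Proposition~3.3.1]{AGS} with Lemma~\ref{sigcompimpdcom} supplying the forward completeness of $K$, which is exactly your diagonal extraction on a countable dense set $D\supset\mathscr{C}$ followed by forward-Cauchy completion and the uniqueness-of-$\sigma$-limits argument. One small caveat: the intermediate claim $d(\xi(r),\xi(s))\le\omega(r,s)$ for \emph{all} $r,s$ does not follow from approximation and the triangle inequality alone, since $\omega$ is not assumed upper semicontinuous off the diagonal; but this full strength is never needed --- for continuity at $r\notin\mathscr{C}$ it suffices that $\omega(t,u)\to 0$ for $t,u\in D$ near $r$, which is the hypothesis, and the inequality for all $r,s$ does hold a posteriori by $\sigma$-sequential lower semicontinuity of $d$ once the pointwise $\sigma$-convergence on all of $[0,T]$ is established.
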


We remark that $\omega(s,s)>0$ can occur for $s \in \mathscr{C}$.
For example, let $\mu$ be a nonnegative finite measure on $[0,T]$ and set
$\omega(s,t)=\omega(t,s):=\mu([s,t])$ for $0 \leq s \leq t \leq T$.
Then $\mathscr{C}$ is the set of atoms of $\mu$ (see \cite[Remark~3.3.2]{AGS}).
We also need the following variant of the local slope $|\partial\phi|$.

\begin{definition}[Relaxed slope]\label{wekarkparg} %[AGS, (2.3.1)]
We define the \emph{relaxed slope} of $\phi$ by
\[
 |\partial^-\phi|(x) :=\inf\Big\{ \liminf_{i \to \infty} |\partial\phi|(x_i)
 \,\Big|\, x_i \,\overset{\sigma}{\lra }\, x,\, \sup_i \big\{ d(x,x_i),\phi(x_i) \big\} <\infty \Big\}
\]
for $x \in \mathfrak{D}(\phi)$, and $|\partial^-\phi|(x):=\infty$ otherwise.
\end{definition}

Note that, by choosing $x_i=x$ for all $i$, $|\partial^-\phi|(x) \le |\partial\phi|(x)$ always holds.

\begin{remark}\label{lowerbackreamrk}
If $|\partial\phi|$ is $\sigma$-sequentially lower semicontinuous,
then we have $|\partial\phi|(x) \leq |\partial^-\phi|(x)$ for any $x \in \mathfrak{D}(\phi)$,
and hence $|\partial^-\phi|=|\partial\phi|$ holds.
\end{remark}

Now, under Assumption~\ref{continudef}, we have a compactness result
(generalizing \cite[Corollary~3.3.4]{AGS}; {see also \cite[Proposition~4.9]{RMS}}).

\begin{theorem}[Compactness]\label{discreecoroll} %[AGS, Cor 3.3.4]
Suppose that Assumption~$\ref{continudef}$\eqref{ass-a}--\eqref{ass-c} hold.
Let $\Lambda$ be a family of sequences of time steps $\gtau$
corresponding to partitions of $[0,\infty)$ such that
$\inf_{\gtau \in \Lambda}\|{\gtau}\|=0$,
and let $\{\Xi^0_{\gtau}\}_{\gtau\in \Lambda}$ be a family of initial data satisfying
\[
 \Xi^0_{\gtau} \,\overset{\sigma}{\lra}\, x_0
 \,\ \text{and}\,\ \phi(\Xi^0_{\gtau}) \to \phi(x_0)\,\ \text{as}\ \|{\gtau}\| \to 0,\qquad
 \sup_{\gtau\in \Lambda} d(x_0,\Xi^0_{\gtau}) <\infty,
\]
where $x_0 \in \mathfrak{D}(\phi)$ is a fixed point.
Then there exist a sequence $(\gtau_{\alpha})_{\alpha \ge 1}$ in $\Lambda$
with $\|{\gtau_{\alpha}}\| \to 0$, a limit curve $\xi \in \FAC^p_{\loc}([0,\infty);X)$,
a non-increasing function $\varphi:[0,\infty) \lra \mathbb{R}$,
and $A \in L^p_{\loc}([0,\infty))$ such that
\begin{enumerate}[{\rm (i)}]
\item\label{frstconve1}
$\overline{\Xi}_{\gtau_{\alpha}}(t) \,\overset{\sigma}{\lra}\, \xi(t)$ and
$\widetilde{\Xi}_{\gtau_{\alpha}}(t) \,\overset{\sigma}{\lra}\, \xi(t)$
as $\alpha \to \infty$ for all $t \geq 0;$

\item\label{frstconve2}
$\varphi(t)
 =\lim_{\alpha \to \infty} \phi\big( \overline{\Xi}_{\gtau_{\alpha}}(t) \big)\geq \phi\big( \xi(t) \big)$
for all $t \geq 0$, and $\xi(0)=x_0;$

\item\label{frstconve3}
$|\Xi'_{\gtau_{\alpha}}| \to A$ weakly in $L^p_{\loc}\big( [0,\infty) \big)$,
and $A(t) \geq |\xi'_+|(t)$ for $\mathscr{L}^1$-a.e.\ $t \in (0,\infty);$

\item\label{frstconve4}
$\liminf_{\alpha \to \infty} G_{\gtau_{\alpha}}^p(t) \geq |\partial^-\phi|^q \big( \xi(t) \big)$
for all $t>0$.
\end{enumerate}
\end{theorem}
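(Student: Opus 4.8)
The plan is to follow the scheme of \cite[\S3.3]{AGS}, combining the a priori estimates of Lemma~\ref{prooriesima} with the Ascoli--Arzel\`a-type Proposition~\ref{proconveglmma} and the monotonicity and lower semicontinuity established above. First I would fix $T>0$, take $x_*=x_0$, and set $S:=\sup_{\gtau\in\Lambda}\max\{\phi(\Xi^0_{\gtau}),\,d^p(x_0,\Xi^0_{\gtau})\}$, which is finite because $\phi(\Xi^0_{\gtau})\to\phi(x_0)$ and $\sup_{\gtau}d(x_0,\Xi^0_{\gtau})<\infty$. Restricting to those $\gtau\in\Lambda$ with $\|{\gtau}\|\le\tau_*(\phi)/(2^{p/(p-1)}p^2)$ (still with infimum $0$) and applying Lemma~\ref{prooriesima}\eqref{apri-2} with $N$ the last index such that $t^{N-1}_{\gtau}\le T$, one gets $d^p(x_0,\Xi^k_{\gtau})\le C_1$ for every grid point in $[0,T]$, and then via \eqref{eq:apriori2} the corresponding bound for $\overline{\Xi}_{\gtau}(t)$ and $\widetilde{\Xi}_{\gtau}(t)$ with $t\in[0,T]$. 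Since moreover $\phi(\Xi^k_{\gtau})\le\phi(\Xi^0_{\gtau})\le S$ and $\phi(\widetilde{\Xi}_{\gtau}(t))\le\Phi_\delta(\Xi^{k-1}_{\gtau})\le S$ by \eqref{moninpro}, all these points lie in a fixed forward bounded sublevel set of $\phi$, hence in a $\sigma$-sequentially compact set $K_T$ by Assumption~\ref{continudef}\eqref{ass-c}; on $K_T$ the distance $d$ is bounded in both orders because $\lambda_d(B^+_\star(r))\le\Theta(r)$.

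Next, from \eqref{apri-1}, \eqref{eq:apriori1} and H\"older's inequality, for $0\le s\le t\le T$ and $\|{\gtau}\|$ small one has
\[
 d\big(\overline{\Xi}_{\gtau}(s),\overline{\Xi}_{\gtau}(t)\big)\le\int_s^{t+\|{\gtau}\|}|\Xi'_{\gtau}|(r)\,{\dd}r\le(pC_1)^{1/p}\,(t-s+\|{\gtau}\|)^{1/q}.
\]
Hence, along any sequence $\gtau_\alpha\in\Lambda$ with $\|{\gtau_\alpha}\|\to0$, $\limsup_\alpha d(\overline{\Xi}_{\gtau_\alpha}(s),\overline{\Xi}_{\gtau_\alpha}(t))\le\omega(s,t):=(pC_1)^{1/p}|t-s|^{1/q}$, which is continuous, symmetric and vanishes on the diagonal. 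Proposition~\ref{proconveglmma} (with $\mathscr{C}=\emptyset$) produces a subsequence along which $\overline{\Xi}_{\gtau_\alpha}(t)\overset{\sigma}{\lra}\xi(t)$ for all $t\in[0,T]$, with $\xi$ $\mathcal{T}_+$-continuous. Since \eqref{eq:apriori2} bounds \emph{both} $d(\overline{\Xi}_{\gtau},\widetilde{\Xi}_{\gtau})$ and $d(\widetilde{\Xi}_{\gtau},\overline{\Xi}_{\gtau})$ by $C_2^{1/p}\|{\gtau}\|^{(p-1)/p}\to0$, the $\sigma$-lower semicontinuity of $d$, the uniqueness of $\sigma$-limits (Remark~\ref{samelimitsimgad}\eqref{top-c}) and the $\sigma$-sequential compactness of $K_T$ force $\widetilde{\Xi}_{\gtau_\alpha}(t)\overset{\sigma}{\lra}\xi(t)$ as well, which is \eqref{frstconve1} on $[0,T]$. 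A diagonal argument over $T=1,2,\dots$ yields a single sequence $(\gtau_\alpha)$ valid on $[0,\infty)$, and $\xi(0)=x_0$ follows from $\overline{\Xi}_{\gtau_\alpha}(0)=\Xi^0_{\gtau_\alpha}\overset{\sigma}{\lra}x_0$.

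For \eqref{frstconve2}, each function $t\mapsto\phi(\overline{\Xi}_{\gtau_\alpha}(t))$ is non-increasing by \eqref{moninpro} and uniformly bounded on compact time intervals by the confinement of the first step and the coercivity in Assumption~\ref{continudef}\eqref{ass-b}; Helly's selection theorem together with a diagonal argument then yields a further subsequence along which $\phi(\overline{\Xi}_{\gtau_\alpha}(t))\to\varphi(t)\in\mathbb{R}$ for all $t\ge0$ with $\varphi$ non-increasing, and Assumption~\ref{continudef}\eqref{ass-a} gives $\varphi(t)\ge\phi(\xi(t))$. For \eqref{frstconve3}, by \eqref{apri-1} the functions $|\Xi'_{\gtau_\alpha}|$ are bounded in $L^p([0,T])$ for every $T$, so the reflexivity of $L^p$ ($p\in(1,\infty)$) and one more diagonal extraction give $|\Xi'_{\gtau_\alpha}|\to A$ weakly in $L^p_{\loc}([0,\infty))$ for some $A\ge0$; passing to the limit in the displayed estimate above (using the $\sigma$-lower semicontinuity of $d$ on the left, testing the weak convergence against the indicator of $[s,t+\|{\gtau_\alpha}\|]$ on the right, and then letting the slack $\|{\gtau_\alpha}\|\to0$) gives $d(\xi(s),\xi(t))\le\int_s^tA(r)\,{\dd}r$ for all $s\le t$, whence $\xi\in\FAC^p_{\loc}([0,\infty);X)$ and $|\xi'_+|\le A$ a.e.\ by Theorem~\ref{vilocitythem}.

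Finally, for \eqref{frstconve4}, fix $t>0$: since the points $\widetilde{\Xi}_{\gtau_\alpha}(t)$ lie in $K_T$ with $\phi(\widetilde{\Xi}_{\gtau_\alpha}(t))$ bounded and $\widetilde{\Xi}_{\gtau_\alpha}(t)\overset{\sigma}{\lra}\xi(t)$, the sequence $(\widetilde{\Xi}_{\gtau_\alpha}(t))_{\alpha}$ is admissible in the infimum defining the relaxed slope $|\partial^-\phi|(\xi(t))$ (Definition~\ref{wekarkparg}), so $|\partial^-\phi|(\xi(t))\le\liminf_\alpha|\partial\phi|(\widetilde{\Xi}_{\gtau_\alpha}(t))$; combining this with \eqref{niforgG} and the monotonicity of $s\mapsto s^q$ yields $|\partial^-\phi|^q(\xi(t))\le\liminf_\alpha G^p_{\gtau_\alpha}(t)$. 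The main difficulty is not any single step but, on the one hand, organizing the successive subsequence extractions --- pointwise $\sigma$-convergence of both interpolants, Helly for $\varphi$, weak $L^p$ for $A$, diagonalization over $T$ --- into one sequence, and, on the other hand, coping with the asymmetry of $d$: one must keep the confining set $K_T$ simultaneously $\sigma$-sequentially compact \emph{and} forward bounded so that $d$ is two-sidedly controlled on it, and one must exploit that \eqref{eq:apriori2} bounds both $d(\overline{\Xi}_{\gtau},\widetilde{\Xi}_{\gtau})$ and $d(\widetilde{\Xi}_{\gtau},\overline{\Xi}_{\gtau})$ in order to identify the limits of the two interpolants; apart from these points the argument parallels \cite[\S3.3]{AGS}.
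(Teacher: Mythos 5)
Your proposal is correct and follows essentially the same route as the paper: the a priori estimates of Lemma~\ref{prooriesima} with $x_*=x_0$, a forward bounded sublevel set made $\sigma$-sequentially compact via Assumption~\ref{continudef}\eqref{ass-a},\eqref{ass-c}, Proposition~\ref{proconveglmma} for the pointwise $\sigma$-limit with identification of $\widetilde{\Xi}_{\gtau_\alpha}$ through \eqref{eq:apriori2} and the $\sigma$-lower semicontinuity of $d$, Helly-type and weak $L^p$ extractions with diagonalization, and Definition~\ref{wekarkparg} together with \eqref{niforgG} for \eqref{frstconve4}; the only organizational difference is that you feed Proposition~\ref{proconveglmma} a H\"older modulus obtained directly from the $L^p$ bound, whereas the paper first extracts the weak limit $A$ and uses $\omega(s,t)=\Theta(d(\star,x_0)+C^{1/p})\int_s^t A(r)\,{\dd}r$. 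Two small repairs are needed: since $\phi(\Xi^0_{\gtau})\to\phi(x_0)$ only as $\|{\gtau}\|\to 0$, the finiteness of your $S$ requires restricting to $\gtau$ with, say, $|\phi(\Xi^0_{\gtau})-\phi(x_0)|<1$ (as the paper does), and since Proposition~\ref{proconveglmma} requires the estimate for both orders of $s,t$ while $d$ is asymmetric, your modulus must carry the reversibility factor $\Theta(d(\star,x_0)+C_1^{1/p})$ — a point you acknowledge qualitatively but do not insert into $\omega$.
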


\begin{proof}
By hypotheses, we can choose a sequence $(\gtau_{\alpha})_{\alpha \ge 1}$
in $\Lambda$ such that $\|{\gtau_{\alpha}}\| \to 0$ and
\[
 \|{\gtau_{\alpha}}\| \leq \frac{\tau_*(\phi)}{2^{p/(p-1)}p^2},\qquad
 |\phi(\Xi^0_{\gtau_{\alpha}})- \phi(x_0)| <1.
\]
In order to apply Lemma~\ref{prooriesima} (with the help of Remark~\ref{existsju}),
set $S:=\max\{\phi(x_0)+1,\sup_{\gtau \in \Lambda}d^p(x_0,\Xi^0_{\gtau}) \}$.
Then, for $T>0$,
$N(\alpha)$ with $t^{N(\alpha)-1}_{\gtau_{\alpha}} \leq T<t^{N(\alpha)}_{\gtau_{\alpha}}$
and $C:=\max\{C_1,C_2\}$ in Lemma~\ref{prooriesima}, we have
\begin{align}
 &d^p(x_0,\Xi^k_{\gtau_{\alpha}}) \leq C, \qquad
 \sum_{i=1}^k
 \frac{d^p(\Xi^{i-1}_{\gtau_{\alpha}},\Xi^i_{\gtau_{\alpha}})}{p\tau^{p-1}_{\alpha,i}}
 \leq \phi(\Xi^0_{\gtau_{\alpha}}) -\phi(\Xi^k_{\gtau_{\alpha}})
 \leq C\quad \text{for}\ 1 \leq k \leq N(\alpha),
 \label{trangleinequaconvex}\\
 &\max \big\{ d^p \big( \overline{\Xi}_{\gtau_{\alpha}}(t),\widetilde{\Xi}_{\gtau_{\alpha}}(t) \big),
 d^p \big( \widetilde{\Xi}_{\gtau_{\alpha}}(t),\overline{\Xi}_{\gtau_{\alpha}}(t) \big) \big\}
 \leq C \|{\gtau_{\alpha}}\|^{p-1} \quad \text{for}\ t\in [0,t^{N(\alpha)}_{\gtau_{\alpha}}],
 \label{**duwtilcont}
\end{align}
where we put $\gtau_{\alpha}=(\tau_{\alpha,i})_{i \ge 1}$.
Note that \eqref{trangleinequaconvex} implies
\[
 \sup_{\alpha} \sup_{t \in [0,T]} d^p\big( x_0,\overline{\Xi}_{\gtau_{\alpha}}(t) \big) \leq C,\qquad
 \sup_{\alpha} \sup_{t \in [0,T]} \phi\big( \overline{\Xi}_{\gtau_{\alpha}}(t) \big)
 \leq \sup_{\alpha} \phi(\Xi^0_{\gtau_{\alpha}}) \leq S.
\]
Now, consider a set
\[
 \mathscr{K} :=\{ x \in X \,|\, d^p(x_0,x) \leq C,\, \phi(x)\leq S \},
\]
and observe $\overline{\Xi}_{\gtau_{\alpha}}([0,T]) \subset \mathscr{K}$.
It follows from Assumption~\ref{continudef}\eqref{ass-a}, \eqref{ass-c} that
$\mathscr{K}$ is $\sigma$-sequentially compact.

Note that $\varphi_{\alpha}(t):=\phi(\overline{\Xi}_{\gtau_{\alpha}}(t))$ is non-increasing by definition.
Therefore, by passing to a subsequence if necessary,
we may assume that $\varphi(t):=\lim_{\alpha \to \infty}\varphi_{\alpha}(t)$ exists for all $t \in[0,T]$
and $\varphi$ is a non-increasing function (see \cite[Lemma~3.3.3]{AGS}).
Moreover, by a diagonal argument,
one can assume that $\varphi(t):=\lim_{\alpha \to \infty}\varphi_{\alpha}(t)$ exists for all $t \geq 0$.
We also find from Lemma~\ref{prooriesima}\eqref{apri-1} and \eqref{trangleinequaconvex} that
\[
 \int^T_0 |\Xi'_{\gtau_{\alpha}}|^p(t) \,{\dd}t
 \leq p\big\{ \phi\big( \overline{\Xi}_{\gtau_{\alpha}}(0) \big)
 -\phi\big( \overline{\Xi}_{\gtau_{\alpha}}(T) \big) \big\}
 \leq pC,
\]
which together with the reflexivity of $L^p([0,T])$ yields
a weakly convergent subsequence of $(|\Xi'_{\gtau_{\alpha}}|)_{\alpha \ge 1}$ in $L^p([0,T])$.
Again by a diagonal argument, we can assume that $(|\Xi'_{\gtau_{\alpha}}|)_{\alpha \ge 1}$
converges weakly to some function $A \in L_{\loc}^p([0,\infty))$.

Let $P_{\gtau_{\alpha}}$ be the partition of $[0,\infty)$ corresponding to $\gtau_{\alpha}$.
For $0 \leq s<t$, set $s_{\alpha} :=\max\{ r \in P_{\gtau_{\alpha}} \,|\, r \le s \}$
and $t_{\alpha} :=\min\{ r \in P_{\gtau_{\alpha}} \,|\, t \le r \}$.
Then we have
\[
 d\big( \overline{\Xi}_{\gtau_{\alpha}}(s),\overline{\Xi}_{\gtau_{\alpha}}(t) \big)
 \leq \int^{t_{\alpha}}_{s_{\alpha}} |\Xi'_{\gtau_{\alpha}}|(r) \,{\dd}r
\]
by the definition \eqref{detrivtauU'} of $|\Xi'_{\gtau_{\alpha}}|$, and hence
\begin{equation}\label{locl2u}
 \limsup_{\alpha \to \infty}
 d\big( \overline{\Xi}_{\gtau_{\alpha}}(s),\overline{\Xi}_{\gtau_{\alpha}}(t) \big)
 \leq \int^t_s A(r) \,{\dd}r.
\end{equation}
Therefore, since $\mathscr{K} \subset \overline{B^+_{\star}(d(\star,x_0)+C^{1/p})}$,
we can apply Proposition~\ref{proconveglmma}
to $\overline{\Xi}_{\gtau_{\alpha}}:[0,T] \lra \mathscr{K}$
with $\omega(s,t)=\omega(t,s):=\Theta(d(\star,x_0)+C^{1/p})\int^t_s A(r) \,{\dd}r$
to find a subsequence of $(\overline{\Xi}_{\gtau_{\alpha}})_{\alpha \ge 1}$
$\sigma$-converging to a $\mathcal{T}_+$-continuous curve $\xi:[0,T] \lra \mathscr{K}$.
Together with \eqref{**duwtilcont} and by a diagonal argument,
we obtain \eqref{frstconve1}.

It is straightforward from Assumption~\ref{continudef}\eqref{ass-a} and \eqref{frstconve1} that
\[
 \varphi(t) =\lim_{\alpha \to \infty} \phi\big( \overline{\Xi}_{\gtau_\alpha}(t) \big)
 \geq \phi\big( \xi(t) \big).
\]
Moreover, the assumption $\Xi^0_{\gtau_\alpha} \,\overset{\sigma}{\lra}\, x_0$
combined with \eqref{frstconve1} furnishes (as in Remark~\ref{samelimitsimgad}\eqref{top-c})
\[
 d\big( \xi(0),x_0 \big)
 \leq \liminf_{\alpha \to \infty} d\big( \overline{\Xi}_{\gtau_\alpha}(0),\Xi^0_{\gtau_\alpha} \big)
 =0,
\]
which shows $\xi(0)=x_0$ and completes the proof of \eqref{frstconve2}.
Next, we deduce from \eqref{frstconve1} and \eqref{locl2u} that
\[
 d\big( \xi(s),\xi(t) \big)
 \leq \liminf_{\alpha \to \infty}
 d\big( \overline{\Xi}_{\gtau_\alpha}(s),\overline{\Xi}_{\gtau_\alpha}(t) \big)
 \leq \limsup_{\alpha \to \infty}
 d\big( \overline{\Xi}_{\gtau_\alpha}(s),\overline{\Xi}_{\gtau_\alpha}(t) \big)
 \leq \int^t_s A(r) \,{\dd}r,
\]
which implies that $|\xi'_+|(t) \leq A(t)$ holds for $\mathscr{L}^1$-a.e.\ $t \in (0,\infty)$.
Thus we have \eqref{frstconve3} and, since $A \in L^p_{\loc}([0,\infty))$,
$\xi \in \FAC^p_{\loc}([0,\infty);X)$.
Moreover, it follows from \eqref{frstconve1} and \eqref{niforgG} that, for all $t>0$,
\[
 |\partial^-\phi|^q \big( \xi(t) \big)
 \leq \liminf_{\alpha \to \infty} |\partial\phi|^q \big( \widetilde{\Xi}_{\gtau_\alpha}(t) \big)
 \leq \liminf_{\alpha \to \infty} G^p_{\gtau_\alpha}(t).
\]
This yields \eqref{frstconve4} and completes the proof.
\end{proof}

\subsection{Generalized minimizing movements}\label{generaexstsnce}%
%%%%%%%%%%%%%%%%%%%%%%%%

In this subsection, we will take advantage of generalized minimizing movements
to obtain the existence of curves of maximal slope (recall Definition~\ref{maxslp}).

\begin{definition}[Generalized minimizing movements]\label{defominvcurve} %[AGS, Def 2.0.6]
Let $p \in (1,\infty)$ and $x_0 \in X$.
We say that a curve $\xi:[0,\infty) \lra X$ is
a \emph{$p$-generalized minimizing movement} for $\phi$ starting from $x_0$
if $\xi(0)=x_0$ and
there is a sequence $({\gtau}_{\alpha})_{\alpha \ge 1}$
of $\|{\gtau}_\alpha\| \to 0$ along with corresponding discrete solutions
$(\overline{\Xi}_{{\gtau}_\alpha})_{\alpha \ge 1}$ such that
\[
 \lim_{\alpha \to \infty} \phi(\Xi^0_{{\gtau}_\alpha}) =\phi(x_0), \qquad
 \limsup_{\alpha \to \infty} d(x_0,\Xi^0_{{\gtau}_\alpha}) <\infty, \qquad
 \overline{\Xi}_{{\gtau}_\alpha}(t) \,\overset{\sigma}{\lra}\, \xi(t)\,\ \text{for all}\ t \geq 0.
\]
We denote by $\GMM_p(\phi;x_0)$ the collection of
$p$-generalized minimizing movements for $\phi$ starting from $x_0$.
\end{definition}

We remark that the construction of discrete solutions depends on $p$.
Note also that, under $\Xi^0_{{\gtau}_\alpha} \,\overset{\sigma}{\lra}\, \xi(0)$,
$\xi(0)=x_0$ is equivalent to $\Xi^0_{{\gtau}_\alpha} \,\overset{\sigma}{\lra}\, x_0$
(see Remark~\ref{samelimitsimgad}\eqref{top-c}).
The following variant of Theorem~\ref{discreecoroll}
ensures that $\GMM_p(\phi;x_0)$ is nonempty under mild assumptions.

\begin{corollary}\label{compacttexistsencesolu}
Suppose that Assumption~$\ref{continudef}$\eqref{ass-a}--\eqref{ass-c} hold.
Let $\Lambda$ be a family of sequences of time steps $\gtau$
corresponding to partitions of $[0,\infty)$ with $\inf_{{\gtau}\in \Lambda}\|{\gtau}\|=0$.
If a family of initial data $\{\Xi^0_{\gtau}\}_{{\gtau} \in \Lambda}$ satisfies
\[
 \sup_{{\gtau} \in \Lambda} \phi(\Xi^0_{\gtau})<\infty, \qquad
 \sup_{{\gtau} \in \Lambda} d(x_0,\Xi^0_{\gtau})<\infty,
\]
then there exist a sequence $({\gtau}_\alpha)_{\alpha \ge 1}$ in $\Lambda$
with $\|{\gtau}_\alpha\| \to 0$ and $\xi \in \FAC^p_{\loc}([0,\infty);X)$
such that $\overline{\Xi}_{{\gtau}_\alpha}(t) \,\overset{\sigma}{\lra}\, \xi(t)$
for all $t \geq 0$.
Moreover, if $\Xi^0_{{\gtau}_\alpha} \,\overset{\sigma}{\lra}\, x_0$
and $\phi(\Xi^0_{\gtau_\alpha}) \to \phi(x_0)$ as $\alpha \to \infty$,
then we have $\xi \in \GMM_p(\phi;x_0)$.
\end{corollary}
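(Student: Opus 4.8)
The plan is to re-run the proof of Theorem~\ref{discreecoroll} almost verbatim, the crucial observation being that its a priori estimates and compactness extraction use \emph{only} the boundedness of $\phi(\Xi^0_\gtau)$ and $d(x_0,\Xi^0_\gtau)$; the stronger hypotheses $\Xi^0_\gtau \,\overset{\sigma}{\lra}\, x_0$ and $\phi(\Xi^0_\gtau)\to\phi(x_0)$ were needed there only to identify the initial value $\xi(0)=x_0$. So the first assertion of the corollary follows from the portion of that proof that precedes the identification of $\xi(0)$, and the ``moreover'' part follows from the remaining lines.

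Concretely, set $x_*:=x_0$ and $S:=\max\{\sup_{\gtau\in\Lambda}\phi(\Xi^0_\gtau),\,\sup_{\gtau\in\Lambda}d^p(x_0,\Xi^0_\gtau)\}<\infty$ by hypothesis. Since $\inf_{\gtau\in\Lambda}\|\gtau\|=0$, pick any $(\gtau_\alpha)_{\alpha\ge1}$ in $\Lambda$ with $\|\gtau_\alpha\|\to0$ and, discarding finitely many terms, $\|\gtau_\alpha\|\le\tau_*(\phi)/(2^{p/(p-1)}p^2)$; Theorem~\ref{existenceofdisc} (applicable thanks to Remark~\ref{existsju} under Assumption~\ref{continudef}\eqref{ass-a}--\eqref{ass-c}) furnishes discrete solutions $\overline{\Xi}_{\gtau_\alpha}$ for the partitions $P_{\gtau_\alpha}$. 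For each $T>0$, Lemma~\ref{prooriesima}\eqref{apri-2} then gives the uniform bounds $d^p(x_0,\overline{\Xi}_{\gtau_\alpha}(t))\le C$ and $\phi(\overline{\Xi}_{\gtau_\alpha}(t))\le S$ on $[0,T]$, the estimate $\int_0^T|\Xi'_{\gtau_\alpha}|^p\le pC$, and the closeness \eqref{eq:apriori2} of $\overline{\Xi}_{\gtau_\alpha}$ to $\widetilde{\Xi}_{\gtau_\alpha}$. As in the proof of Theorem~\ref{discreecoroll}, Assumption~\ref{continudef}\eqref{ass-a},\eqref{ass-c} makes $\mathscr{K}=\{x\in X\,|\,d^p(x_0,x)\le C,\ \phi(x)\le S\}$ $\sigma$-sequentially compact; by reflexivity of $L^p$ I may assume (passing to a subsequence) that $|\Xi'_{\gtau_\alpha}|$ converges weakly in $L^p([0,T])$ to some $A$, which yields the equicontinuity bound of the form \eqref{locl2u}. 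Proposition~\ref{proconveglmma} (applicable by Lemma~\ref{sigcompimpdcom}) then gives a further subsequence and a $\mathcal{T}_+$-continuous limit $\xi:[0,T]\to\mathscr{K}$ with $\overline{\Xi}_{\gtau_\alpha}(t)\,\overset{\sigma}{\lra}\,\xi(t)$ for all $t\in[0,T]$, and the weak limit gives $|\xi'_+|\le A$ a.e., so $\xi\in\FAC^p_{\loc}$. A diagonal argument over $T\to\infty$ (exactly as in Theorem~\ref{discreecoroll}) produces a single sequence $(\gtau_\alpha)$ and $\xi\in\FAC^p_{\loc}([0,\infty);X)$ with $\overline{\Xi}_{\gtau_\alpha}(t)\,\overset{\sigma}{\lra}\,\xi(t)$ for all $t\ge0$, which is the first assertion.

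For the ``moreover'' part, assume in addition $\Xi^0_{\gtau_\alpha}\,\overset{\sigma}{\lra}\,x_0$ and $\phi(\Xi^0_{\gtau_\alpha})\to\phi(x_0)$. Then, exactly as in Theorem~\ref{discreecoroll}\eqref{frstconve2}, the $\sigma$-sequential lower semicontinuity of $d$ gives $d(\xi(0),x_0)\le\liminf_{\alpha\to\infty}d(\overline{\Xi}_{\gtau_\alpha}(0),\Xi^0_{\gtau_\alpha})=0$, so $\xi(0)=x_0$. Since the remaining requirements in Definition~\ref{defominvcurve} --- namely $\lim_\alpha\phi(\Xi^0_{\gtau_\alpha})=\phi(x_0)$, $\limsup_\alpha d(x_0,\Xi^0_{\gtau_\alpha})<\infty$, and $\overline{\Xi}_{\gtau_\alpha}(t)\,\overset{\sigma}{\lra}\,\xi(t)$ for all $t\ge0$ --- are now all at hand, we conclude $\xi\in\GMM_p(\phi;x_0)$.

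There is essentially no new obstacle: the whole point is bookkeeping, checking that Lemma~\ref{prooriesima} needs only $\phi(\Xi^0_\gtau)\le S$ and $d^p(x_0,\Xi^0_\gtau)\le S$ rather than convergence, so the compactness machinery of Section~\ref{topologyexiscurvew} applies unchanged. The only mildly delicate point is the diagonal extraction that must simultaneously yield a single sequence valid for all $t\ge0$ and the membership $\xi\in\FAC^p_{\loc}$, but this is already carried out in the proof of Theorem~\ref{discreecoroll} and is quoted here.
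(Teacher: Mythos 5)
Your proposal is correct and follows essentially the same route as the paper: the paper's own proof simply sets $S:=\max\{\sup_{\gtau\in\Lambda}\phi(\Xi^0_{\gtau}),\sup_{\gtau\in\Lambda}d^p(x_0,\Xi^0_{\gtau})\}$ and reruns the argument of Theorem~\ref{discreecoroll}, observing exactly as you do that the a priori estimates and compactness extraction need only these bounds, while the extra convergence hypotheses serve only to identify $\xi(0)=x_0$ and verify Definition~\ref{defominvcurve}. No gap.
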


\begin{proof}
Since $\inf_{{\gtau}\in \Lambda}\|{\gtau}\|=0$,
we can choose a sequence $({\gtau}_{\alpha})_{\alpha \ge 1}$ with $\|{\gtau}_{\alpha} \| \to 0$.
By letting $S:=\max\{\sup_{{\gtau} \in \Lambda} \phi(\Xi^0_{\gtau}),
 \sup_{{\gtau} \in \Lambda} d^p(x_0,\Xi^0_{\gtau}) \}$,
the existence of $\xi$ is shown in the same way as in Theorem~\ref{discreecoroll}.
Moreover, if $\Xi^0_{{\gtau}_\alpha} \,\overset{\sigma}{\lra}\, x_0$,
then we find $\xi(0)=x_0$
and $\phi(\Xi^0_{\gtau_\alpha}) \to \phi(x_0)$ implies $\xi \in \GMM_p(\phi;x_0)$.
\end{proof}

We remark that, in Theorem~\ref{discreecoroll}, we assumed $x_0 \in \mathfrak{D}(\phi)$
to obtain an upper bound of $\sup_{{\gtau}\in \Lambda} \phi(\Xi^0_{\gtau})$
and apply Lemma~\ref{prooriesima}\eqref{apri-2}.
In Corollary~\ref{compacttexistsencesolu}, however,
we assumed $\sup_{{\gtau}\in \Lambda} \phi(\Xi^0_{\gtau})<\infty$ instead and
$x_0\in \mathfrak{D}(\phi)$ was removed.
Note also that $\sup_{{\gtau}\in \Lambda} \phi(\Xi^0_{\gtau})<\infty$
and $\phi(\Xi^0_{\gtau_\alpha}) \to \phi(x_0)$ imply $x_0\in \mathfrak{D}(\phi)$.

In the next two theorems (generalizing \cite[Theorems~2.3.3, 2.3.1]{AGS}),
we see that $\GMM_p(\phi;x_0)$ consists of $p$-curves of maximal slope.
Recall \eqref{energyident} for the energy identity of $p$-curves of maximal slope.

\begin{theorem}\label{existsassupab} %[AGS, Theorem 2.3.3}
Suppose that Assumptions~$\ref{continudef}$\eqref{ass-a}
and $\ref{nonemtpassumpto}$ hold
and $|\partial^-\phi|$ is a strong upper gradient for $-\phi$.
Let $x_0 \in \mathfrak{D}(\phi)$ and $(\overline{\Xi}_{{\gtau}_\alpha})_{\alpha \ge 1}$ be a sequence
of discrete solutions with a curve $\xi:[0,\infty) \lra X$
such that $\xi(0)=x_0$ and
\[
 \|{\gtau}_\alpha\| \to 0, \quad
 \phi(\Xi^0_{{\gtau}_\alpha}) \to \phi(x_0), \quad
 \sup_{\alpha} d(x_0,\Xi^0_{{\gtau}_\alpha}) <\infty, \quad
 \overline{\Xi}_{{\gtau}_\alpha}(t) \,\overset{\sigma}{\lra}\, \xi(t)\,\ \text{for all}\ t \geq 0.
\]
Then we have $\xi \in \FAC^p_{\loc}([0,\infty);X)$ and
\begin{enumerate}[{\rm (i)}]
\item\label{exstasolucon1}
$\lim_{\alpha \to \infty} \phi \big( \overline{\Xi}_{{\gtau}_\alpha}(t) \big)
 =\phi \big( \xi(t) \big)$
for all $t \geq 0;$

\item\label{exstasolucon2}
$|\Xi'_{{\gtau}_\alpha}| \to |\xi'_+|$
in $L^p_{\loc} \big( [0,\infty) \big);$

\item\label{exstasolucon3}
$|\partial\phi| \circ \overline{\Xi}_{{\gtau}_\alpha} \to |\partial^-\phi| \circ \xi$
in $L^q_{\loc} \big( [0,\infty) \big)$.
\end{enumerate}
In particular, every curve $\xi \in \GMM_p(\phi;x_0)$
is a $p$-curve of maximal slope for $\phi$ with respect to $|\partial^-\phi|$
and we have the energy identity
\begin{equation}\label{eq:energy}
 \frac1p \int^T_0 |\xi'_+|^p(t) \,{\dd}t
 +\frac1q \int^T_0 |\partial^-\phi|^q \big( \xi(t) \big) \,{\dd}t +\phi\big( \xi(T) \big)
 =\phi(x_0) \quad
 \text{for all}\,\ T>0.
\end{equation}
 \end{theorem}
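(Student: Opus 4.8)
The plan is to mimic the strategy of \cite[Theorems~2.3.1, 2.3.3]{AGS}, passing to the limit in the discrete energy identity of Lemma~\ref{prooriesima}\eqref{apri-1} and using lower semicontinuity to control the limit. First I would invoke Theorem~\ref{discreecoroll}: since Assumption~\ref{continudef}\eqref{ass-a}--\eqref{ass-c} hold (note \eqref{ass-b} follows from Assumption~\ref{nonemtpassumpto}), after passing to a subsequence we obtain $\xi\in\FAC^p_{\loc}([0,\infty);X)$, a non-increasing limit $\varphi$ of $\phi\circ\overline{\Xi}_{\gtau_\alpha}$, and $A\in L^p_{\loc}$ with $|\Xi'_{\gtau_\alpha}|\rightharpoonup A$, satisfying $A\geq|\xi'_+|$ a.e., $\varphi\geq\phi\circ\xi$, $\xi(0)=x_0$, and $\liminf_\alpha G^p_{\gtau_\alpha}(t)\geq|\partial^-\phi|^q(\xi(t))$. (One should note that since the $\sigma$-limit is unique, Remark~\ref{samelimitsimgad}\eqref{top-c}, the full sequence already converges once we fix the starting data, so no information is lost.)

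The core is the chain of inequalities. Fix $T>0$. From Lemma~\ref{prooriesima}\eqref{apri-1} with $k=0$ and $t^l_{\gtau_\alpha}$ approaching $T$,
\[
 \frac1p\int_0^{T}|\Xi'_{\gtau_\alpha}|^p\,\dd t
 +\frac1q\int_0^{T}G^p_{\gtau_\alpha}\,\dd t
 +\phi\big(\overline{\Xi}_{\gtau_\alpha}(T)\big)
 \le \phi(\Xi^0_{\gtau_\alpha})+o(1).
\]
Taking $\liminf_{\alpha\to\infty}$, using weak lower semicontinuity of the $L^p$-norm for the first term, Fatou together with $\liminf_\alpha G^p_{\gtau_\alpha}\ge|\partial^-\phi|^q(\xi(\cdot))$ for the second, $\varphi(T)=\lim_\alpha\phi(\overline{\Xi}_{\gtau_\alpha}(T))\ge\phi(\xi(T))$ for the third, and $\phi(\Xi^0_{\gtau_\alpha})\to\phi(x_0)$ on the right, we get
\[
 \frac1p\int_0^{T}|\xi'_+|^p\,\dd t
 +\frac1q\int_0^{T}|\partial^-\phi|^q\big(\xi(t)\big)\,\dd t
 +\phi\big(\xi(T)\big)
 \le \phi(x_0).
\]
For the reverse inequality, since $|\partial^-\phi|$ is assumed to be a strong upper gradient for $-\phi$, Definition~\ref{strongdefg} applied to the curve $\xi$ on $[0,T]$ gives $\phi(x_0)-\phi(\xi(T))\le\int_0^T|\partial^-\phi|(\xi(t))\,|\xi'_+|(t)\,\dd t$, and the Young inequality bounds this by $\frac1p\int_0^T|\xi'_+|^p+\frac1q\int_0^T|\partial^-\phi|^q(\xi)$. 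Combining the two inequalities forces equality everywhere, which is \eqref{eq:energy}; moreover equality in Young's inequality forces $|\xi'_+|^p(t)=|\partial^-\phi|^q(\xi(t))$ a.e., and then $\varphi=\phi\circ\xi$ is non-increasing and, by Proposition~\ref{curaboslpssss}, $\xi$ is a $p$-curve of maximal slope for $\phi$ with respect to $|\partial^-\phi|$. The convergences \eqref{exstasolucon1}--\eqref{exstasolucon3} then follow in the standard way: equality in all the $\liminf$ estimates forces $\lim_\alpha\phi(\overline{\Xi}_{\gtau_\alpha}(t))=\phi(\xi(t))$ for a.e.\ $t$ and, by monotonicity of $\varphi$ and lower semicontinuity, for every $t$; weak convergence $|\Xi'_{\gtau_\alpha}|\rightharpoonup|\xi'_+|$ together with convergence of the $L^p$-norms upgrades to strong $L^p_{\loc}$ convergence; and similarly for $G_{\gtau_\alpha}$, whence $|\partial\phi|\circ\overline{\Xi}_{\gtau_\alpha}\to|\partial^-\phi|\circ\xi$ in $L^q_{\loc}$ via \eqref{niforgG}.

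The main obstacle I anticipate is the bookkeeping in passing to the limit along the discrete partitions: the endpoint $T$ need not lie on $P_{\gtau_\alpha}$, so one must track the $o(1)$ discrepancies between $t^{N(\alpha)}_{\gtau_\alpha}$ and $T$ and between $\overline{\Xi}_{\gtau_\alpha}$, $\widetilde{\Xi}_{\gtau_\alpha}$ (controlled by \eqref{**duwtilcont}), and one must ensure that the asymmetry of $d$ does not spoil the lower semicontinuity arguments — here Theorem~\ref{discreecoroll} has already done the heavy lifting, so what remains is to check that $G_{\gtau_\alpha}$, which is built from $\widetilde{\Xi}_{\gtau_\alpha}$ rather than $\overline{\Xi}_{\gtau_\alpha}$, still feeds correctly into Fatou's lemma. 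A second delicate point is that the strong-upper-gradient hypothesis is used only on the limit curve $\xi$, which is legitimate since $\xi\in\FAC([0,T];X)$, but one should remark that without it only the inequality $\le$ in \eqref{eq:energy} survives (which is the content of the curve-of-maximal-slope property via \eqref{curvemaxforslop}), and the energy \emph{identity} genuinely needs the strong upper gradient, exactly as in Proposition~\ref{curaboslpssss}.
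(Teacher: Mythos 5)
You follow the paper's own route essentially step by step: the discrete energy identity of Lemma~\ref{prooriesima}\eqref{apri-1}, the Fatou/lower-semicontinuity passage to the limit packaged in Theorem~\ref{discreecoroll}, the reverse inequality from the strong upper gradient applied to the limit curve, and equality in the Young inequality, followed by the standard upgrades to the convergences (i)--(iii). The one step that is not justified as written is the very first one: you invoke Theorem~\ref{discreecoroll} ``since Assumption~\ref{continudef}\eqref{ass-a}--\eqref{ass-c} hold'', but the compactness hypothesis \eqref{ass-c} is \emph{not} among the assumptions of the theorem (only Assumption~\ref{continudef}\eqref{ass-a} and Assumption~\ref{nonemtpassumpto} are). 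The paper deals with precisely this point by observing that \eqref{ass-c} enters the proof of Theorem~\ref{discreecoroll} only to produce the $\sigma$-convergent subsequence and the limit curve $\xi$; in the present theorem the convergence $\overline{\Xi}_{\gtau_\alpha}(t)\,\overset{\sigma}{\lra}\,\xi(t)$ is part of the hypotheses, so the remaining conclusions (the a priori bounds, the weak $L^p$ limit $A\ge|\xi'_+|$, the monotone limit $\varphi\ge\phi\circ\xi$, and $\liminf_\alpha G^p_{\gtau_\alpha}\ge|\partial^-\phi|^q\circ\xi$) go through without it. You need to add this observation or rerun that portion of the argument by hand; otherwise the appeal to Theorem~\ref{discreecoroll} is unlicensed.

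Two smaller repairs. First, to conclude that $\xi\in\GMM_p(\phi;x_0)$ is a $p$-curve of maximal slope you cite Proposition~\ref{curaboslpssss}, but that proposition runs in the opposite direction (it assumes the curve is already of maximal slope); the correct step is to differentiate the energy identity: equality in the strong-upper-gradient inequality gives $(\phi\circ\xi)'(t)=-|\partial^-\phi|(\xi(t))\,|\xi'_+|(t)=-\tfrac1p|\xi'_+|^p(t)-\tfrac1q|\partial^-\phi|^q(\xi(t))$ a.e., which is exactly Definition~\ref{maxslp}. Second, for \eqref{exstasolucon3} the pointwise majorization should be $|\partial\phi|^q(\overline{\Xi}_{\gtau_\alpha}(t))\le|\Xi'_{\gtau_\alpha}|^p(t)$ from Lemma~\ref{slopfirstexsts}, since \eqref{niforgG} involves the De~Giorgi interpolant $\widetilde{\Xi}_{\gtau_\alpha}$ rather than $\overline{\Xi}_{\gtau_\alpha}$; and because $\varphi$, $A$ and the slope limits are extracted along subsequences, passing to the full sequence in (i)--(iii) requires the usual sub-subsequence argument (the limits $\phi\circ\xi$, $|\xi'_+|$, $|\partial^-\phi|\circ\xi$ being independent of the subsequence), not merely the uniqueness of the $\sigma$-limit that you invoke.
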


\begin{proof}
We obtain from Theorem~\ref{discreecoroll}
a subsequence $(\gtau_{\alpha_i})_{i \ge 1}$
of $(\gtau_\alpha)_{\alpha \ge 1}$, denoted by $(\gtau_i)_{i \ge 1}$ for simplicity,
satisfying Theorem~\ref{discreecoroll}\eqref{frstconve1}--\eqref{frstconve4}.
We remark that, in the proof of Theorem~\ref{discreecoroll},
Assumption~\ref{continudef}\eqref{ass-c} was used only to find a convergent subsequence
and a limit curve $\xi$ satisfying Theorem~\ref{discreecoroll}\eqref{frstconve1}.
In the current theorem, $\overline{\Xi}_{\gtau_\alpha}(t) \,\overset{\sigma}{\lra}\, \xi(t)$
is included in the hypotheses
and we do not need Assumption~\ref{continudef}\eqref{ass-c}.

It follows from Theorem~\ref{discreecoroll}, Fatou's lemma
and Lemma~\ref{prooriesima}\eqref{apri-1} that
\begin{align}
 &\frac1p \int^T_0 |\xi'_+|^p(t) \,{\dd}t
 +\frac1q \int^T_0 |\partial^-\phi|^q \big( \xi(t) \big) \,{\dd}t
 +\phi\big( \xi(T) \big) \nonumber\\
 &\leq \frac1p \int^T_0 A^p(t) \,{\dd}t
 +\frac1q \int^T_0 \liminf_{i \to \infty} G^p_{\gtau_i}(t) \,{\dd}t
 +\lim_{i \to \infty} \phi\big( \overline{\Xi}_{\gtau_i}(T) \big) \nonumber\\
 &\leq \liminf_{i \to \infty} \Bigg\{
 \frac1p \int^T_0 |\Xi'_{\gtau_i}|^p(t) \,{\dd}t
 +\frac1q \int^T_0 G^p_{\gtau_i}(t) \,{\dd}t
 +\phi\big( \overline{\Xi}_{\gtau_i}(T) \big)  \Bigg\} \nonumber\\
 &= \lim_{i \to \infty} \phi(\Xi_{\gtau_i}^0) =\phi(x_0).
 \label{leftgmmu}
\end{align}
Furthermore, since $|\partial^-\phi|$ is a strong upper gradient for $-\phi$ by assumption,
Definition~\ref{strongdefg} furnishes
\begin{equation}\label{rightgumm}
 \phi(x_0) = \phi\big( \xi(0) \big)
 \leq \phi\big( \xi(T) \big) +\int^T_0 |\partial^-\phi| \big( \xi(t) \big) |\xi'_+|(t) \,{\dd}t.
\end{equation}
Combining this with \eqref{leftgmmu} shows
\[
 \frac1p \int^T_0 |\xi'_+|^p(t) \,{\dd}t
 +\frac1q \int_0^T |\partial^-\phi|^q \big( \xi(t) \big) \,{\dd}t
 \leq \int^T_0 |\partial^-\phi| \big(\xi(t) \big) |\xi'_+|(t) \,{\dd}t.
\]
Therefore, we obtain from the equality condition of the Young inequality that
\begin{equation}\label{roughslopmaxo11}
 |\xi'_+|^p(t) =|\partial^-\phi|^q \big( \xi(t) \big) \quad \text{for $\mathscr{L}^1$-a.e.}\ t \in (0,\infty),
\end{equation}
and all the inequalities in \eqref{leftgmmu} and \eqref{rightgumm} are in fact equalities.
In particular, \eqref{leftgmmu} yields \eqref{eq:energy} and
\begin{align}
 &|\Xi'_{\gtau_i}| \to |\xi'_+|
 \,\ \text{in}\ L^p_{\loc} \big( [0,\infty) \big),
 \label{eq:A-limit}\\
 &\liminf_{i \to \infty} G^p_{\gtau_i} =|\partial^-\phi|^q\circ \xi
 \,\ \text{in}\ L^1_{\loc} \big( [0,\infty) \big), \qquad
 \lim_{i \to \infty} \phi\big( \overline{\Xi}_{\gtau_i}(t) \big)
 =\phi\big( \xi(t) \big)
 \,\ \text{for all}\ t \geq 0.
 \nonumber
\end{align}
Thus, we have \eqref{exstasolucon1} and  \eqref{exstasolucon2} for $(\gtau_i)_{i \ge 1}$.

We next show \eqref{exstasolucon3}.
On the one hand, observe from Definition~\ref{wekarkparg} that
$\liminf_{i \to \infty} |\partial\phi| (\overline{\Xi}_{\gtau_i}(t)) \geq |\partial^-\phi|(\xi(t))$.
On the other hand, we deduce from
\eqref{roughslopmaxo11}, \eqref{eq:A-limit}, \eqref{detrivtauU'} and Lemma~\ref{slopfirstexsts} that
\begin{align*}
 \int_0^T |\partial^-\phi|^q \big( \xi(t) \big) \,{\dd}t
 &= \int_0^T |\xi'_+|^p \,{\dd}t
 =\lim_{i \to \infty} \int_0^T |\Xi'_{\gtau_i}|^p \,{\dd}t
 \geq \limsup_{i \to \infty} \int_0^T |\partial\phi|^q \big( \overline{\Xi}_{\gtau_i}(t) \big) \,{\dd}t \\
 &\geq \liminf_{i \to \infty} \int_0^T |\partial\phi|^q \big( \overline{\Xi}_{\gtau_i}(t) \big) \,{\dd}t
 \geq \int_0^T \liminf_{i \to \infty} |\partial\phi|^q \big( \overline{\Xi}_{\gtau_i}(t) \big) \,{\dd}t.
\end{align*}
Hence, we obtain \eqref{exstasolucon3} for $(\gtau_i)_{i \ge 1}$.
Note finally that,
since every subsequence of $(\gtau_\alpha)_{\alpha \ge 1}$ includes a further subsequence
satisfying \eqref{exstasolucon1}--\eqref{exstasolucon3},
the original sequence $(\gtau_\alpha)_{\alpha \ge 1}$
necessarily satisfies \eqref{exstasolucon1}--\eqref{exstasolucon3}.

For $\xi \in \GMM_p(\phi;x_0)$, one can apply the above argument
and obtain the energy identity \eqref{eq:energy}.
Moreover, we deduce from the equality in \eqref{rightgumm} as well as \eqref{roughslopmaxo11} that
\[
 \frac{{\dd}}{{\dd}t} \phi \big(\xi(t) \big)
 =-|\partial^-\phi| \big( \xi(t) \big) |\xi'_+|(t)
 =-\frac1p |\xi'_+|^p(t) -\frac1q |\partial^-\phi|^q \big( \xi(t) \big)
\]
for $\mathscr{L}^1$-a.e.\ $t \in (0,\infty)$.
Therefore, $\xi$ is a $p$-curve of maximal slope for $\phi$ with respect to $|\partial^-\phi|$.
\end{proof}

{
See \cite[Theorem~3.5]{RMS} and \cite[Theorem~4.21]{CRZ} for the results
corresponding to Theorem~\ref{existsassupab} in their settings.
Theorem~\ref{existsassupab} cannot be deduced from them (and vice versa),
due to the difference of assumptions we explained in Remarks~\ref{diffbetRMS} and \ref{diffbettheree}.
}

\begin{theorem}\label{gmmslopI} %[AGS, Theorem 2.3.1}
Suppose that Assumptions~$\ref{continudef}$\eqref{ass-a} and $\ref{nonemtpassumpto}$ hold.
If $|\partial^-\phi|$ is a weak upper gradient for $-\phi$
and $\phi$ satisfies the \emph{continuity condition:}
\[
 \sup_{i \ge 1} \big\{ |\partial\phi|(x_i),\, d(x,x_i),\, \phi(x_i) \big\}<\infty,
 \ x_i \,\overset{\sigma}{\lra}\, x
 \quad \Longrightarrow\quad \phi(x_i) \to \phi(x),
\]
then every $\xi \in \GMM_p(\phi;x_0)$ with $x_0 \in \mathfrak{D}(\phi)$
is a $p$-curve of maximal slope for $\phi$ with respect to $|\partial^-\phi|$.
\end{theorem}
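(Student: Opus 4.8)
The plan is to run the proof of Theorem~\ref{existsassupab} as far as possible, replacing its strong-upper-gradient input (which there produced the energy identity) by two uses of the continuity condition: first to identify $\phi\circ\xi$ with the limiting non-increasing function, and then to pass to the one-sided energy inequality that characterizes a curve of maximal slope. So fix $\xi\in\GMM_p(\phi;x_0)$ together with a sequence of discrete solutions $(\overline{\Xi}_{\gtau_\alpha})_{\alpha\ge1}$ realizing it. Arguing exactly as at the beginning of the proof of Theorem~\ref{existsassupab}, the parts of the proof of Theorem~\ref{discreecoroll} that do not use Assumption~\ref{continudef}\eqref{ass-c} --- together with Lemma~\ref{prooriesima}, which applies since Assumption~\ref{continudef}\eqref{ass-a} contains the $\mathcal{T}_+$-lower semicontinuity of $\phi$ and Assumption~\ref{nonemtpassumpto} is assumed --- let us pass to a subsequence $(\gtau_i)_{i\ge1}$ such that $\xi\in\FAC^p_{\loc}([0,\infty);X)$, $\overline{\Xi}_{\gtau_i}(t),\widetilde{\Xi}_{\gtau_i}(t)\overset{\sigma}{\lra}\xi(t)$ for all $t\ge0$, $\varphi(t):=\lim_{i\to\infty}\phi(\overline{\Xi}_{\gtau_i}(t))$ exists, is non-increasing, satisfies $\varphi\ge\phi\circ\xi$ and $\varphi(0)=\phi(x_0)$, $|\Xi'_{\gtau_i}|\to A$ weakly in $L^p_{\loc}$ with $A\ge|\xi'_+|$ a.e., $\liminf_{i}G^p_{\gtau_i}(t)\ge|\partial^-\phi|^q(\xi(t))$ for all $t>0$, and the a priori bounds of Lemma~\ref{prooriesima}\eqref{apri-2} hold; here we use that $\sigma$-limits are unique (Remark~\ref{samelimitsimgad}\eqref{top-c}), so the limit curve produced by that argument is the given $\xi$. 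In particular all the points $\overline{\Xi}_{\gtau_i}(t)$, $t\in[0,T]$, lie in a fixed forward bounded sublevel set $\mathscr{K}$ of $\phi$, and $\phi>-\infty$ on $X$ by the coercivity in Assumption~\ref{continudef}\eqref{ass-b} and Lemma~\ref{basicestphi}.

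The next step is to show $\phi\circ\xi=\varphi$ $\mathscr{L}^1$-a.e. By Lemma~\ref{prooriesima}\eqref{apri-1} we have $\int_0^T|\Xi'_{\gtau_i}|^p(t)\,{\dd}t\le p\{\phi(\Xi^0_{\gtau_i})-\phi(\overline{\Xi}_{\gtau_i}(T))\}$, which is bounded uniformly in $i$, so Fatou's lemma gives $\liminf_{i}|\Xi'_{\gtau_i}|^p(t)<\infty$ for $\mathscr{L}^1$-a.e.\ $t>0$. Fix such a $t$. Since $\Xi^k_{\gtau_i}\in J_{\tau_{i,k}}[\Xi^{k-1}_{\gtau_i}]$, Lemma~\ref{slopfirstexsts} yields $|\partial\phi|(\overline{\Xi}_{\gtau_i}(t))\le|\Xi'_{\gtau_i}|^{p-1}(t)$, so along a (t-dependent) subsequence $(i_\ell)_\ell$ the numbers $|\partial\phi|(\overline{\Xi}_{\gtau_{i_\ell}}(t))$ stay bounded, while $d(\xi(t),\overline{\Xi}_{\gtau_{i_\ell}}(t))$ and $\phi(\overline{\Xi}_{\gtau_{i_\ell}}(t))$ are bounded (the points lie in $\mathscr{K}$) and $\overline{\Xi}_{\gtau_{i_\ell}}(t)\overset{\sigma}{\lra}\xi(t)$. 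The continuity condition then forces $\phi(\overline{\Xi}_{\gtau_{i_\ell}}(t))\to\phi(\xi(t))$; since $\phi(\overline{\Xi}_{\gtau_i}(t))\to\varphi(t)$ along the full sequence, we get $\varphi(t)=\phi(\xi(t))$ for a.e.\ $t$. Being non-increasing with $\varphi\ge\phi\circ\xi>-\infty$, $\varphi$ is then bounded below on each $[0,T]$ by $\phi(\xi(T))$.

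Now I extract the one-sided energy inequality. For $0\le s<t$, let $\underline{s}_i$ and $\overline{t}_i$ be respectively the largest and smallest points of the partition $P_{\gtau_i}$ with $\underline{s}_i\le s$ and $\overline{t}_i\ge t$; Lemma~\ref{prooriesima}\eqref{apri-1} reads
\[
 \phi\big(\overline{\Xi}_{\gtau_i}(\underline{s}_i)\big)
 =\frac1p\int_{\underline{s}_i}^{\overline{t}_i}|\Xi'_{\gtau_i}|^p(r)\,{\dd}r
 +\frac1q\int_{\underline{s}_i}^{\overline{t}_i}G^p_{\gtau_i}(r)\,{\dd}r
 +\phi\big(\overline{\Xi}_{\gtau_i}(\overline{t}_i)\big).
\]
Bounding the left term above by $\phi(\overline{\Xi}_{\gtau_i}(s''))$ for any $s''<s$ and the last term below by $\phi(\overline{\Xi}_{\gtau_i}(t'))$ for any $t'>t$ (valid for $i$ large, by monotonicity of $\phi\circ\overline{\Xi}_{\gtau_i}$ and $\underline{s}_i\to s$, $\overline{t}_i\to t$), shrinking the integration intervals to $[s,t]$, taking $\liminf_i$ with the weak lower semicontinuity of the $L^p$-norm, Fatou's lemma and $A\ge|\xi'_+|$, $\liminf_iG^p_{\gtau_i}\ge|\partial^-\phi|^q(\xi)$, and finally letting $s''\uparrow s$, $t'\downarrow t$, I obtain
\[
 \varphi(s^-)\ge\frac1p\int_s^t|\xi'_+|^p(r)\,{\dd}r+\frac1q\int_s^t|\partial^-\phi|^q\big(\xi(r)\big)\,{\dd}r+\varphi(t^+).
\]
Evaluated at the (co-countably many) continuity points $s<t$ of $\varphi$, this reads $\varphi(s)-\varphi(t)\ge\frac1p\int_s^t|\xi'_+|^p+\frac1q\int_s^t|\partial^-\phi|^q(\xi)$; in particular $|\partial^-\phi|^q\circ\xi\in L^1_{\loc}$. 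Hence $\Psi(t):=\varphi(t)+\frac1p\int_0^t|\xi'_+|^p+\frac1q\int_0^t|\partial^-\phi|^q(\xi)$ is non-increasing, so it is a.e.\ differentiable with $\Psi'\le0$ a.e., which gives $\varphi'(t)\le-\frac1p|\xi'_+|^p(t)-\frac1q|\partial^-\phi|^q(\xi(t))$ for a.e.\ $t>0$. Since $\xi\in\FAC^p_{\loc}\subset\FAC^1_{\loc}$, $\phi\circ\xi=\varphi$ a.e.\ with $\varphi$ non-increasing, and $|\partial^-\phi|$ is a weak upper gradient for $-\phi$ by hypothesis, Definition~\ref{maxslp} shows that $\xi$ is a $p$-curve of maximal slope for $\phi$ with respect to $|\partial^-\phi|$.

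The step I expect to be the main obstacle is the identification $\phi\circ\xi=\varphi$: it is precisely where the continuity condition is genuinely used, and the delicate point is that $|\partial\phi|(\overline{\Xi}_{\gtau_i}(t))$ is controlled only along a $t$-dependent subsequence (obtained via Fatou), which nonetheless suffices because $\phi(\overline{\Xi}_{\gtau_i}(t))$ already converges along the whole sequence. The remaining care concerns the bookkeeping with partition points and the passage between $\varphi$, $\varphi(s^-)$ and $\varphi(s^+)$, which is routine once monotonicity of $\varphi$ and of $\phi\circ\overline{\Xi}_{\gtau_i}$ is exploited.
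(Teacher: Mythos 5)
Your proposal is correct and follows essentially the same route as the paper: extract a subsequence with the properties of Theorem~\ref{discreecoroll} (noting the compactness assumption is not needed since the $\sigma$-limit curve is given), pass the discrete energy identity of Lemma~\ref{prooriesima}\eqref{apri-1} to the limit to get $\varphi(s)-\varphi(t)\ge\frac1p\int_s^t|\xi'_+|^p+\frac1q\int_s^t|\partial^-\phi|^q(\xi)$ and hence the differential inequality for $\varphi$, and use Lemma~\ref{slopfirstexsts} together with the a.e.\ finiteness of $\liminf_i|\Xi'_{\gtau_i}|$ and the continuity condition to identify $\varphi=\phi\circ\xi$ a.e. The only differences (identifying $\varphi=\phi\circ\xi$ before rather than after the energy inequality, deriving the a.e.\ finiteness from the a priori bound plus Fatou, and the more explicit endpoint bookkeeping) are cosmetic.
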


\begin{proof}
Let $(\gtau_\alpha)_{\alpha \ge 1}$ be as in Definition~\ref{defominvcurve}.
Similarly to Theorem~\ref{existsassupab},
we can choose a subsequence $(\gtau_i)_{i \ge 1}$ of $(\gtau_\alpha)_{\alpha \ge 1}$
satisfying Theorem~\ref{discreecoroll}\eqref{frstconve1}--\eqref{frstconve4}.
Setting $\varphi(t)=\lim_{i \to \infty} \phi(\overline{\Xi}_{\gtau_i}(t))$ as in Theorem~\ref{discreecoroll},
we infer from \eqref{leftgmmu} and Lemma~\ref{prooriesima}\eqref{apri-1} that
\begin{align}
 &\frac1p \int^t_s |\xi'_+|^p(r) \,{\dd}r
 +\frac1q \int^t_s |\partial^-\phi|^q\big( \xi(r) \big) \,{\dd}r
 \leq \varphi(s)-\varphi(t) \quad \text{for all}\ 0\leq s\leq t,
 \label{phidierve}\\
 &\frac1p \int^t_s \liminf_{i \to \infty} |\Xi'_{\gtau_i}|^p (r) \,{\dd}r
 \leq \varphi(s)-\varphi(t)<\infty \quad \text{for all}\ 0\leq s\leq t. \nonumber
\end{align}
The latter inequality implies that
$\liminf_{i \to \infty}|\Xi'_{\gtau_i}|(t)<\infty$ for $\mathscr{L}^1$-a.e.\ $t \in (0,\infty)$.
Since $\varphi$ is non-increasing, $\varphi'$ exists $\mathscr{L}^1$-a.e.\
and we find from \eqref{phidierve} that
\[
 \varphi'(t) \leq -\frac1p |\xi'_+|^p(t) -\frac1q |\partial^-\phi|^q \big( \xi(t) \big) \quad
 \text{for $\mathscr{L}^1$-a.e.}\ t \in (0,\infty).
\]
As $|\partial^-\phi|$ is a weak upper gradient by assumption,
it suffices to show that $\varphi$ is $\mathscr{L}^1$-a.e.\ equal to $\phi \circ \xi$.

Observe from Lemma~\ref{slopfirstexsts} (as in the proof of Theorem~\ref{existsassupab}) that
\[
 \liminf_{i \to \infty} |\partial\phi|^q \big( \overline{\Xi}_{\gtau_i}(t) \big)
 \leq \liminf_{i \to \infty} |\Xi'_{\gtau_i}|^p(t) <\infty \quad \text{for $\mathscr{L}^1$-a.e.}\ t \in (0,\infty).
\]
Thus, for $\mathscr{L}^1$-a.e.\ $t \in (0,\infty)$, the assumed continuity condition yields
$\varphi(t) =\lim_{i \to \infty} \phi(\overline{\Xi}_{\gtau_i}(t)) =\phi(\xi(t))$.
This completes the proof.
\end{proof}

Now, we present an existence result to Problem~\ref{firsslopque}.
We say that a function $\phi:X \lra \mathbb{R}$ is of \emph{lower $p$-growth}
if there are constants $C,D \geq 0$ such that
$\phi(x) \ge -C-Dd^p(\star,x)$ holds for all $x \in X$.
For instance, if $\inf_X \phi>-\infty$, then $\phi$ is of lower $p$-growth
with $C=-\min\{\inf_X \phi,0\}$.

\begin{corollary}\label{generalforwardexistssolu}
Let $(X,d)$ be a forward boundedly compact forward metric space, $p \in (1,\infty)$,
and $\phi:X \lra \mathbb{R}$ be a continuous function of lower $p$-growth.
If $|\partial^-\phi|$ is a weak upper gradient for $-\phi$,
then, for any $x_0\in X$,
there exists a $p$-curve $\xi:[0,\infty) \lra X$ of maximal slope for $\phi$
with respect to $|\partial^-\phi|$ with $\xi(0)=x_0$.
If additionally $|\partial^-\phi|$ is a strong upper gradient,
then $\xi$ satisfies the energy identity \eqref{eq:energy}.
\end{corollary}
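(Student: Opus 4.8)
The plan is to take $\sigma=\mathcal{T}_+$, verify the hypotheses of Theorems~\ref{gmmslopI} and \ref{existsassupab}, manufacture a generalized minimizing movement via Corollary~\ref{compacttexistsencesolu}, and conclude.

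First I would check Assumption~\ref{continudef}. Since $\phi$ is continuous it is $\mathcal{T}_+$-lower semicontinuous, so \eqref{ass-a} holds. Because $(X,d)$ is forward boundedly compact, every forward bounded set is relatively compact in $\mathcal{T}_+$ (its $\mathcal{T}_+$-closure is a closed subset of a slightly larger forward ball, hence compact); in particular forward bounded sublevel sets of $\phi$ are relatively compact, which gives \eqref{ass-c} in the form of Remark~\ref{strongassupm}, and moreover $(X,d)$ is forward complete, as is tacitly assumed throughout Section~\ref{topologyexiscurvew}. For the coercivity \eqref{ass-b}, write the lower $p$-growth bound as $\phi(x)\ge -C-Dd^p(\star,x)$ with $C,D\ge 0$, take $x_*=\star$, and pick any $\tau_*>0$ with $1/(p\tau_*^{p-1})\ge D$ (any $\tau_*>0$ if $D=0$). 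Then
\[
 \phi(y)+\frac{d^p(\star,y)}{p\tau_*^{p-1}}
 \ge -C+\Big(\frac{1}{p\tau_*^{p-1}}-D\Big)d^p(\star,y)
 \ge -C
 \qquad\text{for all }y\in X,
\]
so $\Phi_{\tau_*}(\star)\ge -C>-\infty$; in particular $\tau_*(\phi)>0$. By Theorem~\ref{existenceofdisc} (see Remark~\ref{existsju}), Assumption~\ref{nonemtpassumpto} holds as well.

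Next I would build the minimizing movement. Fix any family $\Lambda$ of sequences of time steps $\gtau$ whose partitions exhaust $[0,\infty)$ with $\inf_{\gtau\in\Lambda}\|{\gtau}\|=0$, and set $\Xi^0_{\gtau}:=x_0$ for every $\gtau\in\Lambda$. Since $\phi(x_0)\in\mathbb{R}$ and $d(x_0,\Xi^0_{\gtau})=0$, Corollary~\ref{compacttexistsencesolu} applies and yields a sequence $({\gtau}_\alpha)_{\alpha\ge1}$ in $\Lambda$ with $\|{\gtau}_\alpha\|\to0$, together with a curve $\xi\in\FAC^p_{\loc}([0,\infty);X)$ such that $\overline{\Xi}_{{\gtau}_\alpha}(t)\overset{\sigma}{\lra}\xi(t)$ for all $t\ge0$. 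As $\Xi^0_{{\gtau}_\alpha}=x_0\overset{\sigma}{\lra}x_0$ and $\phi(\Xi^0_{{\gtau}_\alpha})=\phi(x_0)$, the same corollary gives $\xi\in\GMM_p(\phi;x_0)$; in particular $\xi(0)=x_0$ and $\GMM_p(\phi;x_0)\neq\emptyset$.

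Finally I would invoke the abstract theorems. The continuity condition required in Theorem~\ref{gmmslopI} is automatic: if $x_i\overset{\sigma}{\lra}x$ then $x_i\to x$ in $\mathcal{T}_+$, and the continuity of $\phi$ forces $\phi(x_i)\to\phi(x)$, without using the bounds on $|\partial\phi|(x_i)$ or $d(x,x_i)$. Hence Theorem~\ref{gmmslopI} shows that $\xi$ is a $p$-curve of maximal slope for $\phi$ with respect to $|\partial^-\phi|$, which together with $\xi(0)=x_0$ is the first assertion. If in addition $|\partial^-\phi|$ is a strong upper gradient for $-\phi$, then all the hypotheses of Theorem~\ref{existsassupab} are met by the discrete solutions $(\overline{\Xi}_{{\gtau}_\alpha})_{\alpha\ge1}$ and the limit curve $\xi$, so $\xi$ satisfies the energy identity \eqref{eq:energy}; alternatively this follows from Proposition~\ref{curaboslpssss} once one observes that $\phi\circ\xi$ extends continuously to $t=0$ with value $\phi(x_0)$, by the $\mathcal{T}_+$-continuity of $\xi$ and of $\phi$. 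I do not expect a genuine obstacle here, since the argument is essentially an assembly of the results above; the only step demanding a little care is the coercivity estimate, where choosing the basepoint $\star$ of the lower $p$-growth bound as $x_*$ neutralizes the asymmetry of $d$.
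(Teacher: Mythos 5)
Your proposal is correct and follows essentially the same route as the paper: set $\sigma=\mathcal{T}_+$, check Assumption~\ref{continudef} (with the lower $p$-growth giving coercivity), obtain $\xi\in\GMM_p(\phi;x_0)$ from Corollary~\ref{compacttexistsencesolu}, and conclude via Theorems~\ref{gmmslopI} and \ref{existsassupab}. The only (harmless) deviation is in the coercivity step, where you take $x_*=\star$ directly instead of estimating $\Phi(\tau,x;\cdot)$ for general $x$ via the triangle inequality and Lemma~\ref{basicpestimate} as the paper does; your explicit verification of Assumption~\ref{nonemtpassumpto} and of the continuity condition in Theorem~\ref{gmmslopI} matches what the paper uses implicitly.
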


\begin{proof}
Let $\sigma=\mathcal{T}_+$.
Observe from the triangle inequality and Lemma~\ref{basicpestimate} that
\[
 \Phi(\tau,x;y) \geq -C -Dd^p(\star,y) +\frac{d^p(x,y)}{p\tau^{p-1}}
 \geq -C-D \big\{ \mathfrak{C}d^p(\star,x) +(1+\epsilon)d^p(x,y) \big\} +\frac{d^p(x,y)}{p\tau^{p-1}}.
\]
Hence, for sufficiently small $\tau>0$, we have $\inf_{y \in X} \Phi(\tau,x;y)>-\infty$
and Assumption~\ref{continudef}\eqref{ass-b} holds.
Moreover, Assumption~\ref{continudef}\eqref{ass-a}, \eqref{ass-c} hold by hypotheses.
Therefore, given any sequence of time steps $(\gtau_\alpha)_{\alpha \ge 1}$
with initial data $(\Xi^0_{\gtau_\alpha})_{\alpha \ge 1}$
such that $\|{\gtau}_\alpha\| \to 0$ and $\Xi_{\gtau_\alpha}^0 \to x_0$,
Corollary~\ref{compacttexistsencesolu} provides
a limit curve $\xi \in \GMM_p(\phi;x_0)$.
Then the claims follow from Theorems~\ref{gmmslopI} and \ref{existsassupab}.
\end{proof}

We remark that the lower $p$-growth was assumed merely for ensuring
that Assumption~\ref{continudef}\eqref{ass-b} holds.
Another condition implying Assumption~\ref{continudef}\eqref{ass-b} is,
e.g., that every sublevel set of $\phi$ is compact (see Proposition~\ref{simpassump}).

In view of Example~\ref{Nonfinslerexamp},
we also give an existence result for gradient curves in the Finsler case.

\begin{corollary}\label{bascirgeulfinsler}
Let $(M,F)$ be a forward complete Finsler manifold and $p \in (1,\infty)$.
For any $\phi \in C^1(M)$ and $x_0 \in M$,
there exists a $C^1$-curve $\xi:[0,T) \lra M$ solving the gradient flow equation
\[
 \mathfrak{j}_p \big( \xi'(t) \big) =\nabla(-\phi) \big( \xi(t) \big), \qquad \xi(0)=x_0,
\]
where $\lim_{t \to T} d_F(x_0,\xi(t))=\infty$ if $T<\infty$.
\end{corollary}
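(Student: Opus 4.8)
The plan is to obtain the curve by combining the abstract existence result Corollary~\ref{generalforwardexistssolu} with a localization argument, and then to extend the resulting curve to a maximal one. First I would note that, by Theorem~\ref{reversibifinslerdd}, $(M,d_F)$ is a pointed forward $\Theta$-metric space, and that forward completeness together with the Finsler Hopf--Rinow theorem (\cite[Theorem~6.6.1]{BCS}) makes it \emph{forward boundedly compact}, so every closed forward ball $\overline{B^+_x(\rho)}$ is compact. Given any $x\in M$, I would fix $\rho>0$ and choose $\chi\in C^\infty(M)$ with $0\le\chi\le 1$, $\chi\equiv 1$ on $\overline{B^+_x(\rho)}$ and compact support; then $\phi_\chi:=\chi\phi\in C^1(M)$ is bounded, hence of lower $p$-growth, and $\phi_\chi=\phi$ on $\overline{B^+_x(\rho)}$ (so $\nabla(-\phi_\chi)=\nabla(-\phi)$ there). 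As in Examples~\ref{finslergradecase} and \ref{finsbackgrad}, $|\partial\phi_\chi|=F(\nabla(-\phi_\chi))$ is continuous (by continuity of the Legendre transform) and a strong upper gradient for $-\phi_\chi$; being continuous it is lower semicontinuous, so $|\partial^-\phi_\chi|=|\partial\phi_\chi|$ by Remark~\ref{lowerbackreamrk}. Hence Corollary~\ref{generalforwardexistssolu} (with $\sigma=\mathcal{T}_+$) provides a $p$-curve of maximal slope $\zeta:[0,\infty)\lra M$ for $\phi_\chi$ with $\zeta(0)=x$, and by Example~\ref{Nonfinslerexamp} $\zeta$ is $C^1$ with $\mathfrak{j}_p(\zeta'(t))=\nabla(-\phi_\chi)(\zeta(t))$. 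Since $F(\zeta'(t))=F(\nabla(-\phi)(\zeta(t)))^{1/(p-1)}$ is bounded on $\overline{B^+_x(\rho)}$, the curve $\zeta$ stays in $B^+_x(\rho)$ for small $t$, where $\phi_\chi=\phi$; thus $\zeta$ solves $\mathfrak{j}_p(\zeta')=\nabla(-\phi)(\zeta)$ on a nondegenerate interval $[0,\varepsilon)$. This gives \emph{local} existence of a $C^1$ gradient-flow curve starting at any point of $M$.

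Next I would patch these local solutions. By a standard maximal-continuation argument (Zorn's lemma, ordering $C^1$ solutions of $\mathfrak{j}_p(\xi')=\nabla(-\phi)(\xi)$ with $\xi(0)=x_0$ by extension and using the previous step to extend any solution beyond its terminal point), there is a solution $\xi:[0,T)\lra M$, $T\in(0,\infty]$, that admits no extension. By \eqref{finslercasemaximalsop} it satisfies $(\phi\circ\xi)'(t)=-F^p(\xi'(t))$, so $\phi\circ\xi$ is non-increasing and $\int_0^t F^p(\xi'(r))\,{\dd}r=\phi(x_0)-\phi(\xi(t))$ for all $t<T$. If $T=\infty$ we are done, so assume $T<\infty$. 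If $\xi([0,T))$ were contained in a compact set $K$, then $\int_0^T F^p(\xi'(r))\,{\dd}r\le\phi(x_0)-\inf_K\phi<\infty$, so by H\"older $\xi\in\FAC([0,T);M)$ has finite length; Lemma~\ref{uniformlyconverge} then gives that $\xi(T_-):=\lim_{t\to T}\xi(t)$ exists and lies in $K$, and, $\nabla(-\phi)$ being continuous, $\xi'(t)$ converges as $t\to T$ to the value at $\xi(T_-)$ of the vector field in \eqref{finserpflow}; concatenating $\xi$ with a local solution starting at $\xi(T_-)$ produces a $C^1$ extension of $\xi$ past $T$, contradicting maximality. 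Therefore $\xi([0,T))$ is not contained in any $\overline{B^+_{x_0}(r)}$.

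It remains to upgrade this to $\lim_{t\to T}d_F(x_0,\xi(t))=\infty$. Suppose not; then there are $r>0$ and $t_k\uparrow T$ with $d_F(x_0,\xi(t_k))\le r$. The same argument as above (with $\overline{B^+_{x_0}(2r)}$ in place of $K$) shows that $\xi$ cannot eventually stay inside $\overline{B^+_{x_0}(2r)}$, so it exits this ball at times arbitrarily close to $T$; hence $v_k:=\inf\{t>t_k\mid d_F(x_0,\xi(t))=2r\}<T$ is well defined. On $[t_k,v_k]$ the curve lies in $\overline{B^+_{x_0}(2r)}$, so $F(\xi'(t))\le L:=\sup_{\overline{B^+_{x_0}(2r)}}F(\nabla(-\phi))^{1/(p-1)}<\infty$, and the triangle inequality yields $r\le d_F(x_0,\xi(v_k))-d_F(x_0,\xi(t_k))\le d_F(\xi(t_k),\xi(v_k))\le L(v_k-t_k)$, i.e.\ $v_k-t_k\ge r/L$. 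Passing to a subsequence so that the $[t_k,v_k]$ are pairwise disjoint, we obtain infinitely many disjoint subintervals of $[0,T)$ of length at least $r/L$, which contradicts $T<\infty$. This proves the corollary.

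I expect the main difficulty to lie not in any single estimate but in the organization forced by the fact that, for merely $C^1$ potentials, the gradient-flow ODE need not have unique solutions: one cannot simply pass to a limit of cut-off solutions, and must instead build a maximal solution by patching local pieces and then rule out, using forward bounded compactness together with the energy identity \eqref{finslercasemaximalsop} and the speed bound, that it terminates while remaining in a forward ball --- the last point being slightly delicate in the asymmetric setting, where ``leaving every forward ball'' must be promoted to $d_F(x_0,\xi(t))\to\infty$.
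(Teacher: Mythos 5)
Your proof is correct, but it is organized differently from the paper's. The paper argues by cases: if $\phi$ is bounded below (or of lower $p$-growth), it applies Corollary~\ref{generalforwardexistssolu} directly to $\phi$ and converts the resulting curve of maximal slope into an ODE solution on all of $[0,\infty)$ via Example~\ref{Nonfinslerexamp}; if $\inf_M\phi=-\infty$, it truncates from below, setting $\phi_r=\max\{\phi,\inf_{B^+_{x_0}(r)}\phi\}$, runs the flow inside $B^+_{x_0}(r)$, and restarts with $\phi_{2r}$ from the exit point, iterating across expanding forward balls — the increasing sequence of exit times makes the maximal interval explicit, with no appeal to Zorn's lemma. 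You instead localize multiplicatively with a smooth cutoff (so the modified potential is automatically bounded, hence of lower $p$-growth, regardless of whether $\phi$ is), obtain local existence at every point, and then produce a maximal solution abstractly and exclude finite-time termination inside a compact set using the energy identity $(\phi\circ\xi)'=-F^p(\xi')$ (which, as you implicitly use, holds for \emph{any} $C^1$ solution of the ODE, not only for the variationally constructed ones), the speed bound $F(\xi')=F(\nabla(-\phi)(\xi))^{1/(p-1)}$, and Lemma~\ref{uniformlyconverge}. What your route buys is a more complete proof of the final claim: your disjoint-excursion argument (each passage from the $r$-sphere to the $2r$-sphere costs at least $r/L$ units of time, with the triangle inequality applied in the correct asymmetric direction) genuinely upgrades ``$\xi$ leaves every forward ball'' to $\lim_{t\to T}d_F(x_0,\xi(t))=\infty$, a point the paper treats rather briskly; what the paper's route buys is a constructive continuation scheme avoiding Zorn and the cutoff, and, in the bounded-below case, the immediate conclusion $T=\infty$. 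One cosmetic point: when you claim $\nabla(-\phi_\chi)=\nabla(-\phi)$ on $\overline{B^+_x(\rho)}$, choose $\chi\equiv 1$ on a slightly larger ball so that the differentials agree up to the boundary (your argument only uses the open ball, so nothing is lost).
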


\begin{proof}
Let $\sigma=\mathcal{T}_+$, and note that the forward metric space $(M,d_F)$
is forward boundedly compact by the Hopf--Rinow theorem.
Since $\phi\in C^1(M)$,
it follows from Example~\ref{finsbackgrad} and Remark~\ref{lowerbackreamrk} that
$|\partial^-\phi|=|\partial\phi|=F(\nabla(-\phi))$ is a strong upper gradient for $-\phi$.
Now, if $\phi$ is bounded below (or of lower $p$-growth),
then Corollary~\ref{generalforwardexistssolu}
furnishes a $p$-curve $\xi:[0,\infty) \lra M$ of maximal slope for $\phi$
with respect to $|\partial^-\phi|$ with $\xi(0)=x_0$,
and we conclude the proof by the argument in Example~\ref{Nonfinslerexamp}.

When $\inf_X \phi=-\infty$,
we replace $\phi$ with $\phi_r:=\max\{\phi,\inf_{B^+_{x_0}(r)} \phi\}$ for large $r>0$
and construct a gradient curve $\xi$ within $B^+_{x_0}(r)$.
If $\xi$ does not reach $\partial B^+_{x_0}(r)$, then $\xi$ is defined on $[0,\infty)$.
If $\xi$ reaches $\partial B^+_{x_0}(r)$ at some $T_1 \in (0,\infty)$,
then we continue the construction for $\phi_{2r}$ from $x_1:=\xi(T_1)$.
Iterating this procedure, since $\nabla(-\phi_r)=\nabla(-\phi)$ in $B^+_{x_0}(r)$,
we eventually obtain a $C^1$-curve $\xi:[0,T) \lra M$
satisfying $\mathfrak{j}_p (\xi'(t)) =\nabla(-\phi)(\xi(t))$
and $\lim_{t \to T} d_F(x_0,\xi(t))=\infty$ if $T<\infty$.
\end{proof}

\begin{remark}\label{regularifinslerm}
%By \eqref{finserpflow}, for $\phi \in C^1(M)$, $\xi$ is at most $C^1$ in general.
For $\phi \in C^l(M)$ with $l \ge 2$,
$\nabla(-\phi)$ is only continuous at its zeros while $C^{l-1}$ at other points
(see \cite{GS,Obook,OS}).
Hence, in order to get a higher regularity of $\xi$,
we need to restrict ourselves to an interval in which $\nabla(-\phi)(\xi(t)) \neq 0$.
See Corollary~\ref{regularoffINSLERCASE} below for instance.
\end{remark}

\subsection{{Doubly nonlinear evolution equations}}\label{funkspacesgrad}
%%%%%%%%%%%%%%%%%%%%%%%%%

This subsection is devoted to gradient flows in infinite dimensional Funk and
{Randers-like spaces}.
Let $(\mathscr{H},\langle\cdot,\cdot\rangle)$ be a Hilbert space
and set $\|\cdot\|:=\sqrt{\langle\cdot,\cdot\rangle}$.
On the unit ball $\mathbb{B}:=\{x\in \mathscr{H} \,|\,\|x\|<1\}$,
we define an asymmetric distance function $d:\mathbb{B} \times \mathbb{B} \lra [0,\infty)$
in the same way as \eqref{distFunk}.
Similarly to \eqref{Funckmeatirc}, $d$ is associated with the (infinite dimensional) Finsler structure
\[
 F(x,v)
 :=\lim_{\varepsilon\to 0^+} \frac{d(x,x+\varepsilon v)}{\varepsilon}
 =\frac{\sqrt{(1-\|x\|^2)\|v\|^2+\langle x,v \rangle^2} +\langle x,v \rangle}{1-\|x\|^2},
 \quad v \in T_x\mathbb{B} \cong \mathscr{H}.
\]
We call $(\mathbb{B},d)$ a \emph{generalized Funk space} and observe the following
(see \cite[Remark 1.1.3]{AGS} for \eqref{funpor3}).

\begin{proposition}\label{basicproperofFunk}
Let $(\mathbb{B},d)$ be a generalized Funk space.
\begin{enumerate}[{\rm (i)}]
\item\label{funpor1}
$(\mathbb{B},\mathbf{0},d)$ is a forward complete pointed forward $\Theta$-metric space
with $\Theta(r) =2{\ee}^r-1$.

\item\label{funpor2}
Both the forward and backward topologies coincide with the original topology of
$\mathbb{B}\subset \mathscr{H}$.

\item\label{funpor3}
A curve $\gamma$ belongs to ${\FAC}^p((a,b);\mathbb{B})$ if and only if
it is differentiable at $\mathscr{L}^1$-a.e.\ $t\in (a,b)$ with the derivative $\gamma'$
satisfying $F(\gamma,\gamma') \in L^p(a,b)$ and
\[
 \gamma(t)-\gamma(s)=\int^t_s \gamma'(r) \,{\dd}r \quad \text{for any }a<s\leq t<b.
\]
Moreover, for $\gamma \in {\FAC}^p((a,b);\mathbb{B})$, we have
$|\gamma'_+|(t)=F(\gamma(t),\gamma'(t))$ for $\mathscr{L}^1$-a.e.\ $t\in (a,b)$.
\end{enumerate}
\end{proposition}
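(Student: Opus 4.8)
The plan is to reduce all algebraic identities to the finite-dimensional Funk metric and then exploit explicit two-sided bounds relating $d$ to the Hilbert norm $\|\cdot\|$. The key remark is that the right-hand side of \eqref{distFunk} depends on $x_1,x_2$ only through $\|x_1\|,\|x_2\|,\langle x_1,x_2\rangle$; hence, for any finite subset of $\mathbb{B}$, restricting $d$ to the linear span of that subset leaves it unchanged and identifies it there with the ordinary Funk metric of a finite-dimensional unit ball. In particular the triangle inequality $d(x,z)\le d(x,y)+d(y,z)$, which involves only $\mathrm{span}\{x,y,z\}$, follows from the classical case, so $(\mathbb{B},d)$ is an asymmetric metric space; the same reduction, applied to planes through $\mathbf{0}$, transfers the estimate $\lambda_{d_F}(B^+_{\mathbf{0}}(r))\le 2{\ee}^r-1$ recalled in the introduction for $\mathbb{B}^n$ to $\lambda_d(B^+_{\mathbf{0}}(r))\le 2{\ee}^r-1$ on $\mathbb{B}$. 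Using next the classical geodesic description of the Funk distance (\cite[Example~1.1.2]{Sh1}), $d(x,y)=\log(\|x-a\|/\|y-a\|)$ with $a\in\partial\mathbb{B}$ the exit point of the forward ray from $x$ through $y$, valid in $\mathbb{B}$ by the planar reduction, and noting $\|x-a\|=\|x-y\|+\|y-a\|$ and $1-\|y\|\le\|y-a\|<2$, one obtains
\[
 \log\bigl(1+\tfrac{\|x-y\|}{2}\bigr)\le d(x,y)\le\log\bigl(1+\tfrac{\|x-y\|}{1-\|y\|}\bigr)
 \qquad\text{for all }x,y\in\mathbb{B};
\]
in particular $d(\mathbf{0},x)=\log\frac{1}{1-\|x\|}$, so $B^+_{\mathbf{0}}(r)=\{x\in\mathscr{H}:\|x\|<1-{\ee}^{-r}\}$.

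This settles the $\Theta$-metric part of \eqref{funpor1}, with $\Theta(r)=2{\ee}^r-1$. For forward completeness, let $(x_i)_{i\ge1}$ be a forward Cauchy sequence; since $d(\mathbf{0},x_i)\le d(\mathbf{0},x_N)+d(x_N,x_i)$, a tail of it lies in a forward ball $B^+_{\mathbf{0}}(R)=\{\|x\|<\rho\}$, $\rho:=1-{\ee}^{-R}<1$, and by the lower bound it is Cauchy in $\|\cdot\|$, hence converges in $\mathscr{H}$ to some $x_\infty$ with $\|x_\infty\|\le\rho<1$, so $x_\infty\in\mathbb{B}$; the upper bound with $1-\|x_i\|\ge{\ee}^{-R}$ then gives $d(x_\infty,x_i)\to0$, i.e.\ $x_i\to x_\infty$ in $\mathcal{T}_+$, proving \eqref{funpor1}. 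Part \eqref{funpor2} is immediate from the same two bounds: a sequence converges to $x$ in $\mathcal{T}_+$, or in $\mathcal{T}_-$, if and only if it converges to $x$ in $\|\cdot\|$ — the lower bound gives the forward direction, while once $x_i\to x$ in $\|\cdot\|$ the norms $\|x_i\|$ stay below some $\rho<1$ and the upper bound applies. Equivalently, sufficiently small forward and backward balls around a point are trapped between Hilbert balls, so $\mathcal{T}_+=\mathcal{T}_-$ equals the original topology of $\mathbb{B}\subset\mathscr{H}$ (one may also combine $\mathcal{T}_-\subset\mathcal{T}_+=\widehat{\mathcal{T}}$ from Theorem~\ref{topologychara} with the fact that $\widehat{d}$ is bi-Lipschitz equivalent to $\|\cdot\|$ on forward balls).

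For \eqref{funpor3} observe that, near any $p\in\mathbb{B}$, the bounds above make $d$ locally bi-Lipschitz equivalent to the Hilbert distance, the explicit formula for $F$ makes $F(x,\cdot)$ comparable to $\|\cdot\|$ uniformly for $x$ near $p$, and $d$ restricts to the $F$-length on Euclidean segments (again by the planar reduction, Funk geodesics being straight lines). If $\gamma\in\FAC^p((a,b);\mathbb{B})$, the lower bound makes $\gamma$ locally absolutely continuous as an $\mathscr{H}$-valued curve, hence $\mathscr{L}^1$-a.e.\ differentiable with $\gamma(t)-\gamma(s)=\int_s^t\gamma'(r)\,{\dd}r$ (Hilbert spaces have the Radon--Nikod\'ym property); at a differentiability point, writing $\gamma(t+h)=\gamma(t)+h\gamma'(t)+o(h)$ and combining $F(x,v)=\lim_{\varepsilon\to0^+}\varepsilon^{-1}d(x,x+\varepsilon v)$ with the local Lipschitz continuity of $d$ yields $|\gamma'_+|(t)=F(\gamma(t),\gamma'(t))$, whence $F(\gamma,\gamma')=|\gamma'_+|\in L^p(a,b)$ by Theorem~\ref{vilocitythem}. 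Conversely, if $\gamma$ is a.e.\ differentiable with $F(\gamma,\gamma')\in L^p(a,b)$ and obeys the integral formula, then $\gamma$ is continuous with image in some $\{\|x\|\le\rho\}\Subset\mathbb{B}$ on each compact subinterval, so $\|\gamma'\|$ is comparable there to $F(\gamma,\gamma')\in L^p$ and $\gamma$ is locally absolutely continuous in $\mathscr{H}$; consequently $t\mapsto d(\gamma(s),\gamma(t))$ is locally absolutely continuous and, for a.e.\ $t$, $\frac{{\dd}}{{\dd}t}d(\gamma(s),\gamma(t))\le\limsup_{h\to0^+}h^{-1}d(\gamma(t),\gamma(t+h))=F(\gamma(t),\gamma'(t))$ by the triangle inequality, so integrating gives $d(\gamma(s),\gamma(t))\le\int_s^tF(\gamma(r),\gamma'(r))\,{\dd}r$, i.e.\ $\gamma\in\FAC^p$; the one-sided limit just used also yields the last identity of \eqref{funpor3}.

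The crux is the two-sided comparison of $d$ with the Hilbert norm, i.e.\ the exit-point description of the Funk distance and its transfer to $\mathbb{B}\subset\mathscr{H}$ through the planar reduction; granting it, \eqref{funpor1} and \eqref{funpor2} are essentially formal, and \eqref{funpor3} follows the standard metric-space template. The only genuinely delicate point is the low regularity of $\gamma'$ in \eqref{funpor3}: rather than approximating the $F$-length by Riemann sums, I differentiate $t\mapsto d(\gamma(s),\gamma(t))$, which is legitimate precisely because $d$ is locally Lipschitz away from $\partial\mathbb{B}$.
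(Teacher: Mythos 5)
Your proposal is correct, and in fact it supplies a proof where the paper gives none: Proposition~\ref{basicproperofFunk} is stated as an observation, with only a pointer to \cite[Remark~1.1.3]{AGS} for part \eqref{funpor3}, so there is no argument in the paper to compare against. Your backbone is the natural one: the formula \eqref{distFunk} depends only on $\|x_1\|,\|x_2\|,\langle x_1,x_2\rangle$, so all identities (triangle inequality, exit-point formula $d(x,y)=\log(\|x-a\|/\|y-a\|)$, the reversibility bound on $B^+_{\mathbf{0}}(r)$) transfer from the finite-dimensional Funk ball by restriction to $\mathrm{span}\{x,y\}$, and the resulting two-sided comparison $\log(1+\|x-y\|/2)\le d(x,y)\le\log\bigl(1+\|x-y\|/(1-\|y\|)\bigr)$ makes $d$ bi-Lipschitz to the Hilbert distance on sets $\{\|x\|\le\rho\}$, from which \eqref{funpor1} and \eqref{funpor2} follow formally and \eqref{funpor3} follows the AGS template (metric absolute continuity plus the Radon--Nikod\'ym property of $\mathscr{H}$ gives a.e.\ differentiability and the integral formula; the converse via differentiating $t\mapsto d(\gamma(s),\gamma(t))$). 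Two small remarks: your transfer of the estimate $\lambda_d(B^+_{\mathbf{0}}(r))\le 2\ee^r-1$ and your use of $F(x,v)=\lim_{\varepsilon\to0^+}\varepsilon^{-1}d(x,x+\varepsilon v)$ rest on facts the paper itself only asserts (in the introduction and in Subsection~\ref{funkspacesgrad}, respectively), which is consistent with the paper's level of detail; and your identification $|\gamma'_+|(t)=F(\gamma(t),\gamma'(t))$ is cleanly justified because Theorem~\ref{vilocitythem} already gives existence of the two-sided metric derivative a.e., so it suffices to compute the right-sided quotient via the local Lipschitz continuity of $d$, exactly as you do.
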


Let $\mathscr{H}^*$ denote the dual space of $\mathscr{H}$.
For $\zeta \in T^*_x\mathbb{B} \cong \mathscr{H}^*$ ($\cong \mathscr{H}$),
the dual norm of $F$ is defined as
\[
 F^*(x,\zeta)
 :=\sup_{v \in T_x\mathbb{B} \setminus \{\mathbf{0}\}} \frac{\langle \zeta,v \rangle}{F(x,v)}.
\]
For $p\in (1,\infty)$, $q=p/(p-1)$ and $v \in T_x\mathbb{B}$,
define $\mathfrak{J}_p(x,v) \subset T_x^*\mathbb{B}$ by
\[
 \mathfrak{J}_p(x,v) :=\{ \zeta \in T_x^*\mathbb{B} \,|\,
 \langle \zeta,v \rangle =F^p(x,v) =F^*(x,\zeta)^q =F(x,v)F^*(x,\zeta) \}.
\]
Note that $\mathfrak{J}_p(x,y)$ is at most a singleton by the differentiability of $\|\cdot\|$
(see Proposition~\ref{bascifunkproperty}).

Given $\phi:\mathbb{B} \lra (-\infty,\infty]$ and $x \in \mathbb{B}$, we set
\begin{align*}
 \partial\phi(x)
 &:= \bigg\{ \zeta \in T_x^*\mathbb{B} \,\bigg|\,
 \liminf_{v \to \mathbf{0}} \frac{\phi(x+v) -\phi(x) -\langle \zeta,v \rangle}{d(x,x+v)} \ge 0 \bigg\}
 \,\ \text{for}\ x \in \mathfrak{D}(\phi), \\
 \partial^{\circ} \phi(x)
 &:=\{ \zeta \in \partial\phi(x) \,|\, F^*(x,-\zeta) \leq F^*(x,-\eta) \ \text{for all}\ \eta\in \partial\phi(x) \}
 \,\ \text{for}\ x \in \mathfrak{D}(\partial\phi), \\
 F^*\big( {-}\partial^{\circ} \phi(x) \big)
 &:= \begin{cases}
 \inf_{\zeta \in \partial\phi(x)} F^*(x,-\zeta) & \text{ if } x \in \mathfrak{D}(\partial\phi), \\
 +\infty & \text{ if } x \notin \mathfrak{D}(\partial\phi),
 \end{cases}
\end{align*}
where $\partial\phi(x)$ is the \emph{Fr\'echet subdifferential} of $\phi$ at $x$
and $\mathfrak{D}(\partial\phi):=\{x\in \mathfrak{D}(\phi) \,|\, \partial \phi(x) \neq \emptyset \}$.
The function $x \longmapsto F^*(-\partial^\circ\phi(x))$ is a weak upper gradient for $-\phi$
(see Proposition~\ref{phigradproe}).

The main result of this subsection reads as follows
(cf.\ \cite[Proposition~1.4.1, Theorem~2.3.7]{AGS}).

\begin{theorem}\label{basicreal} %[AGS, Cor 1.4.5, Prop 1.4.1, Thm 2.3.7]
Let $(\mathbb{B},d)$ be a generalized Funk space,
and let $\phi:\mathbb{B} \longmapsto (-\infty,\infty]$ admit a decomposition $\phi=\phi_1+\phi_2$
where $\phi_1$ is a proper, lower semicontinuous, convex function
and $\phi_2$ is of class $C^1$ in the Fr\'echet sense.
\begin{enumerate}[{\rm (i)}]
\item\label{envo1}
The local slope $|\partial\phi|$ is a strong upper gradient for $-\phi$.
Moreover, for every $x \in \mathbb{B}$, we have
$|\partial\phi|(x) =F^*(-\partial^\circ\phi(x)) =|\partial^-\phi|(x)$.

\item\label{envo2}
If $\xi:(a,b) \lra \mathbb{B}$ is a $p$-curve of maximal slope for $\phi$ with respect to $|\partial\phi|$,
then it satisfies the following \emph{doubly nonlinear differential equation:}
\begin{equation}\label{basicubangr}
 \mathfrak{J}_p \big( \xi(t),\xi'(t) \big)
 =-\partial^\circ \phi\big( \xi(t) \big) \neq \emptyset \quad \text{for $\mathscr{L}^1$-a.e.\ $t\in (a,b)$.}
\end{equation}

\item\label{envo3}
If Assumption~$\ref{continudef}$\eqref{ass-b}, \eqref{ass-c} hold with $\sigma=\mathcal{T}_+$,
then, for every $x_0 \in \mathfrak{D}(\phi)$, $\GMM_p(\phi;x_0)$ is nonempty
and its element $\xi$ satisfies \eqref{basicubangr} and the energy identity
\[
 \int^T_0 |\xi'_+(t)|^p \,{\dd}t +\phi\big( \xi(T) \big) =\phi(x_0) \quad \text{for all}\,\ T>0.
\]
\end{enumerate}
\end{theorem}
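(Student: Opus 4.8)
The three assertions are treated in turn, (iii) being reduced to (i), (ii) and the abstract machinery of Section~\ref{topologyexiscurvew}.

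\emph{Part (i).} I would first prove the pointwise identity $|\partial\phi|(x)=F^*(-\partial^\circ\phi(x))$. The inequality ``$\le$'' is immediate from the definition of $\partial\phi(x)$: if $\zeta\in\partial\phi(x)$, then $[\phi(x)-\phi(x+v)]_+\le[\langle-\zeta,v\rangle]_++o(d(x,x+v))$, and since $d(x,x+v)/F(x,v)\to1$ as $v\to\mathbf0$ this gives $|\partial\phi|(x)\le F^*(x,-\zeta)$; take the infimum over $\zeta$. For ``$\ge$'' one may assume $|\partial\phi|(x)<\infty$. Using that $\phi_1$ is convex (so $t\mapsto[\phi_1(x)-\phi_1(x+tw)]/t$ is non-increasing) and $\phi_2\in C^1$ (so the incremental ratios of $\phi_2$ converge uniformly on $\{F(x,\cdot)\le1\}$), one obtains the directional formula $|\partial\phi|(x)=\sup\{-\phi_1'(x;w)-{\dd}\phi_2(x)(w)\mid F(x,w)\le1\}$, equivalently $-\phi_1'(x;\cdot)-{\dd}\phi_2(x)(\cdot)\le|\partial\phi|(x)\,F(x,\cdot)$ pointwise. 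A Hahn--Banach sandwich between the superlinear function $-{\dd}\phi_2(x)(\cdot)-|\partial\phi|(x)\,F(x,\cdot)$ and the sublinear function $\phi_1'(x;\cdot)$ then produces a bounded linear functional $\eta$ with $-{\dd}\phi_2(x)(v)-|\partial\phi|(x)\,F(x,v)\le\eta(v)\le\phi_1'(x;v)$ for all $v$; the left bound says $F^*(x,-(\eta+{\dd}\phi_2(x)))\le|\partial\phi|(x)$, the right bound says $\eta+{\dd}\phi_2(x)\in\partial\phi(x)$, so $\partial^\circ\phi(x)\ne\emptyset$ and $F^*(-\partial^\circ\phi(x))\le|\partial\phi|(x)$ (the minimiser being unique by strict convexity of $F^*(x,\cdot)$). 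That $|\partial\phi|$ is a \emph{strong} upper gradient for $-\phi$---it is a weak one by Theorem~\ref{slopeuppgr}\eqref{slope-1}---then follows from Proposition~\ref{weakbeomcestronggra}, once one checks that $\phi\circ\gamma$ is absolutely continuous for every $\gamma\in\FAC([a,b];\mathbb B)$ with $|\partial\phi|\circ\gamma\,|\gamma'_+|\in L^1$: indeed $\phi_2\circ\gamma$ is absolutely continuous because $\phi_2\in C^1$ and $\gamma$ is absolutely continuous (Proposition~\ref{basicproperofFunk}\eqref{funpor3}), and $\phi_1\circ\gamma$ is absolutely continuous by the argument of \cite[Proposition~1.4.1]{AGS} exploiting the convexity of $\phi_1$. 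Finally $|\partial^-\phi|=|\partial\phi|$: by Remark~\ref{lowerbackreamrk} it suffices to check that $|\partial\phi|=F^*(-\partial^\circ\phi)$ is $\sigma$-sequentially lower semicontinuous on forward bounded sublevel sets, which I would deduce from the weak--strong closedness of the graph of the convex subdifferential of $\phi_1$, the continuity of ${\dd}\phi_2$, the uniform equivalence of $F^*$ with the dual norm on forward bounded sets, and the joint lower semicontinuity of $(x,\zeta)\mapsto F^*(x,-\zeta)$.

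\emph{Part (ii).} Let $\xi$ be a $p$-curve of maximal slope for $\phi$ with respect to $|\partial\phi|$. By (i) and Proposition~\ref{curaboslpssss}, $\xi\in\FAC^p_{\loc}$, $\phi\circ\xi$ is locally absolutely continuous, and $(\phi\circ\xi)'(t)=-|\xi'_+|^p(t)=-|\partial\phi|^q(\xi(t))=-|\xi'_+|(t)\,|\partial\phi|(\xi(t))$ for $\mathscr L^1$-a.e.\ $t$; moreover $\xi$ is differentiable a.e.\ with $|\xi'_+|(t)=F(\xi(t),\xi'(t))$ by Proposition~\ref{basicproperofFunk}\eqref{funpor3}, and $\partial^\circ\phi(\xi(t))$ is a singleton $\{\zeta_t\}$ a.e.\ by (i). Applying the subgradient inequality for $\zeta_t$ to $v=\xi(t+h)-\xi(t)=h\,\xi'(t)+o(h)$ and letting $h\to0^+$ gives $(\phi\circ\xi)'(t)\ge\langle\zeta_t,\xi'(t)\rangle\ge-F^*(\xi(t),-\zeta_t)\,F(\xi(t),\xi'(t))=-|\partial\phi|(\xi(t))\,|\xi'_+|(t)$, so this chain of inequalities collapses to equalities. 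Hence $\langle-\zeta_t,\xi'(t)\rangle=F^*(\xi(t),-\zeta_t)\,F(\xi(t),\xi'(t))$, and combined with $F(\xi(t),\xi'(t))^p=F^*(\xi(t),-\zeta_t)^q$ this yields $\langle-\zeta_t,\xi'(t)\rangle=F^p(\xi(t),\xi'(t))=F^*(\xi(t),-\zeta_t)^q$, i.e.\ $-\zeta_t\in\mathfrak J_p(\xi(t),\xi'(t))$. Since both $\mathfrak J_p(\xi(t),\xi'(t))$ and $\partial^\circ\phi(\xi(t))$ are singletons, this is precisely \eqref{basicubangr}.

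\emph{Part (iii).} Here $\sigma=\mathcal T_+$, which is the norm topology by Proposition~\ref{basicproperofFunk}\eqref{funpor2}. Assumption~\ref{continudef}\eqref{ass-a} holds since $\phi_1$ is lower semicontinuous and $\phi_2$ is continuous, while \eqref{ass-b} and \eqref{ass-c} hold by hypothesis, so Assumption~\ref{nonemtpassumpto} holds by Theorem~\ref{existenceofdisc}. Taking $\Xi^0_{\gtau_\alpha}\equiv x_0$ along partitions with $\|\gtau_\alpha\|\to0$, Corollary~\ref{compacttexistsencesolu} gives $\GMM_p(\phi;x_0)\ne\emptyset$; and since $|\partial^-\phi|=|\partial\phi|$ is a strong upper gradient for $-\phi$ by (i), Theorem~\ref{existsassupab} shows each $\xi\in\GMM_p(\phi;x_0)$ is a $p$-curve of maximal slope for $\phi$ with respect to $|\partial\phi|$ satisfying \eqref{eq:energy}, whence \eqref{basicubangr} by (ii); finally the pointwise identity $|\xi'_+|^p(t)=|\partial^-\phi|^q(\xi(t))$ from \eqref{maxlinequforthreego} reduces \eqref{eq:energy} to $\int_0^T|\xi'_+|^p(t)\,{\dd}t+\phi(\xi(T))=\phi(x_0)$. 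The main obstacle lies entirely within (i): establishing the directional formula for $|\partial\phi|$ uniformly over the (infinite-dimensional, non-compact) level set $\{F(x,\cdot)=1\}$, realising the infimum defining $F^*(-\partial^\circ\phi(x))$ through the Hahn--Banach sandwich, and---most delicately---verifying the absolute continuity of $\phi\circ\gamma$ along curves satisfying the integrability condition, which is what upgrades $|\partial\phi|$ from a weak to a strong upper gradient.
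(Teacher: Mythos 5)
Your proposal is correct and follows essentially the same route as the paper: part (i) is exactly the argument the paper outsources to Proposition~\ref{phigradproe} and the proof of \cite[Corollary~1.4.5]{AGS} (Hahn--Banach sandwich producing a subgradient attaining the slope, absolute continuity of $\phi\circ\gamma$ from the convexity of $\phi_1$ and the $C^1$-regularity of $\phi_2$, and the strong--weak* closedness \eqref{strongweakclosed} for $|\partial\phi|=|\partial^-\phi|$), while (ii) and (iii) coincide with the paper's derivation via Proposition~\ref{curaboslpssss}, the Young-equality case, Proposition~\ref{bascifunkproperty}\eqref{fungrad3}, Corollary~\ref{compacttexistsencesolu} and Theorem~\ref{existsassupab}. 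Two cosmetic remarks: in (ii) you do not need uniqueness of $\partial^{\circ}\phi(\xi(t))$ (the paper applies the computation to every $\zeta\in\partial^{\circ}\phi(\xi(t))$ and lets the at-most-singleton property of $\mathfrak{J}_p$ force the set equality), and the uniformity over the unit sphere you worry about in (i) is not actually an issue, since positive homogeneity together with the monotone difference quotients of $\phi_1$ and the Fr\'echet differentiability of $\phi_2$ already yield the directional formula.
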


\begin{proof}
\eqref{envo1}
By Proposition~\ref{phigradproe},
the same argument as in the proof of \cite[Corollary~1.4.5]{AGS} yields that
$\partial\phi=\partial\phi_1+{\dd}\phi_2$ satisfies \eqref{basiupperproper1} and \eqref{strongweakclosed},
and $|\partial\phi|$ is a strong upper gradient for $-\phi$ with $|\partial\phi|(x) =F^*(-\partial^\circ\phi(x))$.
Moreover, $|\partial\phi|=|\partial^-\phi|$ follows from (the proof of) \cite[Lemma~2.3.6]{AGS}
(with $\sigma=\mathcal{T}_+$).

\eqref{envo2}
Since $\xi'(t)$ exists $\mathscr{L}^1$-a.e.\ due to Proposition~\ref{basicproperofFunk}\eqref{funpor3},
 Proposition~\ref{curaboslpssss} combined with \eqref{envo1} implies
\[
 \frac{\dd}{{\dd}t} \phi \big( \xi(t) \big)
 =-\frac1p |\xi'_+|^p(t) -\frac1q F^*\big( {-}\partial^\circ\phi(\xi(t)) \big)^q
 \quad \text{for $\mathscr{L}^1$-a.e.}\ t \in (a,b).
\]
On the other hand, for $\mathscr{L}^1$-a.e.\ $t \in (a,b)$ and any $\zeta \in \partial^\circ\phi(\xi(t))$, we have
\[
 \frac{\dd}{{\dd}t} \phi \big( \xi(t) \big)
 =\lim_{\varepsilon \to 0^+} \frac{\phi(\xi(t+\varepsilon))-\phi(\xi(t))}{\varepsilon}
 \geq \liminf_{\varepsilon \to 0^+} \frac{\langle \zeta,\xi(t+\varepsilon)-\xi(t) \rangle}{\varepsilon}
 =-\langle -\zeta,\xi'(t) \rangle.
\]
The Young inequality then furnishes $-\zeta \in \mathfrak{J}_p(\xi(t),\xi'(t))$,
and Proposition~\ref{bascifunkproperty}\eqref{fungrad3} yields \eqref{basicubangr}.
%The converse implication follows from a similar but easier argument.

\eqref{envo3}
This follows from Corollary~\ref{compacttexistsencesolu}, Theorem~\ref{existsassupab},
and \eqref{envo1}, \eqref{envo2} above.
\end{proof}

We remark that the above argument also applies to the unit balls in general Banach spaces $\mathscr{B}$
as well as Minkowski normed spaces of infinite dimension.
For example, let $(\mathscr{B},\|\cdot\|)$ be a reflexive Banach space
and $(\mathscr{B}^*,\|\cdot\|_*)$ be its dual space.
Given $\omega \in \mathscr{B}^*$ with $\|\omega\|_*<1$,
we can define an asymmetric metric $d_\omega$ on $\mathscr{B}$ by
$d_\omega(x,y) :=\|y-x\| +\omega(y-x)$
(cf.\ Example~\ref{ex:Randers}).
Then the Minkowski normed space $(\mathscr{B},d_\omega)$
is a $[({1+\|\omega\|_*})/({1-\|\omega\|_*})]$-metric space and
Proposition~\ref{basicproperofFunk}\eqref{funpor2}, \eqref{funpor3} hold.
Moreover, since
\[
 F(x,v) =\lim_{\varepsilon \to 0^+} \frac{d_\omega(x,x+\varepsilon v)}{\varepsilon}
 =\|v\| +\omega(v),
\]
both $F^*(x,\zeta)$ and $\mathfrak{J}_p(x,v)$ are independent of $x$.
Theorem~\ref{basicreal} remains valid by replacing \eqref{basicubangr}
with the \emph{doubly nonlinear differential inclusion}
$\mathfrak{J}_p(\xi'(t))\supset -\partial^\circ \phi(\xi(t))$
(if $\|\cdot\|$ is differentiable, then $\mathfrak{J}_p(v)$ is at most a singleton
and Theorem~\ref{basicreal} holds as it is).

{
A generalization of the theory of doubly nonlinear evolution equations (DNE)
to asymmetric Finsler-like metrics on Banach spaces was investigated in \cite[\S 8]{RMS},
under the finite reversibility and the lower boundedness of $\phi_1$
(see \cite[(5.1c), Definition~5.10]{RMS}).
We remark that the reversibility of generalized Funk spaces is infinite
by Proposition~\ref{basicproperofFunk}\eqref{funpor1}.
}

\section{$(p,\lambda)$-convexity}\label{plambdaconvex}%%%%%%%%%%
%%%%%%%%%%%%%%%%%%%%%%

In this section, we study the behavior of curves of maximal slope
under a certain convexity assumption on $\Phi$,
where $\Phi$ was defined in Definition~\ref{mosydefam}
as a combination of the target function $\phi$ and the distance function.
Let $(X,d)$ be a forward complete forward metric space,
$\phi:X \lra (-\infty,\infty]$ be a proper function, and $p \in (1,\infty)$
throughout this section.

\subsection{Convexity assumption}%%%%%%%%%%%%
%%%%%%%%%%%%%%%

A curve $\gamma:[0,1] \lra X$ is called a \emph{minimal geodesic}
if $d(\gamma(s),\gamma(t))=(t-s)d(\gamma(0),\gamma(1))$ holds for any $0 \leq s<t \leq 1$.

\begin{definition}[$(p,\lambda)$-convexity]\label{df:pl-conv}
For $\lambda\in \mathbb{R}$,
we say that $\phi$ is \emph{$(p,\lambda)$-convex}
(resp.\ \emph{$(p,\lambda)$-geodesically convex}) if, for any $x_0,x_1\in X$,
there is a curve (resp.\ a minimal geodesic) $\gamma: [0,1] \lra X$ from $x_0$ to $x_1$
such that
\[
 \phi\big( \gamma(t) \big)
 \leq (1-t)\phi(x_0) +t\phi(x_1) -\frac{\lambda}{p} t(1-t^{p-1}) d^p(x_0,x_1)
 \quad \text{for all}\,\ t \in [0,1].
\]
In the particular case of $p=2$, $\phi$ is also called a \emph{$\lambda$-convex}
(resp.\ \emph{$\lambda$-geodesically convex}) function.
\end{definition}

{
\begin{remark}\label{rm:p-conv}
We remark that our definition of $(p,\lambda)$-convexity slightly differs from
the \emph{$(\lambda,p)$-convexity} in \cite[Definition~2.5]{RSS} (discussed on symmetric metric spaces),
which requires
\[
 \phi\big( \gamma(t) \big)
 \leq (1-t)\phi(x_0) +t\phi(x_1) -\frac{\lambda}{p} t(1-t) d^p(x_0,x_1)
 \quad \text{for all}\,\ t \in [0,1]
\]
(see also \cite[Remark~2.4.7]{AGS}).
Since $t \in [0,1]$, this is stronger (resp.\ weaker) than our $(p,\lambda)$-convexity
for $\lambda>0$ and $p \in (1,2)$ or $\lambda<0$ and $p \in (2,\infty)$
(resp.\ for $\lambda>0$ and $p \in (2,\infty)$ or $\lambda<0$ and $p \in (1,2)$).
Our $(p,\lambda)$-convexity is well-behaved in subdivisions in the sense that combining
\[ \phi\bigg( \gamma\bigg( \frac{1}{4} \bigg) \bigg)
 \le \frac{1}{2} \phi(x_0) +\frac{1}{2} \phi\bigg( \gamma\bigg( \frac{1}{2} \bigg) \bigg)
 -\frac{\lambda}{p} \frac{1}{2} \bigg( 1-\frac{1}{2^{p-1}} \bigg) \bigg( \frac{d(x_0,x_1)}{2} \bigg)^p \]
and
\[ \phi\bigg( \gamma\bigg( \frac{1}{2} \bigg) \bigg)
 \le \frac{1}{2} \phi(x_0) +\frac{1}{2} \phi(x_1)
 -\frac{\lambda}{p} \frac{1}{2} \bigg( 1-\frac{1}{2^{p-1}} \bigg) d^p(x_0,x_1) \]
implies
\[ \phi\bigg( \gamma\bigg( \frac{1}{4} \bigg) \bigg)
 \le \frac{3}{4} \phi(x_0) +\frac{1}{4}  {\phi(x_1)}
 -\frac{\lambda}{p} \frac{1}{4} \bigg( 1-\frac{1}{4^{p-1}} \bigg) d^p(x_0,x_1).\]
Moreover, our usage of $\Phi$ below in the spirit of \cite{AGS} seems also advantageous.
See Remarks~\ref{rm:lp-pl_1}, \ref{rm:lp-pl_2} for further discussions.
\end{remark}
}

Now we introduce an important convexity assumption
(in the spirit of \cite[Assumption~2.4.5]{AGS}).

\begin{assumption}\label{firstassup} %[AGS, Assumption 2.4.5]
Let $\lambda \in \mathbb{R}$ and put $\lambda_-:=-\min\{\lambda,0\}$.
We assume that, for any $x_0,x_1 \in \mathfrak{D}(\phi)$,
there exists a curve $\gamma:[0,1] \lra X$ from $x_0$ to $x_1$ such that
\begin{equation}\label{eq:Phi}
 \Phi\big( \tau,x_0;\gamma(t) \big)
 \leq (1-t)\Phi(\tau,x_0;x_0) +t\Phi(\tau,x_0;x_1)
 -\frac1p \bigg( \lambda +\frac{1}{\tau^{p-1}} \bigg) t(1-t^{p-1}) d^p(x_0,x_1)
\end{equation}
for all $\tau\in (0,\lambda_-^{-1/(p-1)})$ and $t\in [0,1]$,
where we set $\lambda_-^{-1/(p-1)}:=\infty$ if $\lambda_-=0$.
\end{assumption}

In other words, $\Phi(\tau,x_0;\cdot)$ is $(p,\lambda +\tau^{1-p})$-convex
for a common curve $\gamma$ for all $\tau \in (0,\lambda_-^{-1/(p-1)})$.
We remark that $\gamma$ is always emanating from $x_0$
(cf.\ \cite[Assumption~4.0.1]{AGS}, which is a stronger convexity condition
requiring the convexity of $\Phi(\tau,x_0;\cdot)$ between any pair of points).

\begin{example}\label{pgeodesicconve2}
\begin{enumerate}[(a)]
\item \label{lambda-a}
It is readily seen that a $(p,\lambda)$-geodesically convex function
$\phi:X \lra (-\infty,\infty]$ satisfies Assumption~\ref{firstassup}
(regardless of the convexity of the distance function).
Indeed, for a minimal geodesic $\gamma:[0,1] \lra X$ from $x_0$ to $x_1$ along which
the $(p,\lambda)$-convexity holds, we have
\begin{align*}
 \Phi\big( \tau,x_0;\gamma(t) \big)
 &= \phi\big( \gamma(t) \big) +\frac{d^p(x_0,\gamma(t))}{p\tau^{p-1}} \\
 &\leq (1-t)\phi(x_0) +t\phi(x_1) -\frac{\lambda}{p} t(1-t^{p-1}) d^p(x_0,x_1)
 +\frac{t^p}{p\tau^{p-1}}d^p(x_0,x_1) \\
 &= (1-t)\Phi(\tau,x_0;x_0) +t\Phi(\tau,x_0;x_1)
 -\frac1p \bigg( \lambda +\frac{1}{\tau^{p-1}} \bigg) t(1-t^{p-1}) d^p(x_0,x_1).
\end{align*}

\item \label{lambda-b}
The convexity of the distance function is intimately related to how the space is curved.
For a complete, simply-connected Riemannian manifold $(M,g)$,
$d_g^2(x,\cdot)$ is $2$-geodesically convex for every $x \in M$
if and only if the sectional curvature is nonpositive.
This is a fact at the origin of the fruitful theory of CAT$(0)$-spaces
(see, e.g., \cite{Jost_book}).
In the Finsler case, only the flag curvature is not sufficient to control the convexity
of the distance function and we need some additional conditions.
For example, for forward complete, simply-connected Finsler manifold $(M,F)$ of Berwald type
with nonpositive flag curvature, $d_F^2(x,\cdot)$ is $0$-geodesically convex for every $x \in M$.
We refer to \cite[\S 15.1]{Sh1} and \cite[\S 5]{Ouni} for this and more general results.
%
%\item \label{lambda-c}
%{\color{blue}some examples...}
\end{enumerate}
\end{example}

We collect some immediate consequences of Assumption~$\ref{firstassup}$ in the next lemma.

\begin{lemma}\label{convedefremark}
Suppose that $\phi$ satisfies Assumption~$\ref{firstassup}$ for some $(p,\lambda)$.
Then we have the following.
\begin{enumerate}[{\rm (i)}]
\item \label{pcon-1}
$\phi$ satisfies Assumption~$\ref{firstassup}$ for all $(p,\lambda')$ with $\lambda'<\lambda$.

\item \label{pcon-2}
For any $\tau\in (0,\lambda_-^{-1/(p-1)})$ and $t\in (0,1]$, we have
\begin{equation}\label{constaweak3.24}
 \frac{\phi(\gamma(t))-\phi(x_0)}{t}
 \leq \phi(x_1) -\phi(x_0) +\frac{t^{p-1}-\lambda\tau^{p-1} (1-t^{p-1})}{p\tau^{p-1}} d^p(x_0,x_1),
\end{equation}
where $\gamma$ is the curve satisfying \eqref{eq:Phi}.
In particular,
\begin{equation}\label{disctancecontrl}
 d\big( x_0,\gamma(t) \big) \leq td(x_0,x_1)
 \quad \text{for all}\,\ t \in [0,1].
\end{equation}

\item \label{pcon-3}
If $\lambda\geq 0$, then we have
\begin{equation}\label{longcopexlambda}
 \frac{\phi(\gamma(t))-\phi(x_0)}{t}
 \leq \phi(x_1) -\phi(x_0) -\frac{\lambda}{p} (1-t^{p-1}) d^p(x_0,x_1)
 \quad \text{for all}\,\ t \in (0,1].
\end{equation}
In particular, $\phi$ is $(p,\lambda)$-convex.
\end{enumerate}
\end{lemma}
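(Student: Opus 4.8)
The plan is to obtain all three parts by mechanically unwinding the identity $\Phi(\tau,x_0;y)=\phi(y)+d^p(x_0,y)/(p\tau^{p-1})$ (together with $d(x_0,x_0)=0$) inside the defining inequality \eqref{eq:Phi}; no idea beyond Assumption~\ref{firstassup} itself is needed.

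For \eqref{pcon-1}, I would note that $\lambda'<\lambda$ forces $(\lambda')_-\ge\lambda_-$, so the admissible range of $\tau$ in Assumption~\ref{firstassup} for $(p,\lambda')$ is contained in that for $(p,\lambda)$; there the curve $\gamma$ furnished by the hypothesis for $(p,\lambda)$ also serves for $(p,\lambda')$, because $\lambda'+\tau^{1-p}<\lambda+\tau^{1-p}$ while $t(1-t^{p-1})d^p(x_0,x_1)\ge0$ for $t\in[0,1]$, so replacing $\lambda$ by $\lambda'$ only enlarges the right-hand side of \eqref{eq:Phi}.

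For \eqref{pcon-2}, I would substitute $\Phi(\tau,x_0;x_0)=\phi(x_0)$, $\Phi(\tau,x_0;x_1)=\phi(x_1)+d^p(x_0,x_1)/(p\tau^{p-1})$ and $\Phi(\tau,x_0;\gamma(t))=\phi(\gamma(t))+d^p(x_0,\gamma(t))/(p\tau^{p-1})$ into \eqref{eq:Phi}, discard the nonnegative summand $d^p(x_0,\gamma(t))/(p\tau^{p-1})$ on the left, subtract $\phi(x_0)$, divide by $t>0$, and simplify the coefficient of $d^p(x_0,x_1)$ via $\tau^{1-p}-(\lambda+\tau^{1-p})(1-t^{p-1})=t^{p-1}\tau^{1-p}-\lambda(1-t^{p-1})$; this yields \eqref{constaweak3.24}, and in passing shows $\gamma(t)\in\mathfrak{D}(\phi)$ since the right-hand side of \eqref{eq:Phi} is finite. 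To get \eqref{disctancecontrl} I would instead retain the term $d^p(x_0,\gamma(t))$, multiply \eqref{eq:Phi} by $p\tau^{p-1}$, and let $\tau\to0^+$: as $\phi(x_0)$, $\phi(x_1)$, $\phi(\gamma(t))$ are all finite, every summand carrying a factor $\tau^{p-1}$ disappears in the limit, leaving $d^p(x_0,\gamma(t))\le td^p(x_0,x_1)-t(1-t^{p-1})d^p(x_0,x_1)=t^pd^p(x_0,x_1)$; the case $t=0$ is immediate from $\gamma(0)=x_0$.

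For \eqref{pcon-3}, if $\lambda\ge0$ then $\lambda_-=0$, so \eqref{constaweak3.24} is available for every $\tau>0$; rewriting its $d^p(x_0,x_1)$-coefficient as $t^{p-1}/(p\tau^{p-1})-\lambda(1-t^{p-1})/p$ and letting $\tau\to\infty$ annihilates the first term and produces \eqref{longcopexlambda}, after which multiplying \eqref{longcopexlambda} by $t$ and rearranging gives $\phi(\gamma(t))\le(1-t)\phi(x_0)+t\phi(x_1)-\frac{\lambda}{p}t(1-t^{p-1})d^p(x_0,x_1)$ for all $t\in[0,1]$, i.e.\ the $(p,\lambda)$-convexity of $\phi$ along $\gamma$ in the sense of Definition~\ref{df:pl-conv}. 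The only point that deserves any care is the two limit passages, $\tau\to0^+$ in \eqref{pcon-2} and $\tau\to\infty$ in \eqref{pcon-3}: one must check that the relevant endpoint lies in the closure of the admissible interval $(0,\lambda_-^{-1/(p-1)})$ — which always contains some $(0,\varepsilon)$ and coincides with $(0,\infty)$ exactly when $\lambda\ge0$ — and that $\phi(\gamma(t))$ is finite so that $p\tau^{p-1}\phi(\gamma(t))\to0$. Everything else is elementary algebra.
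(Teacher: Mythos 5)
Your proposal is correct and follows essentially the same route as the paper: part (i) is the same sign observation the paper dismisses as trivial, \eqref{constaweak3.24} comes from dropping the term $d^p(x_0,\gamma(t))/(p\tau^{p-1})$ in \eqref{eq:Phi} exactly as in the paper's display, and \eqref{disctancecontrl} and \eqref{longcopexlambda} are obtained by the same limits $\tau\to0^+$ (keeping that term) and $\tau\to\infty$ (using $\lambda_-=0$), respectively. The finiteness checks you flag for the limit passages are valid and are implicitly what the paper relies on as well.
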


\begin{proof}
\eqref{pcon-1} is trivial by definition.
In \eqref{pcon-2}, \eqref{constaweak3.24} follows from
$\phi(\gamma(t)) \le \Phi(\tau,x_0;\gamma(t))$ and \eqref{eq:Phi} as
\begin{align}
 \Phi \big( \tau,x_0;\gamma(t) \big)
 &\leq (1-t)\phi(x_0) +t\phi(x_1) +t\frac{d^p(x_0,x_1)}{p\tau^{p-1}}
 -\frac1p \bigg( \lambda +\frac{1}{\tau^{p-1}} \bigg) t(1-t^{p-1}) d^p(x_0,x_1)
 \nonumber\\
 &= (1-t)\phi(x_0) +t\phi(x_1) +t\frac{t^{p-1} -\lambda \tau^{p-1} (1-t^{p-1})}{p\tau^{p-1}} d^p(x_0,x_1).
 \label{3.22remark30}
\end{align}
We obtain \eqref{disctancecontrl} by letting $\tau \to 0$ in the above inequality.
For \eqref{pcon-3}, the assumption $\lambda\geq 0$ implies $\lambda_-=0$ and hence
\eqref{constaweak3.24} holds for all $\tau \in (0,\infty)$,
which furnishes \eqref{longcopexlambda} as  $\tau \to \infty$.
\end{proof}

Next we give an estimate of $\tau_*(\phi)$ (see Definition~\ref{myadef})
under Assumption~\ref{firstassup} (cf.\ \cite[Lemma~2.4.8]{AGS}).

\begin{lemma}\label{cocervityconvexfun} %[AGS, Lemma 2.4.8]
Suppose that Assumption~$\ref{firstassup}$ holds for some $(p,\lambda)$
and there are $x_* \in \mathfrak{D}(\phi)$ and $r_* >0$ such that
\begin{equation}\label{inffunctionobund}
 m_* :=\inf \{ \phi(x) \,|\, d(x_*,x) \le r_* \} >-\infty.
\end{equation}
\begin{enumerate}[{\rm (i)}]
\item \label{m*-1}
We have $\tau_*(\phi)\geq \lambda^{-1/(p-1)}_-$.
In particular, $\tau_*(\phi)=\infty$ if $\lambda\geq 0$.

\item \label{m*-2}
If $\lambda >0$, then $\phi$ is bounded from below.
Moreover, if $\phi$ is $\mathcal{T}_+$-lower semicontinuous,
then it has a unique minimizer $\bar{x} \in X$.
\end{enumerate}
\end{lemma}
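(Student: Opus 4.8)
The plan is to derive (i) together with the boundedness-below in (ii) from one \emph{a priori} estimate, and then to produce the minimizer from uniform convexity. The core estimate is the following \emph{claim}, which I would establish first: if $\psi:X\lra(-\infty,\infty]$ is proper, $x_\circ\in\mathfrak{D}(\psi)$, $r_\circ>0$, $\mu>0$, $m:=\inf\{\psi(x)\,|\,d(x_\circ,x)\le r_\circ\}>-\infty$, and for every $y\in\mathfrak{D}(\psi)$ there is a curve $\gamma:[0,1]\lra X$ from $x_\circ$ to $y$ with $d(x_\circ,\gamma(t))\le t\,d(x_\circ,y)$ and $\psi(\gamma(t))\le(1-t)\psi(x_\circ)+t\psi(y)-\frac{\mu}{p}t(1-t^{p-1})d^p(x_\circ,y)$ for all $t\in[0,1]$, then $\inf_X\psi>-\infty$. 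To prove the claim, fix $y\in\mathfrak{D}(\psi)$ and set $R:=d(x_\circ,y)$; if $R\le r_\circ$ then $\psi(y)\ge m$, while if $R>r_\circ$ I take $t:=r_\circ/R\in(0,1)$, so that $d(x_\circ,\gamma(t))\le r_\circ$ and hence $\psi(\gamma(t))\ge m$, and feeding this into the convexity inequality and solving for $\psi(y)$ gives
\[
 \psi(y)\ge\frac{m-(1-t)\psi(x_\circ)}{t}+\frac{\mu}{p}\big( 1-t^{p-1} \big)R^p
 =\frac{\mu}{p}R^p+\bigg( \frac{m-\psi(x_\circ)}{r_\circ}-\frac{\mu}{p}r_\circ^{p-1} \bigg)R+\psi(x_\circ),
\]
whose right-hand side, as a function of $R\in[r_\circ,\infty)$, is continuous and (since $p>1$ and $\mu>0$) tends to $+\infty$, hence is bounded below. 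I expect this superlinear-domination step to be the one genuine obstacle; everything else is bookkeeping.

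Granting the claim, (i) follows by applying it with $\psi=\Phi(\tau,x_*;\cdot)$ for an arbitrary fixed $\tau\in(0,\lambda_-^{-1/(p-1)})$: here $\mathfrak{D}(\psi)=\mathfrak{D}(\phi)$, the ball-infimum of $\psi$ is at least $\inf\{\phi(x)\,|\,d(x_*,x)\le r_*\}=m_*>-\infty$, and for each $y\in\mathfrak{D}(\phi)$ the curve supplied by Assumption~\ref{firstassup} satisfies both the required convexity inequality---namely \eqref{eq:Phi}, which is exactly this inequality for $\psi$ with $\mu=\lambda+\tau^{1-p}$ (using $\Phi(\tau,x_*;x_*)=\psi(x_*)$ and $\Phi(\tau,x_*;y)=\psi(y)$)---and the bound $d(x_*,\gamma(t))\le t\,d(x_*,y)$ from \eqref{disctancecontrl}; note $\mu=\lambda+\tau^{1-p}>0$ precisely because $\tau<\lambda_-^{-1/(p-1)}$. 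Thus $\Phi_\tau(x_*)>-\infty$ for every such $\tau$, i.e.\ $\tau_*(\phi)\ge\lambda_-^{-1/(p-1)}$, which is $\infty$ when $\lambda\ge0$ (so $\lambda_-=0$). For the boundedness in (ii) with $\lambda>0$ I apply the claim to $\psi=\phi$, $x_\circ=x_*$, $r_\circ=r_*$, $\mu=\lambda$, where the required convexity inequality is \eqref{longcopexlambda} (valid since $\lambda\ge0$, and it holds along the curve of Assumption~\ref{firstassup}) and the distance bound is again \eqref{disctancecontrl}; this yields $\inf_X\phi>-\infty$.

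For the minimizer in (ii), assume in addition that $\phi$ is $\mathcal{T}_+$-lower semicontinuous and put $\mu_0:=\inf_X\phi\in\mathbb{R}$. Let $(y_i)\subset\mathfrak{D}(\phi)$ be a minimizing sequence. Since $\lambda\ge0$, $\phi$ is $(p,\lambda)$-convex by Lemma~\ref{convedefremark}, so for each $i,j$ there is a curve from $y_i$ to $y_j$ along which, at $t=\tfrac12$ and using $\phi\ge\mu_0$,
\[
 \frac{\lambda}{2p}\Big( 1-2^{1-p} \Big)d^p(y_i,y_j)\le\tfrac12\big( \phi(y_i)-\mu_0 \big)+\tfrac12\big( \phi(y_j)-\mu_0 \big)\longrightarrow 0.
\]
As $\lambda>0$ and $1-2^{1-p}>0$, this forces $d(y_i,y_j)\to0$, so $(y_i)$ is forward Cauchy; forward completeness gives $y_i\to\bar x$ in $\mathcal{T}_+$ for some $\bar x\in X$, and $\mathcal{T}_+$-lower semicontinuity gives $\phi(\bar x)\le\liminf_i\phi(y_i)=\mu_0$, so $\bar x$ is a minimizer. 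For uniqueness, if $\bar x,\bar x'$ both minimize with $\bar x\ne\bar x'$, then the $(p,\lambda)$-convex curve joining them satisfies $\phi(\gamma(t))\le\mu_0-\frac{\lambda}{p}t(1-t^{p-1})d^p(\bar x,\bar x')<\mu_0$ for any $t\in(0,1)$, contradicting $\mu_0=\inf_X\phi$; hence $\bar x=\bar x'$.
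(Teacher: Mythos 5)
Your proof is correct and follows essentially the same route as the paper: you evaluate the convexity inequality \eqref{eq:Phi} (resp.\ \eqref{longcopexlambda}) at the point where the curve leaves the ball of radius $r_*$ around $x_*$, use \eqref{disctancecontrl} and $m_*$ there, and conclude boundedness below via superlinear domination, while the minimizer and its uniqueness come from the same $t=\tfrac12$ forward-Cauchy argument under $\mathcal{T}_+$-lower semicontinuity. The only difference is cosmetic: your single abstract claim (applied once to $\Phi(\tau,x_*;\cdot)$ and once to $\phi$) replaces the paper's Young-inequality bookkeeping with $\tau=(\lambda_-+\varepsilon)^{-1/(p-1)}$ and the limits $\varepsilon\to 0$, $\varepsilon\to\lambda$.
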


\begin{proof}
\eqref{m*-1}
Given $\tau=(\lambda_- +\varepsilon)^{-1/(p-1)} \in (0,\lambda^{-1/(p-1)}_-)$, we claim that
\begin{equation}\label{keyparrl}
 \inf\{ \Phi(\tau,x_*;y) \,|\, d(x_*,y)>r_* \} >-\infty.
\end{equation}
Combining this with \eqref{inffunctionobund} implies
$\tau_*(\phi)\geq (\lambda_-+\varepsilon)^{-1/(p-1)}$,
and then \eqref{m*-1} follows by letting $\varepsilon \to 0$.

In order to show \eqref{keyparrl},
take an arbitrary point $y \in \mathfrak{D}(\phi)$ with $d(x_*,y)>r_*$,
let $\gamma:[0,1] \lra X$ be a curve from $x_0=x_*$ to $x_1=y$ satisfying \eqref{eq:Phi},
and set $y_*:=\gamma(r_*/d(x_*,y))$.
Then \eqref{disctancecontrl} implies $d(x_*,y_*) \leq r_*$ and hence $\phi(y_*) \geq m_*$.
We deduce from \eqref{constaweak3.24} that
\begin{align*}
 &\frac{d(x_*,y)}{r_*}\big( \phi(y_*)-\phi(x_*) \big) \\
 &\leq \phi(y)-\phi(x_*) +\frac{1}{p\tau^{p-1}} \bigg\{
 \bigg( \frac{r_*}{d(x_*,y)} \bigg)^{p-1} -\lambda \tau^{p-1}
 \bigg( 1-\bigg(\frac{r_*}{d(x_*,y)} \bigg)^{p-1} \bigg) \bigg\} d^p(x_*,y).
\end{align*}
This together with $\phi(y_*) \geq m_*$ yields
\begin{align}
 &\phi(x_*) +c_* d(x_*,y)
 \leq \phi(y) -\frac{\lambda}{p} d^p(x_*,y)
 \leq \phi(y) +\frac{\lambda_-}{p} d^p(x_*,y),\label{p1controllinequality} \\
 &\text{where}\,\ c_* := \frac{m_*-\phi(x_*)-(\tau^{1-p}+\lambda)r^p_*/p}{r_*}.\notag
\end{align}
Combining this with the Young inequality
\[
 -c_* d(x_*,y) \leq |c_*| d(x_*,y)
 \leq \frac{\varepsilon}{p} d^p(x_*,y) +\frac{\varepsilon^{-q/p}}{q}|c_*|^q
\]
and recalling $\tau=(\lambda_- +\varepsilon)^{-1/(p-1)}$, we find
\[
 \Phi(\tau,x_*;y)
 = \phi(y) +\frac{\lambda_- +\varepsilon}{p} d^p(x_*,y)
 \geq \phi(x_*) +c_*d(x_*,y) +\frac{\varepsilon}{p} d^p(x_*,y)
 \geq \phi(x_*) -\frac{\varepsilon^{-q/p}}{q} |c_*|^q.
\]
This implies \eqref{keyparrl} and completes the proof.

\eqref{m*-2}
If $\lambda>0$, then for any  $\varepsilon>\lambda$, the above argument with $\tau=(\varepsilon -\lambda)^{-1/(p-1)}$ shows
\[  \phi(y) +\frac{\varepsilon -\lambda}{p} d^p(x_*,y)
 \geq \phi(x_*) +c_*d(x_*,y) +\frac{\varepsilon}{p} d^p(x_*,y)
 \geq \phi(x_*) -\frac{\varepsilon^{-q/p}}{q} (-c_*)^q. \]
Letting $\varepsilon \to \lambda$, we deduce from the above inequality and \eqref{inffunctionobund} that
\[ \inf_X \phi \geq \min \Bigg\{ m_*,
 \phi(x_*) -\frac{\lambda^{-q/p}}{q} \frac{(\phi(x_*) -m_* +(\lambda r_*^p)/p)^q}{r_*^q} \Bigg\}. \]

When $\phi$ is lower semicontinuous,
take any minimizing sequence $(x_i)_{i \ge 1}$,
i.e., $\phi(x_i) \to \inf_X \phi$.
For any $0<i \leq j$, let $\gamma_{i,j}$ be a curve from $x_i$ to $x_j$ satisfying \eqref{eq:Phi}.
It follows from \eqref{longcopexlambda} (with $t=1/2$) that
\[
 2\bigg\{ \phi\bigg( \gamma_{i,j} \bigg( \frac{1}{2} \bigg) \bigg) -\phi(x_i) \bigg\}
 \leq \phi(x_j)-\phi(x_i) -\frac{\lambda}{p} (1-2^{1-p}) d^p(x_i,x_j),
\]
which furnishes
\[
\frac{\lambda}{p} (1-2^{1-p}) d^p(x_i,x_j)
 \leq \phi(x_i) +\phi(x_j) -2\phi\bigg( \gamma_{i,j} \bigg( \frac{1}{2} \bigg) \bigg)
 \leq \phi(x_i) +\phi(x_j) -2\inf_X \phi \to 0
\]
as $i,j \to \infty$.
Thus, $(x_i)_{i \ge 1}$ is a forward Cauchy sequence and converges to some $\bar{x} \in X$,
which is a minimizer of $\phi$ due to the  lower semicontinuity of $\phi$.
The uniqueness also follows from the above argument, as any minimizing sequence is convergent.
\end{proof}

Two remarks on the relationship between Assumption~\ref{continudef} and
\eqref{inffunctionobund} are in order.

\begin{remark}\label{assmupabcimpb}
\begin{enumerate}[(a)]
\item \label{m*-a}
Under Assumption~\ref{firstassup},
Assumption~\ref{continudef}\eqref{ass-b} is equivalent to \eqref{inffunctionobund}.
On the one hand, it follows from Lemma~\ref{cocervityconvexfun}\eqref{m*-1} that
\eqref{inffunctionobund} implies Assumption~\ref{continudef}\eqref{ass-b}.
On the other hand, if Assumption~\ref{continudef}\eqref{ass-b}
($\Phi_{\tau_*}(x_*)>-\infty$) holds, then we have,
for any $r_*>0$ and $x \in \overline{B^+_{x_*}(r_*)}$,
\[
 \phi(x) \geq \Phi_{\tau_*}(x_*) -\frac{d^p(x_*,x)}{p\tau^{p-1}_*}
 \geq \Phi_{\tau_*}(x_*) -\frac{r_*^p}{p\tau^{p-1}_*}.
\]
Hence, \eqref{inffunctionobund} holds.

\item \label{m*-b}
Assumption~\ref{continudef}\eqref{ass-a}, \eqref{ass-c} imply \eqref{inffunctionobund}.
Assume on the contrary that there is a sequence $(x_i)_{i \ge 1}$ in $\overline{B^+_{x_*}(r_*)}$
with $\phi(x_i) \to -\infty$.
By Assumption~\ref{continudef}\eqref{ass-c},
we can find a subsequence of $(x_i)_{i \ge 1}$
which is $\sigma$-converging to some point $x_{\infty}$.
Then Assumption~\ref{continudef}\eqref{ass-a} furnishes
$\phi(x_{\infty}) \leq \lim_{i \to \infty} \phi(x_i) =-\infty$,
which contradicts $\phi(x_{\infty})>-\infty$.
\end{enumerate}
\end{remark}

\subsection{Existence of curves of maximal slope}%%%%%%%%%%%%%%
%%%%%%%%%%%%%%%%%%%%%%

Under the convexity as in Assumption~\ref{firstassup},
we can show a global formula for the local slope $|\partial\phi|$ (recall Definition~\ref{wekafordef}),
which plays an important role in proving the existence of curves of maximal slope
(cf.\ \cite[Theorem~2.4.9]{AGS}, {\cite[Lemma~5.3]{RMS}, \cite[Proposition~2.7]{RSS}}).

\begin{theorem}[Global formula of $|\partial\phi|$]\label{thremforwposrt} %[AGS, Theorem 2.4.9]
If Assumption~$\ref{firstassup}$ holds for some $(p,\lambda)$, then
\begin{equation}\label{fowradpostivgrade}
 |\partial\phi|(x)
 =\sup_{y \neq x} \Bigg[ \frac{\phi(x)-\phi(y)}{d(x,y)} +\frac{\lambda}{p} d^{p-1}(x,y) \Bigg]_+
 \quad \text{for all}\,\ x \in \mathfrak{D}(\phi).
\end{equation}
Moreover, when $\lambda\geq 0$, we have $|\partial\phi|(x) =\mathfrak{l}_\phi(x)$
for all $x \in \mathfrak{D}(\phi)$.
\end{theorem}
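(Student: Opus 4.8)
The plan is to establish the formula \eqref{fowradpostivgrade} by proving the two inequalities separately, and then to deduce the statement for $\lambda\ge 0$ as an immediate corollary of \eqref{fowradpostivgrade}. Fix $x\in\mathfrak{D}(\phi)$; terms in the supremum with $y\notin\mathfrak{D}(\phi)$ contribute $0$ and may be ignored. The inequality ``$\le$'' is elementary: for $y\ne x$ one has $[\phi(x)-\phi(y)]_+/d(x,y)=[(\phi(x)-\phi(y))/d(x,y)]_+$ since $d(x,y)>0$, and writing
\[
 \frac{\phi(x)-\phi(y)}{d(x,y)}=\Big(\frac{\phi(x)-\phi(y)}{d(x,y)}+\frac{\lambda}{p}d^{p-1}(x,y)\Big)-\frac{\lambda}{p}d^{p-1}(x,y),
\]
the subadditivity of $[\,\cdot\,]_+$ gives
\[
 \Big[\frac{\phi(x)-\phi(y)}{d(x,y)}\Big]_+
 \le \sup_{z\ne x}\Big[\frac{\phi(x)-\phi(z)}{d(x,z)}+\frac{\lambda}{p}d^{p-1}(x,z)\Big]_+ + \frac{|\lambda|}{p}d^{p-1}(x,y).
\]
Since $p>1$, the last term vanishes as $y\to x$ in $\mathcal{T}_+$, so taking $\limsup_{y\to x}$ and recalling Definition~\ref{wekafordef} yields $|\partial\phi|(x)\le\sup_{y\ne x}[\,\cdot\,]_+$, the right-hand side of \eqref{fowradpostivgrade}.

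For the reverse inequality I would fix $y\in\mathfrak{D}(\phi)$ with $y\ne x$, pick any $\tau\in(0,\lambda_-^{-1/(p-1)})$, and take the curve $\gamma\colon[0,1]\to X$ from $x$ to $y$ provided by Assumption~\ref{firstassup}. By \eqref{disctancecontrl} of Lemma~\ref{convedefremark}, $d(x,\gamma(t))\le t\,d(x,y)$, so $\gamma(t)\to x$ in $\mathcal{T}_+$ as $t\to 0^+$; one reduces easily to the case $\gamma(t)\ne x$ for small $t>0$, since otherwise reading \eqref{constaweak3.24} at such $t$ and letting $t\to 0^+$ already forces $\phi(x)-\phi(y)+\tfrac{\lambda}{p}d^p(x,y)\le 0$, so the corresponding positive part is $0\le|\partial\phi|(x)$. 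For such $t$, combining $d(x,\gamma(t))\le t\,d(x,y)$ with \eqref{constaweak3.24} gives
\[
 \frac{[\phi(x)-\phi(\gamma(t))]_+}{d(x,\gamma(t))}
 \ge \frac{1}{d(x,y)}\cdot\frac{\phi(x)-\phi(\gamma(t))}{t}
 \ge \frac{1}{d(x,y)}\Big(\phi(x)-\phi(y)-\frac{t^{p-1}-\lambda\tau^{p-1}(1-t^{p-1})}{p\tau^{p-1}}\,d^p(x,y)\Big).
\]
Letting $t\to 0^+$, the coefficient of $d^p(x,y)$ on the right converges to $-\lambda/p$ for every admissible $\tau$ (this uses $p>1$), so the right-hand side tends to $(\phi(x)-\phi(y))/d(x,y)+\tfrac{\lambda}{p}d^{p-1}(x,y)$, while $\limsup_{t\to 0^+}$ of the left-hand side is at most $|\partial\phi|(x)$ by Definition~\ref{wekafordef}. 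Inserting the positive part (legitimate as $|\partial\phi|(x)\ge 0$) and taking the supremum over $y\ne x$ yields \eqref{fowradpostivgrade}.

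The case $\lambda\ge 0$ then follows at once: the extra term $\tfrac{\lambda}{p}d^{p-1}(x,y)$ is nonnegative, so monotonicity of $[\,\cdot\,]_+$ together with \eqref{fowradpostivgrade} gives
\[
 \mathfrak{l}_\phi(x)=\sup_{y\ne x}\Big[\frac{\phi(x)-\phi(y)}{d(x,y)}\Big]_+
 \le \sup_{y\ne x}\Big[\frac{\phi(x)-\phi(y)}{d(x,y)}+\frac{\lambda}{p}d^{p-1}(x,y)\Big]_+=|\partial\phi|(x),
\]
while $|\partial\phi|(x)\le\mathfrak{l}_\phi(x)$ always holds since a $\limsup$ over $y\to x$ is dominated by the supremum over all $y\ne x$. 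The main obstacle will be the reverse inequality in \eqref{fowradpostivgrade}: the trick is to use \emph{both} the distance control \eqref{disctancecontrl} (so that the denominator $d(x,\gamma(t))$ can be replaced by $t\,d(x,y)$ without weakening the bound) and the fact that the $\tau$-dependent coefficient in \eqref{constaweak3.24} degenerates exactly to $-\tfrac{\lambda}{p}d^p(x,y)$ as $t\to 0^+$; once that is set up, the positive-part bookkeeping and the degenerate case $\gamma(t)\equiv x$ are routine.
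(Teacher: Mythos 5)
Your proposal is correct and takes essentially the same route as the paper: the inequality $\le$ follows from the definition of the local slope because the extra term $\frac{\lambda}{p}d^{p-1}(x,y)$ vanishes as $y\to x$, and the inequality $\ge$ uses the curve from Assumption~\ref{firstassup} together with \eqref{constaweak3.24} and \eqref{disctancecontrl}, letting $t\to 0^+$, exactly as in the paper. Your explicit treatment of the degenerate case $\gamma(t)=x$ and the per-$y$ positive-part bookkeeping merely spell out what the paper compresses into a ``without loss of generality'' reduction.
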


\begin{proof}
Let $x \in \mathfrak{D}(\phi)$.
Clearly we have
\[
 |\partial\phi|(x) =\limsup_{y \to x} \frac{[\phi(x)-\phi(y)]_+}{d(x,y)}
 \leq \sup_{y \neq x} \Bigg[ \frac{\phi(x)-\phi(y)}{d(x,y)}+\frac{\lambda}{p} d^{p-1}(x,y) \Bigg]_+.
\]
In order to show the reverse inequality, without loss of generality,
we assume that there is $y \neq x$ such that
\[
 \phi(x)-\phi(y) +\frac{\lambda}{p} d^p(x,y) >0.
\]
Let $\gamma$ be a curve from $x$ to $y$ satisfying \eqref{eq:Phi}.
Then \eqref{constaweak3.24} yields that, for any $\tau \in (0,\lambda_-^{-1/(p-1)})$,
\[
 \frac{\phi(x)-\phi(\gamma(t))}{d(x,\gamma(t))}
 \geq \bigg\{ \frac{\phi(x)-\phi(y)}{d(x,y)}
 +\frac{\lambda\tau^{p-1}(1-t^{p-1})-t^{p-1}}{p\tau^{p-1}} d^{p-1}(x,y) \bigg\}
 \frac{td(x,y)}{d(x,\gamma(t))}.
\]
Combining this with $d(x,\gamma(t))\leq td(x,y)$ from \eqref{disctancecontrl},
we find
\[
 |\partial\phi|(x)
 \geq \limsup_{t \to 0} \frac{\phi(x)-\phi(\gamma(t))}{d(x,\gamma(t))}
 \geq \frac{\phi(x)-\phi(y)}{d(x,y)} +\frac{\lambda}{p} d^{p-1}(x,y) >0.
\]
Now \eqref{fowradpostivgrade} follows by taking the supremum in $y$.

When $\lambda \geq 0$, it is straightforward from \eqref{fowradpostivgrade} that
$|\partial\phi|(x) \geq \mathfrak{l}_\phi(x)$.
Since the reverse inequality clearly holds by definition,
we have $|\partial\phi|(x)=\mathfrak{l}_\phi(x)$.
\end{proof}

\begin{corollary}\label{upperconsidrgrage} %[AGS, Corollary 2.4.10]
Suppose that Assumption~$\ref{firstassup}$ holds for some $(p,\lambda)$
and $\phi$ is $\mathcal{T}_+$-lower semicontinuous.
Then $|\partial\phi|$ is a strong upper gradient for $-\phi$ and $\mathcal{T}_+$-lower semicontinuous.
\end{corollary}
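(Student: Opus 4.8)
The plan is to deduce both properties from the global formula for the local slope established in Theorem~\ref{thremforwposrt},
\[
 |\partial\phi|(x)
 =\sup_{y \neq x} \Bigg[ \frac{\phi(x)-\phi(y)}{d(x,y)} +\frac{\lambda}{p} d^{p-1}(x,y) \Bigg]_+,
 \qquad x \in \mathfrak{D}(\phi),
\]
together with the $\mathcal{T}_+$-continuity of $d$ (Theorem~\ref{topologychara}) and the techniques already used for $\mathfrak{l}_\phi$ in Theorem~\ref{slopeuppgr}\eqref{slope-2}. If $\lambda \ge 0$ nothing more is needed: Theorem~\ref{thremforwposrt} then gives $|\partial\phi|=\mathfrak{l}_\phi$ and Theorem~\ref{slopeuppgr}\eqref{slope-2} asserts that $\mathfrak{l}_\phi$ is $\mathcal{T}_+$-lower semicontinuous and a strong upper gradient for $-\phi$. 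So I would focus on general $\lambda$, where the formula carries the extra $d^{p-1}$-term.

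For the lower semicontinuity I would argue as in the first part of the proof of Theorem~\ref{slopeuppgr}\eqref{slope-2}. Take $x_i \to x$ in $\mathcal{T}_+$. If $x \in \mathfrak{D}(\phi)$, fix any $y \in \mathfrak{D}(\phi)$ with $y \neq x$; then $x_i \neq y$ for large $i$, and the lower semicontinuity of $\phi$, the continuity of $d$ (with $d(x,y) \in (0,\infty)$), and the continuity and monotonicity of $s \mapsto [s]_+$ give
\[
 \liminf_{i\to\infty} |\partial\phi|(x_i)
 \ge \liminf_{i\to\infty} \Bigg[ \frac{\phi(x_i)-\phi(y)}{d(x_i,y)} +\frac{\lambda}{p} d^{p-1}(x_i,y) \Bigg]_+
 \ge \Bigg[ \frac{\phi(x)-\phi(y)}{d(x,y)} +\frac{\lambda}{p} d^{p-1}(x,y) \Bigg]_+ ,
\]
and taking the supremum over such $y$ (noting $\liminf_i|\partial\phi|(x_i)\ge 0$) yields $\liminf_i |\partial\phi|(x_i) \ge |\partial\phi|(x)$. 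If $x \notin \mathfrak{D}(\phi)$, lower semicontinuity of $\phi$ forces $\phi(x_i) \to +\infty$, and comparing with a single $y \in \mathfrak{D}(\phi)$ (which is nonempty since $\phi$ is proper) shows $|\partial\phi|(x_i) \to +\infty = |\partial\phi|(x)$.

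For the strong upper gradient property, $|\partial\phi|$ is a weak upper gradient for $-\phi$ by Theorem~\ref{slopeuppgr}\eqref{slope-1}, so by Proposition~\ref{weakbeomcestronggra} it suffices to show that $-\phi\circ\gamma$ is absolutely continuous for every $\gamma\in\FAC([a,b];X)$ with $|\partial\phi|\circ\gamma\,|\gamma'_+|\in L^1(a,b)$. I would reparametrize via Lemma~\ref{Lisrepare} to the $1$-Lipschitz curve $\hat\gamma=\gamma\circ\mathfrak{t}$ on $[0,L]$, and set $\varphi:=-\phi\circ\hat\gamma$, $g:=|\partial\phi|\circ\hat\gamma$, and $\Lambda:=\Theta(d(\star,\hat\gamma(0))+L)<\infty$, exactly as in the proof of Claim~\ref{claim} (renaming the constant $\lambda$ there to $\Lambda$ to avoid a clash with the convexity parameter); then $g\in L^1(0,L)$ by the change of variables in Lemma~\ref{Lisrepare} and the hypothesis, and $g$ is Borel since $|\partial\phi|$ is lower semicontinuous. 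Using the global formula and $d(\hat\gamma(s_1),\hat\gamma(s_2))\le\Lambda|s_1-s_2|$, one gets for all $s_1,s_2\in[0,L]$
\[
 [\varphi(s_2)-\varphi(s_1)]_+ = \big[ \phi(\hat\gamma(s_1))-\phi(\hat\gamma(s_2)) \big]_+
 \le g(s_1)\,d(\hat\gamma(s_1),\hat\gamma(s_2)) -\frac{\lambda}{p} d^p(\hat\gamma(s_1),\hat\gamma(s_2))
 \le \Lambda g(s_1)|s_1-s_2| +\frac{\lambda_-}{p}\Lambda^p |s_1-s_2|^p .
\]
The only new feature compared with Claim~\ref{claim} is the last term; since $|s_1-s_2|^p\le L^{p-1}|s_1-s_2|$ on $[0,L]$, it is absorbed by replacing $g$ with $\tilde g:=\Lambda g+\frac{\lambda_-}{p}\Lambda^p L^{p-1}\in L^1(0,L)$, so that $[\varphi(s_2)-\varphi(s_1)]_+\le\tilde g(s_1)|s_1-s_2|$ and hence $|\varphi(s_1)-\varphi(s_2)|\le(\tilde g(s_1)+\tilde g(s_2))|s_1-s_2|$. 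From here the argument of Claim~\ref{claim} goes through verbatim: \cite[Lemma~1.2.6]{AGS} gives $\varphi\in W^{1,1}(0,L)$ with a continuous representative, the upper semicontinuity of $\varphi$ (from lower semicontinuity of $\phi$) together with $\liminf_{\varepsilon\to0^+}\frac{1}{2\varepsilon}\int_{-\varepsilon}^{\varepsilon}\varphi(s+r)\,{\dd}r\ge\varphi(s)$ forces $\varphi$ to be continuous, hence absolutely continuous, and $-\phi\circ\gamma=\varphi\circ\mathfrak{s}$ is then absolutely continuous.

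The only genuine point of difficulty is this last estimate: reconciling the asymmetry through the reversibility constant $\Lambda$, and recognizing that the new $\lambda_-$-term is lower order on the bounded parameter interval $[0,L]$ and can thus be folded into an $L^1$ function. Once the variation bound is brought into the form $|\varphi(s_1)-\varphi(s_2)|\le(\tilde g(s_1)+\tilde g(s_2))|s_1-s_2|$, everything is inherited from the already-proved Claim~\ref{claim} and Theorem~\ref{slopeuppgr}\eqref{slope-2}.
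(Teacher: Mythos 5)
Your proof is correct, and the lower-semicontinuity half follows the paper verbatim (the paper simply says it is shown as in Theorem~\ref{slopeuppgr}\eqref{slope-2} using \eqref{fowradpostivgrade}, which is exactly your argument). For the strong upper gradient property, however, you take a genuinely different route in the only nontrivial case $\lambda<0$. The paper does not touch the reparametrization argument again: it uses \eqref{fowradpostivgrade} to get the comparison $\mathfrak{l}_\phi \leq |\partial\phi| -\frac{\lambda}{p}\diam(X)^{p-1}$ when $\diam(X)<\infty$, so that $|\partial\phi|\circ\gamma\,|\gamma'_+|\in L^1$ forces $\mathfrak{l}_\phi\circ\gamma\,|\gamma'_+|\in L^1$, and then invokes the already-proved fact that $\mathfrak{l}_\phi$ is a strong upper gradient (Theorem~\ref{slopeuppgr}\eqref{slope-2}, via Remark~\ref{trickforwardupp}); when $\diam(X)=\infty$ it restricts to the compact image $X_0=\gamma([a,b])$ and runs the same comparison with the global slope $\mathfrak{l}^0_\phi$ of the restricted space. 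You instead re-run the Claim~\ref{claim} machinery directly for $|\partial\phi|$: the global formula gives the two-point estimate $[\varphi(s_2)-\varphi(s_1)]_+\le \Lambda g(s_1)|s_1-s_2|+\frac{\lambda_-}{p}\Lambda^p|s_1-s_2|^p$ along the $1$-Lipschitz reparametrization, and you absorb the higher-order term via $|s_1-s_2|^p\le L^{p-1}|s_1-s_2|$ into an $L^1$ majorant $\tilde g$. Both arguments are sound; yours avoids introducing the auxiliary space $X_0$ and its slope $\mathfrak{l}^0_\phi$, at the cost of repeating the $W^{1,1}$/continuity argument, while the paper's reduction reuses Claim~\ref{claim} as a black box. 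One cosmetic remark: your intermediate inequality $[\varphi(s_2)-\varphi(s_1)]_+\le g(s_1)d-\frac{\lambda}{p}d^p$ can fail for $\lambda>0$ when the left-hand side vanishes, but since you treat $\lambda\ge 0$ separately via $|\partial\phi|=\mathfrak{l}_\phi$ and the final bound with $\lambda_-$ is valid in all cases, this does not affect the proof.
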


\begin{proof}
It follows from Theorem~\ref{slopeuppgr} that
$|\partial \phi|$ is a weak upper gradient and $\mathfrak{l}_\phi$ is a strong upper gradient.
Hence, if $\lambda \geq 0$, Theorem~\ref{thremforwposrt} implies that
$|\partial\phi|$ is a strong upper gradient.

Next, suppose that $\lambda<0$ and $\diam(X)<\infty$.
Observe from \eqref{fowradpostivgrade} that
\begin{equation}\label{reverwseinr}
 \mathfrak{l}_\phi(x) \leq |\partial\phi|(x) -\frac{\lambda}{p}\diam(X)^{p-1}
 \quad \text{for all}\,\ x \in \mathfrak{D}(\phi).
\end{equation}
Hence, for any $\gamma \in \FAC([a,b];X)$,
$|\partial\phi| \circ \gamma\, |\gamma'_+| \in L^1(a,b)$ implies that
$\mathfrak{l}_\phi \circ \gamma\, |\gamma'_+| \in L^1(a,b)$
and $-\phi \circ \gamma$ is absolutely continuous by Remark~\ref{trickforwardupp}.
Thus, Proposition~\ref{weakbeomcestronggra} shows that
$|\partial\phi|$ is a strong upper gradient.

Finally, when $\lambda<0$ and $\diam(X)=\infty$,
we shall again prove that $-\phi\circ\gamma$ is absolutely continuous
for $\gamma \in \FAC([a,b];X)$ with $|\partial\phi| \circ \gamma\, |\gamma'_+|\in L^1(a,b)$.
Given such a curve $\gamma$,
we find from Lemma~\ref{uniformlyconverge} that $\gamma([a,b])$ is compact.
Let $X_0:=\gamma([a,b])$ be the compact forward metric space
equipped with the restricted metric $d$,
and denote by $\mathfrak{l}^0_{\phi}$ the corresponding global slope of $\phi$.
Then,  the same argument as above together with
\eqref{reverwseinr} yields $\mathfrak{l}^0_\phi \circ \gamma\, |\gamma'_+| \in L^1(a,b)$
and $-\phi\circ \gamma$ is absolutely continuous.
Therefore, $|\partial\phi|$ is always a strong upper gradient for $-\phi$.

The lower semicontinuity of $|\partial\phi|$
can be shown in the same way as Theorem~\ref{slopeuppgr}\eqref{slope-2}
by \eqref{fowradpostivgrade}.
\end{proof}

Now we present two existence results under Assumption~\ref{firstassup}
(cf.\ \cite[Corollaries~2.4.11, 2.4.12]{AGS}).
Compare them with Corollary~\ref{generalforwardexistssolu}.
Recall Remark~\ref{strongassupm} for Assumption~\ref{continudef} in the $\sigma=\mathcal{T}_+$ case.

\begin{proposition}\label{conextI} %[AGS, Corollary 2.4.11]
Suppose Assumption~$\ref{continudef}$\eqref{ass-a}, \eqref{ass-c} with $\sigma=\mathcal{T}_+$
and Assumption~$\ref{firstassup}$ for some $(p,\lambda)$.
Then, for every $x_0 \in \mathfrak{D}(\phi)$, there exists a $p$-curve
$\xi:[0,\infty) \lra X$ of maximal slope for $\phi$ with respect to $|\partial\phi|$ with $\xi(0)=x_0$.
In particular, $\xi$ satisfies the energy identity \eqref{eq:energy}.
\end{proposition}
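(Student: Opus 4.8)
The plan is to obtain the curve by running the generalized minimizing-movement scheme of Section~\ref{topologyexiscurvew}, so that the real work reduces to checking that the convexity Assumption~\ref{firstassup} supplies the hypotheses needed there. The first point is coercivity. Since $x_0 \in \mathfrak{D}(\phi)$, I would take $x_*=x_0$ in Remark~\ref{assmupabcimpb}\eqref{m*-b}, which shows that Assumption~\ref{continudef}\eqref{ass-a}, \eqref{ass-c} (with $\sigma=\mathcal{T}_+$) force the local lower bound \eqref{inffunctionobund}; then Lemma~\ref{cocervityconvexfun}\eqref{m*-1} gives $\tau_*(\phi)\ge \lambda_-^{-1/(p-1)}>0$, which is precisely Assumption~\ref{continudef}\eqref{ass-b} (see also Remark~\ref{assmupabcimpb}\eqref{m*-a}). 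Thus Assumption~\ref{continudef}\eqref{ass-a}--\eqref{ass-c} all hold with $\sigma=\mathcal{T}_+$, and Theorem~\ref{existenceofdisc} (cf.\ Remark~\ref{existsju}) yields Assumption~\ref{nonemtpassumpto}.

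Next I would pin down the relevant upper gradient. By Remark~\ref{strongassupm}, Assumption~\ref{continudef}\eqref{ass-a} in the case $\sigma=\mathcal{T}_+$ is just the $\mathcal{T}_+$-lower semicontinuity of $\phi$, so Corollary~\ref{upperconsidrgrage} applies and gives that $|\partial\phi|$ is a strong upper gradient for $-\phi$ and is $\mathcal{T}_+$-lower semicontinuous. Since $\sigma=\mathcal{T}_+$, $|\partial\phi|$ is then $\sigma$-sequentially lower semicontinuous, whence $|\partial^-\phi|=|\partial\phi|$ by Remark~\ref{lowerbackreamrk}.

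With these in hand I would invoke Corollary~\ref{compacttexistsencesolu} using the constant initial data $\Xi^0_{\gtau}:=x_0$ and any family $\Lambda$ of partitions of $[0,\infty)$ with $\inf_{\gtau\in\Lambda}\|\gtau\|=0$: here $\sup_{\gtau\in\Lambda}\phi(\Xi^0_{\gtau})=\phi(x_0)<\infty$, $\sup_{\gtau\in\Lambda}d(x_0,\Xi^0_{\gtau})=0$, and trivially $\Xi^0_{\gtau_\alpha}\overset{\sigma}{\lra}x_0$ with $\phi(\Xi^0_{\gtau_\alpha})\to\phi(x_0)$, so the corollary produces some $\xi\in\GMM_p(\phi;x_0)$, in particular $\xi(0)=x_0$. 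Finally, since Assumptions~\ref{continudef}\eqref{ass-a} and~\ref{nonemtpassumpto} hold and $|\partial^-\phi|=|\partial\phi|$ is a strong upper gradient for $-\phi$, Theorem~\ref{existsassupab} shows that $\xi\in\FAC^p_{\loc}([0,\infty);X)$ is a $p$-curve of maximal slope for $\phi$ with respect to $|\partial^-\phi|=|\partial\phi|$ and satisfies the energy identity \eqref{eq:energy}, which finishes the proof.

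Almost everything above is bookkeeping, the genuine content having already been packaged into Corollary~\ref{upperconsidrgrage} (convexity upgrades $|\partial\phi|$ from a weak to a strong upper gradient, even when $\diam(X)=\infty$) and Lemma~\ref{cocervityconvexfun} (the coercivity estimate for $\tau_*(\phi)$). The one step that really must be handled with care is the first: verifying that under Assumption~\ref{firstassup} the hypotheses \eqref{ass-a}, \eqref{ass-c} already imply the coercivity Assumption~\ref{continudef}\eqref{ass-b}, for otherwise the resolvent $J_\tau[x]$ need not exist and the scheme never starts. I expect this---rather than any estimate---to be the main obstacle.
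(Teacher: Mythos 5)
Your proposal is correct and follows essentially the same route as the paper: deduce coercivity (Assumption~\ref{continudef}\eqref{ass-b}) from Remark~\ref{assmupabcimpb} and Lemma~\ref{cocervityconvexfun}, run the scheme with constant initial data $\Xi^0_{\gtau}=x_0$ via Corollary~\ref{compacttexistsencesolu}, and conclude with $|\partial^-\phi|=|\partial\phi|$ being a strong upper gradient (Corollary~\ref{upperconsidrgrage}, Remark~\ref{lowerbackreamrk}) and Theorem~\ref{existsassupab}. The only difference is that you spell out the coercivity verification that the paper compresses into a citation of Remark~\ref{assmupabcimpb}.
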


\begin{proof}
Observe from Remark~\ref{assmupabcimpb} that
Assumption~\ref{continudef}\eqref{ass-b} also holds.
Then, from an arbitrary sequence of time steps
$(\gtau_\alpha)_{\alpha \ge 1}$ such that $\|{\gtau_\alpha}\| \to 0$
and a corresponding sequence of initial data $(\Xi^0_{\gtau_\alpha})_{\alpha \ge 1}$
with $\Xi^0_{\gtau_\alpha} =x_0$ for all $\alpha$,
we can find a limit curve $\xi \in \GMM_p(\phi;x_0)$
by Corollary~\ref{compacttexistsencesolu}.
Furthermore, since $|\partial^-\phi|=|\partial\phi|$ is a strong upper gradient for $-\phi$
by Remark~\ref{lowerbackreamrk} and Corollary~\ref{upperconsidrgrage},
we deduce from Theorem~\ref{existsassupab} that
$\xi$ is a $p$-curve of maximal slope and satisfies the energy identity.
\end{proof}

\begin{proposition} %[AGS, Corollary 2.4.12]
Suppose Assumption~$\ref{continudef}$\eqref{ass-a}, \eqref{ass-c}
and Assumption~$\ref{firstassup}$ for some $(p,\lambda)$.
If $|\partial\phi|$ is $\sigma$-sequentially lower semicontinuous
on forward bounded subsets of sublevel sets of $\phi$,
then for every $x_0 \in \mathfrak{D}(\phi)$, there exists a $p$-curve
$\xi:[0,\infty) \lra X$ of maximal slope for $\phi$ with respect to $|\partial\phi|$ satisfying $\xi(0)=x_0$
and the energy identity \eqref{eq:energy}.
\end{proposition}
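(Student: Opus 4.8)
The plan is to follow the template of the proof of Proposition~\ref{conextI}, the only new ingredient being that the present hypothesis on $|\partial\phi|$ is precisely what is needed to identify the relaxed slope $|\partial^-\phi|$ with $|\partial\phi|$ when $\sigma$ is allowed to differ from $\mathcal{T}_+$.

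First I would record that the whole of Assumption~\ref{continudef} is available: by Remark~\ref{assmupabcimpb}\eqref{m*-b}, Assumption~\ref{continudef}\eqref{ass-a} and \eqref{ass-c} yield \eqref{inffunctionobund}, and then Remark~\ref{assmupabcimpb}\eqref{m*-a} (using Assumption~\ref{firstassup}) gives Assumption~\ref{continudef}\eqref{ass-b}. Hence Assumption~\ref{nonemtpassumpto} holds by Theorem~\ref{existenceofdisc}, $\phi$ is $\mathcal{T}_+$-lower semicontinuous, and Corollary~\ref{upperconsidrgrage} applies, so $|\partial\phi|$ is a strong upper gradient for $-\phi$.

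Next comes the key step, $|\partial^-\phi|=|\partial\phi|$ on $\mathfrak{D}(\phi)$. Fix $x\in\mathfrak{D}(\phi)$ and let $(x_i)_{i\ge1}$ be any sequence admissible in Definition~\ref{wekarkparg}, i.e.\ $x_i\overset{\sigma}{\lra}x$ with $\sup_i\{d(x,x_i),\phi(x_i)\}<\infty$. I would note that such a sequence is contained in a forward bounded subset of a sublevel set of $\phi$: indeed $\sup_i\phi(x_i)<\infty$ by assumption, and the triangle inequality together with the bounded reversibility on forward balls in Definition~\ref{thetametricspace} gives $\sup_{i,j}d(x_i,x_j)<\infty$. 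The assumed $\sigma$-sequential lower semicontinuity of $|\partial\phi|$ on such sets then yields $\liminf_{i\to\infty}|\partial\phi|(x_i)\ge|\partial\phi|(x)$; taking the infimum over all admissible sequences gives $|\partial^-\phi|(x)\ge|\partial\phi|(x)$, and the reverse inequality is automatic (take $x_i\equiv x$). This is the analogue of Remark~\ref{lowerbackreamrk} under the weaker hypothesis.

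Finally I would run the scheme: choose partitions with $\|\gtau_\alpha\|\to0$ and $\Xi^0_{\gtau_\alpha}=x_0$ for all $\alpha$; Corollary~\ref{compacttexistsencesolu} then provides a limit curve $\xi\in\GMM_p(\phi;x_0)$, in particular $\xi\in\FAC^p_{\loc}([0,\infty);X)$ with $\xi(0)=x_0$. Since $|\partial^-\phi|=|\partial\phi|$ is a strong upper gradient for $-\phi$, Theorem~\ref{existsassupab} shows that $\xi$ is a $p$-curve of maximal slope for $\phi$ with respect to $|\partial\phi|$ and satisfies the energy identity \eqref{eq:energy}, completing the proof. The only point requiring genuine attention is the middle paragraph --- confirming that every sequence entering the definition of $|\partial^-\phi|$ is automatically forward bounded and stays in a sublevel set, so that the hypothesis on $|\partial\phi|$ legitimately applies; the remaining steps are direct appeals to Corollaries~\ref{upperconsidrgrage} and \ref{compacttexistsencesolu} and to Theorem~\ref{existsassupab}.
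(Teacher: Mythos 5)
Your proposal is correct and follows essentially the same route as the paper: establish Assumption~\ref{continudef}\eqref{ass-b} via Remark~\ref{assmupabcimpb}, identify $|\partial^-\phi|=|\partial\phi|$ as in Remark~\ref{lowerbackreamrk} (your middle paragraph just makes explicit why admissible sequences lie in forward bounded subsets of sublevel sets, which is exactly the intended point), use Corollary~\ref{upperconsidrgrage} for the strong upper gradient property, and conclude via Corollary~\ref{compacttexistsencesolu} and Theorem~\ref{existsassupab} as in Proposition~\ref{conextI}.
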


\begin{proof}
One can show $|\partial^-\phi|=|\partial\phi|$ in the same way as in Remark~\ref{lowerbackreamrk}.
Moreover, it follows from Assumption~\ref{continudef}\eqref{ass-a}
and Corollary~\ref{upperconsidrgrage} that $|\partial\phi|$ is a strong upper gradient for $-\phi$.
Then we obtain the claim by a similar argument to Proposition~\ref{conextI}
together with Corollary~\ref{compacttexistsencesolu} and Theorem~\ref{existsassupab}.
\end{proof}

\begin{remark}\label{rm:lp-pl_1}
When we replace $t(1 - t^{ p-1} )$ with $t(1- t)$ in \eqref{eq:Phi}
(as in the $(\lambda,p)$-convexity; recall Remark~\ref{rm:p-conv}),
we have $d(x_0,\gamma(t)) \le t^{2/p}d(x_0,x_1)$ in place of \eqref{disctancecontrl}.
Although this does not affect Lemma~\ref{cocervityconvexfun} up to suitable modifications
(e.g., $y_* :=\gamma((r_*/d(x_*,y))^{p/2})$), however the proof of Theorem~\ref{thremforwposrt}
works only when $p \le 2$.
The lack of \eqref{fowradpostivgrade} causes problems when one tries to follow the succeeding arguments.
\end{remark}

\subsection{Regularizing effects}%%%%%%%%%%%%%%
%%%%%%%%%%%%%%%%%

In this subsection, we shall show that Assumption~\ref{firstassup} with $\lambda>0$
implies various finer properties,
including the exponential convergence to a minimizer of $\phi$
(Theorem~\ref{direestimde}; cf.\ \cite[Theorem~2.4.14]{AGS})
as well as a decay estimate of the local slope $|\partial\phi|$
(Theorem~\ref{theogmmd}; cf.\ \cite[Theorem~2.4.15]{AGS}, {\cite[Proposition~2.7]{RSS}}).

\begin{lemma}\label{phibarphi} %[AGS, Lemma 2.4.13]
Let $\phi$ satisfy Assumption~$\ref{firstassup}$ with $\lambda>0$.
Then we have
\begin{equation}\label{estimateofphi}
 \phi(x) -\inf_X \phi \leq \frac{|\partial\phi|^q(x)}{q\lambda^{q/p}} \quad
 \text{for all}\,\ x \in \mathfrak{D}(\phi).
\end{equation}
Moreover, if $\bar{x} \in \mathfrak{D}(\phi)$ is a minimizer of $\phi$, then
\begin{equation}\label{esitephi2}
 \frac{\lambda}{p} d^p(\bar{x},x)
 \leq \phi(x) -\phi(\bar{x}) \quad
 \text{for all}\,\ x \in \mathfrak{D}(\phi).
\end{equation}
\end{lemma}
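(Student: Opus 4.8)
The plan is to extract both estimates directly from the global formula for the local slope in Theorem~\ref{thremforwposrt}, which is available because Assumption~\ref{firstassup} holds for the given $(p,\lambda)$ with $\lambda>0$; no lower semicontinuity or compactness of $\phi$ will be needed, and boundedness below of $\phi$ will come out as a byproduct rather than being assumed.

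For \eqref{esitephi2}, I would first note that a minimizer $\bar x\in\mathfrak{D}(\phi)$ has vanishing local slope: since $\phi(\bar x)\le\phi(y)$ for every $y$, Definition~\ref{wekafordef} gives $|\partial\phi|(\bar x)=\limsup_{y\to\bar x}[\phi(\bar x)-\phi(y)]_+/d(\bar x,y)=0$. Substituting $x=\bar x$ into \eqref{fowradpostivgrade} then forces $\sup_{y\neq\bar x}\bigl[(\phi(\bar x)-\phi(y))/d(\bar x,y)+(\lambda/p)d^{p-1}(\bar x,y)\bigr]_+=0$, that is, $(\phi(\bar x)-\phi(y))/d(\bar x,y)+(\lambda/p)d^{p-1}(\bar x,y)\le 0$ for all $y\neq\bar x$; multiplying by $d(\bar x,y)$ and rearranging yields \eqref{esitephi2}. (Alternatively, one may take $x_0=\bar x$ and $x_1=x$ in \eqref{longcopexlambda} of Lemma~\ref{convedefremark}\eqref{pcon-3}, observe that the left-hand side $[\phi(\gamma(t))-\phi(\bar x)]/t$ is nonnegative since $\bar x$ is a minimizer, and let $t\to 0$.)

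For \eqref{estimateofphi}, fix $x\in\mathfrak{D}(\phi)$; we may assume $|\partial\phi|(x)<\infty$, as otherwise the claim is trivial. Applying \eqref{fowradpostivgrade} and multiplying through by $d(x,y)$ gives $\phi(x)-\phi(y)+\tfrac{\lambda}{p}d^p(x,y)\le|\partial\phi|(x)\,d(x,y)$ for every $y\neq x$. The next step is a weighted Young inequality, $ab\le\tfrac{\lambda}{p}b^p+\tfrac{1}{q\lambda^{q/p}}a^q$ for $a,b\ge0$ — obtained by writing $ab=(\lambda^{1/p}b)(\lambda^{-1/p}a)$ and using $st\le s^p/p+t^q/q$ — applied with $a=|\partial\phi|(x)$ and $b=d(x,y)$. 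Subtracting $\tfrac{\lambda}{p}d^p(x,y)$ from the resulting bound, I obtain $\phi(x)-\phi(y)\le\tfrac{1}{q\lambda^{q/p}}|\partial\phi|^q(x)$ for all $y\neq x$, and this holds trivially for $y=x$ as well. Taking the infimum over $y\in X$ produces \eqref{estimateofphi}, and simultaneously shows $\inf_X\phi\ge\phi(x)-\tfrac{1}{q\lambda^{q/p}}|\partial\phi|^q(x)>-\infty$, so the left-hand side of \eqref{estimateofphi} is well defined.

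I do not expect a genuine obstacle here: the substantive ingredient, the global slope identity of Theorem~\ref{thremforwposrt}, is already established. The only points requiring a little care are the normalization constant in the weighted Young inequality (the factor $\lambda^{q/p}=\lambda^{1/(p-1)}$) and the degenerate cases, namely $|\partial\phi|(x)\in\{0,\infty\}$ in \eqref{estimateofphi} and the vanishing of $|\partial\phi|$ at the minimizer in \eqref{esitephi2}; all of these are handled by the trivial inequalities noted above.
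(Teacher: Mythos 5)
Your proof is correct and follows essentially the same route as the paper: \eqref{estimateofphi} is obtained exactly as in the paper from the global formula \eqref{fowradpostivgrade} of Theorem~\ref{thremforwposrt} combined with the weighted Young inequality. For \eqref{esitephi2} your primary argument (vanishing of $|\partial\phi|$ at the minimizer plugged into \eqref{fowradpostivgrade}) is a valid minor variant, and your stated alternative via \eqref{longcopexlambda} with $x_0=\bar{x}$, $x_1=x$ and $t\to 0$ is precisely the paper's proof.
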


\begin{proof}
For any $x,y \in \mathfrak{D}(\phi)$ with $\phi(x) \geq \phi(y)$,
we deduce from \eqref{fowradpostivgrade} and the Young inequality that
\[
 \phi(x)-\phi(y) \leq |\partial\phi|(x) d(x,y) -\frac{\lambda}{p} d^p(x,y)
 \leq \frac{|\partial\phi|^q(x)}{q\lambda^{q/p}},
\]
which implies \eqref{estimateofphi}.
When $\bar{x}$ is a minimizer of $\phi$,
\eqref{longcopexlambda} with $x_0=\bar{x}$ and $x_1=x \in \mathfrak{D}(\phi)$ yields
\[
 \frac{\lambda}p(1-t^{p-1}) d^p(\bar{x},x) \leq \phi(x)-\phi(\bar{x})
\]
since $\phi(\gamma(t)) \geq \phi(\bar{x})$.
Letting $t \to 0$ completes the proof.
\end{proof}

\begin{theorem}\label{direestimde} %[AGS, Theorem 2.4.14]
Suppose that Assumption~$\ref{firstassup}$ holds
and $\phi$ is $\mathcal{T}_+$-lower semicontinuous with $\inf_X \phi >-\infty$.
Then, for any $p$-curve $\xi:[0,\infty) \lra X$ of maximal slope for $\phi$
with respect to $|\partial\phi|$, we have
\begin{equation}\label{uupersestimate}
 \phi\big( \xi(t) \big) -\inf_X \phi
 \leq \Big\{ \phi\big( \xi(t_0) \big) -\inf_X \phi \Big\}
 \cdot \exp\big( {-q}\sgn(\lambda) |\lambda|^{q/p}(t-t_0) \big) \quad
 \text{for all}\,\ t \geq t_0>0.
\end{equation}
In particular, if $\lambda>0$ and $\bar{x} \in \mathfrak{D}(\phi)$ is a minimizer of $\phi$, then
\[
 d^p \big( \bar{x},\xi(t) \big)
 \leq \frac{p}{\lambda} \Big\{ \phi\big( \xi(t_0) \big) -\inf_X \phi \Big\}
 \cdot \exp\big( {-q}\lambda^{q/p}(t-t_0) \big) \quad
 \text{for all}\,\ t \geq t_0>0.
\]
\end{theorem}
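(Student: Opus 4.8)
The plan is to exploit the energy identity \eqref{energyident} for the $p$-curve of maximal slope together with the differential inequality that it yields, and then to combine this with the global formula \eqref{estimateofphi} from Lemma~\ref{phibarphi}. Write $\psi(t):=\phi(\xi(t))-\inf_X\phi\geq 0$. From Proposition~\ref{curaboslpssss} (applied with $\mathfrak{g}=|\partial\phi|$, which is a strong upper gradient by Corollary~\ref{upperconsidrgrage}), $\psi$ is locally absolutely continuous on $(0,\infty)$ and satisfies
\[
 \psi'(t) =-\frac1p|\xi'_+|^p(t) -\frac1q|\partial\phi|^q\big(\xi(t)\big)
 =-|\partial\phi|^q\big(\xi(t)\big)
 \qquad \text{for }\mathscr{L}^1\text{-a.e.\ }t>0,
\]
using the energy identity $|\xi'_+|^p=|\partial\phi|^q$ from \eqref{maxlinequforthreego}.

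Next I would feed in the slope estimate. If $\lambda>0$, Lemma~\ref{phibarphi} gives $|\partial\phi|^q(\xi(t))\geq q\lambda^{q/p}\psi(t)$, hence $\psi'(t)\leq -q\lambda^{q/p}\psi(t)$ for a.e.\ $t$; Gronwall's inequality applied on $[t_0,t]$ yields $\psi(t)\leq \psi(t_0)\exp(-q\lambda^{q/p}(t-t_0))$, which is \eqref{uupersestimate} with $\sgn(\lambda)=1$. If $\lambda=0$ the inequality \eqref{uupersestimate} reads $\psi(t)\leq\psi(t_0)$, which is immediate since $\psi$ is non-increasing (the function $\varphi$ in Definition~\ref{maxslp} is non-increasing). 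If $\lambda<0$, one cannot bound $|\partial\phi|$ from below by $\psi$ directly; instead I would apply Lemma~\ref{phibarphi} with the parameter $\lambda'=0$ (legitimate by Lemma~\ref{convedefremark}\eqref{pcon-1}, since Assumption~\ref{firstassup} for $(p,\lambda)$ implies it for $(p,\lambda')$ with $\lambda'<\lambda$—wait, we need $\lambda'<\lambda<0$, so this gives a slightly different constant). More carefully: for $\lambda<0$, rewrite the target exponent as $-q\sgn(\lambda)|\lambda|^{q/p}=q|\lambda|^{q/p}>0$, so \eqref{uupersestimate} is an \emph{upper} bound that grows; since $\psi$ is non-increasing we have $\psi(t)\leq\psi(t_0)\leq\psi(t_0)e^{q|\lambda|^{q/p}(t-t_0)}$ trivially. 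Thus the case $\lambda\leq 0$ of \eqref{uupersestimate} follows merely from monotonicity of $\psi$, and only $\lambda>0$ requires the Gronwall argument.

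For the final assertion, suppose $\lambda>0$ and let $\bar x$ be the (unique, by Lemma~\ref{cocervityconvexfun}\eqref{m*-2}) minimizer. Apply \eqref{esitephi2} with $x=\xi(t)$ to get $\frac{\lambda}{p}d^p(\bar x,\xi(t))\leq\phi(\xi(t))-\phi(\bar x)=\psi(t)$, and combine with the exponential decay of $\psi$ just proved to obtain
\[
 d^p\big(\bar x,\xi(t)\big)\leq \frac{p}{\lambda}\,\psi(t)
 \leq \frac{p}{\lambda}\,\psi(t_0)\exp\!\big({-q}\lambda^{q/p}(t-t_0)\big)
 =\frac{p}{\lambda}\Big\{\phi\big(\xi(t_0)\big)-\inf_X\phi\Big\}\exp\!\big({-q}\lambda^{q/p}(t-t_0)\big).
\]
The only subtle point is ensuring $\psi$ is well-defined and finite along $\xi$, i.e.\ that $\xi(t)\in\mathfrak{D}(\phi)$ for $t>0$; but this is automatic since $\phi(\xi(t))\leq\phi(\xi(t_0))<\infty$ by the non-increasing property once $\xi(t_0)\in\mathfrak{D}(\phi)$, and finiteness near $t=0$ follows because a $p$-curve of maximal slope has $\phi\circ\xi$ $\mathscr{L}^1$-a.e.\ equal to a real-valued non-increasing function. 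I expect the main (mild) obstacle to be bookkeeping the three sign regimes of $\lambda$ cleanly and invoking Corollary~\ref{upperconsidrgrage} to legitimize the energy identity; the analytic core is just the scalar Gronwall estimate.
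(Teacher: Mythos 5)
Your proposal is correct and follows essentially the same route as the paper: the energy identity from Proposition~\ref{curaboslpssss} (with $|\partial\phi|$ a strong upper gradient via Corollary~\ref{upperconsidrgrage}), the slope lower bound \eqref{estimateofphi} for $\lambda>0$, a Gronwall-type integration, and \eqref{esitephi2} for the distance estimate. The only cosmetic difference is that you dispose of $\lambda\leq 0$ by monotonicity of $\phi\circ\xi$, whereas the paper absorbs all signs into the single differential inequality $\Delta'(t)\leq -q\sgn(\lambda)|\lambda|^{q/p}\Delta(t)$ (trivial for $\lambda\leq 0$) and integrates uniformly.
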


\begin{proof}
Owing to Corollary~\ref{upperconsidrgrage}, $|\partial\phi|$ is a strong upper gradient for $-\phi$.
Thus, we observe from Proposition~\ref{curaboslpssss} that
$\phi \circ \xi$ is locally absolutely continuous and
\[
 |\xi'_+|^p(t) =|\partial\phi|^q \big( \xi(t) \big) =-(\phi \circ \xi)'(t) \quad
 \text{for $\mathscr{L}^1$-a.e.}\ t \in (0,\infty).
\]
Note that
\[
 q\sgn(\lambda)|\lambda|^{q/p} \Big\{ \phi\big( \xi(t) \big) -\inf_X \phi \Big\}
 \leq |\partial\phi|^q \big( \xi(t) \big),
\]
which follows from \eqref{estimateofphi} when $\lambda >0$ and is trivial for $\lambda \leq 0$.
Thus, $\Delta(t):=\phi(\xi(t)) -\inf_X \phi \geq 0$ satisfies
\[
 \Delta'(t) =-|\partial\phi|^q \big( \xi(t) \big)
 \leq -q\sgn(\lambda) |\lambda|^{q/p} \Delta(t) \quad
 \text{for $\mathscr{L}^1$-a.e.}\ t \in (0,\infty).
\]
This implies \eqref{uupersestimate} by integration.
The second assertion is straightforward from \eqref{esitephi2} and \eqref{uupersestimate}.
\end{proof}

We can derive from Assumption~\ref{firstassup} some estimates
stronger than those in Subsection~\ref{ssc:MY}
(cf.\ \cite[Theorem~3.1.6, Remark~3.1.7]{AGS}).

\begin{theorem}\label{slpconvexfunc} %[AGS, Theorem 3.1.6]
Suppose that Assumptions~$\ref{nonemtpassumpto}$ and $\ref{firstassup}$ hold
and let $\tau \in (0,\tau_*(\phi))$.
\begin{enumerate}[{\rm (i)}]
\item \label{delphi-1}
If $1+\lambda\tau^{p-1}>0$,
then we have, for any $x \in \mathfrak{D}(\phi)$ and $y_\tau \in J_\tau[x]$,
\begin{equation}\label{therconvexineu}
 (1+\lambda\tau^{p-1}) |\partial\phi|^q(y_\tau)
 \leq (1+\lambda\tau^{p-1}) \frac{d^p(x,y_\tau)}{\tau^p}
 \leq q\frac{\phi(x)-\Phi_\tau(x)}{\tau}
 \leq \frac{|\partial\phi|^q(x)}{(1+\lambda\tau^{p-1})^{q/p}}.
\end{equation}
The last inequality holds even when $J_\tau[x]=\emptyset$.

\item \label{delphi-2}
If $\lambda \geq 0$, then we have, for any $x \in \mathfrak{D}(\phi)$ and $y_\tau \in J_\tau[x]$,
\[
 \phi(y_\tau)-\inf_X\phi
 \leq \frac{1}{1+\tau \lambda^{q/p}(q+\lambda \tau^{p-1})} \Big\{ \phi(x)-\inf_X\phi \Big\},
 \qquad
 \sup_{\tau>0} \frac{\phi(x)-\Phi_\tau(x)}{\tau} =\frac{|\partial\phi|^q(x)}{q}.
\]
\end{enumerate}
\end{theorem}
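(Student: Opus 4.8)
The plan is to build the chain \eqref{therconvexineu} one inequality at a time—with three rather different ingredients—and then to extract part~(ii) from these together with Lemma~\ref{phibarphi} and Lemma~\ref{dulecolp}. Observe first that the standing hypothesis $1+\lambda\tau^{p-1}>0$ is exactly the condition $\tau\in(0,\lambda_-^{-1/(p-1)})$, so that Assumption~\ref{firstassup}, and hence also the global slope formula \eqref{fowradpostivgrade} of Theorem~\ref{thremforwposrt}, are available at this value of $\tau$. The leftmost inequality is then trivial: Lemma~\ref{slopfirstexsts} already gives $|\partial\phi|^q(y_\tau)\le d^p(x,y_\tau)/\tau^p$, and multiplying by the positive number $1+\lambda\tau^{p-1}$ concludes it.

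For the middle inequality I would invoke Assumption~\ref{firstassup} with $x_0=x$ and $x_1=y_\tau$, obtaining a curve $\gamma$ satisfying \eqref{eq:Phi}. Since $y_\tau\in J_\tau[x]$ we have $\Phi(\tau,x;\gamma(t))\ge\Phi_\tau(x)=\Phi(\tau,x;y_\tau)$, while $\Phi(\tau,x;x)=\phi(x)$. Substituting these into \eqref{eq:Phi}, cancelling a term $t\Phi_\tau(x)$, and dividing by $1-t$ for $t\in(0,1)$ gives
\[
 \Phi_\tau(x)\le\phi(x)-\frac1p\Big(\lambda+\frac{1}{\tau^{p-1}}\Big)\frac{t(1-t^{p-1})}{1-t}\,d^p(x,y_\tau).
\]
Letting $t\to1^-$ and using $\lim_{t\to1}t(1-t^{p-1})/(1-t)=p-1$ and $(p-1)/p=1/q$ yields $\phi(x)-\Phi_\tau(x)\ge(1+\lambda\tau^{p-1})d^p(x,y_\tau)/(q\tau^{p-1})$, which is the middle inequality after multiplication by $q/\tau$.

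The rightmost inequality I would derive directly from \eqref{fowradpostivgrade}, which is exactly why it persists when $J_\tau[x]=\emptyset$. For every $z\in X$ that formula yields $\phi(x)-\phi(z)+\frac\lambda p d^p(x,z)\le|\partial\phi|(x)\,d(x,z)$, hence
\[
 \phi(x)-\phi(z)-\frac{d^p(x,z)}{p\tau^{p-1}}\le|\partial\phi|(x)\,d(x,z)-\frac{1+\lambda\tau^{p-1}}{p\tau^{p-1}}\,d^p(x,z).
\]
As the coefficient of $d^p(x,z)$ is positive, Young's inequality with exponents $p$ and $q$ bounds the right-hand side by $\tfrac1q|\partial\phi|^q(x)\,\tau\,(1+\lambda\tau^{p-1})^{-q/p}$, after simplifying with $(\tau^{p-1})^{q/p}=\tau$; since the supremum over $z\in X$ of the left-hand side equals $\phi(x)-\Phi_\tau(x)$, multiplying by $q/\tau$ delivers the last inequality.

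Turning to part~(ii): if $\lambda=0$ the first assertion is merely $\phi(y_\tau)\le\phi(x)$, which is contained in \eqref{moninpro}, so suppose $\lambda>0$; then $\inf_X\phi>-\infty$ by Remark~\ref{assmupabcimpb}\eqref{m*-a} and Lemma~\ref{cocervityconvexfun}\eqref{m*-2}. Writing $\Phi_\tau(x)=\phi(y_\tau)+d^p(x,y_\tau)/(p\tau^{p-1})$ and feeding it into the middle inequality of \eqref{therconvexineu} produces $\phi(x)-\phi(y_\tau)\ge\big(1+\tfrac{\lambda\tau^{p-1}}{q}\big)d^p(x,y_\tau)/\tau^{p-1}$, while \eqref{estimateofphi} applied at $y_\tau$ together with Lemma~\ref{slopfirstexsts} gives $\phi(y_\tau)-\inf_X\phi\le d^p(x,y_\tau)/(q\lambda^{q/p}\tau^p)$. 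Eliminating $d^p(x,y_\tau)$ between these two estimates and abbreviating $\Delta_x:=\phi(x)-\inf_X\phi$ and $\Delta_y:=\phi(y_\tau)-\inf_X\phi$ yields $\Delta_x-\Delta_y\ge\tau\lambda^{q/p}(q+\lambda\tau^{p-1})\Delta_y$, which rearranges to the claimed contraction. For the concluding identity, $1+\lambda\tau^{p-1}\ge1$ turns the last inequality of \eqref{therconvexineu} into $(\phi(x)-\Phi_\tau(x))/\tau\le|\partial\phi|^q(x)/q$ for all $\tau>0$, while \eqref{contrllphandtheota} in Lemma~\ref{dulecolp} gives $\limsup_{\tau\to0}(\phi(x)-\Phi_\tau(x))/\tau=|\partial\phi|^q(x)/q$; the two bounds combine to the claimed equality. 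The one genuinely delicate step is the passage $t\to1^-$ in the second paragraph: it is there that the precise weight $t(1-t^{p-1})$ appearing in the $(p,\lambda)$-convexity is what makes the constants line up, while the remaining manipulations are just bookkeeping with the conjugate exponents $p$ and $q$.
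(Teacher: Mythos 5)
Your proof is correct and follows essentially the same route as the paper: the first inequality via Lemma~\ref{slopfirstexsts}, the middle one by evaluating \eqref{eq:Phi} along the curve from $x$ to $y_\tau$ and taking the endpoint derivative (your division by $1-t$ and limit $t\to1^-$ is just the paper's condition $f'(1)\le 0$ in disguise), the last one from \eqref{fowradpostivgrade} plus Young's inequality (which, as you note, never uses $y_\tau$), and part (ii) by combining the chain with \eqref{estimateofphi} and \eqref{contrllphandtheota}. The only cosmetic differences are that you route the decay estimate through $d^p(x,y_\tau)$ rather than $|\partial\phi|^q(y_\tau)$ and handle $\lambda=0$ separately, neither of which changes the argument.
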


\begin{proof}
\eqref{delphi-1}
The first inequality in \eqref{therconvexineu} was shown in Lemma~\ref{slopfirstexsts}.
In order to prove the second one, set
\[
 f(t) :=(1-t)\phi(x) +t\phi(y_\tau)
 +\frac{t}{p\tau^{p-1}}\big\{ t^{p-1}-\lambda \tau^{p-1} (1-t^{p-1}) \big\} d^p(x,y_\tau),
 \quad t \in[0,1].
\]
By \eqref{3.22remark30} with $x_0=x$ and $x_1=y_\tau$, we obtain
\[
 f(1) =\Phi(\tau,x;y_{\tau})
 \leq \Phi\big( \tau,x;\gamma(t) \big) \leq f(t)
\]
for any $t \in (0,1)$.
This implies $f'(1) \leq 0$, thereby
\[
 \phi(y_\tau)-\phi(x) +\bigg( \frac{1}{\tau^{p-1}}  +\frac{\lambda}{q} \bigg) d^p(x,y_\tau)
 \leq 0.
\]
One can rearrange this inequality to see
\[
 \frac{1+\lambda\tau^{p-1}}{q} \frac{d^p(x,y_\tau)}{\tau^{p-1}}
 \leq \phi(x)-\phi(y_\tau) -\frac{d^p(x,y_\tau)}{p\tau^{p-1}}
 =\phi(x)-\Phi_\tau(x).
\]
This is the second inequality in \eqref{therconvexineu}.
The last inequality follows from \eqref{fowradpostivgrade} and the Young inequality as
\begin{align*}
 \frac{\phi(x)-\Phi_\tau(x)}{\tau}
 &= \bigg\{ \frac{\phi(x)-\phi(y_\tau)}{d(x,y_\tau)} +\frac{\lambda}{p} d^{p-1}(x,y_\tau) \bigg\}
 \frac{d(x,y_\tau)}{\tau}
 -(1+\lambda\tau^{p-1}) \frac{d^p(x,y_\tau)}{p\tau^p} \\
 &\leq |\partial\phi|(x) \frac{d(x,y_\tau)}{\tau} -(1+\lambda\tau^{p-1}) \frac{d^p(x,y_\tau)}{p\tau^p}
 \leq \frac{|\partial\phi|^q(x)}{q(1+\lambda\tau^{p-1})^{q/p}}.
\end{align*}
If $J_{\tau}[x] =\emptyset$,
then we replace $y_{\tau}$ with $y \neq x$ and take the supremum in $y$.

\eqref{delphi-2}
Owing to \eqref{therconvexineu}, we obtain
\[
 \frac{\phi(x)-\phi(y_\tau)}{\tau}
 =\frac{\phi(x)-\Phi_\tau(x)}{\tau} +\frac{d^p(x,y_\tau)}{p\tau^p}
 \geq \bigg( \frac{1+\lambda\tau^{p-1}}{q} +\frac1p \bigg) |\partial\phi|^q(y_\tau)
 = \bigg( 1+\frac{\lambda\tau^{p-1}}{q} \bigg) |\partial\phi|^q(y_\tau).
\]
This together with \eqref{estimateofphi} implies the first assertion as
\[
 \Big\{ \phi(x)-\inf_X \phi \Big\} -\Big\{ \phi(y_\tau)-\inf_X \phi \Big\}
 \geq \tau\bigg( 1+\frac{\lambda\tau^{p-1}}{q} \bigg) q\lambda^{q/p} \Big\{ \phi(y_\tau)-\inf_X \phi \Big\}.
\]
The second assertion follows from \eqref{contrllphandtheota} and \eqref{therconvexineu} as
\[
 \frac{|\partial\phi|^q(x)}{q}
 =\limsup_{\tau \to 0} \frac{\phi(x)-\Phi_\tau(x)}{\tau}
 \leq \sup_{\tau>0} \frac{\phi(x)-\Phi_\tau(x)}{\tau}
 \leq \frac{|\partial\phi|^q(x)}{q}.
\]
\end{proof}

\begin{lemma}\label{esnonmoto} %[AGS, Lemma 3.4.1]
Suppose that Assumption~$\ref{firstassup}$ holds for some $(p,\lambda)$
satisfying one of the following$:$
$(1)$ $p \in (1,2)$ and $\lambda\geq 0;$
$(2)$ $p=2$ and $\lambda \in \mathbb{R};$
$(3)$ $p\in (2,\infty)$ and $\lambda=0$.
Given a sequence of time steps $\gtau=(\tau_k)_{k \ge 1}$ with $\lambda\|{\gtau}\|^{p-1}>-1$,
we set
\[
 \lambda_{\gtau}
 :=\frac{\log(1+\lambda\|{\gtau}\|^{p-1})}{\|{\gtau}\|^{p-1}}
 =\inf_{k \ge 1} \frac{\log(1+\lambda \tau_k^{p-1})}{\tau^{p-1}_k} \leq \lambda.
\]
Then ${\ee}^{\lambda_{\gtau}(t^k_{\gtau})^{p-1}} |\partial\phi|(\Xi^k_{\gtau})$
is non-increasing in $k$ for any $(\Xi^k_{\gtau})_{k \ge 0}$ solving \eqref{discreteequation}.
\end{lemma}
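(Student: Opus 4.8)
The plan is to extract a one–step contraction inequality from Theorem~\ref{slpconvexfunc}\eqref{delphi-1} and then telescope. Fix $k \ge 1$ and write $\tau := \tau_k$. From $\tau \le \|{\gtau}\|$ and the hypothesis $\lambda\|{\gtau}\|^{p-1} > -1$ we get $1 + \lambda\tau^{p-1} > 0$, so the chain \eqref{therconvexineu} applies with $x := \Xi^{k-1}_{\gtau} \in \mathfrak{D}(\phi)$ and $y_\tau := \Xi^k_{\gtau} \in J_\tau[\Xi^{k-1}_{\gtau}]$. Comparing its extreme members and using $1 + q/p = q$,
\[
 (1+\lambda\tau^{p-1})^q |\partial\phi|^q(\Xi^k_{\gtau})
 \le |\partial\phi|^q(\Xi^{k-1}_{\gtau}),
\]
so, taking positive $q$-th roots (the case $|\partial\phi|(\Xi^{k-1}_{\gtau}) = \infty$ being trivial, and this occurring at most for $k=1$ since $\Xi^{k-1}_{\gtau} \in \mathfrak{D}(|\partial\phi|)$ for $k \ge 2$ by Lemma~\ref{slopfirstexsts}),
\[
 |\partial\phi|(\Xi^k_{\gtau})
 \le \frac{|\partial\phi|(\Xi^{k-1}_{\gtau})}{1+\lambda\tau^{p-1}}
 = e^{-\Lambda_k \tau_k^{p-1}} |\partial\phi|(\Xi^{k-1}_{\gtau}),
 \qquad \Lambda_k := \frac{\log(1+\lambda\tau_k^{p-1})}{\tau_k^{p-1}}.
\]
Since $\Lambda_k \ge \lambda_{\gtau}$ by the definition of $\lambda_{\gtau}$ as an infimum and $\tau_k^{p-1} > 0$, this gives the key one–step bound $|\partial\phi|(\Xi^k_{\gtau}) \le e^{-\lambda_{\gtau} \tau_k^{p-1}} |\partial\phi|(\Xi^{k-1}_{\gtau})$.

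The remaining point — and the only place where the trichotomy on $(p,\lambda)$ is used — is to replace the increment $\tau_k^{p-1}$ by $(t^k_{\gtau})^{p-1} - (t^{k-1}_{\gtau})^{p-1}$, i.e.\ to verify $e^{-\lambda_{\gtau}\tau_k^{p-1}} \le e^{-\lambda_{\gtau}[(t^k_{\gtau})^{p-1} - (t^{k-1}_{\gtau})^{p-1}]}$. If $p = 2$ then $(t^k_{\gtau})^{p-1} - (t^{k-1}_{\gtau})^{p-1} = \tau_k = \tau_k^{p-1}$, so there is nothing to do and $\lambda$ may have any sign. If $p \in (1,2)$ and $\lambda \ge 0$, then $\lambda_{\gtau} \ge 0$, and by subadditivity of $s \mapsto s^{p-1}$ one has $(t^k_{\gtau})^{p-1} - (t^{k-1}_{\gtau})^{p-1} = (t^{k-1}_{\gtau}+\tau_k)^{p-1} - (t^{k-1}_{\gtau})^{p-1} \le \tau_k^{p-1}$, whence the required exponential inequality. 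If $p \in (2,\infty)$ and $\lambda = 0$, then $\lambda_{\gtau} = 0$ and the assertion degenerates to the plain monotonicity $|\partial\phi|(\Xi^k_{\gtau}) \le |\partial\phi|(\Xi^{k-1}_{\gtau})$, already contained in the one–step bound.

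Combining, in every admitted case $|\partial\phi|(\Xi^k_{\gtau}) \le e^{-\lambda_{\gtau}[(t^k_{\gtau})^{p-1} - (t^{k-1}_{\gtau})^{p-1}]} |\partial\phi|(\Xi^{k-1}_{\gtau})$, that is, $e^{\lambda_{\gtau}(t^k_{\gtau})^{p-1}}|\partial\phi|(\Xi^k_{\gtau}) \le e^{\lambda_{\gtau}(t^{k-1}_{\gtau})^{p-1}}|\partial\phi|(\Xi^{k-1}_{\gtau})$ for all $k \ge 1$, which is the claim. (The identity $\lambda_{\gtau} = \inf_{k}\log(1+\lambda\tau_k^{p-1})/\tau_k^{p-1}$ used here follows since $u \mapsto \log(1+\lambda u)/u$ is non-increasing on $(0,\|{\gtau}\|^{p-1}]$: its derivative has the sign of $\lambda u/(1+\lambda u) - \log(1+\lambda u)$, a function vanishing at $u=0$ with non-positive derivative.) The main obstacle is precisely this last bookkeeping step relating $\tau_k^{p-1}$ to $(t^k_{\gtau})^{p-1} - (t^{k-1}_{\gtau})^{p-1}$, which forces exactly the stated list of admissible pairs $(p,\lambda)$; everything else is a direct application of Theorem~\ref{slpconvexfunc}\eqref{delphi-1}.
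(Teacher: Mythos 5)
Your proof is correct and follows essentially the same route as the paper's: the one-step contraction $|\partial\phi|(\Xi^k_{\gtau}) \le |\partial\phi|(\Xi^{k-1}_{\gtau})/(1+\lambda\tau_k^{p-1})$ extracted from \eqref{therconvexineu} (with the exponent bookkeeping $1+q/p=q$), combined with ${\ee}^{\lambda_{\gtau}\tau_k^{p-1}} \le 1+\lambda\tau_k^{p-1}$ and, in case (1), the subadditivity $(t^{k-1}_{\gtau}+\tau_k)^{p-1} \le (t^{k-1}_{\gtau})^{p-1}+\tau_k^{p-1}$. The only difference is presentational: the paper writes out case (1) and declares the other cases similar, whereas you spell out all three, plus the (correct but not strictly needed) verification of the infimum identity for $\lambda_{\gtau}$.
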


\begin{proof}
We consider only the case (1) (the other cases can be seen similarly).
Owing to $p-1\in (0,1)$,
\[
 (t^k_{\gtau})^{p-1}
 =(t^{k-1}_{\gtau} +\tau_k)^{p-1}
 \leq (t^{k-1}_{\gtau})^{p-1} +\tau_k^{p-1}.
\]
Combining this with \eqref{therconvexineu}, $\lambda_{\gtau} \geq 0$ and
${\ee}^{\lambda_{\gtau} \tau^{p-1}_k} \leq {1+\lambda \tau^{p-1}_k}$,
we obtain
\[
 {\ee}^{\lambda_{\gtau}(t^k_{\gtau})^{p-1}} |\partial\phi|(\Xi^k_{\gtau})
 \leq {\ee}^{\lambda_{\gtau}(t^{k-1}_{\gtau})^{p-1}}
 {\ee}^{\lambda_{\gtau}\tau^{p-1}_k}
 \frac{|\partial\phi|(\Xi^{k-1}_{\gtau})}{1+\lambda \tau^{p-1}_k}
 \leq {\ee}^{\lambda_{\gtau}(t^{k-1}_{\gtau})^{p-1}}|\partial\phi|(\Xi^{k-1}_{\gtau})
\]
as desired.
\end{proof}

The above lemma furnishes the following results including a decay estimate of $|\partial\phi|$
(a kind of regularizing effect).

\begin{theorem}\label{theogmmd} %[AGS, Theorem 2.4.15]
Suppose that $\phi$ is $\mathcal{T}_+$-lower semicontinuous,
and that Assumptions~$\ref{nonemtpassumpto}$, $\ref{firstassup}$ hold
for some $(p,\lambda)$ satisfying one of the following$:$
$(1)$ $p \in (1,2)$ and $\lambda\geq 0;$
$(2)$ $p=2$ and $\lambda \in \mathbb{R};$
$(3)$ $p\in (2,\infty)$ and $\lambda=0$.
Then, for every $x_0 \in \mathfrak{D}(\phi)$,
each element $\xi \in \GMM_p(\phi;x_0)$ is locally Lipschitz in $(0,\infty)$
and satisfies the following.
\begin{enumerate}[{\rm (i)}]
\item \label{reg-1}
For any $t>0$, the right forward metric derivative
\[
 |\xi'_R|(t) :=\lim_{s \to t^+} \frac{d(\xi(t),\xi(s))}{s-t}
\]
exists,
$\xi(t) \in \mathfrak{D}(|\partial\phi|)$, and
\[
 \frac{\dd}{{\dd}t_+} \phi \big( \xi(t) \big)
 =-|\partial\phi|^q \big( \xi(t) \big)
 =-|\xi'_R|^p(t)
 =-|\partial\phi| \big( \xi(t) \big) |\xi'_R|(t)
\]
holds, where $\frac{\dd}{{\dd}t_+}$ denotes the right derivative.

\item \label{reg-2}
$\phi(\xi(t))$ is convex in $t \ge 0$ if $\lambda \geq 0$,
and ${\ee}^{\lambda t^{p-1}} |\partial\phi|(\xi(t))$ is non-increasing and right continuous in $t>0$.
Moreover, we have
\begin{align}
 \frac{t}{q} |\partial\phi|^q \big( \xi(t) \big)
 &\leq {\ee}^{q\lambda_- t^{p-1}} \big\{ \phi(x_0)-\Phi_t(x_0) \big\},
 \label{frsitest}\\
 t|\partial \phi|^q\big( \xi(t) \big)
 &\leq \big( 1+p\lambda_+ C(p,\lambda,t) \big)
 {\ee}^{-q\lambda t^{p-1}} \Big\{ \phi(x_0)-\inf_X \phi \Big\},
 \label{secondest}
\end{align}
where $\lambda_+:=\max\{\lambda,0\}$ and
\[
 C(p,\lambda,t):=
 \int^t_0 s^{p-2} {\ee}^{q\lambda s^{p-1}} {\ee}^{-q\sgn(\lambda)|\lambda|^{q/p} s} \,{\dd}s.
\]
In particular, $C(p,0,t)=t^{p-1}/(p-1)$ and $C(2,\lambda,t)=t$.
\end{enumerate}
\end{theorem}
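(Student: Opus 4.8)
The plan is to follow the proof of \cite[Theorem~2.4.15]{AGS}, substituting the forward--metric ingredients established above.

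\smallskip

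\emph{Step 1: reduction to a curve of maximal slope.} By Corollary~\ref{upperconsidrgrage}, $|\partial\phi|$ is a strong upper gradient for $-\phi$ and is $\mathcal{T}_+$-lower semicontinuous; with Remark~\ref{lowerbackreamrk} this gives $|\partial^-\phi|=|\partial\phi|$. Hence Theorem~\ref{existsassupab} applies to any $\xi\in\GMM_p(\phi;x_0)$: it lies in $\FAC^p_{\loc}([0,\infty);X)$, is a $p$-curve of maximal slope for $\phi$ with respect to $|\partial\phi|$, $\phi\circ\xi$ is non-increasing and locally absolutely continuous with $|\xi'_+|^p(t)=|\partial\phi|^q(\xi(t))=-(\phi\circ\xi)'(t)$ for a.e.\ $t$, and in particular $\int_0^t|\partial\phi|^q(\xi(r))\,\dd r=\phi(x_0)-\phi(\xi(t))$ for all $t>0$. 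I also fix an approximating family from Definition~\ref{defominvcurve}: $\gtau_\alpha$ with $\|\gtau_\alpha\|\to0$, discrete solutions $\overline{\Xi}_{\gtau_\alpha}$ and variational interpolants $\widetilde{\Xi}_{\gtau_\alpha}$ with $\overline{\Xi}_{\gtau_\alpha}(t),\widetilde{\Xi}_{\gtau_\alpha}(t)\,\overset{\sigma}{\lra}\,\xi(t)$, $\phi(\overline{\Xi}_{\gtau_\alpha}(t))\to\phi(\xi(t))$ for all $t\ge0$, and $\int_0^T G^p_{\gtau_\alpha}\to\int_0^T|\partial\phi|^q(\xi(r))\,\dd r$ for all $T>0$ (Theorems~\ref{discreecoroll}, \ref{existsassupab}).

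\smallskip

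\emph{Step 2: discrete slope monotonicity and the a priori estimates.} Lemma~\ref{esnonmoto} (valid precisely in the three cases on $(p,\lambda)$) says $e^{\lambda_{\gtau_\alpha}(t^k_{\gtau_\alpha})^{p-1}}|\partial\phi|(\Xi^k_{\gtau_\alpha})$ is non-increasing in $k$; feeding the one-step estimate Theorem~\ref{slpconvexfunc}\eqref{delphi-1} into the interpolant one upgrades this to $t\mapsto e^{\lambda_{\gtau_\alpha}t^{p-1}}|\partial\phi|(\widetilde{\Xi}_{\gtau_\alpha}(t))$ non-increasing on $(0,\infty)$, and $\lambda_{\gtau_\alpha}\to\lambda$. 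Combining the monotone quantity with \eqref{niforgG} and integrating against $e^{-q\lambda_{\gtau_\alpha}s^{p-1}}$ yields, for $t^k_{\gtau_\alpha}$ close to a given $t$,
\[
 \Bigl(e^{\lambda_{\gtau_\alpha}(t^k_{\gtau_\alpha})^{p-1}}|\partial\phi|(\Xi^k_{\gtau_\alpha})\Bigr)^q\int_0^{t^k_{\gtau_\alpha}} e^{-q\lambda_{\gtau_\alpha}s^{p-1}}\,\dd s \le \int_0^{t^k_{\gtau_\alpha}} G^p_{\gtau_\alpha}(s)\,\dd s .
\]
Taking $\liminf_\alpha$, bounding the left side below through $|\partial^-\phi|=|\partial\phi|$ and the right side by $\phi(x_0)-\phi(\xi(t))$, I obtain a clean integral inequality for $|\partial\phi|(\xi(t))$; specialising $\int_0^t e^{-q\lambda s^{p-1}}\,\dd s$ gives \eqref{frsitest}, and, for $\lambda>0$, inserting \eqref{estimateofphi} (Lemma~\ref{phibarphi}) together with the exponential decay of $\phi(\xi(\cdot))-\inf_X\phi$ from Theorem~\ref{direestimde} produces \eqref{secondest} (when $\inf_X\phi=-\infty$ the estimate and this last reference are vacuous and may be skipped). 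The discrete monotonicity also passes to the limit, via the relaxed slope and the lower semicontinuity of $|\partial\phi|$, to yield that $t\mapsto e^{\lambda t^{p-1}}|\partial\phi|(\xi(t))$ is non-increasing on $(0,\infty)$; since $|\partial\phi|\circ\xi$ is $\mathcal{T}_+$-lower semicontinuous (Corollary~\ref{upperconsidrgrage} and $\mathcal{T}_+$-continuity of $\xi$), a non-increasing lower semicontinuous function being right continuous, $e^{\lambda t^{p-1}}|\partial\phi|(\xi(t))$ is right continuous.

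\smallskip

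\emph{Step 3: the pointwise statements.} From \eqref{frsitest} (or the monotonicity together with the a.e.\ finiteness coming from $\int_0^T|\partial\phi|^q(\xi)<\infty$) we get $|\partial\phi|(\xi(t))<\infty$ for every $t>0$, hence $|\xi'_+|=|\partial\phi|^{q-1}\circ\xi\in L^\infty_{\loc}(0,\infty)$ and $\xi$ is locally Lipschitz on $(0,\infty)$. For (i) I would sandwich $|\xi'_R|(t)$: one side, $d(\xi(t),\xi(s))\le\int_t^s|\partial\phi|^{q-1}(\xi(r))\,\dd r$ gives $\limsup_{s\to t^+}\frac{d(\xi(t),\xi(s))}{s-t}\le|\partial\phi|^{q-1}(\xi(t))$ by right continuity of $|\partial\phi|\circ\xi$; the other side, the global formula \eqref{fowradpostivgrade} with $y=\xi(s)$ together with $\phi(\xi(t))-\phi(\xi(s))=\int_t^s|\partial\phi|^q(\xi(r))\,\dd r$ and $d(\xi(t),\xi(s))\to0$ gives $\liminf_{s\to t^+}\frac{d(\xi(t),\xi(s))}{s-t}\ge|\partial\phi|^{q-1}(\xi(t))$ (the case $|\partial\phi|(\xi(t))=0$ being handled directly from the monotonicity). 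Thus $|\xi'_R|(t)=|\partial\phi|^{q-1}(\xi(t))$ exists, and, using $p(q-1)=q$ and right continuity, $\frac{\dd}{\dd t_+}\phi(\xi(t))=-|\partial\phi|^q(\xi(t))=-|\xi'_R|^p(t)=-|\partial\phi|(\xi(t))\,|\xi'_R|(t)$, with $\xi(t)\in\mathfrak{D}(|\partial\phi|)$. Finally, when $\lambda\ge0$, $|\partial\phi|\circ\xi=e^{-\lambda t^{p-1}}\cdot\bigl(e^{\lambda t^{p-1}}|\partial\phi|\circ\xi\bigr)$ is a product of non-increasing factors, so $\frac{\dd}{\dd t_+}\phi(\xi(t))=-|\partial\phi|^q(\xi(t))$ is non-decreasing and $\phi\circ\xi$ is convex; this completes (ii).

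\smallskip

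\emph{Main obstacle.} The delicate point is the passage of the discrete monotonicity (Lemma~\ref{esnonmoto}) to the limit curve: since $|\partial\phi|$ is only lower semicontinuous, $|\partial\phi|(\overline{\Xi}_{\gtau_\alpha}(s))$ need not converge to $|\partial\phi|(\xi(s))$, so the pointwise inequality cannot be read off directly. The integrated form in Step~2—where only a $\liminf$ on the slope and the \emph{strong} convergence of $\int G^p_{\gtau_\alpha}$ enter—is what makes the a priori estimates go through, and the convexity-induced identity $|\partial^-\phi|=|\partial\phi|$ together with right continuity deduced from lower semicontinuity is what upgrades everything to the pointwise statements. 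The remaining care is bookkeeping across the three cases (1)--(3) on $(p,\lambda)$ and the degenerate case $\inf_X\phi=-\infty$.
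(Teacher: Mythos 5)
Your overall architecture (discrete slope monotonicity from Lemma~\ref{esnonmoto}, passage to the limit, sandwich argument for the right derivative, integral estimates for \eqref{frsitest}--\eqref{secondest}) is the same as the paper's, and your Step~3, once phrased in terms of a right-continuous representative, is even a nice variant: the paper obtains the existence of $\frac{\dd}{{\dd}t_+}\phi(\xi(t))$ via the convexity of $h(t)={\ee}^{q\mu t^{p-1}}\{\phi(\xi(t))-\eta\}$, whereas you read it off from the energy identity $\phi(\xi(t))-\phi(\xi(s))=\int_t^s|\partial\phi|^q(\xi(r))\,{\dd}r$ plus right continuity. But Step~2 has a genuine gap, in two places. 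First, the claimed upgrade of Lemma~\ref{esnonmoto} to ``$t\mapsto {\ee}^{\lambda_{\gtau}t^{p-1}}|\partial\phi|(\widetilde{\Xi}_{\gtau}(t))$ is non-increasing'' does not follow from Theorem~\ref{slpconvexfunc}\eqref{delphi-1}: that estimate only bounds the slope at a minimizer \emph{from above} by the slope at the base point ($|\partial\phi|(y_\delta)\le|\partial\phi|(\Xi^{k-1})/(1+\lambda\delta^{p-1})$); it gives no comparison between $J_{\delta_1}[\Xi^{k-1}]$ and $J_{\delta_2}[\Xi^{k-1}]$ for $\delta_1<\delta_2$, and in particular no \emph{lower} bound on $|\partial\phi|(\widetilde{\Xi}_{\gtau}(s))$ in terms of $|\partial\phi|(\Xi^k)$, which is exactly what your discrete integral inequality $\bigl({\ee}^{\lambda_{\gtau}(t^k_{\gtau})^{p-1}}|\partial\phi|(\Xi^k_{\gtau})\bigr)^q\int_0^{t^k_{\gtau}}{\ee}^{-q\lambda_{\gtau}s^{p-1}}\,{\dd}s\le\int_0^{t^k_{\gtau}}G^p_{\gtau}\,{\dd}s$ requires. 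Second, the discrete monotonicity cannot be passed to the limit pointwise ``via the relaxed slope and lower semicontinuity'': for $s<t$ lower semicontinuity gives $\liminf_\alpha|\partial\phi|(\overline{\Xi}_{\gtau_\alpha}(t))\ge|\partial\phi|(\xi(t))$, i.e.\ control at $t$ from below, but the matching upper control $\limsup_\alpha|\partial\phi|(\overline{\Xi}_{\gtau_\alpha}(s))\le|\partial\phi|(\xi(s))$ at the earlier time is only available $\mathscr{L}^1$-a.e.\ (through Theorem~\ref{existsassupab}\eqref{exstasolucon3}). So the everywhere monotonicity and right continuity of ${\ee}^{\lambda t^{p-1}}|\partial\phi|(\xi(t))$ are not yet available at the end of Step~2; and since your Step~3 upper bound invokes precisely that right continuity, the argument as written is circular.

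The repair is the paper's route: apply Helly's selection (\cite[Lemma~3.3.3]{AGS}) to the piecewise-constant node functions $\mathcal{G}_\alpha(t)={\ee}^{\lambda_{\gtau_\alpha}(t^k_{\gtau_\alpha})^{p-1}}|\partial\phi|(\Xi^k_{\gtau_\alpha})$ only, identify the non-increasing limit $\mathcal{G}$ with ${\ee}^{\lambda t^{p-1}}|\partial\phi|(\xi(t))$ $\mathscr{L}^1$-a.e.\ via Theorem~\ref{existsassupab}\eqref{exstasolucon3} and $|\partial^-\phi|=|\partial\phi|$, and work throughout with the right-continuous representative $\mathcal{G}_R$ (which dominates ${\ee}^{\lambda t^{p-1}}|\partial\phi|(\xi(t))$ everywhere by lower semicontinuity); the pointwise identity, hence the monotonicity and right continuity of ${\ee}^{\lambda t^{p-1}}|\partial\phi|(\xi(t))$ itself, comes out only \emph{after} the sandwich in your Step~3. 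With that representative in hand, \eqref{frsitest} is obtained directly on the limit curve (no discrete integral inequality needed), but note one more missing detail in your sketch: the monotonicity argument bounds $t\,{\ee}^{q\lambda t^{p-1}}|\partial\phi|^q(\xi(t))$ by $\int_0^t|\partial\phi|^q(\xi(s))\,{\dd}s=\phi(x_0)-\phi(\xi(t))$, and to reach the stated bound $q\{\phi(x_0)-\Phi_t(x_0)\}$ one must keep only the $\tfrac1q$-portion of the energy and convert $\tfrac1p\int_0^t|\xi'_+|^p\,{\dd}s$ into the Moreau--Yosida penalty $d^p(x_0,\xi(t))/(pt^{p-1})$ via H\"older, as in the paper. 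Finally, your treatment of \eqref{secondest} only addresses $\lambda>0$; the case $\lambda\le0$ (possible when $p=2$) should be noted, where the $\lambda_+$-term simply disappears from the same identity.
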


\begin{proof}
Take $(\gtau_\alpha)_{\alpha \ge 1}$ associated with $\xi \in \GMM_p(\phi;x_0)$
as in Definition~\ref{defominvcurve}.
First, we shall construct a right continuous function $\mathcal{G}_R$ on $(0,\infty)$ such that
\[
 \mathcal{G}_R(t)
 \geq {\ee}^{\lambda t^{p-1}} |\partial\phi| \big( \xi(t) \big)\,\ \text{for any}\ t>0, \quad
 \mathcal{G}_R(t)
 ={\ee}^{\lambda t^{p-1}} |\partial\phi| \big( \xi(t) \big)
 \,\ \text{for $\mathscr{L}^1$-a.e.}\ t \in (0,\infty)
\]
(i.e., $\mathcal{G}_R$ is the right continuous representative
of ${\ee}^{\lambda t^{p-1}} |\partial\phi| (\xi(t))$).
Define a function $\mathcal{G}_{\alpha}$ on $[0,\infty)$ by
\[
 \mathcal{G}_{\alpha}(0) := |\partial\phi|(\Xi^0_{\gtau_{\alpha}}), \qquad
 \mathcal{G}_{\alpha}(t)
 := {\ee}^{\lambda_{\gtau_{\alpha}}(t^k_{\gtau_{\alpha}})^{p-1}} |\partial\phi|(\Xi^k_{\gtau_{\alpha}})
 \,\ \text{ for } t \in (t^{k-1}_{\gtau_{\alpha}},t^k_{\gtau_{\alpha}}],
\]
where $\lambda_{\gtau_{\alpha}}$ is as in Lemma~\ref{esnonmoto}.
Owing to Lemma~\ref{esnonmoto},
$\mathcal{G}_{\alpha}$ is a non-increasing function provided $\lambda\|{\gtau_{\alpha}}\|^{p-1}>-1$.
By \cite[Lemma~3.3.3]{AGS}, passing to a subsequence if necessary,
we can assume that $\lim_{\alpha \to \infty} \mathcal{G}_{\alpha}(t)$ exists for all $t \ge 0$
and the limit function $\mathcal{G}$ is non-increasing.
Hence, $\mathcal{G}$ has at most countably many discontinuous points.
Since $\lambda_{\gtau_\alpha} \to \lambda$,
we deduce from Corollary~\ref{upperconsidrgrage},
Theorem~\ref{existsassupab}\eqref{exstasolucon3} and Remark~\ref{lowerbackreamrk} that
$\mathcal{G}(t) ={\ee}^{\lambda t^{p-1}} |\partial^- \phi|(\xi(t)) ={\ee}^{\lambda t^{p-1}} |\partial\phi|(\xi(t))$
for $\mathscr{L}^1$-a.e.\ $t\in (0,\infty)$.
Now, define
\[
 \mathcal{G}_R(t) := \lim_{s \to t^+} \mathcal{G}(s) \,\ \text{ for } t \geq 0.
\]
Then $\mathcal{G}_R(t) \leq \mathcal{G}(t)$ for all $t \geq 0$, since $\mathcal{G}$ is non-increasing.
Moreover, the almost everywhere continuity of $\mathcal{G}$ implies that
$\mathcal{G}_R$ is right continuous and $\mathcal{G}_R(t)=\mathcal{G}(t)$
for $\mathscr{L}^1$-a.e.\ $t \in (0,\infty)$,
and the lower semicontinuity of $|\partial\phi|$ yields that
$\mathcal{G}_R(t) \geq {\ee}^{\lambda t^{p-1}}|\partial\phi|(\xi(t))$ for all $t \geq 0$.
We also set $\mathcal{S}_R(t):={\ee}^{-\lambda t^{p-1}}\mathcal{G}_R(t)$,
which is the right continuous representative of $|\partial\phi|(\xi(t))$.

\eqref{reg-1}
We deduce from Theorem~\ref{existsassupab} and Proposition~\ref{curaboslpssss}
that $|\xi'_+|^p(t)=|\partial\phi|^q(\xi(t)) =-(\phi \circ \xi)'(t)$ for $\mathscr{L}^1$-a.e.\ $t\in (0,\infty)$
and the energy identity \eqref{eq:energy} holds.
Thus, for any $t \geq 0$, we have
\begin{equation}\label{rightderviephi}
 \limsup_{\delta \to 0^+} \frac{d(\xi(t),\xi(t+\delta))}{\delta}
 =\limsup_{\delta \to 0^+} \frac{1}{\delta}
 \int^{t+\delta}_t |\partial\phi|^{q/p} \big( \xi(s) \big) \,{\dd}s
 \leq \mathcal{S}^{q/p}_R(t).
\end{equation}

Given $T>0$, put $\mu:=\min\{\lambda,0\}=-\lambda_-$ and $\eta :=\inf_{t \in [0,T]}\phi(\xi(t))$,
and consider the function $h(t):={\ee}^{q\mu t^{p-1}} \{ \phi(\xi(t)) -\eta \}$.
We claim that $h$ is convex in $[0,T]$.
Since $h$ is continuous, it suffices to show that
$h'$ is $\mathscr{L}^1$-a.e.\ equal to a non-decreasing function.
Note that, for $\mathscr{L}^1$-a.e.\ $t \in (0,\infty)$,
\begin{equation}\label{dervativeofer}
 h'(t) =-{\ee}^{q\mu t^{p-1}}|\partial\phi|^q \big( \xi(t) \big) +p\mu t^{p-2} h(t) \leq 0
\end{equation}
since $h(t) \geq 0$ and $\mu \leq 0$.
Thus, $h$ is non-increasing.
Now, if $\lambda \leq 0$,
then we find $\mu=\lambda$ and $\mu=0$ unless $p=2$,
and hence the latter term of $h'(t)$ in \eqref{dervativeofer} is non-decreasing.
As for the former term, recall that
$\mathcal{G}(t)={\ee}^{\lambda t^{p-1}}|\partial\phi|(\xi(t))$ for $\mathscr{L}^1$-a.e.\ $t\in (0,\infty)$
and that $\mathcal{G}$ is non-increasing.
Therefore, we find that $h$ is convex.
In the case of $\lambda>0$, Lemma~\ref{convedefremark}\eqref{pcon-1} yields that
Assumption~\ref{firstassup} holds also for $(p,0)$.
Thus we can apply the above argument, for $\mu=0$ in both cases.

The convexity implies that $h$ is right differentiable
and the right derivative $\frac{\dd}{{\dd}t_+}h$ is non-decreasing.
Thus, $\phi \circ \xi$ is also right differentiable and
\[
 \frac{\dd}{{\dd}t_+} \phi \big( \xi (t) \big)
 \leq \lim_{s \to t^+} \frac{\dd}{{\dd}t_+} \phi\big( \xi(s) \big).
\]
By choosing a sequence $(s_i)_{i \ge 1}$ with $s_i \to t^+$
and $(\phi \circ \xi)'(s_i)=-|\partial\phi|^q(\xi(s_i))=-\mathcal{S}_R^q(s_i)$, we find
\begin{equation}\label{righttruderiv}
 \frac{\dd}{{\dd}t_+} \phi \big( \xi(t) \big)
 \leq \lim_{i \to \infty} (\phi \circ \xi)'(s_i)
 =-\mathcal{S}^q_R(t) \quad \text{for any}\,\ t>0
\end{equation}
(this also implies that $\mathcal{S}_R(t)<\infty$ for all $t>0$).
A direct calculation yields
\begin{align}
 \frac{\dd}{{\dd}t_+}\phi \big( \xi(t) \big)
 &\geq -\liminf_{s \to t^+} \bigg\{
 \frac{[\phi(\xi(t))-\phi(\xi(s))]_+}{d(\xi(t),\xi(s))} \frac{d(\xi(t),\xi(s))}{s-t} \bigg\} \nonumber\\
 &\geq-\limsup_{s \to t^+} \frac{[\phi(\xi(t))-\phi(\xi(s))]_+}{d(\xi(t),\xi(s))}
 \liminf_{s \to t^+} \frac{d(\xi(t),\xi(s))}{s-t} \nonumber\\
 &\geq -|\partial\phi| \big( \xi(t) \big) \liminf_{s \to t^+} \frac{d(\xi(t),\xi(s))}{s-t}.
 \label{lastfinequa}
\end{align}
Combining this with \eqref{rightderviephi} and \eqref{righttruderiv} furnishes
\[
 \mathcal{S}_R(t) \limsup_{s \to t^+} \frac{d(\xi(t),\xi(s))}{s-t}
 \leq \mathcal{S}^q_R(t)
 \leq |\partial\phi| \big( \xi(t) \big)\liminf_{s \to t^+} \frac{d(\xi(t),\xi(s))}{s-t}
 \leq \mathcal{S}_R(t) \liminf_{s \to t^+} \frac{d(\xi(t),\xi(s))}{s-t}.
\]
Hence, $|\xi'_R|(t)$ exists and
\[
 |\xi'_R|(t) =\mathcal{S}^{q/p}_R(t)
 =|\partial\phi|^{q/p} \big( \xi(t) \big) <\infty
\]
for all $t>0$, and especially $\xi(t) \in \mathfrak{D}(|\partial\phi|)$.
Moreover, since equality holds in \eqref{lastfinequa}, we find
\[
 \frac{\dd}{{\dd}t_+}\phi \big( \xi(t) \big)
 =-|\partial\phi| \big( \xi(t) \big) |\xi'_R|(t)
 =-|\partial\phi|^q \big( \xi(t) \big)
 =-|\xi'_R|^p(t).
\]
Hence, \eqref{reg-1} follows.
Note also that $\mathcal{S}_R(t)=|\partial\phi|(\xi(t))$ and
$\mathcal{G}_R(t)={\ee}^{\lambda t^{p-1}}|\partial\phi|(\xi(t))$ hold for all $t \geq 0$.

\eqref{reg-2}
First of all, if $\lambda \geq 0$,
then the convexity of $\phi(\xi(t))$ follows from that of $h$.
Moreover, since $\mathcal{G}$ is non-increasing,
so is $\mathcal{G}_R(t)={\ee}^{\lambda t^{p-1}}|\partial\phi|(\xi(t))$.
To see \eqref{frsitest}, assuming $\lambda \leq 0$, we have
\[
 \frac{t}{q} {\ee}^{q\lambda t^{p-1}}|\partial\phi|^q \big( \xi(t) \big)
 \leq \frac1q \int^t_0 {\ee}^{q\lambda s^{p-1}} |\partial\phi|^q \big( \xi(s) \big) \,{\dd}s
 \leq \frac1q \int^t_0  |\partial\phi|^q \big( \xi(s) \big) \,{\dd}s.
\]
Combining this with the energy identity \eqref{eq:energy},
H\"older inequality and $\xi(0)=x_0$, we obtain
\begin{align*}
 \frac{t}q {\ee}^{q\lambda t^{p-1}} |\partial\phi|^q \big( \xi(t) \big)
 &\leq \phi(x_0) -\phi \big( \xi(t) \big) -\frac1p \int^t_0 |\xi'_+|^p(s) \,{\dd}s \\
 &\leq \phi(x_0) -\phi \big( \xi(t) \big) -\frac{d^p(\xi(0),\xi(t))}{pt^{p-1}} \\
 &\leq \phi(x_0)-\Phi_t(x_0).
\end{align*}
This shows \eqref{frsitest}.
We can reduce the $\lambda>0$ case to the above argument with $\lambda=0$
by Lemma~\ref{convedefremark}\eqref{pcon-1}.

Finally, we prove \eqref{secondest}.
Since there is nothing to prove if $\inf_X \phi=-\infty$, we assume $\eta:=\inf_X \phi>-\infty$.
As ${\ee}^{\lambda t^{p-1}}|\partial\phi|(\xi(t))$ is non-increasing,
it follows from \eqref{uupersestimate} that
\begin{align*}
 &{\ee}^{q\lambda t^{p-1}} \big\{ \phi \big( \xi(t) \big) -\eta \big\} -\big\{ \phi(x_0)-\eta \big\} \\
 &= \int^t_0 -{\ee}^{q\lambda s^{p-1}} |\partial\phi|^q \big( \xi(s) \big) \,{\dd}s
 +p\lambda \int^t_0 s^{p-2} {\ee}^{q\lambda s^{p-1}} \big\{ \phi \big( \xi(s) \big) -\eta \big\} \,{\dd}s \\
 &\leq -t{\ee}^{q\lambda t^{p-1}} |\partial\phi|^q \big( \xi(t) \big)
 +p\lambda_+ \int^t_0 s^{p-2} {\ee}^{q\lambda s^{p-1}}
 {\ee}^{-q\sgn(\lambda)|\lambda|^{q/p} s} \big\{ \phi(x_0)-\eta \big\} \,{\dd}s.
\end{align*}
Therefore, we have
\begin{align*}
 t{\ee}^{q\lambda t^{p-1}}|\partial\phi |^q \big( \xi(t) \big)
 &\leq \big( 1+p\lambda_+ C(p,\lambda,t) \big)
 \big\{ \phi(x_0)-\eta \big\}
\end{align*}
as desired.
One can see $C(p,0,t)=t^{p-1}/(p-1)$ and $C(2,\lambda,t)=t$ by a direct calculation.
\end{proof}

Note that, if $\inf_X \phi>-\infty$,
then the convexity of ${\ee}^{-q\lambda_- t^{p-1}} \{ \phi(\xi(t))-\inf_X \phi \}$ for $\lambda<0$
can also be seen in the same way as above.

\begin{remark}\label{rm:lp-pl_2}
Continuing the discussion in Remark~\ref{rm:lp-pl_1},
suppose that $t(1 - t^{ p-1} )$ is replaced with $t(1- t)$ in \eqref{eq:Phi}.
Then, we have
\[
(1+\lambda\tau^{p-1}) |\partial\phi|^q(y_\tau)
 \leq \frac{p}{q} \frac{|\partial\phi|^q(x)}{(1+\lambda\tau^{p-1})^{q/p}}
\]
instead of \eqref{therconvexineu}, provided that \eqref{fowradpostivgrade} holds.
Thus, we can follow the lines of Lemma~\ref{esnonmoto} and Theorem~\ref{theogmmd}
only when $p \le 2$ (i.e., $p/q \le 1$).
\end{remark}

We conclude this subsection by presenting a regularity result for gradient curves in the Finsler case.

\begin{corollary}\label{regularoffINSLERCASE}
Let $(M,F)$ be a forward complete Finsler manifold and
$\phi:M \lra \mathbb{R}$ be a lower semicontinuous, $\lambda$-geodesically convex function.
\begin{enumerate}[{\rm (i)}]
\item \label{Fin-1}
For any $x_0 \in \mathfrak{D}(\phi)$,
there exists a curve $\xi:[0,\infty) \lra M$ of maximal slope
for $\phi$ with respect to $|\partial\phi|$ with $\xi(0)=x_0$.

\item \label{Fin-2}
If $\lambda>0$, then $\phi$ has a unique minimizer $\bar{x} \in M$,
and $|\partial\phi|(\xi(t))$ decreases to $0$ and $\xi(t) \to \bar{x}$ as $t \to \infty$.

\item \label{Fin-3}
If $\lambda \geq 0$ and $\phi \in C^l(M)$ for some $l \ge 1$,
then $\xi$ is $C^l$ in $(0,T)$ with $T :=\inf\{ t \geq 0 \,|\, \xi(t)=\inf_X \phi \}$.
Moreover, if $\lambda>0$ and $T<\infty$,
then we have $\xi(t)=\bar{x}$ for all $t \geq T$.
\end{enumerate}
\end{corollary}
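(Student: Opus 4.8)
The plan is to obtain all three assertions by specializing the abstract theory of Section~\ref{plambdaconvex} to $p=2$ and $\sigma=\mathcal{T}_+$, and then to translate the conclusions into Finsler language via Examples~\ref{Nonfinslerexamp} and \ref{finsbackgrad}. For (i) I would first verify the hypotheses of Proposition~\ref{conextI}. Since $(M,F)$ is forward complete, the Finsler Hopf--Rinow theorem makes $(M,d_F)$ forward boundedly compact, and the lower semicontinuity of $\phi$ makes its sublevel sets closed; hence every forward bounded subset of a sublevel set of $\phi$ is compact, which is Assumption~\ref{continudef}\eqref{ass-c} with $\sigma=\mathcal{T}_+$, while \eqref{ass-a} is the assumed lower semicontinuity. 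Being $(2,\lambda)$-geodesically convex, $\phi$ satisfies Assumption~\ref{firstassup} for $(2,\lambda)$ by Example~\ref{pgeodesicconve2}\eqref{lambda-a}, and condition \eqref{inffunctionobund} holds because closed forward balls are compact and $\phi$ is lower semicontinuous, so Remark~\ref{assmupabcimpb}\eqref{m*-a} promotes this to Assumption~\ref{continudef}\eqref{ass-b}. Proposition~\ref{conextI} (with $p=2$) then produces a curve $\xi\colon[0,\infty)\lra M$ of maximal slope for $\phi$ with respect to $|\partial\phi|$ with $\xi(0)=x_0$; its proof in fact yields $\xi\in\GMM_2(\phi;x_0)$, which I will use for the remaining parts.

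For (ii), assume $\lambda>0$. Lemma~\ref{cocervityconvexfun}\eqref{m*-2}, using \eqref{inffunctionobund} and the lower semicontinuity, gives $\inf_X\phi>-\infty$ and a unique minimizer $\bar x\in M$. Theorem~\ref{direestimde} then provides the exponential decay of $\phi(\xi(t))-\inf_X\phi$, and via \eqref{esitephi2} of $d_F^p(\bar x,\xi(t))$, so that $\xi(t)\to\bar x$ in $\mathcal{T}_+$ as $t\to\infty$. For the slope, note that Assumption~\ref{nonemtpassumpto} holds by Theorem~\ref{existenceofdisc}, so Theorem~\ref{theogmmd}\eqref{reg-2} applies (we are in its case $p=2$, $\lambda\in\mathbb{R}$): it tells us that $e^{\lambda t}|\partial\phi|(\xi(t))$ is non-increasing, while estimate \eqref{secondest}, which for $p=2$ reads $t\,|\partial\phi|^2(\xi(t))\leq(1+2\lambda t)e^{-2\lambda t}\{\phi(x_0)-\inf_X\phi\}$, forces $|\partial\phi|(\xi(t))\to0$. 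Since a product of nonnegative non-increasing functions is non-increasing, $|\partial\phi|(\xi(t))=e^{-\lambda t}\cdot e^{\lambda t}|\partial\phi|(\xi(t))$ is then a non-increasing function decaying to $0$.

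For (iii), with $\phi\in C^l(M)$, $l\ge1$, Example~\ref{finsbackgrad} gives $|\partial\phi|=F(\nabla(-\phi))$, so $\xi$ is a $2$-curve of maximal slope with respect to $F(\nabla(-\phi))$ and Example~\ref{Nonfinslerexamp} shows $\xi\in C^1((0,\infty);M)$ with $\xi'(t)=\nabla(-\phi)(\xi(t))$. When $\lambda\geq0$, Theorem~\ref{thremforwposrt} yields $|\partial\phi|=\mathfrak{l}_\phi$, hence $|\partial\phi|(x)=0$, equivalently $\nabla(-\phi)(x)=0$, holds precisely at the minimizers of $\phi$. As $\phi\circ\xi$ is non-increasing, $\phi(\xi(t))>\inf_X\phi$ for $t\in(0,T)$, so $\nabla(-\phi)(\xi(t))\neq0$ there; by Remark~\ref{regularifinslerm}, $\nabla(-\phi)$ is $C^{l-1}$ in a neighbourhood of $\xi((0,T))$, and a standard bootstrap applied to the autonomous ODE $\xi'=\nabla(-\phi)(\xi)$ upgrades $\xi$ to class $C^l$ on $(0,T)$. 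If moreover $\lambda>0$ and $T<\infty$, then $\phi(\xi(T))=\inf_X\phi=\phi(\bar x)$ by continuity, so $\xi(T)=\bar x$ by uniqueness; monotonicity of $\phi\circ\xi$ then forces $\phi(\xi(t))=\inf_X\phi$ for all $t\geq T$, whence $|\xi'_+|^2(t)=-(\phi\circ\xi)'(t)=0$ for a.e.\ $t>T$ and therefore $\xi(t)\equiv\bar x$ on $[T,\infty)$.

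I expect the main obstacle to be the non-vanishing of $\nabla(-\phi)$ along $\xi$ before time $T$ in part (iii): the regularity of $\nabla(-\phi)$ genuinely degrades at its zeros (Remark~\ref{regularifinslerm}), so the bootstrap only runs where $\nabla(-\phi)\neq0$, and excluding zeros of $\nabla(-\phi)$ on $(0,T)$ is exactly what the convexity $\lambda\geq0$ buys through the identity $|\partial\phi|=\mathfrak{l}_\phi$ of Theorem~\ref{thremforwposrt}. A more routine but still fiddly matter is to keep careful track of which clauses of Assumptions~\ref{continudef}, \ref{firstassup} and \ref{nonemtpassumpto} each quoted result requires, and to confirm the case hypothesis of Theorem~\ref{theogmmd} in the $p=2$ regime.
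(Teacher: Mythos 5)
Your proposal is correct and follows essentially the same route as the paper: verify Assumptions~\ref{continudef} and \ref{firstassup} via Hopf--Rinow and Example~\ref{pgeodesicconve2}, apply Proposition~\ref{conextI} for (i), Lemma~\ref{cocervityconvexfun}, Theorem~\ref{direestimde} and Theorem~\ref{theogmmd} for (ii), and Examples~\ref{finsbackgrad}, \ref{Nonfinslerexamp} plus non-vanishing of $\nabla(-\phi)$ away from minimizers for (iii). The only cosmetic difference is that you obtain the non-vanishing of $|\partial\phi|$ on $(0,T)$ from the identity $|\partial\phi|=\mathfrak{l}_\phi$ of Theorem~\ref{thremforwposrt}, whereas the paper argues it directly from the $\lambda$-convexity along a geodesic to a point of smaller value; these are equivalent.
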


\begin{proof}
\eqref{Fin-1}
Thanks to Example~\ref{pgeodesicconve2}\eqref{lambda-a},
Assumption~\ref{firstassup} holds for $(2,\lambda)$.
Since $(M,d_F)$ is forward boundedly compact by the Hopf--Rinow theorem,
we find that Assumption~\ref{continudef}\eqref{ass-a}, \eqref{ass-c} hold.
Then, the existence of a ($2$-)curve $\xi$ of maximal slope follows from Proposition~\ref{conextI}.

\eqref{Fin-2}
Note that
\eqref{inffunctionobund} and Assumption~\ref{nonemtpassumpto} also hold due to Remarks~\ref{existsju}, \ref{assmupabcimpb}.
Then, in view of Lemma~\ref{cocervityconvexfun}\eqref{m*-2},
$\phi$ has a unique minimizer $\bar{x} \in M$.
Moreover, on the one hand, Theorem~\ref{direestimde} yields
\[
 d_F^2 \big( \bar{x},\xi(t) \big)
 \leq \frac{2}{\lambda} {\ee}^{-2\lambda t} \big\{ \phi(x_0) -\phi(\bar{x}) \big\}
\]
for any $t \geq 0$, thereby $\xi(t) \to \bar{x}$ as $t \to \infty$.
On the other hand, it follows from Theorem~\ref{theogmmd}\eqref{reg-2} that
$|\partial\phi|(\xi(t))$ is decreasing. Thus  \eqref{secondest} yields
\[
 0 \leq \lim_{t \to \infty} |\partial \phi|^2 \big( \xi(t) \big)
 \leq \lim_{t \to \infty} \frac{1+2\lambda t}{t} {\ee}^{-2\lambda t} \{ \phi(x_0)-\phi(\bar{x}) \} =0.
\]

\eqref{Fin-3}
When $\phi \in C^l(M)$, we find from Corollary~\ref{bascirgeulfinsler}
that $\xi$ is $C^1$ and satisfies $\xi'(t)=\nabla(-\phi)(\xi(t))$.
Now, when $\phi(x)>\inf_X \phi$, we deduce $F(\nabla(-\phi)(x))=|\partial\phi|(x)>0$ from the $\lambda$-convexity
along a minimal geodesic from $x$ to some $y$ with $\phi(y)<\phi(x)$.
This implies that $\nabla(-\phi)$ is a $C^{l-1}$-vector field around $\xi(t)$ for any $t \in (0,T)$
(recall Remark~\ref{regularifinslerm}),
and hence $\xi$ is $C^l$ in $(0,T)$ by $\xi'(t)=\nabla(-\phi)(\xi(t))$.
In the case of $\lambda>0$ and $T<\infty$,  we see that $\xi(t)=\bar{x}$ for all $t \geq T$
since $\phi \circ \xi$ is non-increasing.
\end{proof}

We remark that the convergences in \eqref{Fin-2} are exponentially fast.
Note also that the argument in \eqref{Fin-3} shows that
$\phi$ is of lower $2$-growth for general $\lambda \in \mathbb{R}$
(or of lower $p$-growth if $\phi$ is $(p,\lambda)$-geodesically convex).
Indeed, given $x \in \mathfrak{D}(|\partial\phi|)$ and any $y \in M$,
the $\lambda$-convexity along a minimal geodesic $\gamma:[0,1] \lra M$ from $x$ to $y$
yields
\begin{align*}
 \phi(y)
 &\geq \phi(x) +\lim_{t \to 0}\frac{\phi(\gamma(t))-\phi(x)}{t} +\frac{\lambda}{2}d^2(x,y)
 \geq \phi(x) -|\partial\phi|(x)d(x,y) +\frac{\lambda}{2}d^2(x,y) \\
 &\geq \phi(x) -\frac{1}{2}|\partial\phi|^2(x) +\frac{\lambda -1}{2}d^2(x,y).
\end{align*}

\subsection{Heat flow on compact Finsler manifolds}\label{heatflowfins}%%%%%%
%%%%%%%%%%%%%%%%%%%%%%%%%%%%%

This subsection is devoted to a study of heat flow on compact Finsler manifolds.
It is well known that heat flow can be regarded as gradient flow in the Wasserstein space;
we refer to \cite{Erb,Ograd,OS,Vi} for more details.
In what follows, let $(M,F,\m)$ be a compact Finsler manifold
endowed with a smooth positive measure $\m$.
Along \cite{Erb}, we shall present a slightly more general framework than \cite{OS}.

\subsubsection{Finsler structure of the Wasserstein space}%%%%%%%
%%%%%%%%%%%%%%%%%%%%%%

Denote by $\mathcal{P}(M)$ the collection of Borel probability measures on $M$.
For $\mu_0,\mu_1\in \mathcal{P}(M)$, the \emph{$L^2$-Wasserstein distance} is defined by
\[
 d_W(\mu_0,\mu_1)
 :=\inf_{\pi} \bigg( \int_{M \times M} d^2_F(x,y) \,\pi({\dd}x\,{\dd}y) \bigg)^{1/2},
\]
where $\pi \in \mathcal{P}(M \times M)$ runs over all couplings of $(\mu_0,\mu_1)$.
The fundamental theory of optimal transport yields that
the $L^2$-Wasserstein space $(\mathcal{P}(M),d_W)$ is a compact $\lambda_F(M)$-metric space
and any $\mu_0,\mu_1 \in \mathcal{P}(M)$ admit a minimal geodesic $(\mu_t)_{t\in [0,1]}$
from $\mu_0$ to $\mu_1$ (cf.\ \cite[Corollary~4.17]{KZ}).

Let $\mathcal{P}_{\mathrm{ac}}(M;\m) \subset \mathcal{P}(M)$
be the set of measures absolutely continuous with respect to $\m$.
Then we have the following standard fact (cf.\ \cite[Theorem~4.10]{O}).

\begin{proposition}\label{basicminpo}
For any $\mu_0 \in \mathcal{P}_{\mathrm{ac}}(M;\m)$ and $\mu_1\in \mathcal{P}(M)$,
there exists a $\mu_0$-a.e.\ unique vector field  $\Psi_{\mu_0}^{\mu_1}$ on $M$ such that
\begin{enumerate}[{\rm (i)}]
\item \label{nup1}
$\pi =(\id_M, \exp\Psi_{\mu_0}^{\mu_1})_\sharp \mu_0$ is a unique optimal coupling of $(\mu_0,\mu_1);$

\item \label{nup4}
$\mu_t =[\exp(t\Psi^{\mu_1}_{\mu_0})]_\sharp \mu_0$, $t\in [0,1]$,
is a unique minimal geodesic from $\mu_0$ to $\mu_1;$

\item \label{nup5}
$d^2_W(\mu_0,\mu_1)=\int_M F^2( \Psi^{\mu_1}_{\mu_0}) \,{\dd}\mu_0;$

\item \label{nup3}
there exists a $(d_F^2/2)$-convex function $\varphi:M \lra \mathbb{R}$
such that $\Psi_{\mu_0}^{\mu_1}=\nabla\varphi$,
\end{enumerate}
where $f_\sharp \mu$ denotes the push-forward measure of $\mu$ by $f$.
\end{proposition}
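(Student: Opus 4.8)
The plan is to reduce the statement to the Finsler optimal transport theory developed in \cite{O}; I outline the argument. Since $M$ is compact and the cost $c(x,y):=\tfrac12 d_F^2(x,y)$ is continuous, Kantorovich duality yields an optimal coupling of $(\mu_0,\mu_1)$ together with a $c$-concave potential, i.e.\ a function $\varphi:M\to\mathbb{R}$ with $\varphi(x)=\inf_{y\in M}\{\tfrac12 d_F^2(x,y)-\psi(y)\}$ for its $c$-transform $\psi$ --- which is precisely a $(d_F^2/2)$-convex function in the present terminology --- such that every optimal coupling is concentrated on the contact set $\Gamma:=\{(x,y)\in M\times M : \varphi(x)+\psi(y)=c(x,y)\}$. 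As an infimum of locally semiconcave functions, $\varphi$ is locally Lipschitz, hence differentiable $\m$-a.e.; since $\mu_0\in\mathcal{P}_{\mathrm{ac}}(M;\m)$, it is differentiable $\mu_0$-a.e. This already records \eqref{nup3}.

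Next I would invoke the first-order optimality condition: at a point $x$ where $\varphi$ is differentiable, any $y$ with $(x,y)\in\Gamma$ must satisfy $y=\exp_x(\nabla\varphi(x))$ with $t\mapsto\exp_x(t\nabla\varphi(x))$ minimizing on $[0,1]$, using the Legendre correspondence $\nabla\varphi=\mathfrak{L}^{-1}(\dd\varphi)$ and the $\m$-negligibility of cut loci. Thus the $c$-section of $\Gamma$ above $x$ is a single point for $\mu_0$-a.e.\ $x$. Setting $\Psi_{\mu_0}^{\mu_1}:=\nabla\varphi$ on this full $\mu_0$-measure set and extending it arbitrarily, every optimal coupling must equal $\pi=(\id_M,\exp\Psi_{\mu_0}^{\mu_1})_\sharp\mu_0$; its uniqueness, hence \eqref{nup1}, follows from the standard averaging argument (a convex combination of two optimal plans is optimal, so supported in $\Gamma$, which is $\mu_0$-a.e.\ single-valued in the second variable), and the same argument gives the $\mu_0$-a.e.\ uniqueness of $\Psi_{\mu_0}^{\mu_1}$. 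Then \eqref{nup5} is immediate since $d_F(x,\exp_x\Psi_{\mu_0}^{\mu_1}(x))=F(\Psi_{\mu_0}^{\mu_1}(x))$ along the minimizing segment.

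For \eqref{nup4} I would set $\mu_t:=[\exp(t\Psi_{\mu_0}^{\mu_1})]_\sharp\mu_0$ and verify it is a minimal geodesic in $(\mathcal{P}(M),d_W)$: since each curve $r\mapsto\exp_x(r\Psi_{\mu_0}^{\mu_1}(x))$ is minimal in $(M,d_F)$, the coupling $(\exp(s\Psi_{\mu_0}^{\mu_1}),\exp(t\Psi_{\mu_0}^{\mu_1}))_\sharp\mu_0$ yields $d_W(\mu_s,\mu_t)\le(t-s)d_W(\mu_0,\mu_1)$ for $0\le s\le t\le 1$, while the reverse inequality is the triangle inequality; uniqueness of the geodesic follows because any minimal geodesic in $(\mathcal{P}(M),d_W)$ lifts, by compactness of $M$, to a dynamical optimal plan whose $(\mu_0,\mu_1)$-marginal is an optimal coupling and hence equals $\pi$ by \eqref{nup1}. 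The main obstacle is the careful treatment of the irreversible Finsler exponential map and the cut locus: one needs that for $\m$-a.e.\ $x$ the transport geodesic is unique and depends measurably on $x$, and that $\varphi$ inherits enough regularity from $d_F^2$ --- which is semiconcave only away from cut loci and must be localized since $d_F$ is asymmetric --- to be differentiable almost everywhere. All of this is carried out in \cite{O}, so it is legitimate to cite that reference and present only the sketch above.
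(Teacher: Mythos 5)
The paper gives no proof of this proposition at all: it is stated as a ``standard fact'' with a bare citation to \cite[Theorem~4.10]{O}, and your argument is exactly the McCann-type duality proof that underlies that citation (Kantorovich potential, a.e.\ differentiability, first-order condition giving $y=\exp_x(\nabla\varphi(x))$, averaging argument for uniqueness, displacement interpolation), with the genuinely delicate points --- cut loci, regularity of the potential for the asymmetric metric, and a.e.\ uniqueness and measurability of the transport geodesics needed for uniqueness of the Wasserstein geodesic --- correctly identified and deferred to \cite{O}. So your proposal is correct and takes essentially the same route as the paper, merely making explicit the argument the paper only cites.
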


The vector field $\Psi^{\mu_1}_{\mu_0}$ will be referred to as the \emph{optimal transport vector field}.
Observe that $\Psi^{\mu_t}_{\mu_0}=t\Psi_{\mu_0}^{\mu_1}$ holds for all $t \in [0,1]$.
By \cite{Ouni}, the function $\varphi$ in \eqref{nup3} is Lipschitz and twice differentiable $\m$-a.e.

Now we recall the Finsler structure of $\mathcal{P}(M)$ introduced in \cite{OS}
(see \cite{Erb,Vi} for the Riemannian case).
For $\mu\in \mathcal{P}(M)$,
let $L^2(\mu;TM)$ denote the space of measurable vector fields $\w$
with the asymmetric norm
\[
 \|{\w}\|_{\mu} :=\bigg( \int_M F^2(\w) \,{\dd}\mu \bigg)^{1/2} <\infty.
\]
We similarly define $L^2(\mu;T^*M)$ as the space of measurable $1$-forms $\zeta$ with
\[
 \|\zeta\|^*_{\mu} :=\bigg( \int_M F^*(\zeta)^2 \,{\dd}\mu \bigg)^{1/2} <\infty.
\]
We also define
\[
 \langle \zeta,\w \rangle_{\mu} :=\int_M \zeta(\w) \,{\dd}\mu
\]
for $\w \in L^2(\mu;TM)$ and $\zeta \in L^2(\mu;T^*M)$,
and the \emph{Legendre transformation} $\LL_\mu :L^2(\mu;TM) \lra L^2(\mu;T^*M)$
by applying the pointwise Legendre transformation induced by $F$
(recall Subsection~\ref{ssc:Finsler}).
The \emph{tangent} and \emph{cotangent spaces} of $\mathcal{P}(M)$ at $\mu$ are defined by
\[
 T_\mu \mathcal{P}(M) :=\overline{\{ \nabla\varphi \,|\, \varphi \in C^\infty(M) \}}^{\|\cdot\|_{\mu}},
 \qquad
 T^*_\mu \mathcal{P}(M) :=\overline{\{ {\dd}\varphi \,|\, \varphi \in C^\infty(M) \}}^{\|\cdot\|^*_{\mu}},
\]
respectively.
We summarize some properties of these spaces (see Appendix \ref{addwasser} for the proof).

\begin{proposition}\label{normstrcturelemma}
Given $\mu\in \mathcal{P}(M)$, we have the following.
\begin{enumerate}[{\rm (i)}]
\item \label{dualpro2}
The Legendre transformation $\LL_\mu:L^2(\mu;TM) \lra L^2(\mu;T^*M)$ is a homeomorphism.
In particular, its restriction $\LL_\mu:T_\mu \mathcal{P}(M) \lra T^*_\mu \mathcal{P}(M)$
is also a homeomorphism.

\item\label{dualpro1}
For any $\zeta \in L^2(\mu;T^*M)$, we have
$\|\zeta\|^*_{\mu} =\sup_{\w \in L^2(\mu;TM) \setminus \{0\}} \langle \zeta,\w \rangle_{\mu}/\|{\w}\|_\mu$
and $\w=\LL^{-1}_{\mu}(\zeta)$ is a unique element in $L^2(\mu;TM)$ satisfying
$\|{\w}\|_{\mu}^2 =(\|\zeta\|^*_{\mu})^2 =\langle \zeta,\w \rangle_{\mu}$.

\item\label{Riesz}
For every bounded linear functional $\mathcal{Q}$ on $L^2(\mu;TM)$,
there exists a unique $1$-form $\zeta \in L^2(\mu;T^*M)$ such that
$\mathcal{Q}(\w) =\langle \zeta,\w \rangle_\mu$ for all $\w \in L^2(\mu;TM)$.

\item\label{basicconnection}
We have
$T^*_\mu \mathcal{P}(M) =\{ \zeta \in L^2(\mu;T^*M) \,|\,
 \langle \zeta,\w \rangle_\mu=0\ \text{for all}\ \w \in \mathbf{Ker(div)}(\mu) \}$, where
\[
 \mathbf{Ker(div)}(\mu)
 := \big\{ {\w} \in L^2(\mu;TM) \,\big|\,
 \langle {\dd}\varphi,\w \rangle_{\mu} =0\ \text{for all}\ \varphi \in C^\infty(M) \big\}.
\]

\item\label{identityrelation}
If $\vv,\w \in T_\mu \mathcal{P}(M)$ satisfy $\vv-\w \in \mathbf{Ker(div)}(\mu)$,
then we have $\vv=\w$.
\end{enumerate}
\end{proposition}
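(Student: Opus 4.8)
The plan is to transfer the entire statement to genuine Hilbert spaces by fixing, once and for all, an auxiliary Riemannian metric $h$ on the compact manifold $M$. Compactness yields a constant $C\ge 1$ with $C^{-1}\sqrt{h(v,v)}\le F(v)\le C\sqrt{h(v,v)}$ for every $v\in TM$, and dually $C^{-1}\sqrt{h^*(\zeta,\zeta)}\le F^*(\zeta)\le C\sqrt{h^*(\zeta,\zeta)}$ for every $\zeta\in T^*M$. Consequently, as sets $L^2(\mu;TM)$ and $L^2(\mu;T^*M)$ are the usual Hilbert spaces of $L^2$-sections for $h$ and $h^*$; the asymmetric norms $\|\cdot\|_\mu$ and $\|\cdot\|^*_\mu$ are comparable to their reversals (so forward and backward convergence coincide and induce the $h$-topology); and the closures defining $T_\mu\mathcal{P}(M)$ and $T^*_\mu\mathcal{P}(M)$ are just Hilbert-space closures. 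In particular $T^*_\mu\mathcal{P}(M)$, being the closure of the \emph{linear} set $\{{\dd}\varphi:\varphi\in C^\infty(M)\}$, is a closed linear subspace, whereas $T_\mu\mathcal{P}(M)$ is merely closed.

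For \eqref{dualpro2} I would apply the pointwise Legendre transformation $\mathfrak{L}:TM\to T^*M$ — a homeomorphism (a diffeomorphism off the zero sections, with $\mathfrak{L}(0)=0$) obeying $F^*(\mathfrak{L}(v))=F(v)$ and $\mathfrak{L}(v)(v)=F^2(v)$ — fibrewise; then $\|\LL_\mu\w\|^*_\mu=\|\w\|_\mu$ shows $\LL_\mu$ is a norm-preserving bijection $L^2(\mu;TM)\to L^2(\mu;T^*M)$, and continuity of $\LL_\mu$ and $\LL_\mu^{-1}$ follows from the pointwise continuity of $\mathfrak{L}^{\pm 1}$ via a routine subsequence-and-dominated-convergence argument (dominating through the $h$-comparison). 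Since $\LL_\mu(\nabla\varphi)={\dd}\varphi$ by the very definition of the gradient, $\LL_\mu$ carries $\overline{\{\nabla\varphi\}}$ onto $\overline{\{{\dd}\varphi\}}$, giving the restricted homeomorphism. For \eqref{dualpro1} I would integrate the pointwise inequality \eqref{tangninnerprot} and apply the Cauchy--Schwarz inequality in $L^2(\mu)$ to get $\langle\zeta,\w\rangle_\mu\le\|\w\|_\mu\|\zeta\|^*_\mu$; taking $\w=\LL^{-1}_\mu(\zeta)$, for which $\zeta(\w)=F^2(\w)=F^*(\zeta)^2$ pointwise, turns both inequalities into equalities, yielding the supremum formula and the chain $\|\w\|^2_\mu=(\|\zeta\|^*_\mu)^2=\langle\zeta,\w\rangle_\mu$. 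Conversely, if some $\w$ satisfies this chain, equality in Cauchy--Schwarz forces $F(\w)=cF^*(\zeta)$ $\mu$-a.e., the norm identity fixes $c=1$, and equality in \eqref{tangninnerprot} then identifies $\w$ with $\mathfrak{L}^{-1}(\zeta)$ $\mu$-a.e., giving uniqueness.

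For \eqref{Riesz}, boundedness of $\mathcal{Q}$ with respect to $\|\cdot\|_\mu$ is equivalent (by the $h$-comparison) to boundedness on the Hilbert space $L^2(\mu;TM)$ with the $h$-inner product, so the classical Riesz theorem provides $\mathbf{u}$ with $\mathcal{Q}(\w)=\int_M h(\mathbf{u},\w)\,{\dd}\mu$, and $\zeta:=h(\mathbf{u},\cdot)\in L^2(\mu;T^*M)$ works; uniqueness follows by applying \eqref{dualpro1} to $\zeta_1-\zeta_2$. For \eqref{basicconnection}, writing $\sharp,\flat$ for the musical isomorphisms of $h$, the pairing becomes $\langle\zeta,\w\rangle_\mu=(\zeta^\sharp,\w)_h$, hence $\mathbf{Ker(div)}(\mu)=G^\perp$ with $G:=\overline{\{({\dd}\varphi)^\sharp:\varphi\in C^\infty(M)\}}$ a closed subspace of $(L^2(\mu;TM),(\cdot,\cdot)_h)$; thus $\zeta$ annihilates $\mathbf{Ker(div)}(\mu)$ iff $\zeta^\sharp\in(G^\perp)^\perp=G$, i.e.\ iff $\zeta\in\overline{\{{\dd}\varphi\}}^{\|\cdot\|^*_\mu}=T^*_\mu\mathcal{P}(M)$, the norm comparison letting one move between the $h^*$- and $F^*$-topologies. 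Finally, for \eqref{identityrelation}: by \eqref{dualpro2} the elements $\LL_\mu(\vv),\LL_\mu(\w)$ lie in $T^*_\mu\mathcal{P}(M)$, which is linear, so $\LL_\mu(\vv)-\LL_\mu(\w)$ lies there too and, by \eqref{basicconnection}, pairs to $0$ against every element of $\mathbf{Ker(div)}(\mu)$, in particular against $\vv-\w$. Expanding $\langle\LL_\mu(\vv)-\LL_\mu(\w),\vv-\w\rangle_\mu=0$ and using $\langle\LL_\mu(\vv),\vv\rangle_\mu=\|\vv\|^2_\mu$, $\langle\LL_\mu(\w),\w\rangle_\mu=\|\w\|^2_\mu$ and the Cauchy--Schwarz bound from \eqref{dualpro1} gives $0\ge(\|\vv\|_\mu-\|\w\|_\mu)^2\ge 0$; all inequalities are therefore equalities, so $\langle\LL_\mu(\vv),\w\rangle_\mu=\|\LL_\mu(\vv)\|^*_\mu\|\w\|_\mu$ with $\|\w\|_\mu=\|\vv\|_\mu=\|\LL_\mu(\vv)\|^*_\mu$, and the uniqueness clause of \eqref{dualpro1} forces $\w=\LL^{-1}_\mu(\LL_\mu(\vv))=\vv$.

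The main obstacle is precisely the asymmetry: none of $\|\cdot\|_\mu$, $\LL_\mu$ or $T_\mu\mathcal{P}(M)$ is symmetric or linear, so no Hilbert-space theorem applies directly, and in \eqref{identityrelation} one cannot even assert $\vv-\w\in T_\mu\mathcal{P}(M)$. The way around this is the systematic use of the auxiliary metric $h$ — available precisely because $M$ is compact — to carry out all the functional analysis in honest Hilbert spaces, and, for \eqref{identityrelation}, to pass to the cotangent side, where $T^*_\mu\mathcal{P}(M)$ genuinely is a linear subspace.
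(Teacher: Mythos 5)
Your proposal is correct and follows essentially the same route as the paper: an auxiliary Riemannian metric (the paper's $g$, your $h$) reduces \eqref{Riesz} to the classical Riesz theorem and \eqref{basicconnection} to Hilbert-space orthogonality (the paper cites Erbar's decomposition $L^2(\mu;TM)=T^g_\mu\mathcal{P}(M)\oplus\mathbf{Ker(div)}(\mu)$, which is exactly the fact $(G^\perp)^\perp=G$ you invoke), while \eqref{dualpro2}, \eqref{dualpro1} are the pointwise Legendre-transform properties and \eqref{identityrelation} is deduced from \eqref{basicconnection} plus the equality case in \eqref{dualpro1}. The only cosmetic difference is in \eqref{identityrelation}: the paper pairs $\LL_\mu(\vv)$ against $\vv-\w$ and $\LL_\mu(\w)$ against $\w-\vv$ separately, so it never needs the linearity of $T^*_\mu\mathcal{P}(M)$ that your combined pairing $\langle \LL_\mu(\vv)-\LL_\mu(\w),\vv-\w\rangle_\mu=0$ uses; both versions give $\|\vv\|_\mu=\|\w\|_\mu$ with $\langle \LL_\mu(\vv),\w\rangle_\mu=\|\vv\|_\mu\|\w\|_\mu$ and conclude via the uniqueness clause of \eqref{dualpro1}.
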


The next theorem is seen in the same manner as \cite[Lemma~7.2, Theorem~7.3]{OS}
or \cite[Proposition~2.5]{Erb}.
(In fact, \eqref{continequa} does not depend on the metric
and can be reduced to any bi-Lipschitz equivalent Riemannian metric.)

\begin{theorem}\label{existfloweq}
Let $I \subset \mathbb{R}$ be an open interval
and $\mu=(\mu_t)_{t \in I}$ be a continuous curve in $\mathcal{P}(M)$.

\begin{enumerate}[{\rm (i)}]
\item\label{continqeuq1}
If $\mu \in \FAC^2_{\loc}(I;\mathcal{P}(M))$,
then there exists a Borel vector field $\vv :I \times M \lra TM$
with $\|{\vv}_t\|_{\mu_t} \in L^2_{\loc}(I)$ $(\vv_t(x):=\vv(t,x) \in T_xM)$ such that
\begin{itemize}
\item[(a)]
$\vv_t \in T_{\mu_t} \mathcal{P}(M)$ for $\mathscr{L}^1$-a.e.\ $t \in I;$

\item[(b)]
the \emph{continuity equation} $\partial_t \mu_t+\di(\vv_t\cdot {\mu_t})=0$ holds
in the sense of distributions, i.e.,
\begin{equation}\label{continequa}
 \int_I \int_M \{ \partial_t \varphi(t,x) +\langle {\dd}\varphi(t,x),\vv_t(x) \rangle \} \,\mu_t({\dd}x)\,{\dd}t =0
 \quad \text{for all}\,\ \varphi \in C^\infty_0(I \times M),
\end{equation}
where $C^{\infty}_0(I \times M)$ denotes the set of $C^{\infty}$-functions on $I \times M$
of compact support.
\end{itemize}
Moreover, such a vector field $\vv_t$ is unique and satisfies $\|{\vv}_t\|_{\mu_t} =|\mu'_+|(t)$
for $\mathscr{L}^1$-a.e.\ $t  \in I$.

\item\label{continqeuq2}
If $\mu$ satisfies \eqref{continequa} above for some Borel vector field $(\vv_t)_{t \in I}$
with $\|{\vv}_t\|_{\mu_t} \in L^2_{\loc}(I)$,
then $\mu$ is locally forward absolutely continuous with $|\mu'_+|(t) \leq \|{\vv}_t\|_{\mu_t}$.
\end{enumerate}
\end{theorem}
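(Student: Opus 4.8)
The plan is to reduce the statement to the classical characterization of $2$-absolutely continuous curves in Wasserstein spaces (Benamou--Brenier, Ambrosio--Gigli--Savar\'e) over an auxiliary \emph{Riemannian} metric, exploiting that the continuity equation \eqref{continequa} does not see the metric. First I would fix any Riemannian metric $g$ on $M$. By compactness of $M$, the $0$-homogeneous continuous function $v \mapsto F(v)/|v|_g$ on $TM \setminus \{0\}$ is bounded above and below by positive constants $C \ge c > 0$, so $c|v|_g \le F(v) \le C|v|_g$ for all $v \in TM$. This passes to the induced distances and to the Wasserstein level, giving $c\,d_W^g \le d_W \le C\,d_W^g$ (where $d_W^g$ is the Wasserstein distance built from $d_g$) and $c\|\w\|^g_\mu \le \|\w\|_\mu \le C\|\w\|^g_\mu$ on $L^2(\mu;TM)$, with $\|\cdot\|^g_\mu$ the Riemannian $L^2$-norm. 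Hence $L^2(\mu;TM)$ has the same topology and the same closed subspaces under the two norms; in particular $T_\mu\mathcal{P}(M)$ agrees, \emph{as a set}, with its Riemannian analogue and, being the closure of the linear span $\{\nabla\varphi \mid \varphi \in C^\infty(M)\}$, is a closed---hence weakly closed---subspace of the Hilbert space $(L^2(\mu;TM),\langle\cdot,\cdot\rangle_g)$. Finally, $\w \mapsto \|\w\|_\mu$ is convex and (globally Lipschitz in $\w$, hence) strongly continuous on $L^2(\mu;TM)$, therefore weakly lower semicontinuous.

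For part \eqref{continqeuq2}, I would argue as follows. From $\|\vv_t\|^g_{\mu_t} \le c^{-1}\|\vv_t\|_{\mu_t}$ one has $\|\vv_t\|^g_{\mu_t} \in L^2_{\loc}(I)$, so on each compact $[a,b] \subset I$ the superposition principle applied to \eqref{continequa} over $(M,g)$ yields a probability measure $\boldsymbol{\eta}$ on $C([a,b];M)$ with $(\mathrm{e}_r)_\sharp\boldsymbol{\eta} = \mu_r$ for $r \in [a,b]$, concentrated on absolutely continuous curves $\gamma$ solving $\dot\gamma_r = \vv_r(\gamma_r)$ for $\mathscr{L}^1$-a.e.\ $r$. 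Since $(\mathrm{e}_s,\mathrm{e}_t)_\sharp\boldsymbol{\eta}$ couples $\mu_s$ and $\mu_t$ and $d_F(\gamma_s,\gamma_t) \le \int_s^t F(\dot\gamma_r)\,{\dd}r$, Cauchy--Schwarz in $r$ and Fubini give
\[
 d_W(\mu_s,\mu_t)^2 \le \int d_F(\gamma_s,\gamma_t)^2\,{\dd}\boldsymbol{\eta}(\gamma)
 \le (t-s)\int_s^t \int_M F(\vv_r)^2\,{\dd}\mu_r\,{\dd}r
 = (t-s)\int_s^t \|\vv_r\|^2_{\mu_r}\,{\dd}r
\]
for $a \le s \le t \le b$; this exhibits $\mu \in \FAC^2_{\loc}(I;\mathcal{P}(M))$ with $|\mu'_+|(t) \le \|\vv_t\|_{\mu_t}$ at every Lebesgue point of $r \mapsto \|\vv_r\|^2_{\mu_r}$.

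For part \eqref{continqeuq1}, the equivalence $c\,d_W^g \le d_W \le C\,d_W^g$ places $\mu$ in $\FAC^2_{\loc}(I;(\mathcal{P}(M),d_W^g))$, and the classical Riemannian theory (see \cite{AGS}; cf.\ \cite{OS,Erb}) produces a Borel vector field $\vv$, lying $\mathscr{L}^1$-a.e.\ in the (Riemannian, hence Finsler) tangent space, solving \eqref{continequa}, with $\|\vv_t\|^g_{\mu_t} \in L^2_{\loc}(I)$; since $\|\vv_t\|_{\mu_t} \le C\|\vv_t\|^g_{\mu_t}$, this gives (a), (b) and $L^2_{\loc}$-integrability. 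For the norm identity, $|\mu'_+|(t) \le \|\vv_t\|_{\mu_t}$ is part \eqref{continqeuq2} applied to this $\vv$; for the reverse, I would fix $s$ at which $|\mu'_+|(s)$ exists (Theorem~\ref{vilocitythem}) and at which $t \mapsto \int_M \psi\,{\dd}\mu_t$ is differentiable with derivative $\int_M \langle {\dd}\psi,\vv_s\rangle\,{\dd}\mu_s$ for every $\psi$ in a countable dense subset of $C^\infty(M)$---all but an $\mathscr{L}^1$-null set of $s$. Choose $t_n \downarrow s$ with $(t_n-s)^{-1}d_W(\mu_s,\mu_{t_n}) \to |\mu'_+|(s)$, let $\Psi^n := \Psi_{\mu_s}^{\mu_{t_n}}$ be the optimal transport vector field of Proposition~\ref{basicminpo}, and set $\vv^n := (t_n-s)^{-1}\Psi^n \in T_{\mu_s}\mathcal{P}(M)$ (Proposition~\ref{basicminpo}\eqref{nup3}); then $\|\vv^n\|_{\mu_s} = (t_n-s)^{-1}d_W(\mu_s,\mu_{t_n}) \to |\mu'_+|(s)$, so along a subsequence $\vv^n \rightharpoonup \vv_\infty$ weakly in $L^2(\mu_s;TM)$. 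Using $\mu_{t_n} = (\exp\Psi^n)_\sharp\mu_s$, a second-order Taylor expansion of $\psi \circ \exp$ and $\|\Psi^n\|_{\mu_s} \to 0$, I would show
\[
 \int_M \langle {\dd}\psi,\vv_\infty\rangle\,{\dd}\mu_s
 = \lim_{n\to\infty}\frac{1}{t_n-s}\Big(\int_M \psi\,{\dd}\mu_{t_n} - \int_M \psi\,{\dd}\mu_s\Big)
 = \int_M \langle {\dd}\psi,\vv_s\rangle\,{\dd}\mu_s
\]
for all $\psi$ in the dense set, so $\vv_\infty - \vv_s \in \mathbf{Ker(div)}(\mu_s)$; since $T_{\mu_s}\mathcal{P}(M)$ is weakly closed, $\vv_\infty \in T_{\mu_s}\mathcal{P}(M)$, whence $\vv_\infty = \vv_s$ by Proposition~\ref{normstrcturelemma}\eqref{identityrelation}, and weak lower semicontinuity of $\|\cdot\|_{\mu_s}$ gives $\|\vv_s\|_{\mu_s} \le \liminf_n \|\vv^n\|_{\mu_s} = |\mu'_+|(s)$. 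Uniqueness follows by testing \eqref{continequa} with $\varphi(t,x)=\chi(t)\psi(x)$: any two admissible fields differ by an element of $\mathbf{Ker(div)}(\mu_t)$ for $\mathscr{L}^1$-a.e.\ $t$, hence agree by Proposition~\ref{normstrcturelemma}\eqref{identityrelation}.

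The hard part will be the last step of part \eqref{continqeuq1}---identifying the weak limit of the rescaled optimal transport vector fields with $\vv_s$. This needs a careful bound on the second-order remainder in the expansion of $\psi \circ \exp$ (of order $\|\Psi^n\|^2_{\mu_s} = O((t_n-s)^2)$, thanks to the bound on $\|\vv^n\|^g_{\mu_s}$ and the bounded geometry of the compact $M$), a clean description of the exceptional $\mathscr{L}^1$-null set in $s$, and the right packaging of the superposition principle and of the AGS characterization over $(M,g)$ so that all conclusions transfer to the asymmetric distance $d_W$.
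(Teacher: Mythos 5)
Your overall route---reduce to a bi-Lipschitz equivalent Riemannian metric $g$ on the compact $M$, since the continuity equation \eqref{continequa} does not see the metric---is exactly the reduction the paper indicates, and your proof of part \eqref{continqeuq2} via the superposition principle is sound. The genuine gap is in part \eqref{continqeuq1}, at the step where you declare that $T_\mu\mathcal{P}(M)$ ``agrees, as a set, with its Riemannian analogue'' and is the closure of a linear span, hence a weakly closed subspace. This is false unless $F$ is Riemannian: by definition $T_\mu\mathcal{P}(M)$ is the closure of the set of \emph{Finsler} gradients, i.e.\ $T_\mu\mathcal{P}(M)=\LL_\mu^{-1}(T^*_\mu\mathcal{P}(M))$ by Proposition~\ref{normstrcturelemma}\eqref{dualpro2}, and since the Legendre transformation is nonlinear this set is in general neither linear nor convex and differs from the closure of the $g$-gradients; only the cotangent space $T^*_\mu\mathcal{P}(M)$, the annihilator of $\mathbf{Ker(div)}(\mu)$ by Proposition~\ref{normstrcturelemma}\eqref{basicconnection}, is metric-independent. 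Two of your steps collapse because of this. First, the field supplied by the Riemannian theory of \cite{AGS,Erb} lies in the $g$-tangent space, so property (a) is not automatic; you must replace it, for a.e.\ $t$, by the representative in $\LL_{\mu_t}^{-1}(T^*_{\mu_t}\mathcal{P}(M))$ within the same divergence class (e.g.\ minimize $\|\cdot\|_{\mu_t}$ over the closed convex set $\w_t+\mathbf{Ker(div)}(\mu_t)$ and check, via Proposition~\ref{normstrcturelemma}\eqref{dualpro1},\eqref{basicconnection}, that the minimizer lies in $T_{\mu_t}\mathcal{P}(M)$), together with a measurable-in-$t$ selection. Second, the identification $\vv_\infty=\vv_s$ is unjustified: Proposition~\ref{normstrcturelemma}\eqref{identityrelation} needs \emph{both} vectors in $T_{\mu_s}\mathcal{P}(M)$, your $\vv_s$ is only known to lie in the $g$-tangent space, and weak closedness of the (nonlinear) Finsler tangent space is precisely what you have not proved. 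The standard repair (Erbar's Lemma~2.7, \cite{OS}) avoids weak closedness: once (a) is secured for $\vv_s$, any weak limit $\vv_\infty$ of the rescaled transports satisfies $\vv_\infty-\vv_s\in\mathbf{Ker(div)}(\mu_s)$, whence
\[
 \|\vv_s\|_{\mu_s}^2
 =\big\langle \LL_{\mu_s}(\vv_s),\vv_\infty \big\rangle_{\mu_s}
 \leq \|\vv_s\|_{\mu_s}\,\|\vv_\infty\|_{\mu_s}
 \leq \|\vv_s\|_{\mu_s}\,\liminf_{n\to\infty}\|\vv^n\|_{\mu_s},
\]
which already gives $\|\vv_s\|_{\mu_s}\leq|\mu'_+|(s)$ without ever identifying $\vv_\infty$ with $\vv_s$.

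A secondary issue: Proposition~\ref{basicminpo} provides the optimal transport vector field $\Psi^{\mu_{t_n}}_{\mu_s}$ only when $\mu_s\in\mathcal{P}_{\mathrm{ac}}(M;\m)$, whereas Theorem~\ref{existfloweq} concerns arbitrary curves in $\mathcal{P}(M)$; so your rescaled-optimal-map step is unavailable at a general $s$, and one must instead argue with optimal plans and Kantorovich potentials (or follow the dual/difference-quotient construction of \cite[Lemma~7.2, Theorem~7.3]{OS}). In short, the metric-independent portion of the theorem does reduce to the Riemannian case as you (and the paper) say, but the Finsler-specific assertions---membership in $T_{\mu_t}\mathcal{P}(M)$ and the identity $\|\vv_t\|_{\mu_t}=|\mu'_+|(t)$ for the asymmetric $d_W$---are exactly the content the paper delegates to \cite{OS}, and your proposal bypasses them by an identification of tangent spaces that fails for non-Riemannian $F$.
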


The vector field $\vv=(\vv_t)_{t \in I}$ in \eqref{continqeuq1} above is called
the \emph{tangent vector field} of the curve $\mu$.

\begin{lemma}\label{tangerverproer}
\begin{enumerate}[{\rm (i)}]
\item\label{phisctur1}
For any $\mu_0\in \mathcal{P}_{\mathrm{ac}}(M;\m)$ and $\mu_1\in \mathcal{P}(M)$,
we have $\Psi^{\mu_1}_{\mu_0} \in T_{\mu_0}\mathcal{P}(M)$.

\item\label{weakconvegoptimavect}
For any $(\mu_t)_{t \in I} \in \FAC^2_{\loc}(I;\mathcal{P}_{\mathrm{ac}}(M;\m))$
and its tangent vector field $(\vv_t)_{t \in I}$, we have, for $\mathscr{L}^1$-a.e.\ $t \in I$,
\begin{equation}\label{eq:Psi/h}
 \frac1h \Psi_{\mu_t}^{\mu_{t+h}} \to \vv_t \text{ weakly in }L^2(\mu_t;TM) \text{ as } h \to 0^+.
\end{equation}
\end{enumerate}
\end{lemma}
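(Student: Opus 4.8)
The plan is to prove \eqref{phisctur1} first and then use it inside the proof of \eqref{weakconvegoptimavect}. For \eqref{phisctur1}, I will realize $\Psi^{\mu_1}_{\mu_0}=\nabla\varphi$, with $\varphi$ the $(d_F^2/2)$-convex potential from Proposition~\ref{basicminpo}\eqref{nup3}, as an $\|\cdot\|_{\mu_0}$-limit of gradients of smooth functions. Since $\varphi$ is $(d_F^2/2)$-convex it is Lipschitz on the compact manifold $M$ by \cite{Ouni}, so $F^*(\dd\varphi)$ is bounded. Using a finite atlas with a subordinate partition of unity I would mollify $\varphi$ chart-by-chart to obtain $\varphi_\varepsilon\in C^\infty(M)$ with $\dd\varphi_\varepsilon\to\dd\varphi$ at every Lebesgue point of $\dd\varphi$ (hence $\m$-a.e., hence $\mu_0$-a.e.\ since $\mu_0\ll\m$) and $\sup_\varepsilon\sup_M F^*(\dd\varphi_\varepsilon)<\infty$. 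Because $\mathfrak{L}^{-1}$ is continuous on $T^*M$ with $F(\mathfrak{L}^{-1}(\zeta))=F^*(\zeta)$, this gives $\nabla\varphi_\varepsilon=\mathfrak{L}^{-1}(\dd\varphi_\varepsilon)\to\nabla\varphi$ $\mu_0$-a.e.\ with $F(\nabla\varphi_\varepsilon)$ uniformly bounded; dominated convergence (the dominating function being a constant, integrable since $\mu_0$ is a probability measure) then yields $\|\nabla\varphi_\varepsilon-\nabla\varphi\|_{\mu_0}\to0$, so $\Psi^{\mu_1}_{\mu_0}\in T_{\mu_0}\mathcal{P}(M)$.

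For \eqref{weakconvegoptimavect}, fix a countable family $\{\psi_k\}\subset C^\infty(M)$ that is dense in $C^1$-norm, and work at those $t\in I$ for which (a) $|\mu'_+|(t)$ exists with $|\mu'_+|(t)=\|\vv_t\|_{\mu_t}$ (Theorem~\ref{existfloweq}\eqref{continqeuq1}), and (b) for every $k$ the map $s\mapsto\int_M\psi_k\,\dd\mu_s$ is differentiable at $t$ with derivative $\langle\dd\psi_k,\vv_t\rangle_{\mu_t}$; condition (b) holds $\mathscr{L}^1$-a.e.\ because the continuity equation \eqref{continequa} makes $s\mapsto\int_M\psi_k\,\dd\mu_s$ locally absolutely continuous with a.e.\ derivative $\langle\dd\psi_k,\vv_t\rangle_{\mu_t}$. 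Write $\Psi_h:=\Psi^{\mu_{t+h}}_{\mu_t}$. By Proposition~\ref{basicminpo}\eqref{nup5}, $\|\Psi_h/h\|_{\mu_t}=d_W(\mu_t,\mu_{t+h})/h\to|\mu'_+|(t)$ as $h\to0^+$, so $(\Psi_h/h)_{h>0}$ is bounded in $L^2(\mu_t;TM)$; since $M$ is compact and $\lambda_F(M)<\infty$ this asymmetric norm is bi-Lipschitz equivalent to that of a reference Riemannian metric, so from any sequence $h_j\to0^+$ one can extract a subsequence along which $\Psi_{h_j}/h_j\rightharpoonup\mathbf{w}$ weakly in $L^2(\mu_t;TM)$, and by \eqref{phisctur1} each $\Psi_h/h\in T_{\mu_t}\mathcal{P}(M)$, a closed (hence weakly closed) subspace, so $\mathbf{w}\in T_{\mu_t}\mathcal{P}(M)$.

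To identify $\mathbf{w}=\vv_t$, for $\psi\in C^\infty(M)$ I would use $\mu_{t+h}=(\exp\Psi_h)_\sharp\mu_t$ together with a second-order Taylor expansion of $\psi$ along geodesics, whose remainder is controlled uniformly in $x$ by $F^2(\Psi_h(x))$ since $M$ is compact, to obtain
\[
 \frac1h\Big( \int_M\psi\,\dd\mu_{t+h} -\int_M\psi\,\dd\mu_t \Big)
 =\Big\langle \dd\psi,\tfrac1h\Psi_h \Big\rangle_{\mu_t}
 +O\!\Big( \tfrac{d_W^2(\mu_t,\mu_{t+h})}{h} \Big),
\]
where the error term is $h\cdot(d_W(\mu_t,\mu_{t+h})/h)^2\to0$ as $h\to0^+$. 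Letting $h=h_j\to0^+$, the left-hand side tends to $\langle\dd\psi,\vv_t\rangle_{\mu_t}$ (first for $\psi=\psi_k$ by (b), then for all $\psi$ by $C^1$-density and the uniform bounds), while the pairing on the right tends to $\langle\dd\psi,\mathbf{w}\rangle_{\mu_t}$ because $\dd\psi$ is a bounded $1$-form and hence a continuous functional against weak convergence in $L^2(\mu_t;TM)$. Therefore $\langle\dd\psi,\vv_t-\mathbf{w}\rangle_{\mu_t}=0$ for all $\psi\in C^\infty(M)$, i.e.\ $\vv_t-\mathbf{w}\in\mathbf{Ker(div)}(\mu_t)$, and since $\vv_t,\mathbf{w}\in T_{\mu_t}\mathcal{P}(M)$, Proposition~\ref{normstrcturelemma}\eqref{identityrelation} forces $\mathbf{w}=\vv_t$. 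As every subsequence of $(\Psi_h/h)_{h>0}$ admits a further subsequence converging weakly to this same limit, and weak convergence on bounded subsets of a separable Hilbert space is metrizable, the whole family converges: $\Psi_h/h\rightharpoonup\vv_t$ as $h\to0^+$.

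The hard part will be the identification step in \eqref{weakconvegoptimavect}: keeping the test functions confined to a single full-measure set of times while simultaneously (i) controlling the Taylor remainder uniformly so that it is genuinely $O(d_W^2(\mu_t,\mu_{t+h})/h)=o(1)$, and (ii) matching the two limits through the continuity equation. The weak extraction and the upgrade from subsequential to full convergence are routine once the bi-Lipschitz Hilbertian structure on $L^2(\mu_t;TM)$ and Proposition~\ref{normstrcturelemma} are in hand.
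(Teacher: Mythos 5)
Your overall route is the one the paper intends: part (i) by mollifying the Lipschitz $(d_F^2/2)$-convex potential from Proposition~\ref{basicminpo}\eqref{nup3} and using dominated convergence, and part (ii) by the argument of \cite[Lemma~2.7]{Erb} combined with Theorem~\ref{existfloweq} and Proposition~\ref{normstrcturelemma}\eqref{identityrelation}. However, there is one genuinely unjustified step in (ii): the claim that $T_{\mu_t}\mathcal{P}(M)$ is ``a closed (hence weakly closed) subspace'', used to place the weak limit $\mathbf{w}$ of $\Psi_{h_j}/h_j$ inside $T_{\mu_t}\mathcal{P}(M)$. In the irreversible Finsler setting the gradient is nonlinear, so $T_{\mu_t}\mathcal{P}(M)$ is \emph{not} a linear subspace: it is the closure of $\{\nabla\varphi\}=\{\LL_{\mu_t}^{-1}({\dd}\varphi)\}$, i.e.\ the image of the linear space of exact forms under the fiberwise nonlinear inverse Legendre transform. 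It is a closed cone, but neither convexity nor weak closedness is available (strong closedness does not imply weak closedness for nonconvex sets, and $\LL_{\mu_t}^{-1}$ is not weakly continuous), so you cannot conclude $\mathbf{w}\in T_{\mu_t}\mathcal{P}(M)$; and Proposition~\ref{normstrcturelemma}\eqref{identityrelation} requires \emph{both} vectors to lie in $T_{\mu_t}\mathcal{P}(M)$. This is exactly the point where the Riemannian argument of Erbar (closed linear subspace, orthogonal complement) does not transfer verbatim, so the identification $\mathbf{w}=\vv_t$ is not justified as written.

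The gap can be closed without any weak closedness of the tangent cone. At the chosen $t$ you also know, by Proposition~\ref{basicminpo}\eqref{nup5}, that $\|\Psi_h/h\|_{\mu_t}=d_W(\mu_t,\mu_{t+h})/h\to|\mu'_+|(t)=\|\vv_t\|_{\mu_t}$; since $F(x,\cdot)$ is convex, the functional $\mathbf{u}\longmapsto\|\mathbf{u}\|_{\mu_t}$ is convex and strongly continuous, hence weakly lower semicontinuous, so $\|\mathbf{w}\|_{\mu_t}\leq\|\vv_t\|_{\mu_t}$. Your identity $\langle {\dd}\psi,\vv_t-\mathbf{w}\rangle_{\mu_t}=0$ for all $\psi\in C^\infty(M)$ gives $\vv_t-\mathbf{w}\in\mathbf{Ker(div)}(\mu_t)$; since $\LL_{\mu_t}(\vv_t)\in T^*_{\mu_t}\mathcal{P}(M)$, Proposition~\ref{normstrcturelemma}\eqref{basicconnection} yields $\langle\LL_{\mu_t}(\vv_t),\mathbf{w}\rangle_{\mu_t}=\langle\LL_{\mu_t}(\vv_t),\vv_t\rangle_{\mu_t}=\|\vv_t\|_{\mu_t}^2\leq\|\vv_t\|_{\mu_t}\|\mathbf{w}\|_{\mu_t}$, hence $\|\vv_t\|_{\mu_t}\leq\|\mathbf{w}\|_{\mu_t}$. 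Therefore all these are equalities, and the uniqueness part of Proposition~\ref{normstrcturelemma}\eqref{dualpro1} applied to $\zeta=\LL_{\mu_t}(\vv_t)$ forces $\mathbf{w}=\vv_t$. With this replacement, the remaining ingredients of your proof (the uniform second-order Taylor remainder $O(d_W^2(\mu_t,\mu_{t+h})/h)$, the countable $C^1$-dense test family fixing a single full-measure set of times, and the subsequence/metrizability upgrade to full convergence) are sound, and part (i) is fine as written.
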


\begin{proof}
Thanks to Proposition~\ref{basicminpo}\eqref{nup3},
one can show \eqref{phisctur1} via an approximation by smooth functions.
\eqref{weakconvegoptimavect} follows from a similar argument
to the proof of \cite[Lemma~2.7]{Erb} along with
Proposition~\ref{normstrcturelemma}\eqref{identityrelation}, Theorem~\ref{existfloweq}
and \eqref{phisctur1} above.
\end{proof}

\subsubsection{Subdifferentials and gradient flows}%%%%%%%%%%%%
%%%%%%%%%%%%%%%%%%%

Let $\phi:\mathcal{P}(M) \lra (-\infty,\infty]$ be a $\mathcal{T}_+$-lower semicontinuous function
on the Wasserstein space $(\mathcal{P}(M),d_W)$
with $\mathfrak{D}({\phi}) \subset \mathcal{P}_{\mathrm{ac}}(M;\m)$.

\begin{definition}[Subdifferentials]\label{badfscsubdiff}
For $\mu \in \mathfrak{D}({\phi})$, a $1$-form $\zeta\in L^2(\mu;T^*M)$ is said to  \emph{belong to
the subdifferential} $\partial{\phi}(\mu)$ if
\[
 {\phi}(\nu)-{\phi}(\mu)
 \geq \langle \zeta,\Psi^{\nu}_{\mu} \rangle_{\mu}
 +o\big( d_W(\mu,\nu) \big) \quad \text{ for all}\,\ \nu \in \mathcal{P}(M).
\]
We call $\zeta \in \partial{\phi}(\mu)$ a \emph{strong subdifferential} if it satisfies
\[
 \phi \big( (\exp\w)_\sharp \mu \big) -\phi(\mu)
 \geq \langle \zeta,\w \rangle_{\mu}
 +o(\|{\w}\|_{\mu}) \quad \text{ for all}\,\ {\w} \in L^2(\mu;TM).
\]
\end{definition}

\begin{proposition}\label{properofsubdiff}
Given $\mu\in \mathfrak{D}({\phi})$, we have the following.
\begin{enumerate}[{\rm (i)}]
\item\label{lowerbounded}
If $\partial{\phi}(\mu) \neq \emptyset$,
then $|\partial {\phi}|(\mu) \leq \inf_{\zeta \in \partial{\phi}(\mu)}\|{-}\zeta\|^*_{\mu}$.

\item\label{charastrongsubd}
If $\zeta \in \partial{\phi}(\mu) \cap T^*_\mu \mathcal{P}(M)$,
then $\zeta$ is a strong subdifferential.

\item\label{weaksubdfiffde}
If $\phi$ is $\lambda$-geodesically convex,
then $\zeta \in L^2(\mu;T^*M)$ belongs to $\partial{\phi}(\mu)$ if and only if
\begin{equation}\label{stillconvexloer}
 \phi(\nu) -\phi(\mu)
 \geq \langle \zeta,\Psi^\nu_\mu \rangle_{\mu}
 +\frac{\lambda}{2} d^2_W(\mu,\nu) \quad \text{ for all}\,\ \nu \in \mathcal{P}(M).
\end{equation}
\end{enumerate}
\end{proposition}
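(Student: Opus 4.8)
The plan is to prove the three assertions separately: \eqref{lowerbounded} and \eqref{weaksubdfiffde} follow by directly unfolding Definition~\ref{badfscsubdiff}, whereas \eqref{charastrongsubd} requires a smoothing argument. Throughout, recall that $\mu \in \mathfrak{D}(\phi) \subset \mathcal{P}_{\mathrm{ac}}(M;\m)$, so that the optimal transport vector field $\Psi^{\nu}_{\mu}$ of Proposition~\ref{basicminpo} is available for every $\nu \in \mathcal{P}(M)$, and that $(\mathcal{P}(M),d_W)$ is a forward metric space, so that Definition~\ref{wekafordef} applies to $|\partial\phi|$.

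For \eqref{lowerbounded}, I would fix $\zeta \in \partial\phi(\mu)$. For $\nu$ near $\mu$, Definition~\ref{badfscsubdiff} gives $\phi(\mu)-\phi(\nu) \le \langle {-}\zeta,\Psi^{\nu}_{\mu}\rangle_{\mu} + o(d_W(\mu,\nu))$, and the pointwise Legendre inequality \eqref{tangninnerprot} together with the Cauchy--Schwarz inequality yields $\langle {-}\zeta,\Psi^{\nu}_{\mu}\rangle_{\mu} \le \|{-}\zeta\|^*_{\mu}\,\|\Psi^{\nu}_{\mu}\|_{\mu} = \|{-}\zeta\|^*_{\mu}\,d_W(\mu,\nu)$, the last equality being Proposition~\ref{basicminpo}\eqref{nup5}. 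Dividing by $d_W(\mu,\nu)$, letting $\nu \to \mu$ in the sense of Definition~\ref{wekafordef}, and then taking the infimum over $\zeta \in \partial\phi(\mu)$ gives the claim.

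For \eqref{weaksubdfiffde}, the implication "\eqref{stillconvexloer} $\Rightarrow$ $\zeta \in \partial\phi(\mu)$" is immediate since $\frac{\lambda}{2}d_W^2(\mu,\nu) = o(d_W(\mu,\nu))$. For the converse, I would fix $\nu \in \mathcal{P}(M)$ and take the minimal geodesic $\mu_t = [\exp(t\Psi^{\nu}_{\mu})]_\sharp\mu$, $t \in [0,1]$, of Proposition~\ref{basicminpo}\eqref{nup4}, recalling that $\Psi^{\mu_t}_{\mu} = t\Psi^{\nu}_{\mu}$ and $d_W(\mu,\mu_t) = t\,d_W(\mu,\nu)$. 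Applying the subdifferential inequality to $\mu_t$ gives $\phi(\mu_t)-\phi(\mu) \ge t\langle\zeta,\Psi^{\nu}_{\mu}\rangle_{\mu} + o(t)$ as $t \to 0^+$, while the $\lambda$-geodesic convexity of $\phi$ along $(\mu_t)$ gives $\phi(\mu_t) \le (1-t)\phi(\mu) + t\phi(\nu) - \frac{\lambda}{2}t(1-t)d_W^2(\mu,\nu)$. Combining these two bounds, dividing by $t$ and letting $t \to 0^+$ yields exactly \eqref{stillconvexloer}.

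The substantive part is \eqref{charastrongsubd}. Given $\zeta \in \partial\phi(\mu) \cap T^*_{\mu}\mathcal{P}(M)$ and $\w \in L^2(\mu;TM)$, put $\nu := (\exp\w)_\sharp\mu$; since $(\id_M,\exp\w)_\sharp\mu$ is a coupling of $(\mu,\nu)$ we have $d_W(\mu,\nu) \le \|\w\|_{\mu}$, so Definition~\ref{badfscsubdiff} gives $\phi(\nu)-\phi(\mu) \ge \langle\zeta,\Psi^{\nu}_{\mu}\rangle_{\mu} + o(\|\w\|_{\mu})$, and it remains to prove $\langle\zeta,\Psi^{\nu}_{\mu}\rangle_{\mu} = \langle\zeta,\w\rangle_{\mu} + o(\|\w\|_{\mu})$. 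For $\varphi \in C^{\infty}(M)$, a change of variables gives $\int_M \varphi\,{\dd}\nu - \int_M \varphi\,{\dd}\mu = \int_M[\varphi(\exp_x\w(x)) - \varphi(x)]\,{\dd}\mu(x)$, and a second-order Taylor expansion of $\varphi \circ \exp$ (controlled uniformly because $M$ is compact and $F$ is bi-Lipschitz to an auxiliary Riemannian metric, after a routine truncation of $\w$) identifies this with $\langle{\dd}\varphi,\w\rangle_{\mu} + O(\|\w\|_{\mu}^2)$; the same computation with $\Psi^{\nu}_{\mu}$ in place of $\w$, using $\|\Psi^{\nu}_{\mu}\|_{\mu} = d_W(\mu,\nu) \le \|\w\|_{\mu}$, gives $\langle{\dd}\varphi,\Psi^{\nu}_{\mu}\rangle_{\mu} + O(\|\w\|_{\mu}^2)$, whence $\langle{\dd}\varphi,\w - \Psi^{\nu}_{\mu}\rangle_{\mu} = o(\|\w\|_{\mu})$. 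Since $\zeta$ is an $\|\cdot\|^*_{\mu}$-limit of differentials ${\dd}\varphi_n$ and $\|\w - \Psi^{\nu}_{\mu}\|_{\mu} \le (1+\lambda_F(M))\|\w\|_{\mu}$ by the triangle inequality for $F$, writing $\langle\zeta,\w-\Psi^{\nu}_{\mu}\rangle_{\mu} = \langle\zeta - {\dd}\varphi_n,\w-\Psi^{\nu}_{\mu}\rangle_{\mu} + \langle{\dd}\varphi_n,\w-\Psi^{\nu}_{\mu}\rangle_{\mu}$, bounding the first term by Cauchy--Schwarz and the second by the previous estimate, and letting first $\|\w\|_{\mu} \to 0$ and then $n \to \infty$, we conclude $\langle\zeta,\w-\Psi^{\nu}_{\mu}\rangle_{\mu} = o(\|\w\|_{\mu})$, which finishes the proof. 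The main obstacle is precisely this last passage from test differentials to a general $\zeta \in T^*_{\mu}\mathcal{P}(M)$: the Taylor remainder in $\langle{\dd}\varphi_n,\cdot\rangle_{\mu}$ depends on $\|\varphi_n\|_{C^2}$, so one must fix a good approximant ${\dd}\varphi_n$ of $\zeta$ before sending $\|\w\|_{\mu} \to 0$, and one also needs the truncation in order to turn that remainder into a genuine $o(\|\w\|_{\mu})$ uniformly in $x$.
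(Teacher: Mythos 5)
Your proposal is correct, and it essentially reconstructs the arguments the paper itself only cites: part (i) directly from Definitions~\ref{wekafordef} and \ref{badfscsubdiff} together with $\|\Psi^{\nu}_{\mu}\|_{\mu}=d_W(\mu,\nu)$, and parts (ii), (iii) along the lines of \cite[Lemmas~3.2, 3.5]{Erb} -- the rescaling-along-the-unique-geodesic argument for (iii), and for (ii) the comparison $\langle {\dd}\varphi,\w-\Psi^{\nu}_{\mu}\rangle_{\mu}=O_{\varphi}(\|\w\|_{\mu}^2)$ via change of variables and second-order Taylor expansion, extended to $\zeta\in T^*_{\mu}\mathcal{P}(M)$ by approximation with the limits taken in the correct order. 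The only Finsler-specific points (the asymmetric Cauchy--Schwarz bound and the triangle inequality, both controlled by $\lambda_F(M)<\infty$ on the compact manifold, and the uniform Hessian bound making the truncation unnecessary) are handled or harmless, so there is no gap.
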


\begin{proof}
Definitions \ref{wekafordef} and \ref{badfscsubdiff} directly yield \eqref{lowerbounded} directly.
\eqref{charastrongsubd} and \eqref{weaksubdfiffde} follow from the same arguments as in
\cite[Lemmas~3.2, 3.5]{Erb}, respectively.
\end{proof}

\begin{proposition}\label{chainrule} %[Erbar, Prop 3.6]
Let $I \subset \mathbb{R}$ be an open interval and $(\mu_t)_{t \in I}\in \FAC^2(I;\mathfrak{D}({\phi}))$
with the tangent vector field $(\vv_t)_{t \in I}$.
If $\phi$ is $\lambda$-geodesically convex and $\int_I |\partial{\phi}|(\mu_t) |\mu'_+|(t) \,{\dd}t <\infty$,
then $t\longmapsto{\phi}(\mu_t)$ is absolutely continuous in $I$
and we have, for $\mathscr{L}^1$-a.e.\ $t \in I$,
\[
 \frac{\dd}{{\dd}t}{\phi}(\mu_t)
 = \langle \zeta,\vv_t \rangle_{\mu_t} \quad \text{for all}\,\ \zeta \in \partial{\phi}(\mu_t).
\]
\end{proposition}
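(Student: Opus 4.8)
The plan is to adapt the chain-rule argument of \cite{Erb} to the asymmetric Finsler--Wasserstein framework set up in Subsection~\ref{heatflowfins}.

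First I would establish the absolute continuity of $t\longmapsto\phi(\mu_t)$. Since $\phi$ is $\lambda$-geodesically convex, Example~\ref{pgeodesicconve2}\eqref{lambda-a} shows that Assumption~\ref{firstassup} holds for the pair $(2,\lambda)$, and hence by Corollary~\ref{upperconsidrgrage} the local slope $|\partial\phi|$ is a $\mathcal{T}_+$-lower semicontinuous \emph{strong} upper gradient for $-\phi$; this is the step where $\lambda$-geodesic convexity is genuinely needed, since without it $|\partial\phi|$ is only a weak upper gradient (Theorem~\ref{slopeuppgr}\eqref{slope-1}). Applying Remark~\ref{trickforwardupp} with $\mathfrak{g}=|\partial\phi|$ to the restriction of $\mu$ to any compact subinterval $[a,b]\subset I$ and to its reverse curve---which is again forward absolutely continuous because $(\mathcal{P}(M),d_W)$ is a $\lambda_F(M)$-metric space with $\lambda_F(M)<\infty$---together with the hypothesis $\int_I|\partial\phi|(\mu_t)\,|\mu'_+|(t)\,{\dd}t<\infty$ yields that $\phi\circ\mu$ is locally absolutely continuous on $I$, hence $\mathscr{L}^1$-a.e.\ differentiable. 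Call $t\in I$ \emph{regular} if $\phi\circ\mu$ is differentiable at $t$, $|\mu'_+|(t)$ exists (Theorem~\ref{vilocitythem}), $\|\vv_t\|_{\mu_t}=|\mu'_+|(t)$ with $\vv_t\in T_{\mu_t}\mathcal{P}(M)$ (Theorem~\ref{existfloweq}\eqref{continqeuq1}), and the weak convergence $h^{-1}\Psi^{\mu_{t+h}}_{\mu_t}\to\vv_t$ in $L^2(\mu_t;TM)$ of Lemma~\ref{tangerverproer}\eqref{weakconvegoptimavect} holds; $\mathscr{L}^1$-a.e.\ $t\in I$ is regular.

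Next I would prove the two one-sided inequalities at a regular $t$, for a fixed $\zeta\in\partial\phi(\mu_t)$. For the lower bound, insert $\nu=\mu_{t+h}$ with $h>0$ into the defining inequality of Definition~\ref{badfscsubdiff}, divide by $h$, and let $h\to0^+$: since $d_W(\mu_t,\mu_{t+h})/h\to|\mu'_+|(t)<\infty$ the remainder $o(d_W(\mu_t,\mu_{t+h}))/h$ vanishes, and continuity of the linear functional $\langle\zeta,\cdot\rangle_{\mu_t}$ on $L^2(\mu_t;TM)$ together with the weak convergence gives $\frac{\dd}{{\dd}t}\phi(\mu_t)\ge\langle\zeta,\vv_t\rangle_{\mu_t}$. (The same computation applied to a difference $\zeta_1-\zeta_2$ of subdifferential elements shows $\langle\zeta_1-\zeta_2,\vv_t\rangle_{\mu_t}=0$, so $\langle\zeta,\vv_t\rangle_{\mu_t}$ is in fact independent of the chosen $\zeta\in\partial\phi(\mu_t)$.) For the matching upper bound, insert instead $\nu=\mu_{t-h}$ with $h>0$ and divide by $-h<0$, obtaining $\frac{\dd}{{\dd}t}\phi(\mu_t)\le\liminf_{h\to0^+}\langle\zeta,(-h)^{-1}\Psi^{\mu_{t-h}}_{\mu_t}\rangle_{\mu_t}$.

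The main obstacle is that the last display requires the \emph{backward} counterpart of Lemma~\ref{tangerverproer}\eqref{weakconvegoptimavect}, namely $(-h)^{-1}\Psi^{\mu_{t-h}}_{\mu_t}\to\vv_t$ weakly in $L^2(\mu_t;TM)$ as $h\to0^+$, which is not literally the statement proved there. I would establish it by repeating that proof: the norms $h^{-1}d_W(\mu_t,\mu_{t-h})$ stay bounded (using $d_W(\mu_t,\mu_{t-h})\le\lambda_F(M)\,d_W(\mu_{t-h},\mu_t)=O(h)$ via finite reversibility), so a weak subsequential limit $\w$ exists and lies in $T_{\mu_t}\mathcal{P}(M)$, which is convex and strongly closed, hence weakly closed, and contains each $\Psi^{\mu_{t-h}}_{\mu_t}$ by Lemma~\ref{tangerverproer}\eqref{phisctur1}; testing against ${\dd}\varphi$ with $\varphi\in C^\infty(M)$, a first-order Taylor expansion of $\varphi\circ\exp$ (whose error is $O(d_W^2(\mu_t,\mu_{t-h}))=o(h)$) combined with the continuity equation \eqref{continequa} gives $\langle{\dd}\varphi,\w\rangle_{\mu_t}=\frac{\dd}{{\dd}t}\int_M\varphi\,{\dd}\mu_t=\langle{\dd}\varphi,\vv_t\rangle_{\mu_t}$ at a.e.\ $t$, because the two one-sided derivatives of $t\mapsto\int_M\varphi\,{\dd}\mu_t$ agree; hence $\w-\vv_t\in\mathbf{Ker(div)}(\mu_t)$ and Proposition~\ref{normstrcturelemma}\eqref{identityrelation} forces $\w=\vv_t$, whence the whole family converges to $\vv_t$. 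Combining the two one-sided bounds yields $\frac{\dd}{{\dd}t}\phi(\mu_t)=\langle\zeta,\vv_t\rangle_{\mu_t}$ at every regular $t$, i.e.\ for $\mathscr{L}^1$-a.e.\ $t\in I$ and for every $\zeta\in\partial\phi(\mu_t)$, which is the assertion. (An alternative route would evaluate the subdifferential at the moving base point $\mu_{t+h}$ and pass to the limit, but this forces one to control $\vv_{t+h}-\vv_t$, which is delicate since $t\mapsto\vv_t$ is only measurable; I would therefore keep the symmetric-in-$h$ argument above.)
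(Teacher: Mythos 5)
Your first half is fine and agrees with the paper: the absolute continuity of $t\mapsto\phi(\mu_t)$ via Example~\ref{pgeodesicconve2}\eqref{lambda-a}, Corollary~\ref{upperconsidrgrage} and Remark~\ref{trickforwardupp}, and the lower bound $\frac{\dd}{{\dd}t}\phi(\mu_t)\ge\langle\zeta,\vv_t\rangle_{\mu_t}$ obtained from $\nu=\mu_{t+h}$ together with Lemma~\ref{tangerverproer}\eqref{weakconvegoptimavect} (the paper uses the $\lambda$-convex form \eqref{stillconvexloer}, but your version with the $o(d_W)$ remainder is equally good). The genuine gap is in the upper bound. Your key claim, that $(-h)^{-1}\Psi^{\mu_{t-h}}_{\mu_t}\to\vv_t$ weakly in $L^2(\mu_t;TM)$, is not established by your sketch and there is no reason for it to hold when $F$ is irreversible. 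First, $T_{\mu_t}\mathcal{P}(M)$ is the closure of the set of \emph{Finsler} gradients $\nabla\varphi$; since the Legendre transform is nonlinear, this set is a cone but not convex and not a linear subspace, so ``convex and strongly closed, hence weakly closed'' is unjustified. Second, even granting weak closedness, the vectors you pass to the limit are $-(h^{-1}\Psi^{\mu_{t-h}}_{\mu_t})$, i.e.\ negatives of elements of $T_{\mu_t}\mathcal{P}(M)$; they lie in $-T_{\mu_t}\mathcal{P}(M)$, which is the tangent cone of the reverse structure $\overleftarrow{F}$ (the closure of $\{-\nabla(-\psi)\}$), not in $T_{\mu_t}\mathcal{P}(M)$. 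Hence Proposition~\ref{normstrcturelemma}\eqref{identityrelation}, which needs \emph{both} vectors in the forward tangent space, cannot be invoked. Your test against exact forms ${\dd}\varphi$ only pins the weak limit down modulo $\mathbf{Ker(div)}(\mu_t)$, and the missing ingredient is precisely what fails: heuristically $\Psi^{\mu_{t-h}}_{\mu_t}\approx -h\w^*$, where $\w^*$ is the representative of the coset $\vv_t+\mathbf{Ker(div)}(\mu_t)$ in $-T_{\mu_t}\mathcal{P}(M)$ (the $\overleftarrow{F}$-norm-minimal element), which in general differs from $\vv_t$. Your argument then only yields $\frac{\dd}{{\dd}t}\phi(\mu_t)\le\langle\zeta,\w^*\rangle_{\mu_t}$, which need not equal $\langle\zeta,\vv_t\rangle_{\mu_t}$ for a general $\zeta\in\partial\phi(\mu_t)$ (it would if $\zeta\in T^*_{\mu_t}\mathcal{P}(M)$, by Proposition~\ref{normstrcturelemma}\eqref{basicconnection}, but the statement allows arbitrary subdifferentials).

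The paper avoids this issue altogether: for the left derivative it passes to the reverse curve $\bar{\mu}_s:=\mu_{-s}$, which is forward absolutely continuous for the reverse Finsler structure $\overleftarrow{F}(v)=F(-v)$ and whose tangent vector field (with respect to $\overleftarrow{F}$) is $\bar{\vv}_s=-\vv_{-s}$, and then runs the same right-derivative argument in $(M,\overleftarrow{F},\m)$, so that only the already-available forward convergence of Lemma~\ref{tangerverproer}\eqref{weakconvegoptimavect} (applied to the reverse manifold) is used; no backward convergence with respect to the forward structure is ever claimed. You should repair your left-derivative step along these lines. Finally, your parenthetical assertion that $\langle\zeta_1-\zeta_2,\vv_t\rangle_{\mu_t}=0$ for two subdifferentials does not follow from the one-sided lower bound alone; it only comes out after the equality has been proved.
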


\begin{proof}
The absolute continuity of ${\phi}(\mu_t)$ follows from Example~\ref{pgeodesicconve2}\eqref{lambda-a},
Corollary~\ref{upperconsidrgrage} and Remark~\ref{trickforwardupp}.
It follows from the assumption and Lemma~\ref{tangerverproer}\eqref{weakconvegoptimavect}
that, for $\mathscr{L}^1$-a.e.\ $t\in I$, $|\partial{\phi}|(\mu_t)<\infty$,
$s \longmapsto {\phi}(\mu_s)$ is differentiable at $t$, and \eqref{eq:Psi/h} holds.
Then, for any $\zeta \in \partial{\phi}(\mu_t)$, we deduce from \eqref{stillconvexloer} that
\begin{align*}
 {\phi}(\mu_{t+h})-{\phi}(\mu_t)
 &\geq \langle \zeta,\Psi^{\mu_{t+h}}_{\mu_t} \rangle_{\mu_t}
 +\frac{\lambda}2 d^2_W(\mu_t,\mu_{t+h}) \\
 &= h \bigg\langle \zeta,\frac{1}{h}\Psi^{\mu_{t+h}}_{\mu_t} -\vv_t \bigg\rangle_{\mu_t}
 +h \langle \zeta,\vv_t \rangle_{\mu_t}
 +\frac{\lambda}2 d^2_W(\mu_t,\mu_{t+h}) \\
 &= h \langle \zeta,\vv_t \rangle_{\mu_t} +o(h)
\end{align*}
for $h>0$.
This implies
\[
 \frac{{\dd}}{{\dd}t_+}{\phi}(\mu_t) \geq \langle \zeta,\vv_t \rangle_{\mu_t}.
\]
Moreover, by considering the reverse curve $\bar{\mu}_t:=\mu_{-t}$
with the tangent vector field $\bar{\vv}_t :=-{\vv}_{-t}$
for the reverse Finsler structure $\overleftarrow{F}(v):=F(-v)$,
we also obtain
\[
 -\frac{{\dd}}{{\dd}t_-}{\phi}(\mu_t) \geq -\langle \zeta,\vv_t \rangle_{\mu_t}.
\]
This completes the proof.
\end{proof}

Now, we introduce gradient flows in the Wasserstein space in the same spirit as
\cite[Definition~3.7]{Erb} (slightly weaker than \cite[Definition~7.6]{OS}),
which is compatible with the notion of ($2$-)curves of maximal slope
as we will see in Proposition~\ref{equivalecegratrajec} below.

\begin{definition}[Gradient flows in the Wasserstein space]\label{wassgradfl}
A curve $(\mu_t)_{t>0} \in \FAC^2_{\loc}((0,\infty);\mathcal{P}(M))$
with the tangent vector field $(\vv_t)_{t>0}$ is called a \emph{trajectory of the gradient flow}
for $\phi$ if
\begin{equation}\label{wassgradf}
 -\LL_{\mu_t}(\vv_t) \in \partial{\phi}(\mu_t) \quad \text{for $\mathscr{L}^1$-a.e.}\ t \in (0,\infty).
\end{equation}
%where $\partial^\circ{\phi}(\mu):=\left\{\eta\in \partial{\phi}(\mu):\, \|-\eta\|^*_{\mu}\leq \|-\zeta\|^*_{\mu}\text{ for any }\zeta\in \partial{\phi}(\mu_t)\right\}$.
\end{definition}

\begin{remark}\label{weassquafl}
Owing to Theorem~\ref{existfloweq}\eqref{continqeuq1} and
Proposition~\ref{properofsubdiff}\eqref{lowerbounded}, we have
\[
 |\mu'_+|(t) |\partial{\phi}|(\mu_t)
 \leq \|{\vv}_t\|_{\mu_t} \|{\LL}_{\mu_t}(\vv_t)\|^*_{\mu_t}
 =\|{\vv}_t\|^2_{\mu_t} \in L^1_{\loc}\big( (0,\infty) \big).
\]
Thus, it follows from Proposition~\ref{chainrule}
(provided that $\phi$ is $\lambda$-geodesically convex)
that $t\longmapsto {\phi}(\mu_t)$ is locally absolutely continuous in $(0,\infty)$ and
\[
 -\frac{\dd}{{\dd}t}{\phi}(\mu_t)=\|{\vv}_t\|^2_{\mu_t}
 \quad \text{for $\mathscr{L}^1$-a.e.}\ t \in (0,\infty),
\]
which implies ${\phi}(\mu_t)<\infty$ for all $t>0$ and the energy identity
\[
 \phi(\mu_r) -\phi(\mu_s) =\int^s_r \|{\vv}_t\|^2_{\mu_t} \,{\dd}t
 \quad \text{ for all}\,\ 0<r<s.
\]
\end{remark}

\begin{proposition}\label{equivalecegratrajec}
Let $\phi$ be a $\lambda$-geodesically convex function such that,
for every $\mu \in \mathfrak{D}(|\partial{\phi}|)$,
there is $\zeta \in \partial{\phi}(\mu)$ with $\|{-}\zeta\|^*_\mu =|\partial{\phi}|(\mu)$.
Then, $(\mu_t)_{t>0} \in \FAC^2_{\loc}((0,\infty);\mathcal{P}(M))$
is a curve of maximal slope for ${\phi}$ with respect to $|\partial{\phi}|$
if and only if it is a trajectory of the gradient flow for ${\phi}$.
\end{proposition}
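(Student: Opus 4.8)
The plan is to prove the two implications separately, leaning on the Finsler Wasserstein calculus assembled above. First I would record the preliminary observation that, by Example~\ref{pgeodesicconve2}\eqref{lambda-a}, the $\lambda$-geodesic convexity of $\phi$ makes Assumption~\ref{firstassup} hold for $(2,\lambda)$; together with the $\mathcal{T}_+$-lower semicontinuity of $\phi$, Corollary~\ref{upperconsidrgrage} then gives that $|\partial\phi|$ is a strong (hence weak) upper gradient for $-\phi$ on $(\mathcal{P}(M),d_W)$, which is what allows one to speak of curves of maximal slope with respect to $|\partial\phi|$ and to invoke the energy identity of Proposition~\ref{curaboslpssss}.

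For the direction ``gradient flow $\Rightarrow$ curve of maximal slope'' I would start from a trajectory $(\mu_t)_{t>0}$ with tangent vector field $(\vv_t)_{t>0}$, so $-\LL_{\mu_t}(\vv_t)\in\partial\phi(\mu_t)$ for $\mathscr{L}^1$-a.e.\ $t$. Proposition~\ref{properofsubdiff}\eqref{lowerbounded} (applied to $\zeta=-\LL_{\mu_t}(\vv_t)$), combined with the isometry $\|\LL_{\mu_t}(\vv_t)\|^*_{\mu_t}=\|{\vv}_t\|_{\mu_t}$ from Proposition~\ref{normstrcturelemma}\eqref{dualpro1} and the identity $\|{\vv}_t\|_{\mu_t}=|\mu'_+|(t)$ of Theorem~\ref{existfloweq}\eqref{continqeuq1}, yields $|\partial\phi|(\mu_t)\le\|{\vv}_t\|_{\mu_t}=|\mu'_+|(t)$. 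Then Remark~\ref{weassquafl} gives local absolute continuity of $t\longmapsto\phi(\mu_t)$ and $-\frac{{\dd}}{{\dd}t}\phi(\mu_t)=\|{\vv}_t\|^2_{\mu_t}$, so
\begin{align*}
 \frac{{\dd}}{{\dd}t}\phi(\mu_t)
 &= -\|{\vv}_t\|^2_{\mu_t}
 = -\frac12|\mu'_+|^2(t)-\frac12\|{\vv}_t\|^2_{\mu_t} \\
 &\le -\frac12|\mu'_+|^2(t)-\frac12|\partial\phi|^2(\mu_t)
 \qquad\text{for $\mathscr{L}^1$-a.e.}\ t>0,
\end{align*}
and since $\frac{{\dd}}{{\dd}t}\phi(\mu_t)\le 0$, $\phi\circ\mu$ is non-increasing; hence $(\mu_t)_{t>0}$ is a curve of maximal slope for $\phi$ with respect to $|\partial\phi|$.

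For the converse I would take a curve of maximal slope $(\mu_t)_{t>0}\in\FAC^2_{\loc}((0,\infty);\mathcal{P}(M))$ with tangent vector field $(\vv_t)_{t>0}$. Since $|\partial\phi|$ is a strong upper gradient, Proposition~\ref{curaboslpssss} makes $\phi\circ\mu$ locally absolutely continuous and gives
\[
 \|{\vv}_t\|^2_{\mu_t}=|\mu'_+|^2(t)=|\partial\phi|^2(\mu_t)=-\frac{{\dd}}{{\dd}t}\phi(\mu_t)
 \quad\text{for $\mathscr{L}^1$-a.e.}\ t>0;
\]
in particular $\mu_t\in\mathfrak{D}(|\partial\phi|)\subset\mathfrak{D}(\phi)$ for all $t>0$, so the hypothesis supplies, for a.e.\ $t$, some $\zeta_t\in\partial\phi(\mu_t)$ with $\|{-}\zeta_t\|^*_{\mu_t}=|\partial\phi|(\mu_t)$. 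On each compact $[r,s]\subset(0,\infty)$ one has $\int_r^s|\partial\phi|(\mu_t)\,|\mu'_+|(t)\,{\dd}t=\int_r^s|\mu'_+|^2(t)\,{\dd}t<\infty$, so Proposition~\ref{chainrule} applies on $(r,s)$ and gives $\frac{{\dd}}{{\dd}t}\phi(\mu_t)=\langle\zeta_t,\vv_t\rangle_{\mu_t}$ a.e.; exhausting $(0,\infty)$, this holds for $\mathscr{L}^1$-a.e.\ $t>0$. Combining these identities, $\vv_t\in L^2(\mu_t;TM)$ satisfies
\[
 \|{\vv}_t\|^2_{\mu_t}=\big(\|{-}\zeta_t\|^*_{\mu_t}\big)^2=\langle{-}\zeta_t,\vv_t\rangle_{\mu_t}
 \quad\text{for $\mathscr{L}^1$-a.e.}\ t>0,
\]
so the uniqueness clause of Proposition~\ref{normstrcturelemma}\eqref{dualpro1} identifies $\vv_t=\LL^{-1}_{\mu_t}({-}\zeta_t)$, that is $-\LL_{\mu_t}(\vv_t)=\zeta_t\in\partial\phi(\mu_t)$; hence $(\mu_t)_{t>0}$ is a trajectory of the gradient flow for $\phi$.

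I expect the point requiring most care to be this last step of the converse direction: extracting from the energy identity and Proposition~\ref{chainrule} the two equalities $\|{\vv}_t\|^2_{\mu_t}=(\|{-}\zeta_t\|^*_{\mu_t})^2$ and $\langle{-}\zeta_t,\vv_t\rangle_{\mu_t}=\|{\vv}_t\|^2_{\mu_t}$, and only then invoking the uniqueness in Proposition~\ref{normstrcturelemma}\eqref{dualpro1}. This, together with the consistent bookkeeping of the \emph{asymmetric} dual norm $F^*$ (so that $\|{-}\zeta\|^*_\mu$, rather than $\|\zeta\|^*_\mu$, appears throughout --- matching both Proposition~\ref{properofsubdiff}\eqref{lowerbounded} and the standing hypothesis), is the delicate part; the remaining checks --- that the domain and integrability requirements of Proposition~\ref{chainrule} and Remark~\ref{weassquafl} are met ($\lambda$-geodesic convexity, $\mu_t\in\mathfrak{D}(\phi)$ for all $t>0$, and $|\partial\phi|(\mu_t)\,|\mu'_+|(t)\in L^1_{\loc}$) --- follow directly from the strong-upper-gradient property and the energy identity established at the outset.
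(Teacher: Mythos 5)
Your proposal is correct and follows essentially the same route as the paper: the forward direction combines Remark~\ref{weassquafl} with Proposition~\ref{properofsubdiff}\eqref{lowerbounded} and Theorem~\ref{existfloweq}, and the converse uses the energy identity of Proposition~\ref{curaboslpssss}, the chain rule of Proposition~\ref{chainrule} with the hypothesized $\zeta_t$, and the equality case in Proposition~\ref{normstrcturelemma}\eqref{dualpro1} to identify $-\LL_{\mu_t}(\vv_t)=\zeta_t$. The only (harmless) deviations are presentational: you derive the maximal-slope inequality directly from $|\partial\phi|(\mu_t)\le\|\vv_t\|_{\mu_t}$ rather than via the weak-upper-gradient sandwich, and you make explicit the uniqueness step the paper leaves implicit; note also that $\mu_t\in\mathfrak{D}(|\partial\phi|)$ is only guaranteed for $\mathscr{L}^1$-a.e.\ $t$, which is all you actually use.
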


\begin{proof}
First, suppose that $(\mu_t)_{t>0}$ is a trajectory of the gradient flow for ${\phi}$.
Since $|\partial{\phi}|$ is a weak upper gradient for $-{\phi}$
by Theorem~\ref{slopeuppgr}\eqref{slope-1},
it follows from Definition~\ref{weakforwaupp22} together with Remark~\ref{weassquafl} that
\[
 -\|{\vv}_t\|^2_{\mu_t} =\frac{\dd}{{\dd}t}{\phi}(\mu_t)
 \geq -|\partial {\phi}|(\mu_t) |\mu'_+|(t) \geq -\|{\vv}_t\|^2_{\mu_t}
\]
for $\mathscr{L}^1$-a.e.\ $t \in (0,\infty)$.
This implies
\begin{equation}\label{basisiqu2}
 \frac{{\dd}}{{\dd}t}{\phi}(\mu_t)
 =-\frac12 |\mu'_+|^2(t) -\frac12 |\partial {\phi}|^2(\mu_t), \qquad
 |\partial{\phi}|(\mu_t) =\|{\vv}_t\|_{\mu_t} =|\mu'_+|(t).
\end{equation}
Thus, $(\mu_t)_{t>0}$ is a curve of maximal slope for ${\phi}$ with respect to $|\partial{\phi}|$.

Conversely, if $(\mu_t)_{t>0}$ is a curve of maximal slope with the tangent vector field $(\vv_t)_{t>0}$,
then \eqref{basisiqu2} holds (see \eqref{maxlinequforthreego}) and we have
\[
 \int_r^s |\partial{\phi}|(\mu_t) |\mu'_+|(t) \,{\dd}t
 = \int_r^s |\mu'_+|^2(t) \,{\dd}t <\infty
 \quad \text{for all}\,\ 0<r \leq s<\infty.
\]
By hypothesis, there exists $\zeta_t \in \partial\phi(\mu_t)$
with $\|{-}\zeta_t\|^*_{\mu_t}=|\partial\phi|(\mu_t)$.
Then Proposition~\ref{chainrule} yields
\[
 |\partial {\phi}|(\mu_t) |\mu'_+|(t) =-\frac{\dd}{{\dd}t}{\phi}(\mu_t)
 =-\langle \zeta_t,\vv_t \rangle_{\mu_t} \leq \|{-}\zeta_t\|^*_{\mu_t} \|{\vv}_t\|_{\mu_t}
 =|\partial{\phi}|(\mu_t) |\mu'_+|(t),
\]
which implies $-\LL_{\mu_t}(\vv_t) =\zeta_t \in \partial{\phi}(\mu_t)$ as desired.
\end{proof}

\begin{remark}\label{generacasep2}
Although we need only the $p=2$ case in the analysis of heat flow,
one can generalize the concepts and results in this subsection to the $L^p$-Wasserstein distance with $p>1$
by suitable modifications.
For instance, it follows from \cite[Theorem 3.4]{Ke2} that Proposition \ref{basicminpo} remains valid
by replacing $\nabla\varphi$ with $F^{q-2}(\nabla\varphi)\,\nabla\varphi$,
where $q$ is the conjugate exponent of $p$.
The $L^p$-Wasserstein distance can be used to analyze the gradient flow structure of the $q$-heat flow
as studied in the symmetric case by Kell \cite{Ke};
one could apply our method to generalize it to the asymmetric setting.
\end{remark}

\subsubsection{Gradient flow for the relative entropy}%%%%%%%%%%%%
%%%%%%%%%%%%%%%%%%%%%

The \emph{relative entropy} $H_{\m}: \mathcal{P}(M) \lra (-\infty,\infty]$
with respect to $\m$ is defined by
\[
 H_\mathfrak{m}(\mu) :=\int_M \rho \log\rho \,{\dd}\mathfrak{m}
\]
if $\mu=\rho\m$ and $[\rho \log\rho]_+$ is integrable;
otherwise we set $H_{\m}(\mu):=+\infty$.
In particular, $\mathfrak{D}(H_{\m}) \subset \mathcal{P}_{\mathrm{ac}}(M;\m)$.
We recall some basic properties (see \cite{O,OS}).

\begin{proposition}\label{basicpropertiesfoH}
\begin{enumerate}[{\rm (i)}]
\item\label{lowerboundH}
$H_{\m}(\mu) \ge -\log \m(M)$ for any $\mu\in \mathcal{P}(M);$

\item\label{lersim}
$H_{\m}$ is $\mathcal{T}_+$-lower semicontinuous$;$

\item\label{CDKNcon}
There exists some $K \in \mathbb{R}$ such that $H_{\m}$ is $K$-geodesically convex.
\end{enumerate}
\end{proposition}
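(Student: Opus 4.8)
The plan is to handle the three items separately: (i) and (ii) are measure-theoretic facts insensitive to the Finsler structure, whereas (iii) is the substantive point and will be deduced from the curvature--dimension theory for Finsler manifolds (Ohta; see \cite{OS}, and the forward metric space formulation in \cite{KZ}) together with the compactness of $M$.

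For (i), I would first note that the case $H_{\m}(\mu)=+\infty$ is trivial since $\m(M)<\infty$. Otherwise $\mu=\rho\m$ with $[\rho\log\rho]_+\in L^1(\m)$; since $t\log t\ge-1/\ee$ for $t\ge0$, the negative part $[\rho\log\rho]_-$ is bounded, hence integrable, so $H_\m(\mu)$ is a well-defined real number. Applying Jensen's inequality to the probability measure $\bar\m:=\m/\m(M)$ and the convex function $t\mapsto t\log t$ gives $\int_M\rho\log\rho\,\dd\bar\m\ge\big(\int_M\rho\,\dd\bar\m\big)\log\big(\int_M\rho\,\dd\bar\m\big)=\m(M)^{-1}\log\m(M)^{-1}$, and multiplying through by $\m(M)$ yields $H_\m(\mu)\ge-\log\m(M)$.

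For (ii), the key observation is that $\mathcal{T}_+$ on $\mathcal{P}(M)$ is the weak topology of probability measures: by Theorem~\ref{topologychara}(ii) it coincides with $\widehat{\mathcal{T}}$, and since $(\mathcal{P}(M),d_W)$ is a compact $\lambda_F(M)$-metric space, $\widehat{d_W}$ is comparable (via $\widehat{d}\le\widetilde{d}\le2\widehat{d}$ and finite reversibility) to the $L^2$-Wasserstein distance of the genuine metric $\widehat{d_F}$, which metrizes weak convergence on the compact manifold $M$. I would then invoke the Donsker--Varadhan variational formula $H_\m(\mu)=\sup_{f\in C(M)}\{\int_M f\,\dd\mu-\log\int_M\ee^{f}\,\dd\m\}$, exhibiting $H_\m$ as a supremum of weakly continuous affine functionals of $\mu$, hence weakly---equivalently $\mathcal{T}_+$---lower semicontinuous.

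The heart of the matter is (iii). Since $M$ is compact and $F,\m$ are smooth, the weighted Ricci curvature $\mathrm{Ric}_\infty$ of $(M,F,\m)$ is continuous on the compact unit tangent sphere bundle, hence $\mathrm{Ric}_\infty\ge K$ for some $K\in\mathbb{R}$. I would then apply the characterization of the curvature--dimension condition for Finsler manifolds: $\mathrm{Ric}_\infty\ge K$ implies that $(M,F,\m)$ is a $\mathsf{CD}(K,\infty)$ space, i.e.\ $H_\m$ is $K$-geodesically convex along minimal geodesics of $(\mathcal{P}(M),d_W)$ (see \cite{OS}, and \cite{KZ} for the forward setting). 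In detail, for $\mu_0,\mu_1\in\mathfrak{D}(H_\m)\subset\mathcal{P}_{\mathrm{ac}}(M;\m)$ the displacement interpolation $\mu_t=[\exp(t\Psi^{\mu_1}_{\mu_0})]_\sharp\mu_0$ from Proposition~\ref{basicminpo} remains absolutely continuous, and a Jacobian (Monge--Amp\`ere type) computation along the optimal transport, fed with the lower bound $\mathrm{Ric}_\infty\ge K$, yields $H_\m(\mu_t)\le(1-t)H_\m(\mu_0)+tH_\m(\mu_1)-\frac{K}{2}t(1-t)d_W^2(\mu_0,\mu_1)$; if $\mu_0$ or $\mu_1$ lies outside $\mathfrak{D}(H_\m)$ the inequality is trivial and any minimal geodesic (which exists by compactness of $(\mathcal{P}(M),d_W)$) works. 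The main obstacle is precisely this implication---the Finsler interpolation and Jacobi-field estimates behind it---but it is already available in \cite{OS,KZ}, so it suffices here to record the lower Ricci bound furnished by compactness and invoke that result.
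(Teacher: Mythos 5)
Your proposal is correct and, in substance, follows the same route as the paper, which offers no argument of its own for this proposition but simply cites \cite{O,OS}: items (i) and (ii) are the standard Jensen and variational/weak-lower-semicontinuity arguments (using that $\mathcal{T}_+$ on $(\mathcal{P}(M),d_W)$ is the weak topology, valid here since $\lambda_F(M)<\infty$ by compactness), and item (iii) is exactly the compactness bound $\mathrm{Ric}_\infty\ge K$ combined with Ohta's equivalence between this bound and $K$-displacement convexity of $H_{\m}$, which is the content of the cited references.
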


%\begin{proof}
%Since $M$ is compact, the convergence of $d_W$ is exactly the weak convergence of $\mathcal{P}(M)$. Let $g$ be a Riemannian metric on $M$ and let $\vol_g$ be the Riemannian measure. There exists a positive function $f\in C^\infty_0(M)$ such that $\m=f\vol_g$. Thus, $H_{\m}(\mu)=H_{\vol_g}(\mu)+\int_M \log f {\dd}\mu$. Since $\mu\longmapsto H_{\vol_g}$ satisfies the properties %(\ref{lowerboundH})(\ref{lersim}) (cf.  \cite[Lemma 4.1, Proposition 5.4]{Erb})), so does $H_{\m}$. On the other hand, the compactness of $M$ implies that $(M,F,\mathfrak{m})$ satisfies $\CD(K,\infty)$ condition for some $K\in \mathbb{R}$ (cf. \cite{KZ,OS}), in which case $H_{\m}$ is $K$-geodesically convex.
 %\end{proof}

The Sobolev space $W^{1,1}(M)$ is defined as the closure of $C^\infty(M)$
with respect to the asymmetric norm $\|u\|_{W^{1,1}} :=\int_M (|u| +F^*({\dd}u)) \,{\dd}{\m}$.
The proof of the following key proposition is postponed to Appendix~\ref{addwasser}.

\begin{proposition}\label{strongHWthe} %[Erbar, Prop 4.3]
For $\mu=\rho\m \in \mathfrak{D}(H_{\m})$, the following are equivalent$:$
\begin{enumerate}[{\rm (I)}]
\item\label{fintieweaksubd}
$|\partial H_{\m}|(\mu)<\infty;$

\item\label{sobolespace}
$\rho\in W^{1,1}(M)$ with ${\dd}\rho =\rho\zeta$ for some $\zeta\in L^2(\mu;T^*M)$.
\end{enumerate}
In this case, $\zeta \in \partial H_{\m}(\mu) \cap T^*_\mu \mathcal{P}(M)$
and is a unique strong subdifferential with $|\partial H_{\m}|(\mu) =\|{-}\zeta\|^*_{\mu}$.
\end{proposition}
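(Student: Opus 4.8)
The plan is to establish the equivalence \eqref{fintieweaksubd} $\Leftrightarrow$ \eqref{sobolespace} by adapting the Riemannian argument of \cite{Erb} (and the earlier Finsler treatment in \cite{OS}) to the present asymmetric framework, the only genuinely new point being the bookkeeping of $F$ versus $\overleftarrow{F}$. First I would prove \eqref{sobolespace} $\Rightarrow$ \eqref{fintieweaksubd} together with the asserted properties of $\zeta$. Assuming $\rho \in W^{1,1}(M)$ with ${\dd}\rho = \rho\zeta$ for some $\zeta \in L^2(\mu;T^*M)$, I would compute, for any $\nu = \sigma\m \in \mathcal{P}(M)$ and the optimal transport vector field $\Psi^\nu_\mu = \nabla\varphi$ from Proposition~\ref{basicminpo}\eqref{nup3}, the first-order expansion of $H_\m$ along the geodesic $\mu_t = [\exp(t\Psi^\nu_\mu)]_\sharp\mu$. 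The standard change-of-variables/Jacobian computation (using that $\varphi$ is twice differentiable $\m$-a.e.\ by \cite{Ouni}) gives
\[
 H_\m(\nu) - H_\m(\mu) \ge \int_M {\dd}\varphi(\Psi^\nu_\mu) \,{\dd}\mu + K\text{-term},
\]
and since $\int_M {\dd}\varphi(\Psi^\nu_\mu)\,{\dd}\mu = -\int_M \varphi \,\di(\Psi^\nu_\mu \cdot \mu) = \int_M \langle {\dd}\rho/\rho, \Psi^\nu_\mu\rangle\,{\dd}\mu = \langle\zeta,\Psi^\nu_\mu\rangle_\mu$ after an integration by parts justified by $\rho \in W^{1,1}$, this shows $\zeta \in \partial H_\m(\mu)$; I would then invoke Proposition~\ref{properofsubdiff}\eqref{lowerbounded} to get $|\partial H_\m|(\mu) \le \|{-}\zeta\|^*_\mu < \infty$. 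That $\zeta \in T^*_\mu\mathcal{P}(M)$ follows from Proposition~\ref{normstrcturelemma}\eqref{basicconnection}: for any $\w \in \mathbf{Ker(div)}(\mu)$ one has $\langle\zeta,\w\rangle_\mu = \int_M \langle{\dd}\rho,\w\rangle/\rho \cdot \rho\,{\dd}\m = \int_M \langle{\dd}\rho,\w\rangle\,{\dd}\m = 0$ by the definition of $\mathbf{Ker(div)}$ and approximation of $\rho$ by smooth functions. Being in $T^*_\mu\mathcal{P}(M)$, $\zeta$ is a strong subdifferential by Proposition~\ref{properofsubdiff}\eqref{charastrongsubd}.

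For the converse \eqref{fintieweaksubd} $\Rightarrow$ \eqref{sobolespace}, I would assume $|\partial H_\m|(\mu) < \infty$ and use the global slope formula. By Proposition~\ref{basicpropertiesfoH}\eqref{CDKNcon}, $H_\m$ is $K$-geodesically convex, so Theorem~\ref{thremforwposrt} (with $p=2$, $\lambda = K$) gives
\[
 |\partial H_\m|(\mu) = \sup_{\nu \ne \mu}\Bigl[\frac{H_\m(\mu) - H_\m(\nu)}{d_W(\mu,\nu)} + \frac{K}{2}d_W(\mu,\nu)\Bigr]_+.
\]
Testing against $\nu = [\exp(\varepsilon\w)]_\sharp\mu$ for $\w = \nabla\psi$, $\psi \in C^\infty(M)$, and letting $\varepsilon \to 0^+$, the finiteness of the slope yields a bound
\[
 \Bigl|\int_M \langle {\dd}\rho, \nabla\psi\rangle\,{\dd}\m\Bigr| = \Bigl|\frac{\dd}{{\dd}\varepsilon}\Big|_{\varepsilon=0^+} H_\m\bigl([\exp(\varepsilon\nabla\psi)]_\sharp\mu\bigr)\Bigr| \le |\partial H_\m|(\mu)\,\|\nabla\psi\|_\mu
\]
(the left-hand identity again via the Jacobian expansion of the entropy). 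This says that the linear functional $\nabla\psi \mapsto -\int_M\langle{\dd}\rho,\nabla\psi\rangle\,{\dd}\m$ is bounded on $\{\nabla\psi\}$ for the $\|\cdot\|_\mu$-norm, hence extends to a bounded linear functional on $T_\mu\mathcal{P}(M) \subset L^2(\mu;TM)$; by the Riesz-type representation Proposition~\ref{normstrcturelemma}\eqref{Riesz} there is $\zeta \in L^2(\mu;T^*M)$ with $-\int_M\langle{\dd}\rho,\nabla\psi\rangle\,{\dd}\m = \langle\zeta,\nabla\psi\rangle_\mu = \int_M \zeta(\nabla\psi)\rho\,{\dd}\m$ for all $\psi$. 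This is precisely the weak formulation of ${\dd}\rho = -\rho\zeta$ (up to the sign, which I will track carefully), and combined with $\zeta \in L^2(\mu;T^*M)$, i.e.\ $F^*({\dd}\rho/\rho) \in L^2(\mu)$, a routine argument (Hölder, using $\rho\log\rho \in L^1$ hence $\rho \in L^1$) gives $\rho \in W^{1,1}(M)$. Finally, uniqueness of the strong subdifferential and the identity $|\partial H_\m|(\mu) = \|{-}\zeta\|^*_\mu$ follow by combining Proposition~\ref{properofsubdiff}\eqref{lowerbounded} with the reverse inequality already obtained, and observing that any two strong subdifferentials lying in $T^*_\mu\mathcal{P}(M)$ must coincide by Proposition~\ref{normstrcturelemma}\eqref{basicconnection}/\eqref{identityrelation}.

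The main obstacle, and the step I would spend the most care on, is the rigorous justification of the first-order expansion of $H_\m$ along Wasserstein geodesics in the \emph{irreversible} Finsler setting — that is, controlling the entropy of $[\exp(t\nabla\varphi)]_\sharp\mu$ via the Jacobian of the exponential map when $\varphi$ is merely Lipschitz and twice differentiable $\m$-a.e. In the reversible case this is handled in \cite{OS,Erb}, but here one must be attentive to the distinction between $F$ and its reverse $\overleftarrow{F}$: the subdifferential inequality in Definition~\ref{badfscsubdiff} is tested with $\Psi^\nu_\mu$ (forward transport), so the error terms and the sign of $\zeta$ are governed by $F^*$ rather than $\overleftarrow{F}{}^*$, and one should verify that the Legendre transform $\LL_\mu$ used to pass between $\zeta$ and the tangent vector field is the one induced by $F$ (not $\overleftarrow{F}$), consistently with Theorem~\ref{existfloweq} and the definition \eqref{wassgradf} of the gradient flow. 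Since the excerpt defers this proposition to Appendix~\ref{addwasser}, I would relegate these measure-theoretic and Jacobian estimates there, citing \cite{Ouni} for the twice-differentiability and regularity of optimal maps and \cite[Lemma~7.2, Theorem~7.3]{OS}, \cite[Proposition~2.5, Lemmas~3.2, 3.5]{Erb} for the structural facts that carry over verbatim once the asymmetry is accounted for.
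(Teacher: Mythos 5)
Your (II)$\,\Rightarrow\,$(I) half is essentially the paper's argument: the paper also differentiates $H_{\m}$ along the geodesic $t\mapsto\exp(t\Psi^\nu_\mu)_\sharp\mu$ (quoting the Jacobian computation of \cite{OS}), obtaining $\lim_{t\to0^+}t^{-1}\{H_{\m}(\mu_t)-H_{\m}(\mu)\}=\int_M\langle{\dd}\rho,\Psi^\nu_\mu\rangle\,{\dd}\m=\langle\zeta,\Psi^\nu_\mu\rangle_\mu$, combines this with the $K$-geodesic convexity, and invokes Proposition~\ref{properofsubdiff}. (Your displayed expansion is garbled: $\int_M {\dd}\varphi(\Psi^\nu_\mu)\,{\dd}\mu=d_W^2(\mu,\nu)$, and the chain of equalities following it is not correct as written, but the intended statement is the one above.) Note also that the paper gets the equivalence (I)$\,\Leftrightarrow\,$(II) \emph{and} the membership $\zeta\in T^*_\mu\mathcal{P}(M)$ by reducing to a bi-Lipschitz equivalent Riemannian metric and citing \cite{Erb}; your direct verification of $\zeta\in T^*_\mu\mathcal{P}(M)$ via Proposition~\ref{normstrcturelemma}\eqref{basicconnection} is sketchier than it looks, since $\mathbf{Ker(div)}(\mu)$ is defined by integration against $\mu$, so one must in effect test with truncations of $\log\rho$ rather than with smooth approximations of $\rho$.

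The genuine gap is in your (I)$\,\Rightarrow\,$(II) direction. You differentiate $H_{\m}$ only along gradient directions $\nabla\psi$, $\psi\in C^\infty(M)$, and then want to ``extend to a bounded linear functional on $T_\mu\mathcal{P}(M)$'' and apply Proposition~\ref{normstrcturelemma}\eqref{Riesz}. First, $\{\nabla\psi\,|\,\psi\in C^\infty(M)\}$ is \emph{not} a linear subspace of $L^2(\mu;TM)$ (the Finsler gradient is nonlinear in $\psi$) and $T_\mu\mathcal{P}(M)$ is merely its closure, while \eqref{Riesz} concerns functionals on all of $L^2(\mu;TM)$, so the extension step is not available as stated. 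More seriously, even granting some extension, the identity you end up with is only the equation ${\dd}\rho=\rho\zeta$ \emph{tested against gradient fields}; this is strictly weaker than (II). The conclusion ``$\rho\in W^{1,1}(M)$ with ${\dd}\rho=\rho\zeta$'' means that the distributional derivative of $\rho$, i.e.\ the functional $\w\mapsto-\int_M\rho\,\di_{\m}(\w)\,{\dd}\m$ on \emph{arbitrary} smooth vector fields, is represented by the integrable $1$-form $\rho\zeta$; gradient directions only probe the second-order pairing $\psi\mapsto\int_M\rho\,\di_{\m}(\nabla\psi)\,{\dd}\m$ and cannot certify this (for instance, on a flat torus with $\rho$ constant and $\zeta$ a nonzero harmonic $1$-form one has $\int_M\rho\,\zeta(\nabla\psi)\,{\dd}\m=0$ for every $\psi$, yet $\rho\zeta\neq{\dd}\rho$; recovering ${\dd}\rho$ from the gradient-tested identity would require an elliptic argument that breaks down when $\rho$ degenerates). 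This is exactly why the paper (following \cite{Erb}) perturbs along arbitrary smooth vector fields $\w$ via $\mu_t=(T_t)_\sharp\mu$: the slope then bounds $\mathcal{Q}(\w)=\int_M\rho\,\di_{\m}(\w)\,{\dd}\m$ by $|\partial H_{\m}|(\mu)\,\|{\w}\|_\mu$ for every smooth $\w$, Riesz on the whole of $L^2(\mu;TM)$ produces $\bar{\zeta}$, and the resulting identity $\int_M\rho\,\di_{\m}(\w)\,{\dd}\m=-\int_M\rho\,\bar{\zeta}(\w)\,{\dd}\m$ for all smooth $\w$ is precisely the weak formulation giving $\rho\in W^{1,1}(M)$, ${\dd}\rho=\rho\bar{\zeta}$, and $\|{-}\bar{\zeta}\|^*_\mu\le|\partial H_{\m}|(\mu)$. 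Replacing your gradient-only test class by arbitrary smooth vector fields (and using the finite reversibility of the compact manifold to pass from the one-sided bound to boundedness) repairs the argument and brings it in line with the paper's proof.
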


The following existence result follows from Proposition~\ref{conextI}
with the help of Propositions~\ref{equivalecegratrajec}, \ref{strongHWthe}.

\begin{theorem}\label{existenceoftranflow}
For any $\mu\in \mathfrak{D}(H_{\m})$,
there exists a trajectory $(\mu_t)_{t\geq 0}$ of the gradient flow for $H_{\m}$ with $\mu_0=\mu$.
\end{theorem}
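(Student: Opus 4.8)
The plan is to obtain Theorem~\ref{existenceoftranflow} as a direct application of the general existence machinery from Section~\ref{plambdaconvex}, specialized to the Wasserstein space $(\mathcal{P}(M),d_W)$ over a compact Finsler manifold and the relative entropy $\phi = H_{\m}$. First I would verify that $(\mathcal{P}(M),d_W)$ is a forward complete forward metric space to which the theory applies: by the remarks preceding Proposition~\ref{basicminpo}, it is a compact $\lambda_F(M)$-metric space (in particular a $\theta$-metric space, hence a forward metric space in the sense of Notation~\ref{notationX}), and compactness gives forward completeness. Next I would check the hypotheses of Proposition~\ref{conextI} with $\sigma = \mathcal{T}_+$. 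Assumption~\ref{continudef}\eqref{ass-a} (lower semicontinuity of $H_{\m}$) is Proposition~\ref{basicpropertiesfoH}\eqref{lersim}, and Assumption~\ref{continudef}\eqref{ass-c} is automatic because $(\mathcal{P}(M),d_W)$ is compact, so every sublevel set is trivially relatively compact. For Assumption~\ref{firstassup}: by Proposition~\ref{basicpropertiesfoH}\eqref{CDKNcon}, $H_{\m}$ is $K$-geodesically convex for some $K \in \mathbb{R}$, i.e.\ $(2,K)$-geodesically convex in the sense of Definition~\ref{df:pl-conv}, and Example~\ref{pgeodesicconve2}\eqref{lambda-a} then shows Assumption~\ref{firstassup} holds for $(p,\lambda) = (2,K)$.

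Having confirmed these hypotheses, Proposition~\ref{conextI} (applied with $p=2$) yields, for any $\mu \in \mathfrak{D}(H_{\m})$, a curve of maximal slope $\xi = (\mu_t)_{t \ge 0}$ for $H_{\m}$ with respect to $|\partial H_{\m}|$ with $\mu_0 = \mu$, satisfying the energy identity. The remaining task is to translate ``curve of maximal slope'' into ``trajectory of the gradient flow'' in the sense of Definition~\ref{wassgradfl}. This is exactly the content of Proposition~\ref{equivalecegratrajec}, whose hypothesis is that $\phi$ be $\lambda$-geodesically convex (true here with $\lambda = K$) and that for every $\mu \in \mathfrak{D}(|\partial H_{\m}|)$ there exist $\zeta \in \partial H_{\m}(\mu)$ with $\|{-}\zeta\|^*_\mu = |\partial H_{\m}|(\mu)$. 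That last property is precisely furnished by Proposition~\ref{strongHWthe}: whenever $|\partial H_{\m}|(\mu) < \infty$, the density $\rho$ lies in $W^{1,1}(M)$ with ${\dd}\rho = \rho\zeta$ for a $1$-form $\zeta \in L^2(\mu;T^*M)$, and this $\zeta$ is a strong subdifferential in $\partial H_{\m}(\mu) \cap T^*_\mu\mathcal{P}(M)$ with $|\partial H_{\m}|(\mu) = \|{-}\zeta\|^*_\mu$. Therefore Proposition~\ref{equivalecegratrajec} applies and $(\mu_t)_{t \ge 0}$ is a trajectory of the gradient flow for $H_{\m}$ with $\mu_0 = \mu$, which is the assertion.

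In short, the argument is a composition: \emph{compactness + lower semicontinuity + $K$-geodesic convexity} $\Rightarrow$ Assumptions~\ref{continudef}\eqref{ass-a},\eqref{ass-c} and \ref{firstassup} $\Rightarrow$ Proposition~\ref{conextI} gives a curve of maximal slope $\Rightarrow$ Propositions~\ref{strongHWthe} and \ref{equivalecegratrajec} upgrade it to a gradient-flow trajectory. I do not anticipate a genuine obstacle here, since all the hard analytic content has been packaged into the cited statements; the only point requiring a little care is making sure the hypothesis of Proposition~\ref{equivalecegratrajec} is met at \emph{every} $\mu \in \mathfrak{D}(|\partial H_{\m}|)$ (not merely along the constructed curve), which is why the full strength of Proposition~\ref{strongHWthe} — the existence and norm-minimality of the strong subdifferential for arbitrary $\mu$ with finite slope — is invoked. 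One should also note in passing that Assumption~\ref{continudef}\eqref{ass-b} holds: it follows from Remark~\ref{assmupabcimpb}\eqref{m*-b} together with \eqref{ass-a}, \eqref{ass-c}, or directly from the lower bound $H_{\m} \ge -\log\m(M)$ in Proposition~\ref{basicpropertiesfoH}\eqref{lowerboundH}, so that $\tau_*(H_{\m}) = \infty$ (consistent with $\lambda = K$ via Lemma~\ref{cocervityconvexfun}\eqref{m*-1} when $K \ge 0$, and with the coercivity directly otherwise).
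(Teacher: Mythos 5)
Your proposal is correct and follows essentially the same route as the paper, which obtains Theorem~\ref{existenceoftranflow} precisely from Proposition~\ref{conextI} (with Assumptions~\ref{continudef}\eqref{ass-a}, \eqref{ass-c} and \ref{firstassup} supplied by compactness of $(\mathcal{P}(M),d_W)$, Proposition~\ref{basicpropertiesfoH} and Example~\ref{pgeodesicconve2}\eqref{lambda-a}) combined with Propositions~\ref{strongHWthe} and \ref{equivalecegratrajec}. The verification details you spell out are exactly the ones the paper leaves implicit.
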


For $u\in W^{1,1}(M)$, the \emph{distributional Laplacian} $\Delta_{\m}u$ is defined as
\[
 \int_M \varphi \Delta_{\m}u \,{\dd}{\m}
 :=-\int_M \langle {\dd}\varphi,\nabla u \rangle \,{\dd}{\m}
 \quad \text{for all}\,\ \varphi \in C^\infty(M).
\]
Note that the Laplacian $\Delta_{\m}$ is a nonlinear operator
(unless $F$ comes from a Riemannian metric).

\begin{definition}[Weak solutions to the heat equation]\label{df:heat}
We say that $u \in L^1_{\loc}((0,\infty);W^{1,1}(M))$ is a \emph{weak solution
to the heat equation} $\partial_t u_t =\Delta_{\m}u_t$ if
\[
 \int_0^\infty \int_M
 \big\{ u_t \cdot \partial_t \varphi -\langle {\dd}\varphi,\nabla u_t \rangle \big\} \,{\dd}{\m}\,{\dd}t =0
 \quad \text{for all}\,\ \varphi\in C^\infty_0\big( (0,\infty) \times M \big).
\]
\end{definition}

\begin{remark}\label{rm:reg}
The $W^{1,1}$-regularity along with \eqref{deltarohoprperties} below is the minimum regularity
to which our characterization result applies.
In the Riemannian case, it is well known that such a weak solution
has a smooth version (e.g., by using the heat kernel).
In the Finsler setting, however, there is no heat kernel due to the nonlinearity and
it seems unclear (to the authors) if a solution to the heat equation
given in Theorem~\ref{equvthegra}\eqref{heatflowcondi}
automatically enjoys a higher regularity (e.g., $C^{1,\alpha}$ as in \cite{GS,OS}).
\end{remark}

\begin{theorem}\label{equvthegra}
Let $(\mu_t)_{t\geq 0}$ be a continuous curve in $(\mathcal{P}(M),d_W)$.
Then the following are equivalent$:$
\begin{enumerate}[{\rm (I)}]
\item\label{gradflowcondi}
$(\mu_t)_{t>0}$ is a trajectory of the gradient flow for $H_{\m};$

\item\label{heatflowcondi}
$\mu_t$ is given by $\mu_t =\rho_t \m \in \mathcal{P}_{\mathrm{ac}}(M;\m)$ for $t>0$
and $(-\rho_t)_{t>0}$ is a weak solution to the heat equation
\begin{equation}\label{reverseheatflow}
\partial_t(-\rho_t) =\Delta_{\m} (-\rho_t)
\end{equation}
satisfying
\begin{equation}\label{deltarohoprperties}
 H_{\m}(\rho_t\m) <\infty \,\ \text{for all}\ t>0,\qquad
 \int^{t_1}_{t_0} \int_M \frac{F^2(\nabla(-\rho_t))}{\rho_t} \,{\dd}{\m}\,{\dd}t <\infty
 \,\ \text{for all}\,\ 0<t_0<t_1.
\end{equation}
\end{enumerate}
\end{theorem}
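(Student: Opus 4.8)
The plan is to establish the two implications \eqref{gradflowcondi}$\Rightarrow$\eqref{heatflowcondi} and \eqref{heatflowcondi}$\Rightarrow$\eqref{gradflowcondi} separately, the common ingredient being Proposition~\ref{strongHWthe}, which converts finiteness of $|\partial H_{\m}|$ into the $W^{1,1}$-regularity ${\dd}\rho=\rho\zeta$ with $\zeta\in L^2(\mu;T^*M)$, together with the pointwise Legendre duality relating the tangent vector field $\vv_t$ of the curve to the $1$-form $\zeta_t$. The algebraic identity on which everything rests is
\begin{equation}\label{eq:plan-key}
 \nabla(-\rho_t) =\LL^{-1}\big( {\dd}(-\rho_t) \big) =\LL^{-1}(-\rho_t\zeta_t) =\rho_t\,\LL^{-1}(-\zeta_t),
\end{equation}
valid because $\LL^{-1}$ is positively $1$-homogeneous and $\rho_t\ge 0$. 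Thus, writing $\vv_t:=\LL_{\mu_t}^{-1}(-\zeta_t)$, one has $\rho_t\vv_t=\nabla(-\rho_t)$, hence $F^2(\nabla(-\rho_t))/\rho_t=\rho_t F^2(\vv_t)$, which integrates to $\|{\vv}_t\|_{\mu_t}^2$; and, after testing against $\varphi\in C^\infty_0((0,\infty)\times M)$ with $\mu_t=\rho_t\m$, the continuity equation $\partial_t\mu_t+\di(\vv_t\cdot\mu_t)=0$ of Theorem~\ref{existfloweq} becomes literally the same distributional identity as $\partial_t(-\rho_t)=\Delta_{\m}(-\rho_t)$ in the sense of Definition~\ref{df:heat}.

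For \eqref{gradflowcondi}$\Rightarrow$\eqref{heatflowcondi}: since $H_{\m}$ is $K$-geodesically convex by Proposition~\ref{basicpropertiesfoH}\eqref{CDKNcon}, Remark~\ref{weassquafl} gives $H_{\m}(\mu_t)<\infty$ for all $t>0$, so $\mu_t=\rho_t\m\in\mathcal{P}_{\mathrm{ac}}(M;\m)$; moreover $\mu\in\FAC^2_{\loc}$ with tangent vector field $\vv_t$ and $-\LL_{\mu_t}(\vv_t)\in\partial H_{\m}(\mu_t)$ for a.e.\ $t$, whence $|\partial H_{\m}|(\mu_t)\le\|{\vv}_t\|_{\mu_t}<\infty$ by Proposition~\ref{properofsubdiff}\eqref{lowerbounded}. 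For such $t$, Proposition~\ref{strongHWthe} yields $\rho_t\in W^{1,1}(M)$, ${\dd}\rho_t=\rho_t\zeta_t$ with $\zeta_t\in L^2(\mu_t;T^*M)$, and $\zeta_t$ the unique strong subdifferential with $\|{-}\zeta_t\|^*_{\mu_t}=|\partial H_{\m}|(\mu_t)$. Because $\vv_t\in T_{\mu_t}\mathcal{P}(M)$ by Theorem~\ref{existfloweq}\eqref{continqeuq1}, the element $-\LL_{\mu_t}(\vv_t)\in\partial H_{\m}(\mu_t)\cap T^*_{\mu_t}\mathcal{P}(M)$ is a strong subdifferential by Proposition~\ref{properofsubdiff}\eqref{charastrongsubd}, so by uniqueness $-\LL_{\mu_t}(\vv_t)=\zeta_t$, and \eqref{eq:plan-key} gives $\rho_t\vv_t=\nabla(-\rho_t)$. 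The integral bound in \eqref{deltarohoprperties} then follows from $\int_M F^2(\nabla(-\rho_t))/\rho_t\,{\dd}\m=\|{\vv}_t\|^2_{\mu_t}\in L^1_{\loc}$, and $-\rho_t\in L^1_{\loc}((0,\infty);W^{1,1}(M))$ from $\|\rho_t\|_{W^{1,1}}\le 1+\lambda_F(M)\|{\vv}_t\|_{\mu_t}$ via Cauchy--Schwarz applied to $\int_M\rho_t F^*(\zeta_t)\,{\dd}\m$ together with the reversibility bound $F^*(\zeta_t)\le\lambda_F(M)F^*(-\zeta_t)$ (recall $\lambda_F(M)<\infty$ on a compact manifold); finally the continuity equation translates into the weak heat equation as above.

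For \eqref{heatflowcondi}$\Rightarrow$\eqref{gradflowcondi}: for a.e.\ $t>0$ we have $\rho_t\in W^{1,1}(M)$, and, setting $\zeta_t:={\dd}\rho_t/\rho_t$ on $\{\rho_t>0\}$ (and $0$ elsewhere), the second condition in \eqref{deltarohoprperties} together with $F^2(\nabla(-\rho_t))=F^*(-{\dd}\rho_t)^2$ and reversibility shows $\zeta_t\in L^2(\mu_t;T^*M)$; since $\mu_t=\rho_t\m\in\mathfrak{D}(H_{\m})$, Proposition~\ref{strongHWthe} gives $\zeta_t\in\partial H_{\m}(\mu_t)\cap T^*_{\mu_t}\mathcal{P}(M)$ with $\|{-}\zeta_t\|^*_{\mu_t}=|\partial H_{\m}|(\mu_t)$. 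Put $\vv_t:=\LL_{\mu_t}^{-1}(-\zeta_t)$; then $\vv_t\in T_{\mu_t}\mathcal{P}(M)$ by Proposition~\ref{normstrcturelemma}\eqref{dualpro2}, $\rho_t\vv_t=\nabla(-\rho_t)$ by \eqref{eq:plan-key}, and $\|{\vv}_t\|_{\mu_t}^2=\int_M F^2(\nabla(-\rho_t))/\rho_t\,{\dd}\m\in L^1_{\loc}$. Reading the weak heat equation for $-\rho_t$ backwards yields the continuity equation $\partial_t\mu_t+\di(\vv_t\cdot\mu_t)=0$, so Theorem~\ref{existfloweq}\eqref{continqeuq2} gives $\mu\in\FAC^2_{\loc}((0,\infty);\mathcal{P}(M))$ and, by the uniqueness in Theorem~\ref{existfloweq}\eqref{continqeuq1}, $\vv_t$ is its tangent vector field. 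Since $-\LL_{\mu_t}(\vv_t)=\zeta_t\in\partial H_{\m}(\mu_t)$ for a.e.\ $t$, Definition~\ref{wassgradfl} shows $(\mu_t)_{t>0}$ is a trajectory of the gradient flow.

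The main obstacle is the bookkeeping forced by the nonlinearity of the Legendre transform and of $\Delta_{\m}$: one must keep track of the correct sign in $\nabla(-\rho_t)$ (which is precisely why the heat equation is stated for $-\rho_t$ rather than $\rho_t$), and identify the subdifferential element produced by the gradient-flow condition with the canonical $\zeta_t$ of Proposition~\ref{strongHWthe} through the uniqueness of the strong subdifferential; once \eqref{eq:plan-key} and the $L^2$--$W^{1,1}$ dictionary of Proposition~\ref{strongHWthe} are in hand, the remaining steps are routine manipulations of the distributional continuity/heat equation and of $\FAC^2_{\loc}$ curves via Theorem~\ref{existfloweq}.
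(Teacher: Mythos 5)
Your proposal is correct and follows essentially the same route as the paper's proof: in both directions it reduces the statement to Proposition~\ref{strongHWthe} and the uniqueness of the strong subdifferential, identifies $\vv_t=\nabla(-\rho_t)/\rho_t$ via the pointwise Legendre transform (your identity $\rho_t\vv_t=\nabla(-\rho_t)$ is exactly the paper's computation), and translates between the continuity equation of Theorem~\ref{existfloweq} and the weak heat equation, with the $L^2_{\loc}$ bound on $\|{\vv}_t\|_{\mu_t}$ giving \eqref{deltarohoprperties} and, conversely, the $\FAC^2_{\loc}$ regularity. The only cosmetic difference is that you make explicit the time-integrability of the $W^{1,1}$-norm required by Definition~\ref{df:heat} (via the reversibility bound and Cauchy--Schwarz), which the paper handles by the analogous H\"older estimate in the converse direction and otherwise leaves implicit.
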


\begin{proof}
\eqref{gradflowcondi}$\,\Rightarrow\,$\eqref{heatflowcondi}
Let $(\vv_t)_{t>0}$ be the tangent vector field of $(\mu_t)_{t>0}$.
Recall from Remark~\ref{weassquafl} that $H_{\m}(\mu_t)<\infty$ holds for all $t>0$,
and hence, we have $\mu_t =\rho_t \m \in \mathfrak{D}(H_{\m})$.
Moreover, we deduce from Proposition~\ref{normstrcturelemma}\eqref{dualpro2}
and \eqref{wassgradf} that
$-\LL_{\mu_t}(\vv_t) \in \partial H_{\m}(\mu_t) \cap T_{\mu_t}^*\mathcal{P}(M)$
for $\mathscr{L}^1$-a.e.\ $t \in (0,\infty)$.
Then it follows from Proposition~\ref{properofsubdiff} that
$-\LL_{\mu_t}(\vv_t)$ is a strong subdifferential and
\[
 |\partial H_{\m}|(\mu_t) \leq \|\LL_{\mu_t}(\vv_t)\|^*_{\mu_t}
 =\|{\vv}_t\|_{\mu_t} <\infty.
\]
Hence, Proposition~\ref{strongHWthe} yields that $\rho_t \in W^{1,1}(M)$
and $-\LL_{\mu_t}(\vv_t) ={\dd}\rho_t/\rho_t$ for $\mathscr{L}^1$-a.e.\ $t \in (0,\infty)$.
Note that $\vv_t =\LL^{-1}_{\mu_t}(-{\dd}\rho_t/\rho_t) =\nabla(-\rho_t)/{\rho_t}$
and then the continuity equation \eqref{continequa} is exactly the heat equation \eqref{reverseheatflow}.
Moreover, thanks to $\|{\vv}_t\|_{\mu_t} \in L^2_{\loc}((0,\infty))$, for any $0<t_0< t_1$,
we have
\[
 \int^{t_1}_{t_0} \int_M \frac{F^2(\nabla(-\rho_t))}{\rho_t} \,{\dd}{\m} \,{\dd}t
 =\int^{t_1}_{t_0} \int_M F^2(\vv_t) \,{\dd}\mu_t \,{\dd}t
 =\int^{t_1}_{t_0} \|{\vv}_t\|^2_{\mu_t} \,{\dd}t <\infty.
\]

\eqref{heatflowcondi}$\,\Rightarrow\,$\eqref{gradflowcondi}
Note again that the heat equation \eqref{reverseheatflow} is equivalent to
the continuity equation \eqref{continequa} for $\mu_t$ with $\vv_t:={\nabla(-\rho_t)}/{\rho_t}$.
Then the assumption \eqref{deltarohoprperties} implies $\vv_t \in L^2(\mu_t;TM)$
for $\mathscr{L}^1$-a.e.\ $t \in (0,\infty)$.
Now it follows from Proposition~\ref{existfloweq}\eqref{continqeuq2} that
$(\mu_t)_{t>0} \in \FAC^2_{\loc}((0,\infty);\mathcal{P}(M))$. Moreover,
the H\"older inequality furnishes
\[
 \frac{\|\rho_t\|_{W^{1,1}}}{\lambda_F(M)}
 \leq \|{-}\rho_t\|_{W^{1,1}}
 = 1+\int_M \frac{F(\nabla(-\rho_t))}{\rho_t} \,{\dd}\mu_t
 \leq 1+\|{\vv}_t\|_{\mu_t} <\infty,
\]
thereby $\rho_t\in W^{1,1}(M)$.
We deduce from Proposition~\ref{strongHWthe} that
$\zeta_t :=-\LL_{\mu_t}(\vv_t) ={\dd}\rho_t/\rho_t$
belongs to $\partial H_{\m}(\mu_t) \cap T_{\mu_t}^*\mathcal{P}(M)$
and is a unique strong subdifferential at $\mu_t$.
Thus $\vv_t \in T_{\mu_t}\mathcal{P}(M)$,
and $(\vv_t)_{t>0}$ is the tangent vector field of $(\mu_t)_{t>0}$
(recall Proposition~\ref{existfloweq}\eqref{continqeuq1}).
Since $-\LL_{\mu_t}(\vv_t) =\zeta_ t\in \partial H_{\m}(\mu_t)$ for $\mathscr{L}^1$-a.e.\ $t \in (0,\infty)$,
we conclude that $(\mu_t)_{t>0}$ is a trajectory of the gradient flow for $H_{\m}$.
\end{proof}

We remark that, due to the irreversibility of $F$,
$(\rho_t)_{t>0}$ in Theorem~\ref{equvthegra} is not necessarily a weak solution to the heat equation.
Instead, it is a weak solution to the heat equation with respect to the reverse Finsler structure
$\overleftarrow{F}(v):=F(-v)$.
We denote by $\overleftarrow{\Delta}_{\m}$ the Laplacian for $(M,\overleftarrow{F},\m)$.

\begin{corollary}\label{existsofheatlow}
\begin{enumerate}[{\rm (i)}]
\item\label{norsolheatfl}
For any $u \in L^2(M)$ with $\sup_M u <\infty$,
there exists a weak solution $(u_t)_{t \geq 0}$ to the heat equation
$\partial_t u_t=\Delta_{\m} u_t$ with $u_0=u$.

\item\label{reversolheatfl}
For any $u \in L^2(M)$ with $\inf_M u >-\infty$,
there exists a weak solution $(u_t)_{t \geq 0}$ to the heat equation
$\partial_t u_t=\overleftarrow{\Delta}_{\m} u_t$ with respect to $\overleftarrow{F}$ with $u_0=u$.
\end{enumerate}
\end{corollary}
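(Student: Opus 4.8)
The plan is to reduce both assertions to the existence of entropy gradient flows in the Wasserstein space (Theorem~\ref{existenceoftranflow}) and their characterization as weak solutions of the heat equation (Theorem~\ref{equvthegra}), after an affine normalization turning $u$ into a probability density. The structural facts I shall use are that, although $\Delta_{\m}$ is nonlinear, the Finsler gradient $\nabla$ is positively $1$-homogeneous and unaffected by adding a constant, so $\Delta_{\m}(\alpha f+\beta)=\alpha\,\Delta_{\m}f$ for all $\alpha>0$ and $\beta\in\mathbb{R}$; that $\overleftarrow{\nabla}f=-\nabla(-f)$ by the definitions of $\overleftarrow{F}$ and the Legendre transformation, hence $\overleftarrow{\Delta}_{\m}f=-\Delta_{\m}(-f)$; and that $\overleftarrow{F}^{*}$ is bi-Lipschitz equivalent to $F^{*}$ on the compact manifold $M$, so $W^{1,1}(M)$ is the same space for $F$ and for $\overleftarrow{F}$.

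For the first part, take $v:=-u\in L^{2}(M)$, which is bounded below, say $v\ge m$ $\m$-a.e.\ for some constant $m$. If $v$ is $\m$-a.e.\ constant, then so is $u$ and the constant curve $u_{t}\equiv u$ is a weak solution; otherwise $c:=\bigl(\int_{M}(v-m)\,{\dd}\m\bigr)^{-1}\in(0,\infty)$ and $\rho_{0}:=c(v-m)$ is a nonnegative density with $\int_{M}\rho_{0}\,{\dd}\m=1$. Since $\rho_{0}\in L^{2}(M)$ and $M$ is compact, $[\rho_{0}\log\rho_{0}]_{+}$ is integrable, whence $\mu_{0}:=\rho_{0}\m\in\mathfrak{D}(H_{\m})\subset\mathcal{P}_{\mathrm{ac}}(M;\m)$. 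By Theorem~\ref{existenceoftranflow} there is a trajectory $(\mu_{t})_{t\ge0}$ of the gradient flow for $H_{\m}$ with $\mu_{0}=\rho_{0}\m$, and Theorem~\ref{equvthegra} gives $\mu_{t}=\rho_{t}\m$ for $t>0$ with $(-\rho_{t})_{t>0}\in L^{1}_{\loc}\bigl((0,\infty);W^{1,1}(M)\bigr)$ a weak solution of $\partial_{t}(-\rho_{t})=\Delta_{\m}(-\rho_{t})$. Put $u_{t}:=-c^{-1}\rho_{t}-m$; then $u_{0}=-(v-m)-m=-v=u$, $u_{t}\in L^{1}_{\loc}\bigl((0,\infty);W^{1,1}(M)\bigr)$, and, using the $1$-homogeneity and translation-invariance of $\Delta_{\m}$,
\[
 \partial_{t}u_{t}=c^{-1}\partial_{t}(-\rho_{t})=c^{-1}\Delta_{\m}(-\rho_{t})=\Delta_{\m}\bigl(c^{-1}(-\rho_{t})-m\bigr)=\Delta_{\m}u_{t},
\]
so $(u_{t})_{t\ge0}$ is the desired weak solution, with $u_{0}=u$ since $\mu_{0}=\rho_{0}\m$.

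For the second part, let $u\in L^{2}(M)$ with $\inf_{M}u>-\infty$. Then $-u$ is bounded above, so the first part furnishes a weak solution $(w_{t})_{t\ge0}$ of $\partial_{t}w_{t}=\Delta_{\m}w_{t}$ with $w_{0}=-u$. Set $u_{t}:=-w_{t}\in L^{1}_{\loc}\bigl((0,\infty);W^{1,1}(M)\bigr)$; then $u_{0}=u$ and
\[
 \partial_{t}u_{t}=-\partial_{t}w_{t}=-\Delta_{\m}w_{t}=-\Delta_{\m}(-u_{t})=\overleftarrow{\Delta}_{\m}u_{t}.
\]
At the level of the weak formulation, since $\overleftarrow{\nabla}u_{t}=-\nabla w_{t}$, the defining integral identity for $\overleftarrow{\Delta}_{\m}$ along $u_{t}$ is precisely the negative of the one for $\Delta_{\m}$ along $w_{t}$, which vanishes by hypothesis; hence $(u_{t})_{t\ge0}$ is a weak solution to $\partial_{t}u_{t}=\overleftarrow{\Delta}_{\m}u_{t}$ with $u_{0}=u$. (Alternatively, one may invoke directly the remark that $(\rho_{t})$ solves the heat equation for $\overleftarrow{F}$.)

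The main obstacle — indeed the only non-routine point — is the affine reduction: one must verify that the normalized density lies in $\mathfrak{D}(H_{\m})$, dispose of the degenerate case of a constant $u$ separately, and, because $\Delta_{\m}$ is not linear, justify the change of variables through the $1$-homogeneity and translation-invariance of $\nabla$ rather than any linearity of the heat operator. Once these points are settled, the rest is a direct application of Theorems~\ref{existenceoftranflow} and~\ref{equvthegra}.
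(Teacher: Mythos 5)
Your proposal is correct and follows essentially the same route as the paper: affinely normalize the (suitably signed) datum into a probability density lying in $\mathfrak{D}(H_{\m})$, invoke Theorems~\ref{existenceoftranflow} and \ref{equvthegra} to get $(-\rho_t)_{t>0}$ solving the heat equation, undo the affine change using the positive $1$-homogeneity and additive-constant invariance of $\Delta_{\m}$, and obtain (ii) from (i) via $\overleftarrow{\Delta}_{\m}f=-\Delta_{\m}(-f)$. The only cosmetic difference is that the paper normalizes with $\sup u$ and $\|u-\sup u\|_{L^1}$ rather than an arbitrary bound $m$, which changes nothing.
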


\begin{proof}
\eqref{norsolheatfl}
Suppose that $u$ is not constant and set $\rho :=(-u+\sup u)/\|u-\sup u\|_{L^1}$.
Then $\mu:=\rho \m \in \mathcal{P}_{\mathrm{ac}}(M;\m)$ and, moreover,
$\mu \in \mathfrak{D}(H_{\m})$ (see, e.g., the proof of \cite[Lemma~16.2]{Obook}).
Hence, there exists a trajectory $\mu_t=\rho_t \m$ of the gradient flow for $H_{\m}$ with $\mu_0=\mu$
by Theorem~\ref{existenceoftranflow},
and $(-\rho_t)_{t>0}$ is a weak solution to the heat equation by Theorem~\ref{equvthegra}.
This completes the proof by letting $u_t :=\sup u -\|u-\sup u\|_{L^1} \cdot \rho_t$.

\eqref{reversolheatfl}
Thanks to $\overleftarrow{\Delta}_{\m}f=-{\Delta}_{\m}(-f)$,
this is shown in the same way as \eqref{norsolheatfl}.
\end{proof}

\begin{remark}\label{rm:L^2}
In \cite[\S 3]{OS}, we constructed a weak solution to the heat equation starting from $u \in H^1(M)$
as a trajectory of the gradient flow for the energy in the Hilbert space $L^2(M)$.
Then, by the $L^2$-contraction property,
we can extend it to a contraction semigroup acting on $L^2(M)$.
In contrast, Corollary~\ref{existsofheatlow} provides a direct construction for (bounded) $u \in L^2(M)$.
Because of the lack of a higher regularity (Remark~\ref{rm:reg})
and the Wasserstein contraction (see \cite{OSnc} and Subsection~\ref{ssc:rem} below),
it seems unclear if these flows starting from $u \in L^2(M) \setminus H^1(M)$ coincide.
\end{remark}

\subsection{Further problems and related works}\label{ssc:rem}%%%%%%%%
%%%%%%%%%%%%%%%%%%%%

In ``Riemannian-like'' spaces,
one can proceed to the \emph{contraction property} asserting that
two gradient curves emanating from different points
are getting closer with an exponential rate depending on the convexity of $\phi$:
\[ d\big( \xi_1(t),\xi_2(t) \big) \leq {\ee}^{-\lambda t} d\big( \xi_1(0),\xi_2(0) \big) \]
(see, e.g., \cite[Theorem~4.0.4(iv)]{AGS}).
This property is, however, known to fail in Finsler-like spaces (see \cite{OSnc}).
It is an intriguing open problem if one can obtain
any weaker contraction estimate for convex functions on Finsler manifolds or normed spaces.
An important apparatus to study the contraction property is
the \emph{evolution variational inequality} (see \cite{AGS,MS}),
which also forces the space to be Riemannian.

The contraction property plays a vital role in the study of heat flow.
In view of Section~\ref{heatflowfins} and \cite{OS},
heat flow can be regarded as the gradient flow of the relative entropy
in the $L^2$-Wasserstein space, and the convexity of the relative entropy
is equivalent to the associated lower Ricci curvature bound (see \cite{O,vRS,Vi}).
Moreover, the contraction property of heat flow is
equivalent to the corresponding gradient estimate (see \cite{Kuwada}).
Though we know some gradient estimates in the Finsler setting (see \cite{Oisop,Obook,OSbw}),
the lack of the Riemannian-like structure prevents us to obtain a contraction property of heat flow
(precisely, the nonlinearity of the Finsler Laplacian causes an essential difference).
Therefore, generalizing the contraction property to the Finsler setting
will make a breakthrough also in the study of heat flow on Finsler manifolds.

We refer to \cite{Ohyp} for a recent work on discrete-time gradient flows in Gromov hyperbolic spaces;
note that some non-Riemannian Finsler manifolds can be Gromov hyperbolic.
Let us also mention another related work \cite{LOZ} concerning \emph{self-contracted curves},
which is available for some Finsler manifolds.

\appendix
\section{An auxiliary lemma}%%%%%%%%%%%%%%%
%%%%%%%%%%%%%%%

\begin{lemma}\label{basicpestimate}
Given $p \in [1,\infty)$ and any $\epsilon>0$, we have
\[
 (1+\epsilon) a^p +\mathfrak{C}(p,\epsilon) b^p \geq (a+b)^p \quad
 \text{for all}\,\ a,b \geq 0,
\]
where
\[
 \mathfrak{C}(1,\epsilon) =1, \qquad
 \mathfrak{C}(p,\epsilon) =\frac{1+\epsilon}{((1+\epsilon)^{1/(p-1)} -1)^{p-1}}
 \quad \text{for}\,\ p>1.
\]
In particular, $\lim_{\epsilon \to \infty}\mathfrak{C}(p,\epsilon)=1$ for all $p \geq 1$.
\end{lemma}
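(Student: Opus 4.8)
The plan is to reduce the two-variable inequality to a one-variable optimization by homogeneity, and then identify $\mathfrak{C}(p,\epsilon)$ as the exact extremal constant. The case $p=1$ is immediate, since $(1+\epsilon)a+b=(a+b)+\epsilon a\ge a+b$, so $\mathfrak{C}(1,\epsilon)=1$ works (and is optimal). For $p>1$ the inequality is trivial when $b=0$, so assume $b>0$; dividing both sides by $b^p$ and setting $t:=a/b\ge 0$, the assertion becomes $(1+\epsilon)t^p+\mathfrak{C}(p,\epsilon)\ge(t+1)^p$ for all $t\ge 0$. Hence it suffices to prove that $\sup_{t\ge 0}f(t)=\mathfrak{C}(p,\epsilon)$, where $f(t):=(t+1)^p-(1+\epsilon)t^p$; this simultaneously yields the inequality and the optimality of the constant.

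Next I would locate the maximizer of $f$ on $[0,\infty)$. Since $f$ is continuous with $f(0)=1$ and $f(t)=(t+1)^p-(1+\epsilon)t^p\sim-\epsilon t^p\to-\infty$ as $t\to\infty$, the supremum is attained. For $t>0$,
\[
 f'(t)=p\bigl[(t+1)^{p-1}-(1+\epsilon)t^{p-1}\bigr]=p\,t^{p-1}\bigl[(1+1/t)^{p-1}-(1+\epsilon)\bigr],
\]
and $t\mapsto(1+1/t)^{p-1}$ is strictly decreasing on $(0,\infty)$, being the composition of the decreasing map $t\mapsto 1+1/t$ with the increasing map $x\mapsto x^{p-1}$ (here $p-1>0$). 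Thus $f'$ is positive then negative, changing sign exactly once, at
\[
 t_0:=\frac{1}{(1+\epsilon)^{1/(p-1)}-1},
\]
which is well defined and strictly positive because $\epsilon>0$ forces $(1+\epsilon)^{1/(p-1)}>1$. Therefore $t_0$ is the unique global maximizer of $f$ on $[0,\infty)$ (including the endpoint $t=0$).

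It then remains to evaluate $f(t_0)$. Setting $c:=(1+\epsilon)^{1/(p-1)}-1$, so that $t_0=1/c$ and $t_0+1=(1+c)/c=(1+\epsilon)^{1/(p-1)}/c$, a direct computation gives
\[
 f(t_0)=\frac{(1+\epsilon)^{p/(p-1)}-(1+\epsilon)}{c^p}
 =\frac{(1+\epsilon)\bigl[(1+\epsilon)^{1/(p-1)}-1\bigr]}{c^p}
 =\frac{1+\epsilon}{c^{p-1}}=\mathfrak{C}(p,\epsilon),
\]
which is the desired inequality. For the limit, using $(1+\epsilon)=(1+c)^{p-1}$ one rewrites
\[
 \mathfrak{C}(p,\epsilon)=\frac{(1+c)^{p-1}}{c^{p-1}}=\Bigl(1-(1+\epsilon)^{-1/(p-1)}\Bigr)^{-(p-1)},
\]
which tends to $1$ as $\epsilon\to\infty$ for $p>1$; and $\mathfrak{C}(1,\epsilon)\equiv 1$. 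The argument is elementary throughout; the only step requiring a little care is confirming that the interior critical point $t_0$ is the global maximum of $f$ rather than merely a local extremum, which is precisely what the monotonicity of $(1+1/t)^{p-1}$ secures by fixing the sign of $f'$ on all of $(0,\infty)$.
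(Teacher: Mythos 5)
Your proof is correct and follows essentially the same route as the paper: reduce by homogeneity to maximizing $f(t)=(t+1)^p-(1+\epsilon)t^p$, locate the unique critical point $t_0=\{(1+\epsilon)^{1/(p-1)}-1\}^{-1}$, and evaluate $f(t_0)=\mathfrak{C}(p,\epsilon)$. The extra details you supply (the explicit $p=1$ case, the sign analysis of $f'$ confirming a global maximum, and the limit as $\epsilon\to\infty$) merely spell out what the paper's shorter argument leaves implicit.
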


\begin{proof}
We assume $p>1$ and $a,b>0$ without loss of generality.
Moreover, by dividing both sides with $b^p$, it suffices to consider the case of $b=1$.
Put $f(a):=(a+1)^p -(1+\epsilon)a^p$ for $a>0$.
Since
\[
 f'(a) =p(a+1)^{p-1} -p(1+\epsilon)a^{p-1}
\]
attains $0$ only at $\bar{a}=\{ (1+\epsilon)^{1/(p-1)} -1 \}^{-1}$, we have
\[
 f(a) \leq f(\bar{a})
 =\frac{1+\epsilon}{((1+\epsilon)^{1/(p-1)} -1)^{p-1}}
\]
for all $a>0$.
This completes the proof.
\end{proof}

\section{Complementary results for generalized Funk spaces}\label{Funkspaces}
%%%%%%%%%%%%%%%%%%%%%%%%%%%%

Let $(\mathbb{B},d)$ be a generalized Funk space as in Subsection~\ref{funkspacesgrad}.

\begin{proposition}\label{bascifunkproperty}
For any $x\in \mathbb{B}$, we have the following.
\begin{enumerate}[{\rm (i)}]
\item\label{fungrad1}
$F(x,\cdot)$ is convex on $T_x\mathbb{B} \cong \mathscr{H}$,
and $F(x,v) \leq d(x,x+v)$ for any $v \in \mathscr{H}$ with $x+v \in \mathbb{B}$.

\item\label{fungrad2}
$F^*(x,\cdot)$ is convex on $T^*_x\mathbb{B} \cong \mathscr{H}^*$
and weakly*-lower semicontinuous, i.e.,
if $\zeta_i \overset{*}\rightharpoonup \zeta$ $($weakly* convergent$)$,
then we have $F^*(x,\zeta) \leq \liminf_{i \to \infty} F^*(x,\zeta_i)$.

\item\label{fungrad3}
The set $\mathfrak{J}_p(x,v)$ is at most a singleton
for every $v \in T_x\mathbb{B} \cong \mathscr{H}$.
\end{enumerate}
\end{proposition}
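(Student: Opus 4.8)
The proof naturally splits into the three assertions, and the plan is to handle them in the order (i), (ii), (iii), since the later parts rest on the earlier ones.

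For (i), I would start from the explicit formula
\[
 F(x,v) = \frac{\sqrt{(1-\|x\|^2)\|v\|^2 + \langle x,v\rangle^2} + \langle x,v\rangle}{1-\|x\|^2}.
\]
With $x$ fixed, the denominator is a positive constant, so convexity in $v$ reduces to convexity of $v \mapsto \sqrt{(1-\|x\|^2)\|v\|^2 + \langle x,v\rangle^2}$ plus the linear term $\langle x,v\rangle$. The quantity under the square root is $\langle Av,v\rangle$ for the positive semidefinite operator $A = (1-\|x\|^2)\,\mathrm{Id} + x\otimes x$ (here $\|x\|<1$ so in fact $A$ is positive definite), and the square root of a positive semidefinite quadratic form is a seminorm, hence convex; adding the linear functional $v\mapsto\langle x,v\rangle$ preserves convexity. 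The inequality $F(x,v) \le d(x,x+v)$ when $x+v\in\mathbb{B}$ follows since $F(x,v)=\lim_{\varepsilon\to0^+} d(x,x+\varepsilon v)/\varepsilon$ by the definition of $F$ in Subsection~\ref{funkspacesgrad}, while the map $\varepsilon\mapsto d(x,x+\varepsilon v)$ is convex on $[0,1]$ (it is the restriction of the convex distance function along a line segment, or more directly one checks it by the same explicit formula), so its derivative at $0$ is dominated by the slope of the chord from $0$ to $1$.

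For (ii), convexity of $F^*(x,\cdot)$ is automatic: $F^*(x,\zeta) = \sup_{v\ne 0} \langle\zeta,v\rangle/F(x,v)$ is a supremum of linear functionals of $\zeta$, hence convex (and lower semicontinuous) in $\zeta$ for any topology in which these linear functionals are continuous. For weak*-lower semicontinuity one uses the same representation: for each fixed $v$, the map $\zeta\mapsto\langle\zeta,v\rangle/F(x,v)$ is weak*-continuous, so $F^*(x,\cdot)$ is a supremum of weak*-continuous functions and therefore weak*-lower semicontinuous. I should be a little careful that the supremum defining $F^*$ can be taken over a weak*-dense countable set or simply left as an arbitrary supremum — either way the lower semicontinuity argument goes through.

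For (iii), the point is that $\mathfrak{J}_p(x,v)$ consists of those $\zeta$ with $\langle\zeta,v\rangle = F(x,v)F^*(x,\zeta)$ and $F^*(x,\zeta)^q = F^p(x,v)$; the equality case in the Cauchy-type inequality $\langle\zeta,v\rangle\le F(x,v)F^*(x,\zeta)$ pins down $\zeta/F^*(x,\zeta)$ as a supporting functional of the convex body $\{F(x,\cdot)\le 1\}$ at $v/F(x,v)$, and the normalization $F^*(x,\zeta)^q=F^p(x,v)$ then fixes the scale. So $\mathfrak{J}_p(x,v)$ is a singleton precisely when $F(x,\cdot)$ is differentiable at $v$ (for $v\ne 0$). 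This is where I would invoke differentiability of $\|\cdot\|$ on $\mathscr{H}$: since $\mathscr{H}$ is a Hilbert space, $\|\cdot\|$ is Fréchet-differentiable away from $0$, hence so is $v\mapsto\sqrt{\langle Av,v\rangle}$ away from $0$ (as $A$ is a positive-definite isomorphism and this is just the norm associated to the equivalent inner product $\langle A\cdot,\cdot\rangle$), and therefore $F(x,\cdot)$ is Fréchet-differentiable on $\mathscr{H}\setminus\{0\}$; the case $v=0$ gives $F(x,0)=0$ and $\mathfrak{J}_p(x,0)=\{0\}$ trivially. The main obstacle — really the only subtle point — is making the equality-case analysis rigorous in the infinite-dimensional setting: one must argue that $F^*(x,\zeta) = \langle\zeta, u\rangle$ for the (unique) unit-$F$ vector $u$ achieving the supremum, and that uniqueness of the supporting hyperplane at a smooth boundary point forces uniqueness of $\zeta$; this uses strict convexity/smoothness of the unit ball of $F(x,\cdot)$, which again comes down to the Hilbert space structure. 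Once that is set up, the normalization condition $F^*(x,\zeta)^q = F^p(x,v)$ determines $\zeta$ uniquely, giving the claim.
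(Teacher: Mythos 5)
Your proposal is correct, and for parts (ii) and (iii) it runs along essentially the same lines as the paper; the differences are in the details of (i) and in how the uniqueness in (iii) is finished. For the convexity of $F(x,\cdot)$ the paper simply reduces to the finite-dimensional case, whereas you write $F(x,v)=\bigl(\sqrt{\langle Av,v\rangle}+\langle x,v\rangle\bigr)/(1-\|x\|^2)$ with $A=(1-\|x\|^2)\,\mathrm{Id}+x\otimes x$ positive definite, which is more self-contained and works directly in $\mathscr{H}$. For $F(x,v)\le d(x,x+v)$ the paper uses the monotonicity $d(x,x+\varepsilon v)\le d(x+tv,x+(t+\varepsilon)v)$ coming from the ray interpretation of the Funk distance, while you use convexity of $\varepsilon\mapsto d(x,x+\varepsilon v)$ together with the bound of the derivative at $0$ by the chord slope; both work. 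One caution: your parenthetical appeal to convexity of the full distance function $y\mapsto d(x,y)$ is a true but unproved (and here unnecessary) side claim; the cleaner route you also mention does go through, since restricting \eqref{distFunk} to $x_2=x+\varepsilon v$ gives
\[
 d(x,x+\varepsilon v)=-\log\Bigl(1-\tfrac{\varepsilon\|v\|^2}{S-\langle x,v\rangle}\Bigr),
 \qquad S=\sqrt{(1-\|x\|^2)\|v\|^2+\langle x,v\rangle^2},
\]
which is manifestly convex in $\varepsilon$ with derivative $F(x,v)$ at $\varepsilon=0$. In (iii), what is needed is exactly that the subdifferential of the convex function $F(x,\cdot)$ at $\bar v=v/F(x,v)$ is a singleton because $F(x,\cdot)$ is (Fr\'echet) differentiable there; strict convexity of the unit ball plays no role and attainment of the supremum defining $F^*$ need not be argued, since membership in $\mathfrak{J}_p(x,v)$ already asserts $\langle\zeta,v\rangle=F(x,v)F^*(x,\zeta)$. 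The paper implements this uniqueness by hand via the $\varepsilon$--$\delta$ characterization of Fr\'echet differentiability from \cite{FHH}, while you invoke the standard convex-analysis fact; either way the normalization $F^*(x,\zeta)^q=F^p(x,v)$ fixes the scale and the $v=\mathbf{0}$ case is trivial, as you note.
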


\begin{proof}
\eqref{fungrad1}
This is reduced to the finite-dimensional case.
The inequality $F(x,v) \leq d(x,x+v)$ can be seen from
$d(x,x+\varepsilon v) \le d(x+tv,x+(t+\varepsilon)v)$ for $0 \le t<t+\varepsilon \le 1$
(by the interpretation of $d$ as in \cite[(1.1)]{Sh1}).

\eqref{fungrad2}
This is a direct consequence of the definition of $F^*$.

\eqref{fungrad3}
Assume $v \neq \mathbf{0}$ without loss of generality.
Given any $\zeta_1,\zeta_2\in \mathfrak{J}_p(x,v)$,
set $\bar{v}:=v/F(x,v)$ and $\bar{\zeta}_i:=\zeta_i/F^*(x,\zeta_i)$, $i=1,2$.
It suffices to show $\bar{\zeta}_1=\bar{\zeta}_2$.
Since $f:=F(x,\cdot)$ is differentiable at $\bar{v}$, for any $\varepsilon>0$,
there exists $\delta>0$ such that
$f(\bar{v}+w)+f(\bar{v}-w) \leq 2f(\bar{v})+\varepsilon\|w\| =2+\varepsilon\|w\|$
for any $w \in \mathscr{H}$ with $\|w\|< \delta$
(see, e.g., \cite[Lemma~8.3]{FHH}).
Combining this with $F^*(x,\bar{\zeta}_i)=\langle \bar{\zeta}_i,\bar{v} \rangle=1$, we find
\[
 \langle \bar{\zeta}_1,w \rangle -\langle \bar{\zeta}_2,w \rangle
 =\langle \bar{\zeta}_1,\bar{v}+w \rangle +\langle \bar{\zeta}_2,\bar{v}-w \rangle
 -\langle \bar{\zeta}_1,\bar{v} \rangle -\langle \bar{\zeta}_2,\bar{v} \rangle
 \leq f(\bar{v}+w)+f(\bar{v}-w)-2
 \leq \varepsilon\|w\|
\]
for any $w$ with $\|w\|<\delta$.
This implies $\bar{\zeta}_1 =\bar{\zeta}_2$.
\end{proof}

\begin{proposition}\label{phigradproe} %[AGS, Prop 1.4.4]
Let $\phi:\mathbb{B} \lra (-\infty,\infty]$ be a proper lower semicontinuous function.
\begin{enumerate}[{\rm (i)}]
\item\label{convexfun1}
We have $|\partial\phi|(x)\leq F^*(-\partial^\circ\phi(x))$ for all $x\in \mathbb{B}$.
In particular, $x\longmapsto F^*(-\partial^\circ\phi(x))$ is a weak upper gradient for $-\phi$.

\item\label{convexfun2}
If $\phi$ is convex, then $|\partial\phi|$ is a strong upper gradient and we have
\begin{equation}\label{basiupperproper1}
 |\partial\phi|(x) =\mathfrak{l}_\phi(x) =F^* \big( {-}\partial^\circ \phi(x) \big)
 \quad \text{for all}\,\ x \in \mathbb{B}.
\end{equation}

\item\label{convexfun3}
If $\phi$ is convex, then the graph of $\partial\phi$ in $\mathbb{B} \times \mathscr{H}^*$
is strongly-weakly* closed.
Moreover, we have
\begin{equation}\label{strongweakclosed}
 \zeta_i \in \partial\phi(x_i),\quad x_i \to x,\quad \zeta_i \overset{*}\rightharpoonup \zeta
 \quad \Longrightarrow \quad \zeta \in \partial\phi(x),\quad \phi(x_i) \to \phi(x).
\end{equation}
\end{enumerate}
\end{proposition}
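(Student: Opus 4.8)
The plan is to transcribe the Hilbert-space arguments of \cite[\S 1.4]{AGS} (and, for the identity $|\partial\phi|=|\partial^-\phi|$, of \cite[Lemma~2.3.6]{AGS}) into the present setting, replacing the isometry used there by the asymmetric sublinear norm $F(x,\cdot)$ together with the one-sided estimate $F(x,v)\le d(x,x+v)$ of Proposition~\ref{bascifunkproperty}\eqref{fungrad1}. A preliminary remark, used tacitly in \eqref{convexfun2} and \eqref{convexfun3}, is that for convex lower semicontinuous $\phi$ the Fr\'echet subdifferential $\partial\phi(x)$ coincides with the convex subdifferential $\{\zeta\in\mathscr{H}^*:\phi(y)\ge\phi(x)+\langle\zeta,y-x\rangle\ \text{for all}\ y\in\mathbb{B}\}$; this follows because $t\mapsto t^{-1}(\phi(x+tv)-\phi(x))$ is non-decreasing and $d(x,x+tv)/t\to F(x,v)>0$ for $v\neq\mathbf{0}$. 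For \eqref{convexfun1} one may assume $x\in\mathfrak{D}(\partial\phi)$, the right-hand side being $+\infty$ otherwise. Given $\zeta\in\partial\phi(x)$ and $y=x+v$ near $x$, the defining inequality of $\partial\phi(x)$ yields $\phi(x)-\phi(y)\le\langle-\zeta,v\rangle+o(d(x,y))\le F^*(x,-\zeta)F(x,v)+o(d(x,y))\le F^*(x,-\zeta)d(x,y)+o(d(x,y))$, using the definition of $F^*$ and Proposition~\ref{bascifunkproperty}\eqref{fungrad1}; dividing by $d(x,y)$ and letting $y\to x$ gives $|\partial\phi|(x)\le F^*(x,-\zeta)$, and the infimum over $\zeta$ gives $|\partial\phi|(x)\le F^*(-\partial^\circ\phi(x))$. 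The ``in particular'' is then immediate from Theorem~\ref{slopeuppgr}\eqref{slope-1}, since any $[0,\infty]$-valued function dominating a weak upper gradient is again a weak upper gradient.

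For \eqref{convexfun2} I would first establish $|\partial\phi|=\mathfrak{l}_\phi$: the inequality $\le$ is trivial, and for $\ge$ one fixes $y\neq x$, runs along the segment $\gamma(t):=x+t(y-x)\in\mathbb{B}$, uses convexity to get $\phi(x)-\phi(\gamma(t))\ge t(\phi(x)-\phi(y))$, and lets $t\to0^+$ in $\frac{[\phi(x)-\phi(\gamma(t))]_+}{d(x,\gamma(t))}\ge[\phi(x)-\phi(y)]_+\cdot\frac{t}{d(x,\gamma(t))}$, whose right-hand side tends to $[\phi(x)-\phi(y)]_+/F(x,y-x)\ge[\phi(x)-\phi(y)]_+/d(x,y)$ by the definition of $F$ and Proposition~\ref{bascifunkproperty}\eqref{fungrad1}; taking the supremum over $y$ gives $|\partial\phi|(x)\ge\mathfrak{l}_\phi(x)$. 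Lower semicontinuity and Theorem~\ref{slopeuppgr}\eqref{slope-2} then yield that $|\partial\phi|=\mathfrak{l}_\phi$ is a strong upper gradient for $-\phi$. It remains to prove $F^*(-\partial^\circ\phi(x))\le|\partial\phi|(x)$ for $x$ with $\ell:=|\partial\phi|(x)=\mathfrak{l}_\phi(x)<\infty$; the key point is that convexity upgrades the slope bound $[\phi(x)-\phi(x+v)]_+\le\ell\,d(x,x+v)$ to the homogeneous inequality $\phi(x)-\phi(x+v)\le\ell\,F(x,v)$ for every $v\in\mathscr{H}$ (with left side $-\infty$ when $x+v\notin\mathbb{B}$), obtained by replacing $v$ with $tv$, using $t(\phi(x)-\phi(x+v))\le\phi(x)-\phi(x+tv)\le\ell\,d(x,x+tv)$, dividing by $t$ and letting $t\to0^+$. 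Thus the concave map $v\mapsto\phi(x)-\phi(x+v)$ is dominated by the finite, continuous sublinear functional $v\mapsto\ell\,F(x,v)$, and both vanish at the origin; the Hahn--Banach sandwich theorem produces $\eta\in\mathscr{H}^*$ with $\phi(x)-\phi(x+v)\le\eta(v)\le\ell\,F(x,v)$, so that $\zeta:=-\eta\in\partial\phi(x)$ and $F^*(x,-\zeta)=\sup_{v\neq\mathbf{0}}\eta(v)/F(x,v)\le\ell$. Combined with \eqref{convexfun1} this proves \eqref{basiupperproper1}.

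For \eqref{convexfun3}, write $\zeta_i\in\partial\phi(x_i)$ in convex form: $\phi(y)\ge\phi(x_i)+\langle\zeta_i,y-x_i\rangle$ for all $y\in\mathbb{B}$. A weak-star convergent sequence in $\mathscr{H}^*$ is bounded, say $\|\zeta_i\|_*\le C$; testing with a fixed $y_0\in\mathfrak{D}(\phi)$ bounds $\phi(x_i)$ from above uniformly in $i$ (as $x_i\in\mathbb{B}$), while testing with $y=x$ gives $\phi(x)\ge\phi(x_i)+\langle\zeta_i,x-x_i\rangle$ where the pairing tends to $0$ since $\|x-x_i\|\to0$; together with lower semicontinuity this forces $\phi(x_i)\to\phi(x)<\infty$. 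Finally, for fixed $y\in\mathbb{B}$, passing to the limit in $\phi(y)\ge\phi(x_i)+\langle\zeta_i,y-x\rangle+\langle\zeta_i,x-x_i\rangle$, where $\langle\zeta_i,y-x\rangle\to\langle\zeta,y-x\rangle$ by $\zeta_i\overset{*}{\rightharpoonup}\zeta$ and $\langle\zeta_i,x-x_i\rangle\to0$, yields $\phi(y)\ge\phi(x)+\langle\zeta,y-x\rangle$; hence $\zeta\in\partial\phi(x)$, which is both the asserted closedness of the graph and \eqref{strongweakclosed}.

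I expect the main obstacle to be the minimal-norm subdifferential construction in \eqref{convexfun2}: one must verify that the upgraded bound $\phi(x)-\phi(x+v)\le\ell\,F(x,v)$ genuinely holds for every $v\in\mathscr{H}$ (including those for which $x+v$ leaves $\mathbb{B}$, where the left side is $-\infty$) and that the Hahn--Banach sandwich delivers a bounded functional $\eta$; both hinge on $F(x,\cdot)$ being a bona fide (asymmetric) norm on $\mathscr{H}$ with $F(x,v)\le d(x,x+v)$, i.e.\ on Proposition~\ref{bascifunkproperty}\eqref{fungrad1}--\eqref{fungrad2}. The identification of the Fr\'echet and convex subdifferentials, used in \eqref{convexfun2} and \eqref{convexfun3}, is routine given convexity and $d(x,x+tv)/t\to F(x,v)$ and should be recorded once at the outset.
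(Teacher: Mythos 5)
Your proposal is correct and follows essentially the same route as the paper: the same estimate $\phi(x)-\phi(x+v)\le\langle-\zeta,v\rangle+o(d)\le F^*(x,-\zeta)\,d(x,x+v)+o(d)$ for (i), the same two ingredients for (ii) (the identity $|\partial\phi|=\mathfrak{l}_\phi$, which the paper gets from Theorem~\ref{thremforwposrt} and you rederive along segments, plus the homogeneous upgrade $\phi(x)-\phi(x+v)\le\mathfrak{l}_\phi(x)F(x,v)$ fed into Hahn--Banach, your sandwich form being equivalent to the paper's geometric separation of the epigraph-type sets), and for (iii) you simply write out the argument of \cite[Proposition~1.4.4]{AGS} that the paper cites. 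The only additions are presentational (the explicit identification of the Fr\'echet with the convex subdifferential, and the boundedness of the separating functional via $F(x,v)\lesssim\|v\|$), both of which are fine.
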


\begin{proof}
\eqref{convexfun1}
Assume $x \in \mathfrak{D}(\partial\phi)$ and $|\partial \phi|(x)>0$ without loss of generality.
For any $\zeta \in \partial\phi(x)$, we have
\[
 |\partial \phi|(x)
 =\limsup_{v \to \mathbf{0}} \frac{\phi(x)-\phi(x+v)}{d(x,x+v)}
 \leq \limsup_{v \to \mathbf{0}} \frac{\langle -\zeta,v \rangle}{F(x,v)}
 \leq F^*(x,-\zeta).
\]
This shows the former assertion,
and the latter one is a consequence of Theorem~\ref{slopeuppgr}\eqref{slope-1}.

\eqref{convexfun2}
It follows from Theorem~\ref{slopeuppgr}\eqref{slope-2} and Theorem~\ref{thremforwposrt} that
$|\partial\phi|=\mathfrak{l}_{\phi}$ is a strong upper gradient for $-\phi$.
In view of (\ref{convexfun1}), it remains to show the existence of $\zeta \in \partial\phi(x)$ with $F^*(x,-\zeta)\leq \mathfrak{l}_\phi(x)$
for $x \in \mathfrak{D}(\phi)$ satisfying $\mathfrak{l}_\phi(x) <\infty$.
Observe from Theorem~\ref{thremforwposrt} and Proposition~\ref{bascifunkproperty}\eqref{fungrad1} that
\[
 -\mathfrak{l}_{\phi}(x) F(x,v)
 \leq \phi(x+v) -\phi(x) \quad \text{for all}\ v \in \mathscr{H} \text{ with } x+v \in \mathbb{B},
\]
i.e., the convex set
$\{ (v,r) \in \mathscr{H} \times \mathbb{R} \,|\, r \geq \phi(x+v)-\phi(x),\, x+v \in \mathbb{B}\}$
is disjoint from the open convex set
$\{ (v,r) \in \mathscr{H} \times \mathbb{R} \,|\, r<-\mathfrak{l}_\phi(x)F(x,v),\, x+v \in \mathbb{B}\}$.
Therefore, we can apply a geometric version of the Hahn--Banach theorem
to obtain $\zeta \in \mathscr{H}^*$ and $\alpha \in \mathbb{R}$ such that
\[
 -\mathfrak{l}_\phi(x) F(x,v)
 \leq \langle \zeta,v \rangle +\alpha
 \leq \phi(x+v)-\phi(x)
 \quad \text{for all}\ v \in \mathscr{H} \text{ with } x+v \in \mathbb{B}.
\]
Taking $v= \mathbf{0}$ implies $\alpha=0$.
Thus, the first inequality shows $F^*(x,-\zeta)\leq \mathfrak{l}_{\phi}(x)$
while the second one means $\zeta \in \partial\phi(x)$.
This completes the proof.

\eqref{convexfun3}
Owing to Proposition~\ref{basicproperofFunk}(\ref{funpor2}),
this claim can be proved in the same way as \cite[Proposition 1.4.4]{AGS}.
\end{proof}
%ここ%

\section{Complementary results for the Wasserstein space}\label{addwasser}
%%%%%%%%%%%%%%%%%%%%%%%%%%%%

Let $(M,F,\m)$ be a compact Finsler manifold with a smooth positive measure
as in Subsection~\ref{heatflowfins}.

\begin{proof}[Proof of Proposition~$\ref{normstrcturelemma}$]
\eqref{dualpro2} and \eqref{dualpro1} readily follow from the properties of the Legendre transformation.

\eqref{Riesz}
Let $g$ be an arbitrary Riemannian metric on $M$,
which is bi-Lipschitz equivalent to $F$ by the compactness of $M$.
Then $L^2(\mu;TM)$ can be regarded as a Hilbert space equipped with the inner product
\[
 G(\vv,\w) :=\int_M g(\vv,\w) \,{\dd}\mu.
\]
Hence, for any bounded linear functional $\mathcal{Q}$ on $L^2(\mu;TM)$,
the Riesz representation theorem yields unique $\vv \in L^2(\mu;TM)$
such that $\mathcal{Q}(\w)=G(\vv,\w)$.
We conclude the proof by setting $\zeta:=g(\vv,\cdot)$.

\eqref{basicconnection}
Let $g$ and $G$ be as above and
$T^g_\mu \mathcal{P}(M)$ be the tangent space of $\mathcal{P}(M)$ at $\mu$ induced from $g$.
Note that $L^2(\mu;T^*M)$, $L^2(\mu;TM)$ and $\mathbf{Ker(div)}(\mu)$
are independent of the choice of a metric,
and $T^g_\mu \mathcal{P}(M)$ is identified with $T^*_{\mu}\mathcal{P}(M)$
via the Legendre transformation of $g$.
Then the claim follows from
$L^2(\mu;TM) =T^g_\mu \mathcal{P}(M) \oplus \mathbf{Ker(div)}(\mu)$
in \cite[Lemma~2.4]{Erb},
where $\oplus$ is the orthogonal direct sum with respect to $G$.

\eqref{identityrelation}
Owing to \eqref{basicconnection}, we have
$0 =\langle \LL_\mu(\vv),\vv-\w \rangle_\mu =\|{\vv}\|_\mu^2 -\langle \LL_\mu(\vv),\w \rangle_\mu$,
and hence
\[
 \|{\vv}\|_\mu^2 =\langle \LL_\mu(\vv),\w \rangle_{\mu} \leq \|{\vv}\|_\mu \|{\w}\|_{\mu}.
\]
This yields $\|{\vv}\|_\mu \leq \|{\w}\|_\mu$, and similarly $\|{\vv}\|_\mu \geq \|{\w}\|_\mu$ holds.
Therefore we find $\|{\vv}\|^2_\mu=\|{\w}\|^2_\mu =\langle \LL_\mu(\vv),\w \rangle_{\mu}$,
and then $\vv=\w$ follows from \eqref{dualpro1} above.
\end{proof}

\begin{proof}[Proof of Proposition~$\ref{strongHWthe}$]
Note that the equivalence between \eqref{fintieweaksubd} and \eqref{sobolespace}
can be reduced to a Riemannian metric $g$ on $M$.
Hence, it follows from \cite[Proposition~4.3]{Erb} that
\eqref{fintieweaksubd} and \eqref{sobolespace} are equivalent and
${\dd}\rho=\rho \zeta$ for some $\zeta \in T^*_{\mu}\mathcal{P}(M)$.

Next we prove that $\zeta$ is a subdifferential at $\mu\in \mathfrak{D}(H_{\m})$.
Given $\nu\in \mathcal{P}(M)$,
let $\Psi^\nu_\mu$ be the unique optimal transport vector field
and set $\mu_t :=\exp(t\Psi^\nu_\mu)_\sharp \mu$.
On the one hand, by the same argument as in \cite[Proposition 7.7]{OS}, we have
\[
 \lim_{t \to 0^+} \frac{H_{\m}(\mu_t) -H_{\m}(\mu)}{t}
 =\int_M \langle {\dd}\rho, \Psi_{\mu}^{\nu} \rangle \,{\dd}{\m}
 =\int_M \langle \zeta,\Psi_{\mu}^{\nu} \rangle \,{\dd}\mu
 \leq \|\zeta\|_{\mu}^* \|\Psi_{\mu}^{\nu}\|_\mu <\infty.
\]
On the other hand, Proposition~\ref{basicpropertiesfoH}\eqref{CDKNcon} furnishes
\[
 \frac{H_{\m}(\mu_t) -H_{\m}(\mu)}{t}
 \leq H_{\m}(\nu) -H_{\m}(\mu) -\frac{K}{2}(1-t) d^2_W(\mu,\nu), \quad t\in (0,1].
\]
Letting $t\rightarrow0^+$, we find
\[
 H_{\m}(\nu) -H_{\m}(\mu)
 \geq \langle \zeta,\Psi_{\mu}^{\nu} \rangle_{\mu} +\frac{K}{2} d^2_W(\mu,\nu).
\]
Then it follows from Proposition~\ref{properofsubdiff} that
$\zeta$ is a strong {subdifferential} and $|\partial H_{\m}|(\mu) \leq \|{-}\zeta\|^*_{\mu}<\infty$.

To see the reverse inequality,
we consider an arbitrary smooth vector field $\w$ and a map $T_t:M \lra M$
such that, for every $x \in M$, $t \longmapsto T_t(x)$ is the geodesic with
$\frac{\dd}{{\dd}t}|_{t=0} [T_t(x)] =\w(x)$.
Thus, by the same discussion as in \cite[Lemma~4.2]{Erb},
we find that $\mu_t:=(T_t)_{\sharp}\mu$ satisfies
\begin{equation}\label{derivativeH}
 \lim_{t \to 0} \frac{H_{\m}(\mu_t)-H_{\m}(\mu)}{t}
 =-\int_M \rho \di_{\m}(\w) \,{\dd}{\m},
\end{equation}
where $\di_{\m}$ denotes the divergence with respect to $\m$
(see also the calculation after the proof).
This implies that
\[
 \mathcal{Q}({\w}) :=\int_M \rho \di_{\m}(\w) \,{\dd}{\m}
 \leq |\partial H_{\m}|(\mu) \|{\w}\|_\mu
\]
is extended to a linear bounded functional on $L^2(\mu;TM)$.
Then Proposition~\ref{normstrcturelemma}\eqref{Riesz} provides
a unique $1$-form $\bar{\zeta} \in L^2(\mu;T^*M)$ with
$\mathcal{Q}(\w)=\langle -\bar{\zeta},\w \rangle_{\mu}$
and $\|{-}\bar{\zeta}\|_{\mu}^* \le |\partial H_{\m}|(\mu)$.
Since
\[
 \int_M \rho \di_{\m}(\w) \,{\dd}{\m}
 = -\int_M \rho \langle \bar{\zeta},\w \rangle \,{\dd}{\m}
\]
for all smooth vector fields $\w$, we obtain $\rho \bar{\zeta} ={\dd}\rho=\rho\zeta$.
Therefore, $\|{-}\zeta\|_{\mu}^* \le |\partial H_{\m}|(\mu)$
and hence $|\partial H_{\m}|(\mu) =\|{-}\zeta\|^*_{\mu}$ holds as desired.

Finally, owing to \eqref{derivativeH}, the uniqueness of $\zeta$
can be shown in a similar way to \cite[Proposition~4.3]{Erb}.
\end{proof}

For completeness, we give a (standard) calculation of
$\di_{\m}(\w)$ needed in the proof of \eqref{derivativeH}:
\begin{align*}
 \int_M \rho \di_{\m}(\w) \,{\dd}{\m}
 &=-\int_M \w(\rho) \,{\dd}{\m}
 =\lim_{t \to 0} \int_M \frac{\rho -\rho(T_t)}{t} \,{\dd}{\m}
 =\lim_{t \to 0} \frac{1}{t} \int_M \rho \big( 1- \det[{\dd}(T_t^{-1})] \big) \,{\dd}{\m} \\
 &= \lim_{t \to 0} \frac{1}{t} \int_M \rho \left( 1- \frac{1}{\det[{\dd}T_t]\circ T_t^{-1}} \right) \,{\dd}{\m}
 =\int_M \rho \cdot \frac{\dd}{{\dd}t}\bigg|_{t=0} \det[{\dd}T_t] \,{\dd}{\m}.
\end{align*}

\end{document}